\documentclass[preprint, 3p, times]{elsarticle}

\usepackage{amsmath}
\usepackage[utf8]{inputenc} 
\usepackage[english]{babel}

\usepackage{amsmath}
\usepackage{amsthm}
\usepackage{amssymb}
\usepackage{bm}
\usepackage[utf8]{inputenc}
\usepackage[english]{babel} 
\usepackage{mathrsfs}
\usepackage{graphicx}
\usepackage{tikz}
\usetikzlibrary{arrows}
\usetikzlibrary{positioning}
\newcommand{\leftiearrow}{\tikz \draw[-triangle 90] (.1,0)--(0,0);}
\newcommand{\upparrow}{\tikz \draw[-triangle 90] (0,0)--(0,.1);}
\newcommand{\downnarrow}{\tikz \draw[-triangle 90] (0,.1)--(0,0);}
\newcommand{\rightiearrow}{\tikz \draw[-triangle 90] (0,0)--(.1,0);}

\newtheorem{thm}{Theorem}[section]

\newtheorem{lem}[thm]{Lemma}

\theoremstyle{definition}

\newtheorem{rmk}[thm]{Remark}
\newtheorem{defin}[thm]{Definition}

\DeclareMathOperator{\ran}{ran}
\DeclareMathOperator{\codim}{codim}
\DeclareMathOperator{\ind}{index}
\DeclareMathOperator{\ess}{ess}
\DeclareMathOperator{\ue}{ue}
\DeclareMathOperator{\smbl}{Smbl}
\DeclareMathOperator{\Tr}{Tr}
\DeclareMathOperator{\inte}{int}
\DeclareMathOperator{\exte}{ext}
\DeclareMathOperator{\ea}{ea}

\journal{Journal de Math\'ematiques Pures et Appliqu\'ees}
\begin{document} 
	\begin{frontmatter}
\title{The spectra of harmonic layer potential operators on domains with rotationally symmetric conical points\tnoteref{fund}}
\tnotetext[fund]{This work was supported by the Swedish Research Council under contract 621-2014-5159.}
\author{Johan Helsing}
\ead{helsing@maths.lth.se}
\address{Lund University, Box 118, SE-221 00 Lund, Sweden}

\author{Karl-Mikael Perfekt\corref{cor1}}
\ead{k.perfekt@reading.ac.uk}
\address{Department of Mathematics and Statistics, University of Reading, Reading RG6 6AX, United Kingdom}

\cortext[cor1]{Corresponding author}

\begin{abstract}
  We study the adjoint of the double layer potential associated with
  the Laplacian (the adjoint of the Neumann--Poincar\'e operator), as
  a map on the boundary surface $\Gamma$ of a domain in $\mathbb{R}^3$
  with conical points. The spectrum of this operator directly reflects
  the well-posedness of related transmission problems across $\Gamma$.
  In particular, if the domain is understood as an inclusion with
  complex permittivity $\epsilon$, embedded in a background medium
  with unit permittivity, then the polarizability tensor of the domain
  is well-defined when $(\epsilon+1)/(\epsilon-1)$ belongs to the resolvent set in
  energy norm. We study surfaces $\Gamma$ that have a finite number of
  conical points featuring rotational symmetry. On the energy space,
  we show that the essential spectrum consists of an interval.  On
  $L^2(\Gamma)$, i.e.  for square-integrable boundary data, we show
  that the essential spectrum consists of a countable union of curves,
  outside of which the Fredholm index can be computed as a winding
  number with respect to the essential spectrum. We provide explicit
  formulas, depending on the opening angles of the conical points. We
  reinforce our study with very precise numerical experiments,
  computing the energy space spectrum and the spectral measures of the
  polarizability tensor in two different examples. Our results
  indicate that the densities of the spectral measures may approach
  zero extremely rapidly in the continuous part of the energy space
  spectrum. \bigskip
  
  \noindent {\bf R\'esum\'e} \newline \newline
Nous \'etudions l'adjoint du potentiel de double couche associ\'e \`a
l'op\'erateur de Laplace (l'adjoint de l'op\'erateur de Neumann-Poincar\'e)
d\'efini sur la fronti\`ere $\Gamma$ d'un domaine de $\mathbb{R}^3$ contenant
des points coniques. Le spectre de cet op\'erateur est intimement li\'e
\`a la r\'esolution de probl\`emes de transmission \`a travers $\Gamma$.
En particulier, dans le contexte de la propagation des ondes \'electromagn\'etiques,
si le domaine d\'elimit\'e par $\Gamma$ repr\'esente une inclusion contenant un mat\'eriau de permittivit\'e complexe $\epsilon$, immerg\'e dans un milieu infini de permittivit\'e
\'egale \`a 1, on peut d\'efinir le tenseur de polarisabilit\'e d\`es que le rapport $(\epsilon+1)/(\epsilon-1)$ appartient \`a l'ensemble r\'esolvent de l'op\'erateur au sens de la norme d'\'energie.
Nous \'etudions des surfaces $\Gamma$ qui poss\`edent un nombre fini de points
coniques \`a sym\'etrie de rotation. Lorsque l'op\'erateur est d\'efini sur l'espace d'\'energie, nous montrons
que son spectre essentiel est un intervalle. Lorsqu'il est d\'efini dans l'espace $L^2(\Gamma)$,
i.e. pour des fonctions de carr\'e int\'egrable sur $\Gamma$, nous montrons que son
spectre est constitu\'e d'une union de courbes, en dehors desquelles on peut calculer
l'indice de Fredholm de l'op\'erateur, comme l’indice par rapport \`a
ces courbes. Nous donnons des formules explicites, en fonction de l'angle
d'ouverture des points coniques. Nous compl\'etons notre \'etude par des exp\'eriences num\'eriques tr\`es pr\'ecises,
o\`u, pour deux exemples, nous calculons le spectre de l'op\'erateur au sens de l'espace d'\'energie et les mesures spectrales du tenseur de polarisabilit\'e.
Nos r\'esultats sugg\`erent que les densit\'es des mesures spectrales approchent z\'ero
extr\^emement rapidement dans la partie continue du spectre au sens de l'espace d'\'energie.
\end{abstract}
\begin{keyword}
	layer potential \sep Neumann--Poincar\'e operator \sep spectrum \sep  polarizability
	
	\MSC[2010] 31B10 \sep 45B05 \sep 45E05
\end{keyword}
\end{frontmatter}

\section{Introduction}
Let $\Gamma \subset \mathbb{R}^3$ be a connected Lipschitz surface, enclosing a bounded open domain $\inte(\Gamma)$ and with surface measure $d\sigma$. We are interested in the spectrum of the layer potential operator
\begin{equation} \label{eq:kdef}
K^\Gamma f(\bm{r}) = \frac{1}{2\pi}\int_{\Gamma} K^\Gamma(\bm{r}, \bm{r}') f(\bm{r}') \, d\sigma(\bm{r}'), \quad \bm{r} \in \Gamma,
\end{equation}
based on the normal derivative of
the Newtonian kernel
\begin{equation} \label{eq:kdefker}
K^\Gamma(\bm{r}, \bm{r}') = \frac{\langle \bm{r} - \bm{r}', \bm{\nu}_{\bm{r}} \rangle}{|\bm{r}' - \bm{r}|^3},
\end{equation}
where $\bm{\nu}_{\bm{r}}$ denotes the outward unit normal of $\Gamma$. $K^\Gamma$ may also be considered for planar Lipschitz curves $\Gamma \subset \mathbb{R}^2$, in which case the kernel is given by $K^\Gamma(\bm{r}, \bm{r}') = 2\frac{\langle \bm{r} - \bm{r}', \bm{\nu}_{\bm{r}} \rangle}{|\bm{r}' - \bm{r}|^2}$.

Knowledge about the spectrum of $K^\Gamma$ leads to existence and uniqueness results for boundary value problems involving the Laplacian on the interior and exterior domains $\inte(\Gamma)$  and $\exte(\Gamma)$ of $\Gamma$. For example, layer potential techniques may be used to solve the classical Dirichlet and Neumann problems for $\inte(\Gamma)$ by understanding the Fredholm theory of $K^\Gamma + I$ and $K^\Gamma - I$, respectively \cite{Ver84}.

When $\Gamma$ is non-smooth, for example if $\Gamma$ has corners in 2D, or edges or conical points in 3D, the spectrum of $K^\Gamma \colon X \to X$ is highly dependent on the space $X$. For example, suppose that $\Gamma$ is a curvilinear polygon in the plane. $K^\Gamma + I \colon L^2(\Gamma) \to L^2(\Gamma)$ is always invertible \cite{Ver84} when $\Gamma$ is Lipschitz, but in the polygonal case there always exist $p_0 > 2$, depending on the opening angles of the corners of $\Gamma$, such that $K^\Gamma + I \colon L^{p_0}(\Gamma) \to L^{p_0}(\Gamma)$ is not Fredholm \cite{Lew90,Shele90}. The underlying explanation for this is that when $\Gamma$ is an infinite wedge, the model domain to analyze corners; then, by homogeneity of its kernel, $K^\Gamma$ may be realized as a block matrix of Mellin convolution operators. These convolution kernels depend on $p_0$, accounting for the dependence on $p_0$ of the spectrum \cite{FJL76}. In 3D, similar results were shown in \cite{FJL77} in the idealized cases of $\Gamma$ being an infinite straight cone or an infinite three-dimensional wedge. We refer also to \cite{Mit02} for an extensive account of the $L^p$-theory in 2D, although with results only stated for $p \leq 2$.

In this paper we will, for surfaces $\Gamma \subset \mathbb{R}^3$, consider the action of $K^\Gamma$ on two different spaces: $L^2(\Gamma)$ and the energy space $\mathcal{E}$. The energy space consists of the distributions on $\Gamma$ whose single layer potentials have finite energy in $\mathbb{R}^3$. It is identifiable with the Sobolev space $H^{-1/2}(\Gamma)$ of index $-1/2$ on the boundary. The energy space stands out as the most natural space on which to consider $K^\Gamma$ for many reasons, one of them being that $K^\Gamma \colon \mathcal{E} \to \mathcal{E}$ is self-adjoint and therefore, in contrast with the $L^p$-theory, has a real spectrum.

Our interest in the entire spectrum of $K^\Gamma$ arises from the transmission problem
\begin{equation} \label{eq:introtrans}
\begin{cases}
\int_{\mathbb{R}^3} |\nabla U|^2 \, dV < \infty, \\
\Delta U(\bm{r}) = 0, \quad \bm{r} \in \mathbb{R}^3 \setminus \Gamma, \\
\Tr_{\inte} U(\bm{r}) = \Tr_{\exte} U(\bm{r}), \quad \bm{r} \in \Gamma, \\
\partial_{\bm{\nu}}^{\exte} U(\bm{r}) = \epsilon \partial_{\bm{\nu}}^{\inte} U(\bm{r}) - g(\bm{r}), \quad \bm{r} \in \Gamma.
\end{cases}
\end{equation}
Here $\epsilon \neq 1$, and $\Tr_{\inte}$ and $\partial_{\bm{\nu}}^{\inte}$ denote the boundary trace and normal derivative of interior approach, $\Tr_{\exte}$ and $\partial_{\bm{\nu}}^{\exte}$ the corresponding operators of exterior approach. If $g \in \mathcal{E}$, it turns out that there exists a solution $U$ of \eqref{eq:introtrans} satisfying $\lim_{\bm{r} \to \infty} U(\bm{r}) = 0$ if and only if there is $f \in \mathcal{E}$ such that
$$(K^\Gamma - z)f = \frac{g}{1-\epsilon}, \quad z = -\frac{1+\epsilon}{1-\epsilon}.$$

In the special case that $g(\bm{r}) = \bm{e} \cdot \bm{\nu}_{\bm{r}}$
for a vector $\bm{e} \in \mathbb{R}^3$, then solving the transmission
problem is involved in computing the polarizability tensor \cite{CM16,Fuchs75,HP13,SYJA04} of $\inte(\Gamma)$. In this setting, the domain is an inclusion
with complex permittivity $\epsilon$ in an infinite space of
permittivity 1. The polarizability tensor is associated with a set of spectral measures that arise from the spectral measure of $K^\Gamma \colon \mathcal{E} \to \mathcal{E}$, see Section~\ref{sec:introtrans}. Atoms in these spectral measures correspond to values of $\epsilon$ for which surface plasmon resonances can be excited \cite{AMRZ, APRY}. However, not every eigenvalue of $K^\Gamma$ necessarily produces a singularity in the polarizability tensor; we call such eigenvalues dark plasmons. In Section~\ref{sec:numres} we will observe an abundance of dark spectra for the type of surface $\Gamma$ that we will consider. More precisely, the described relationship between the transmission problem \eqref{eq:introtrans} and plasmonic resonances holds in the quasi-static approximation of the Maxwell equations. In the setting of smooth surfaces $\Gamma$, detailed analysis of plasmonic resonances using the full Maxwell equations and justification of the quasi-static approximation can be found in \cite{ADM, ARYZ}. Note that the spectrum of $K^\Gamma$ is pure point when $\Gamma$ is smooth.

For $\Gamma$ a plane polygon and $g \in H^{s}(\Gamma)$, $s > -1/2$, the spectrum of (a more general version of) the transmission problem \eqref{eq:introtrans} was studied in \cite{CS85}. In \cite{KLY15}, the spectral resolution of $K^\Gamma \colon \mathcal{E} \to \mathcal{E}$ was determined in a model case where $\Gamma$ is constructed from two intersecting circles (equivalent to the infinite wedge). For a general curvilinear polygon in 2D, the essential spectrum of $K^\Gamma \colon \mathcal{E} \to \mathcal{E}$ was determined in \cite{PP16}.
\begin{thm}[\cite{PP16}] \label{thm:PP16}
Suppose that $\Gamma \subset \mathbb{R}^2$ is a curvilinear polygon with corners of angles $\alpha_1, \ldots, \alpha_n$. Then the spectrum of $K^\Gamma \colon \mathcal{E} \to \mathcal{E}$ consists of an interval and a sequence $\{\lambda_k\}$ of eigenvalues with no limit point outside the interval,
$$\sigma(K^\Gamma, \mathcal{E}) = \left\{ x \in \mathbb{R} \, : \, |x| \leq \max_{1 \leq j \leq N} \left|1 - \frac{\alpha_j}{\pi} \right|\right\} \cup \{\lambda_k\}.$$ 
\end{thm}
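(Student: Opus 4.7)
The plan is to combine a localization argument with an explicit computation of the spectrum on a model infinite wedge. Since $K^\Gamma$ is self-adjoint on the energy space $\mathcal{E} \simeq H^{-1/2}(\Gamma)$, it suffices to identify its essential spectrum; what remains outside will automatically consist of isolated eigenvalues of finite multiplicity with no accumulation outside $\sigma_{\text{ess}}$. My first step is to fix smooth cutoffs $\{\chi_j\}$, each localizing to a single corner $P_j$, together with a cutoff $\chi_0$ supported away from all corners, so that $\sum_j \chi_j^2 = 1$ on $\Gamma$. Writing $K^\Gamma = \sum_j \chi_j K^\Gamma \chi_j + \text{commutators}$ and observing that commutators $[K^\Gamma, \chi_j]$ have smoother kernels than $K^\Gamma$ itself, a standard pseudodifferential argument shows these commutator contributions are compact on $\mathcal{E}$. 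Moreover, $\chi_0 K^\Gamma \chi_0$ acts via a $C^\infty$ kernel away from the corners and is thus compact on $H^{-1/2}$.

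The main content is then the analysis near a single corner $P_j$ with opening angle $\alpha_j$. I would straighten the curve $\Gamma$ near $P_j$ to an infinite straight wedge $W_{\alpha_j}$ of the same angle by a bi-Lipschitz change of variables whose Jacobian equals the identity at $P_j$ to high order. Standard kernel estimates show that the difference between $\chi_j K^\Gamma \chi_j$ (pulled back) and $\chi_j K^{W_{\alpha_j}} \chi_j$ is again compact on the energy space associated with the wedge. Hence, modulo compact operators, $K^\Gamma$ is unitarily equivalent to a direct sum of the model operators $K^{W_{\alpha_j}}$ on $\mathcal{E}(W_{\alpha_j})$.

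The key computation is the spectrum of $K^{W_\alpha}$ on the energy space of the infinite wedge $W_\alpha$. Because the kernel is homogeneous of degree $-1$, I pass to logarithmic radial coordinates $t = \log r$ and use the Mellin transform. After separating by parity across the axis of symmetry and diagonalizing, $K^{W_\alpha}$ becomes a multiplication operator whose symbols are elementary hyperbolic expressions; their joint range is precisely the interval $\bigl[-|1-\alpha/\pi|,\, |1-\alpha/\pi|\bigr]$. This gives $\sigma(K^{W_\alpha},\mathcal{E}) = [-|1-\alpha/\pi|,\, |1-\alpha/\pi|]$, and, since no eigenvalues are picked up in this purely continuous spectrum, the Weyl-type localization above yields
$$\sigma_{\text{ess}}(K^\Gamma, \mathcal{E}) = \bigcup_{j=1}^n \bigl[-|1-\alpha_j/\pi|,\, |1-\alpha_j/\pi|\bigr] = \bigl[-M, M\bigr], \quad M = \max_j |1-\alpha_j/\pi|.$$
Self-adjointness then supplies the discrete eigenvalues $\{\lambda_k\}$ outside $[-M,M]$, completing the description.

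The main obstacle I anticipate is the localization step on the negative-index Sobolev space $H^{-1/2}(\Gamma)$: multiplication by a smooth cutoff is not a local operation on $\mathcal{E}$ in the naive sense, and one must work through the identification of $\mathcal{E}$ with the single layer potential's energy norm to justify that the commutator and straightening errors are genuinely compact. Equivalently, one can realize $K^\Gamma$ symmetrically via the single layer operator $S^\Gamma$ as $(S^\Gamma)^{1/2} (K^\Gamma)^* (S^\Gamma)^{-1/2}$ acting on $L^2$, and do the localization on the $L^2$ side, which is where I would expect the technical work to concentrate.
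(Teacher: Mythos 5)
The theorem you are addressing is quoted by this paper from \cite{PP16} and is not proved here, so there is no ``paper's own proof'' to compare against directly; the closest analogue is the paper's proof of the 3D statement (Theorem~\ref{thm:Eessspec}) via Sections~\ref{sec:straightcone}--\ref{sec:Espec}, which follows the same overall scheme you propose: reduce to a homogeneous model near each singular point, diagonalize by Mellin transform there, and assemble via a compact-perturbation and Weyl-sequence argument. Your outline is therefore the right broad plan.

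The genuine gap is in the step you describe as ``a standard pseudodifferential argument shows these commutator contributions are compact on $\mathcal{E}$.'' This is the hard part, and it is \emph{not} standard on $H^{-1/2}(\Gamma)$ for a Lipschitz $\Gamma$. The natural commutator/straightening estimates produce Riesz-transform-type kernels multiplied by small factors; Riesz transforms are bounded on $L^2(\Gamma)$ but in general \emph{not} on $H^{\pm1/2}(\Gamma)$ for a Lipschitz surface (this is precisely the obstruction the present paper highlights in its introduction to Section~\ref{sec:perturb}, and see Theorem~\ref{thm:Eperturbcurve}: the naive argument gives compactness on $L^2$ but ``for $\mathcal{E}$ we have to work harder''). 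What makes the argument go through in the paper's 3D analogue --- and in \cite{PP16} --- is an additional algebraic input: the Plemelj identity $S^\Gamma K^\Gamma = (K^\Gamma)^\ast S^\Gamma$ is used to transfer the problem to $H^{1/2}$ via $S^\Gamma$, the resulting differences are shown compact on both $L^2$ and $H^1$, and then real interpolation gives compactness on $H^{1/2}$, hence on $\mathcal{E}=H^{-1/2}$ by duality. Your proposal names this obstacle in the final paragraph and suggests symmetrizing with $(S^\Gamma)^{1/2}$ to move to $L^2$, but that trades one difficulty for another: $(S^\Gamma)^{1/2}$ is a nonlocal fractional-order operator, so localizing $(S^\Gamma)^{1/2}(K^\Gamma)^\ast(S^\Gamma)^{-1/2}$ by smooth cutoffs is no easier than the original commutator problem, and you give no argument for that compactness either. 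Until this step is resolved the proof is incomplete; everything else (the Mellin computation on the wedge giving the interval of half-length $|1-\alpha/\pi|$, the union-over-corners assembly, and the discrete-plus-essential dichotomy from self-adjointness) is sound in outline.
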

See also \cite{HKL16} for a numerical experiments in agreement with this theorem. In the special case that $\Gamma$ coincides with two line segments in a neighborhood of each corner, a different approach to Theorem~\ref{thm:PP16}, yielding more information, very recently appeared in \cite{BZ17}. In three dimensions, only a few results concerning the entire spectrum seem to be available. As mentioned, the $L^p$-theory for infinite straight cones and wedges was considered in \cite{FJL77}. Also for the infinite straight cone, the generalized eigensolutions to the transmission problem \eqref{eq:introtrans} were explicitly computed in \cite{KCHWA14,MRL77,SPG14} -- these will be important in our determination of the spectrum of $K^\Gamma \colon \mathcal{E} \to \mathcal{E}$. For the more general type of infinite cone $\Gamma = \mathbb{R}^+ \omega$, where $\omega$ is a smooth curve on the sphere, the invertibility of $K^\Gamma - z$ on certain weighted Sobolev spaces has via Mellin convolutions been reduced to the invertibility of a parametric system of operators on $\omega$ \cite{CQ13,QN12}.

In the current contribution, we will characterize the spectrum of $K^\Gamma \colon L^2(\Gamma) \to L^2(\Gamma)$ and $K^\Gamma \colon \mathcal{E} \to \mathcal{E}$ in the case that $\Gamma$ is a rotationally symmetric surface with a conical point, see Figure~\ref{fig:surface}. Our main results straightforwardly generalize to surfaces with a finite number of conical points, each of which is locally rotationally symmetric around some axis. However, since the level of complexity is already quite high, we will never do so explicitly.
\begin{figure}[t!]
        \centering
        \includegraphics[width =0.20\linewidth]{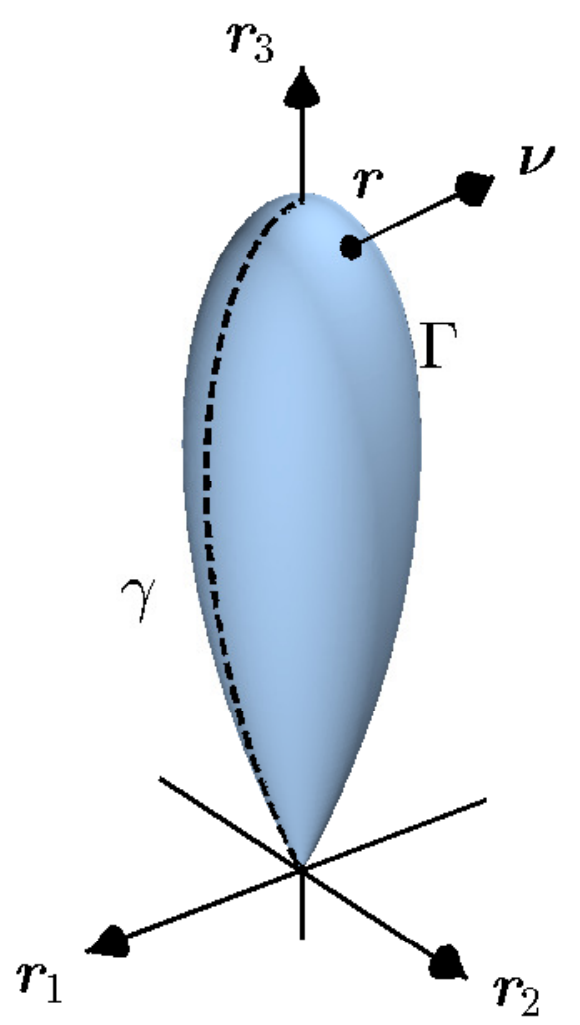}
        \qquad \qquad
        \includegraphics[width =0.30\linewidth]{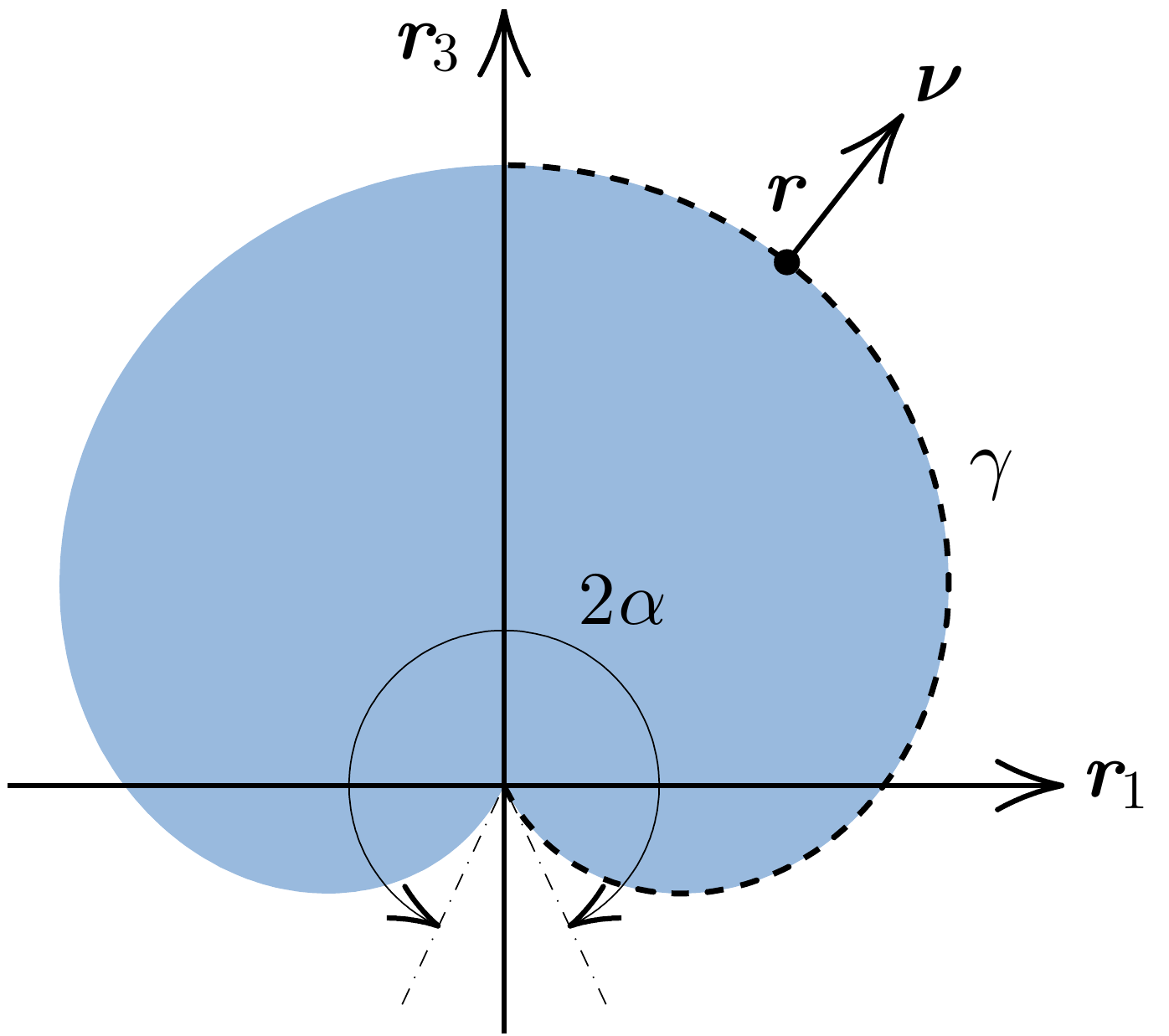}
        \caption{(a): An axially symmetric surface $\Gamma$ with a conical point of opening angle $2\alpha = 5\pi/18$. (b): A cross-section of $\inte(\Gamma)$ with opening angle $2\alpha = 31\pi/18$.}
        \label{fig:surface}
\end{figure}

We now state our main theorems, beginning with our result on the $L^2(\Gamma)$-spectrum. In the statement, $P_\lambda^n(x)$ denotes an associated Legendre function of the first kind (see the Appendix), and $\dot{P}_\lambda^n$ denotes its derivative in $x$.
\newtheorem*{thm:winding}{Theorem~\ref{thm:winding}}
\begin{thm:winding} \it
Let $\Gamma$ be a closed surface of revolution with a conical point of opening angle $2\alpha$, obtained by revolving a $C^5$-curve $\gamma$. For $n \in \mathbb{Z}$, denote by $\Pi_n$ the closed curve
$$
\Pi_n =\left\{ \frac{P_{i\xi}^n(\cos \alpha)\dot{P}_{i\xi}^n(-\cos \alpha)-P_{i\xi}^n(-\cos \alpha)\dot{P}_{i\xi}^n(\cos \alpha)}{P_{i\xi}^n(-\cos \alpha)\dot{P}_{i\xi}^n(\cos \alpha)+P_{i\xi}^n(\cos \alpha)\dot{P}_{i\xi}^n(-\cos \alpha)} \, : \, -\infty \leq \xi \leq \infty \right\},
$$
with orientation given by the $\xi$-variable.
Then the operator $K^\Gamma \colon L^2(\Gamma, \, d\sigma) \to L^2(\Gamma, \, d\sigma)$ has essential spectrum
$$
\sigma_{\ess}(K^\Gamma, L^2) = \bigcup_{n=-\infty}^\infty \Pi_n.
$$
If $z \notin \sigma_{\ess}(K^\Gamma, L^2)$, then $K^\Gamma-z$ has Fredholm index
$$\ind(K^\Gamma-z) = \sum_{n=-\infty}^\infty W(z, \Pi_n) = W(z, \Pi_0) + 2\sum_{n=1}^\infty W(z, \Pi_n)$$
where $W(z, \Pi_n) \leq 0$ denotes the winding number of $z$ with respect to $\Pi_n$ and the right-hand side is always a finite sum. In particular, every point $z$ lying inside one of the curves $\Pi_n$ belongs to the spectrum $\sigma(K^\Gamma, L^2)$.

Whenever $z$ is not a real number, it holds that $\dim \ker (K^\Gamma - z) = 0$, so that
$$\ind(K^\Gamma - z) = -\codim \ran K^\Gamma, \quad z \in \mathbb{C}\setminus\mathbb{R}.$$
 In particular,  if $\ind(K^\Gamma-z) = 0$ (so that $z$ lies outside every curve $\Pi_n$), then either $K^\Gamma-z$ is invertible or $z = x$ is real and an eigenvalue of $K^\Gamma$.
\end{thm:winding}
\begin{figure}[ht]
\includegraphics[width =0.48\linewidth]{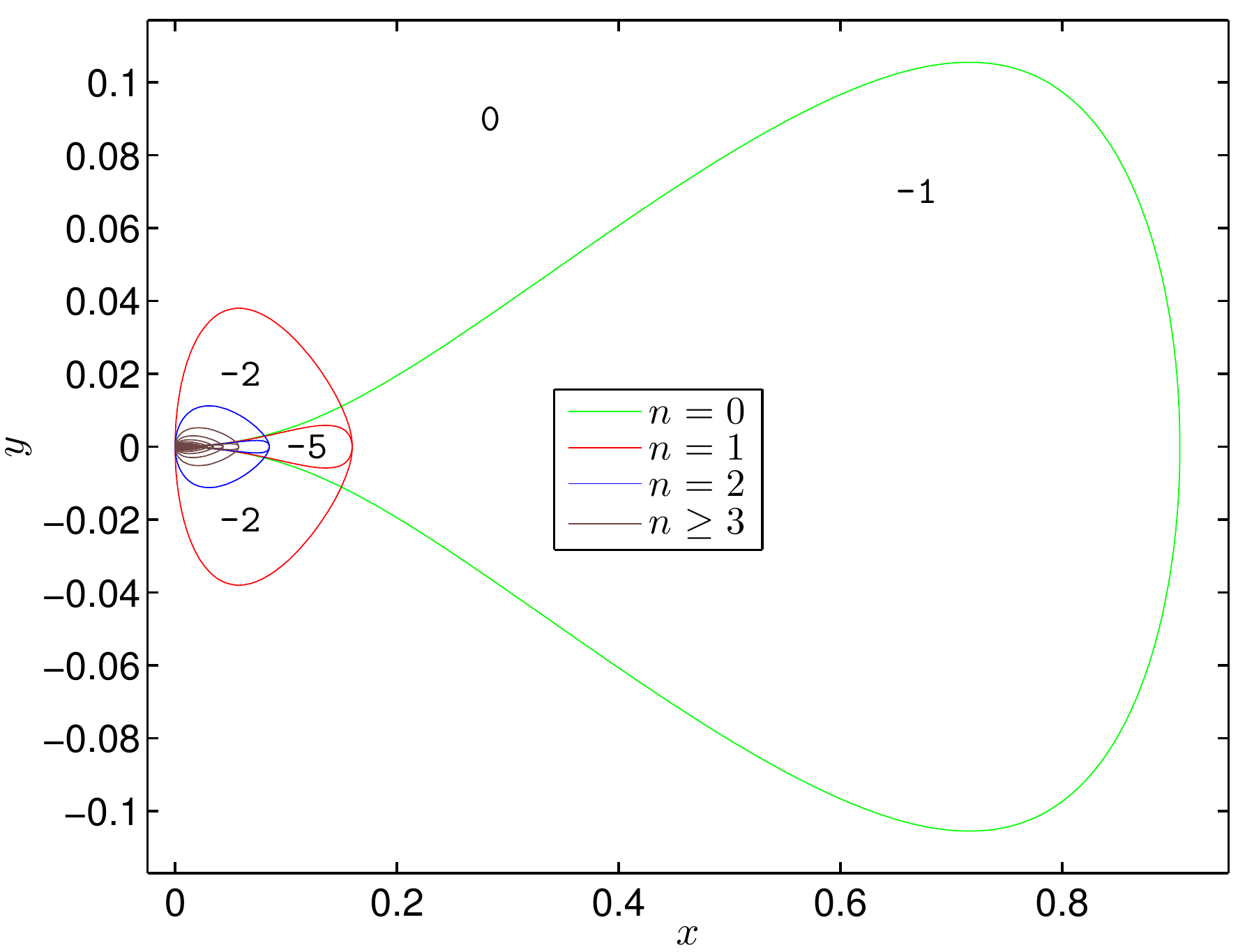}
\hfill
\includegraphics[width =0.48\linewidth]{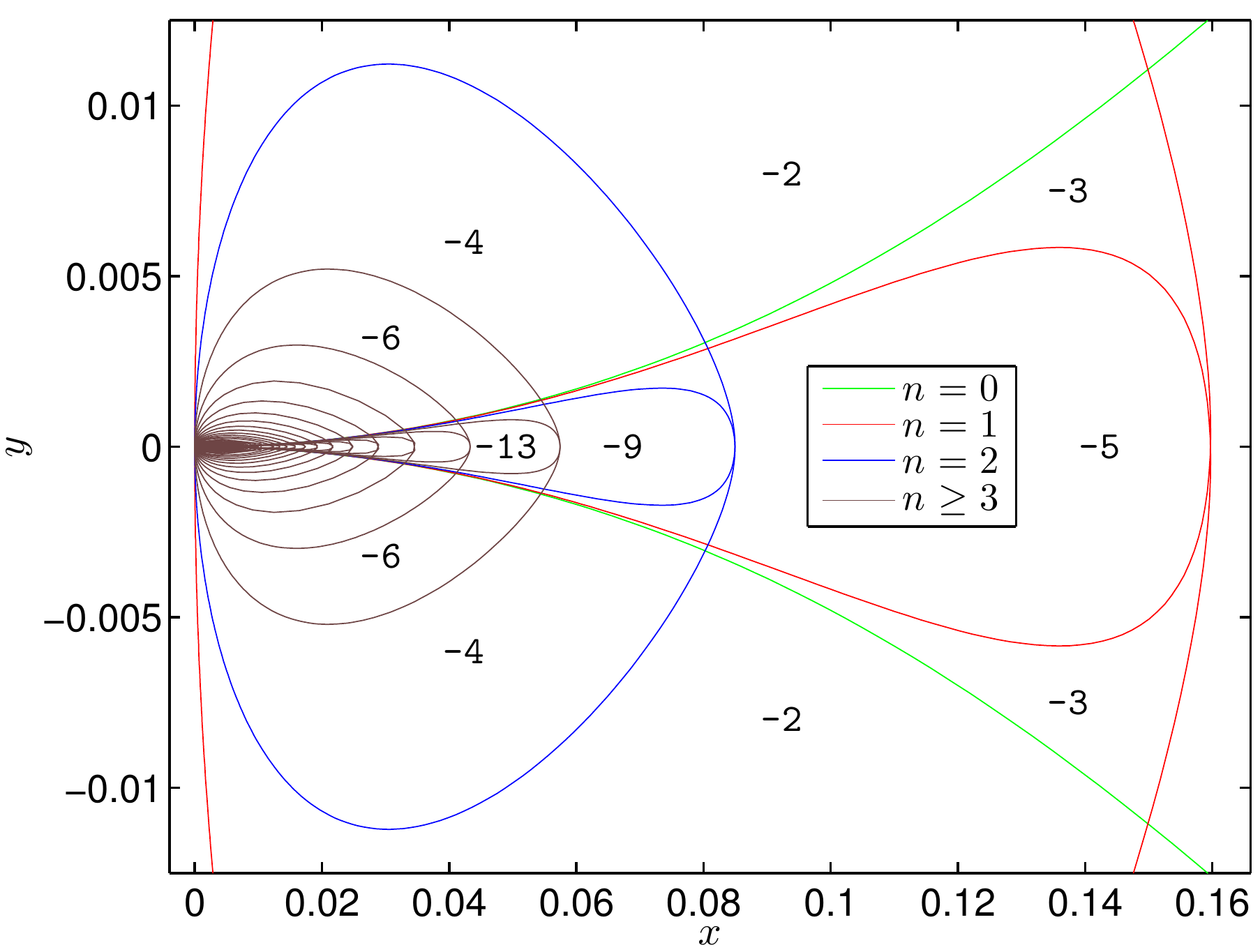}
\caption{The essential spectrum of $K^\Gamma$ for a surface $\Gamma$ with a conical point of opening angle $2\alpha = 5\pi/18$.  $\ind(K^\Gamma - z)$ is marked in the regions of non-essential spectrum. Zoom to the right.}
\label{fig:leaves}
\end{figure}
Theorem~\ref{thm:winding} is illustrated in Figure~\ref{fig:leaves}. After reversing the signs of the winding numbers, the first paragraph of the theorem applies equally well to the adjoint operator $(K^\Gamma)^\ast \colon L^2(\Gamma) \to L^2(\Gamma)$, known as the Neumann--Poincar\'e operator. As a consequence, the number of eigenfunctions of $(K^\Gamma)^\ast \colon L^2(\Gamma) \to L^2(\Gamma)$ to the eigenvalue $z \in \mathbb{C}$ is equal to the winding number of $z$ with respect to the essential spectrum, except at certain exceptional real values $z \in \mathbb{R}$.

Next, we state our characterization of the $\mathcal{E}$-spectrum.
\newtheorem*{thm:Eessspec}{Theorem~\ref{thm:Eessspec}}
\begin{thm:Eessspec} \it
        Let $\Gamma$ be a closed surface of revolution with a conical point of opening angle $2\alpha$, obtained by revolving a $C^5$-curve $\gamma$. For $n \in \mathbb{Z}$, denote by $\Sigma_n$ the closed interval
        \begin{equation*}
        \Sigma_n = \left\{\frac{P_{i\xi - 1/2}^n(\cos \alpha)\dot{P}_{i\xi - 1/2}^n(-\cos \alpha) - P_{i\xi - 1/2}^n(-\cos \alpha)\dot{P}_{i\xi - 1/2}^n(\cos \alpha)}{P_{i\xi - 1/2}^n(-\cos \alpha)\dot{P}_{i\xi - 1/2}^n(\cos \alpha)+P_{i\xi - 1/2}^n(\cos \alpha)\dot{P}_{i\xi - 1/2}^n(-\cos \alpha)} \, : \, -\infty \leq \xi \leq \infty \right\}.
        \end{equation*}
        Then the self-adjoint operator $K^\Gamma \colon \mathcal{E} \to \mathcal{E}$, where $\mathcal{E}$ is the energy space of $\Gamma$, has essential spectrum
        \begin{equation*}
        \sigma_{\ess}(K^\Gamma, \mathcal{E}) = \bigcup_{n=-\infty}^\infty \Sigma_n.
        \end{equation*}
        Hence, the spectrum of $K^\Gamma$ consists of this interval and a sequence of real eigenvalues $\{\lambda_k\}$ with no limit point outside of it,
                \begin{equation*}
                \sigma(K^\Gamma, \mathcal{E}) = \{\lambda_k\} \cup \sigma_{\ess}(K^\Gamma, \mathcal{E}).
                \end{equation*}
\end{thm:Eessspec}
In Section~\ref{sec:numerical} we will develop a method to numerically determine the polarizability tensor and spectrum of $K^\Gamma$ for rotationally symmetric surfaces $\Gamma$. We offer one of our numerical results already here, which at the same time illustrates Theorem~\ref{thm:Eessspec}. In the proof of Theorem~\ref{thm:Eessspec} we decompose $K^\Gamma \colon \mathcal{E} \to \mathcal{E}$ according to its Fourier modes, $K^\Gamma = \oplus K^\gamma_n$. Figure~\ref{fig:introobtuse} demonstrates the indicator function for mode $n=0$, which detects the spectrum of $K^\gamma_0$, for a surface $\Gamma$ of opening angle $2\alpha = 31\pi/18 > \pi$. The set where the indicator function is equal to $1/2$ coincides with the interval $\Sigma_0$ of Theorem~\ref{thm:Eessspec}, i.e. the essential spectrum of $K^\gamma_0$. The points where the indicator function is $1$ correspond to eigenvalues. It turns out (only numerically demonstrated) that in this case there is an infinite sequence of eigenvalues outside the essential spectrum, and every eigenvalue but one yields a plasmon resonance.
\begin{figure}[ht]
\centering
        \includegraphics[width =0.48\linewidth]{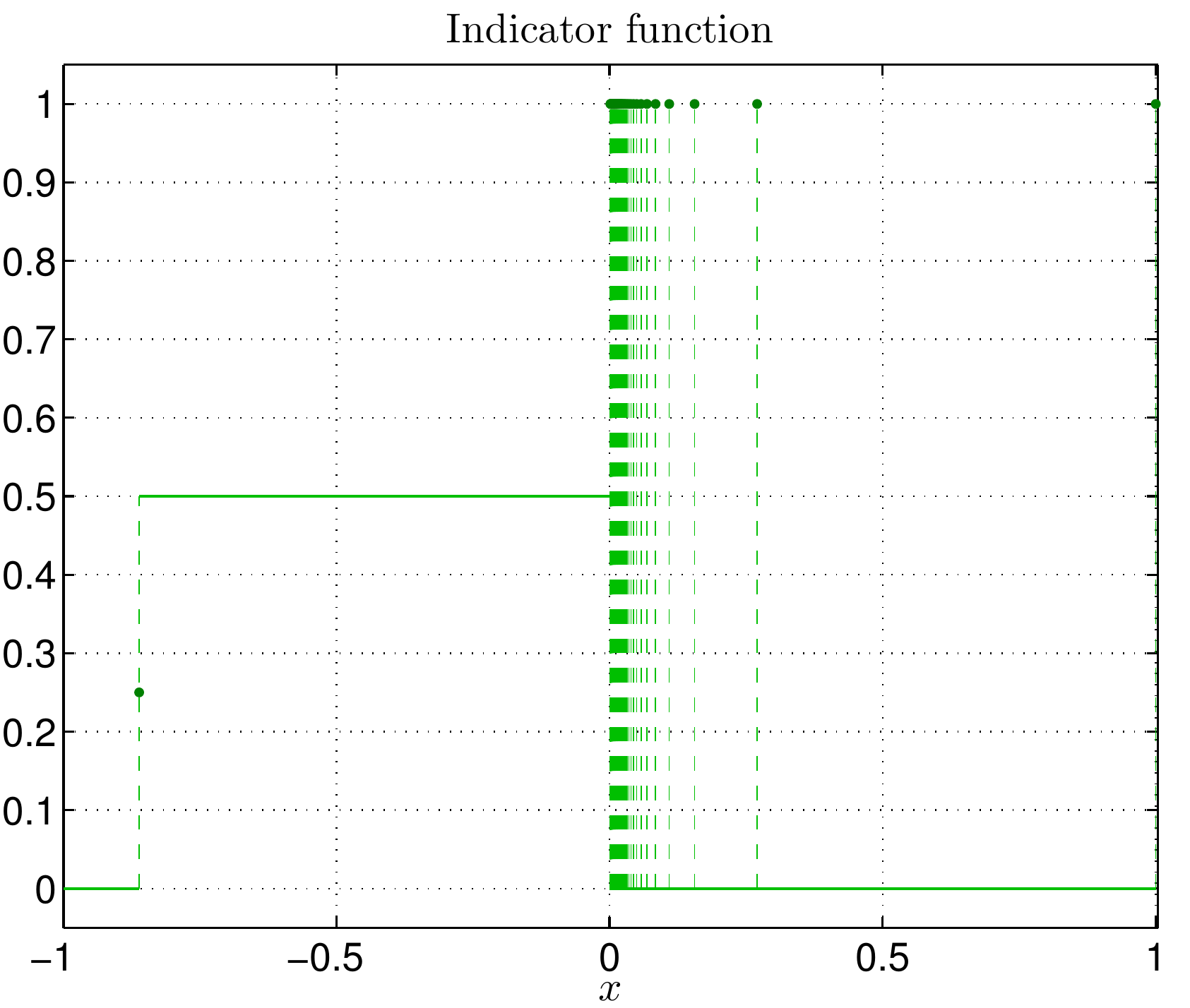}
        \caption{The indicator function for a surface $\Gamma$ with a conical point of reflex opening angle $2\alpha = 31\pi/18$ and mode $n=0$. 275 eigenvalues are drawn.}
        \label{fig:introobtuse}
\end{figure}

We now explain the layout of the paper, with some remarks on the content of each section. Section~\ref{sec:prelim} contains preliminary material on layer potentials, the energy space, the transmission problem, limit polarizability, Fredholm operators, Mellin transforms, Sobolev spaces and singular integral operators. 

In Section~\ref{sec:straightcone} we study the model case in which $\Gamma$ is an infinite straight cone. We provide the spectral resolution of both operators $K^\Gamma \colon L^2(\Gamma) \to L^2(\Gamma)$ and $K^\Gamma \colon \mathcal{E} \to \mathcal{E}$. The first case is quite straightforward, and the relevant analysis appears implicitly in \cite{FJL77}. Each modal operator $K_n^\gamma$ is in this case unitarily equivalent to a Mellin convolution operator, and this leads to the spectral resolution. On the energy space we make use of a special norm related to the single layer potential which has several advantages. For one, this norm decomposes orthogonally with respect to the Fourier modes. Secondly, it allows us to exploit that we can calculate the action of the single layer potential on the generalized eigenfunctions of $K^\Gamma$ on the infinite cone. We remark that for the case of intersecting disks, the same norm was used in \cite{KLY15} to realize the spectral theorem of $K^\Gamma$.

In Section~\ref{sec:perturb} we show, in a certain sense, that $K^\Gamma$ is a compact perturbation of $K^{\Gamma_\alpha}$, where $\Gamma_\alpha$ is a straight cone of the same opening angle. The proof proceeds by writing the difference of kernels as a sum of products of Riesz kernels with smooth, small kernels. The Riesz transforms are however not bounded on $\mathcal{E}$; since $\bm{\nu}_{\bm{r}} \notin H^{1/2}(\Gamma)$ this would contradict \cite[Eq. (6.50)]{MMV16}. Hence, the indicated argument provides compactness on $L^2(\Gamma)$, but for $\mathcal{E}$ we have to work harder. We combine certain algebraic identities (the Plemelj formula) with further estimates and real interpolation in this case.

In Section~\ref{sec:L2spec} we prove Theorem~\ref{thm:winding}. The index formula is proven by showing that the modal operators $K^\gamma_n$ are pseudodifferential operators of Mellin type, for which there is a well-developed symbolic calculus \cite{Els87,Lew91,LP83}. Theorem~\ref{thm:Eessspec} is proven in Section~\ref{sec:Espec}. The method of proof is to first show that on the infinite cone, the singularities of $K^{\Gamma_\alpha}$ at the origin and at infinity contribute equally to the essential spectrum. The theorem is then pieced together from the results in Sections~\ref{sec:straightcone} and \ref{sec:perturb}.

Section~\ref{sec:numerical} contains our numerical results. We first define the indicator function, and establish its properties. Then we give an overview of the numerical method, after which we present numerical results on the polarizability tensor and the spectrum of $K^\Gamma$ for the two surfaces $\Gamma$ illustrated in Figure~\ref{fig:surface}.

Finally, the Appendix contains explicit expressions for the various modal kernels we will consider, in terms of special functions. Our theory and numerical method both depend on these explicit formulas. In particular, we will defer the proof of the technical Lemma~\ref{lem:specestimate} to the Appendix. 

\subsubsection*{Notation} If $A$ and $B$ are two non-negative quantities depending on some variables, we write $A \lesssim B$ to signify that there is a constant $c > 0$ such that $A \leq cB$. If $A \lesssim B$ and $B \lesssim A$, we write $A \simeq B$. 

\section{Background, definitions and notation} \label{sec:prelim}
\subsection{Single and double layer potentials}
For a function $f$ on $\Gamma$, its single layer potential $S^\Gamma f$ is given by
\begin{equation} \label{eq:Sdef}
 S^{\Gamma}f(\bm{r}) = \int_{\Gamma} S^{\Gamma}(\bm{r}, \bm{r'}) f(\bm{r'}) \, d\sigma(\bm{r'}), \quad \bm{r} \in \mathbb{R}^3,
 \end{equation}
where 
$$ S^{\Gamma}(\bm{r}, \bm{r'}) = \frac{1}{2 \pi} \frac{1}{|\bm{r} - \bm{r'}|}.$$
Note that $S^{\Gamma}f(\bm{r})$ is a harmonic function for $\bm{r} \in \mathbb{R}^3 \setminus \Gamma = \inte(\Gamma)\cup \exte(\Gamma)$. If $f$ is any reasonable function or distribution, $S^{\Gamma}f(\bm{r})$ will have traces from both the interior domain $\inte(\Gamma)$ and the exterior domain $\exte(\Gamma)$. Due to the weak singularity of the kernel, these traces coincide with evaluating $S^\Gamma$ directly on $\Gamma$,
$$\Tr_{\inte} S^{\Gamma}f(\bm{r}) = \Tr_{\exte} S^{\Gamma}f(\bm{r}) = S^{\Gamma}f(\bm{r}), \quad \bm{r} \in \Gamma.$$
In the most general case, these traces may be understood in the sense of convergence in nontangential cones for almost every point of $\Gamma$ \cite{Ver84}, or in a distributional sense \cite{PP14}. Most of the time we will consider $S^\Gamma$ as map directly on $\Gamma$, since the well-posedness of the interior and exterior Dirichlet problems ensure that $S^{\Gamma}f$ can be uniquely identified with its values on $\Gamma$, see \cite{PP14}.

The layer potential $K^\Gamma f$, evaluated on the boundary, is given by the principal value integral \eqref{eq:kdef} with kernel defined in \eqref{eq:kdefker}. The adjoint operator $(K^\Gamma)^\ast$ (with respect to $L^2(\Gamma)$) is usually referred to as the boundary double layer potential, or the Neumann--Poincar\'e operator. Note also that the choice of normalizing constant in front of \eqref{eq:kdef} may be different in other works. 

We will consider two different function/distribution spaces for the action of $K^\Gamma$. First, we will consider $K^\Gamma \colon L^2(\Gamma) \to L^2(\Gamma)$ as an operator on $L^2(\Gamma) = L^2(\Gamma, d\sigma)$. $K^\Gamma$ is always bounded on $L^2(\Gamma)$ \cite{Ver84}, but note that $K^\Gamma$ is not a self-adjoint operator in this space. When $\Gamma$ has singularities, so that $K^\Gamma$ is not a compact operator, the spectrum of $K^\Gamma$ on $L^2(\Gamma)$ is typically not real. This is illustrated by our main theorem on the $L^2(\Gamma)$-spectrum, Theorem~\ref{thm:winding}. See also \cite{Mit02} for the 2D-case.

The second space we will consider is the Hilbert space $\mathcal{E}$, obtained by completing $L^2(\Gamma)$ in the positive definite scalar product
$$\langle f, g \rangle_{\mathcal{E}} = \langle S^\Gamma f, g \rangle_{L^2(\Gamma)}.$$
By applying the classical jump formulas for the interior and exterior normal derivatives of $S^\Gamma f$ and Green's formula, we have that
\begin{equation} \label{eq:energynorm}
\langle f, f \rangle_{\mathcal{E}} = \frac{1}{2} \int_{\mathbb{R}^3} | \nabla S^\Gamma f|^2 \, dV.
\end{equation}
A proof, which carries over verbatim to the Lipschitz case (see \cite{PP14}) may be found in \cite[Lemma 1]{KPS07}. Here $dV$ denotes the usual volume element on $\mathbb{R}^3$. Hence, we refer to $\mathcal{E}$ as the \textit{energy space}, as it consists of charges generating single layer potentials $S^{\Gamma}f$ with finite energy in $\mathbb{R}^3$. In light of this physical interpretation, it is not a complete surprise that $K^\Gamma \colon \mathcal{E} \to \mathcal{E}$ is self-adjoint as an operator on $\mathcal{E}$. Indeed, from the Plemelj formula
\begin{equation} \label{eq:plemelj}
S^\Gamma K^\Gamma f(\bm{r}) = (K^\Gamma)^\ast S^\Gamma f(\bm{r}), \quad \bm{r} \in \Gamma
\end{equation}
it follows that
$$\langle K^\Gamma f, g \rangle_{\mathcal{E}} = \langle f, K^\Gamma g \rangle_{\mathcal{E}},$$
see \cite{KLY15,KPS07,PP14}. By considering trace theorems and well-posedness of Dirichlet problems, it can be deduced that the $\mathcal{E}$-norm is equivalent to the Sobolev norm of index $-1/2$ on the boundary (see Section~\ref{sec:sobolev}),
\begin{equation} \label{eq:energynormequiv}
\|f\|_{\mathcal{E}} \simeq \|f\|_{H^{-1/2}(\Gamma)}.
\end{equation}
Again we refer the reader to \cite{KPS07}, or to \cite{PP14} for a treatment explicitly for the Lipschitz case. By interpolating between $L^2(\Gamma)$ and $H^{-1}(\Gamma)$, where $K^\Gamma$ is bounded by the classical theory \cite{Ver84}, it now follows that $K^\Gamma : \mathcal{E} \to \mathcal{E}$ is bounded.

It is known \cite[Theorem 2.5]{CL08} that as an operator on $\mathcal{E}$ the spectrum of $K^\Gamma$ is contained in $(-1,1]$,
\begin{equation}\label{eq:Kspeccontain}
\sigma(K^\Gamma, \mathcal{E}) \subset (-1,1].
\end{equation}
However, without additional hypotheses on $\Gamma$ such as convexity or smoothness, it is not even known if the essential norm of $K^\Gamma \colon L^2(\Gamma) \to L^2(\Gamma)$ is less than $1$,
$$\|K^\Gamma\|_{L^2(\Gamma) \to L^2(\Gamma), \ess} \leq 1?$$
We refer to \cite{Wendland09} for a discussion.

In addition to bounded domains, we will consider one instance of an unbounded surface. Namely, we will consider an infinite straight cone $\Gamma_\alpha$ of opening angle $2\alpha$, $0 < \alpha < \pi$, $\alpha \neq \pi/2$. In general the layer potential theory for domains with non-compact boundary is rather delicate, but in our particular case $\Gamma_\alpha$ is a Lipschitz graph. In any case, since $\Gamma_\alpha$ will be our model for studying domains with axially symmetric conical points, we will make precise calculations from which the boundedness and other basic properties of $K^{\Gamma_\alpha} \colon L^2(\Gamma_\alpha) \to L^2(\Gamma_\alpha)$ and $K^{\Gamma_\alpha} \colon \mathcal{E} \to \mathcal{E}$ will be apparent. All of the properties of $S^\Gamma$, $K^\Gamma$, and $\mathcal{E}$ mentioned in this subsection continue to hold, except that we (the authors) are not entirely sure about the available results on the Dirichlet problem. In particular, we are not sure if \eqref{eq:energynormequiv} holds. However, in view of \eqref{eq:energynorm} and the boundedness of the trace \cite{Mik11}, we at least have that
\begin{equation} \label{eq:energynormdom}
\|f\|_{\mathcal{E}} = \sup_{\|g\|_{\mathcal{E}}=1} \langle S^{\Gamma_\alpha} g, f \rangle_{L^2(\Gamma_\alpha)} \leq \sup_{\|g\|_{\mathcal{E}}=1} \|\Tr S^{\Gamma_\alpha} g\|_{H^{1/2}(\Gamma_\alpha)} \|f\|_{H^{-1/2}(\Gamma_\alpha)} \lesssim \|f\|_{H^{-1/2}(\Gamma_\alpha)}.
\end{equation}
Furthermore, if $\rho \in C^\infty_c(\Gamma_\alpha)$ is a smooth compactly supported function, then
\begin{equation} \label{eq:energynormcutoff}
\|\rho f\|_{\mathcal{E}} \simeq \|\rho f\|_{H^{-1/2}(\Gamma_\alpha)},
\end{equation}
with implicit constants possibly depending on $\rho$.
\subsection{The transmission problem and limit polarizability} \label{sec:introtrans}
In the transmission problem \eqref{eq:introtrans}, with $g \in \mathcal{E}$, the normal derivatives $\partial_{\bm{\nu}}^{\exte} U$ and $\partial_{\bm{\nu}}^{\inte} U$ of exterior and interior approach need to be understood in a distributional sense. Making the ansatz $U = S^\Gamma f$, the jump formulas
\begin{equation} \label{eq:jumpformula}
\partial_{\bm{\nu}}^{\inte} S^{\Gamma}f(\bm{r}) = f(\bm{r}) - K^\Gamma f(\bm{r}), \quad \partial_{\bm{\nu}}^{\exte} S^{\Gamma}f(\bm{r}) = -f(\bm{r}) - K^\Gamma f(\bm{r}), \quad \bm{r} \in \Gamma
\end{equation}
imply that $U$ solves the transmission problem if and only if $f \in \mathcal{E}$ and
$$(K^\Gamma - z)f = \frac{g}{1-\epsilon}, \quad z = -\frac{1+\epsilon}{1 - \epsilon}.$$
In fact, in the case that $\Gamma$ is a bounded surface, any solution to the transmission problem which satisfies $\lim_{\bm{r} \to \infty} U(\bm{r}) = 0$ must be of this form, as mentioned in the introduction. See \cite[Proposition 5.1]{HP13}. 

To define the polarizability tensor of $\Gamma$ we understand
$\inte(\Gamma)$ as an inclusion with permittivity $\epsilon$, embedded
in infinite space of permittivity $1$. For a unit field $\bm{e} \in
\mathbb{R}^3$, we seek a potential $U$ such that
\begin{equation*}
\begin{cases}
\int_{\mathbb{R}^3} |\nabla U - \bm{e}|^2 \, dV < \infty, \\
\Delta U(\bm{r}) = 0, \quad \bm{r} \in \mathbb{R}^3 \setminus \Gamma, \\
\Tr_{\inte} U(\bm{r}) = \Tr_{\exte} U(\bm{r}), \quad \bm{r} \in \Gamma, \\
\partial_{\bm{\nu}}^{\exte} U(\bm{r}) = \epsilon \partial_{\bm{\nu}}^{\inte} U(\bm{r}), \quad \bm{r} \in \Gamma.
\end{cases}
\end{equation*}
The single layer potential ansatz 
$$U(\bm{r}) = \bm{e} \cdot \bm{r} + S^\Gamma \rho(\bm{r})$$
 yields \cite[Section~2]{HP13} the equation
$$(K^\Gamma - z)\rho = g_{\bm{e}}, \quad g_{\bm{e}}(\bm{r}) = (\bm{e} \cdot \bm{\nu_r}), \quad z = -\frac{1+\epsilon}{1 - \epsilon}.$$
If the solution $U$ exists uniquely for all $\bm{e}$, then the polarizability tensor $\omega$, a linear map on $\mathbb{R}^3$, scaled by the volume $|\inte(\Gamma)|$ of $\inte(\Gamma)$, is defined by 
$$\omega(z)\bm{e} = \frac{(\epsilon - 1)}{|\inte(\Gamma)|}\int_{\inte(\Gamma)} \nabla U(\bm{r}) \, d V(\bm{r}).$$
To evaluate the polarizability, we make use of Green's formula
\begin{equation} \label{eq:green}
\int_{\inte(\Gamma)} \langle \nabla U, \nabla V \rangle \, dV = \int_{\Gamma} U \overline{\partial_{\bm{\nu}}^{\inte} V} \, d\sigma,
\end{equation} 
valid for $U$ and $V$ harmonic in $\inte(\Gamma)$ and of sufficient smoothness.

We suppose now that $\Gamma$ is rotationally symmetric around the $\bm{r}_3$-axis. Then $\omega(z)$ is diagonal, and its first two diagonal entries are equal, $\omega_{11}(z) = \omega_{22}(z)$. We refer to $\omega_{jj}(z)$ as polarizability in the $\bm{r}_j$-direction, $j=1,2,3$. Applying \eqref{eq:green} and the jump formulas \eqref{eq:jumpformula} yields that
\begin{equation} \label{eq:polariz}
\omega_{jj}(z) = \bm{e}_j \cdot \omega(z)\bm{e}_j = \int_{\Gamma}\rho(\bm{r}) h_{\bm{e}_j}(\bm{r}) \, d\sigma(\bm{r}) =  \langle (K^\Gamma-z)^{-1}g_{\bm{e}_j}(\bm{r}), h_{\bm{e}_j}(\bm{r}) \rangle_{L^2(\Gamma)},
\end{equation}
where $\bm{e}_j$ denotes the $j$th unit vector in the standard basis of $\mathbb{R}^3$, and
$$h_{\bm{e}}(\bm{r}) = -2\frac{\bm{e}\cdot\bm{r}}{|\inte(\Gamma)|}.$$
In Section~\ref{sec:indicator} we will see that \eqref{eq:polariz} is associated with a spectral measure $\mu_j$,
\begin{equation} \label{eq:polarcauchy}
\omega_{jj}(z) = \int_{-1}^{1} \frac{d\mu_j(s)}{s - z}.
\end{equation}
This statement is a little more subtle than it appears, since
$K^\Gamma$ is not a self-adjoint operator in the $L^2$-pairing. An
appropriate formalism was developed in \cite[Section~5]{HP13}, and we
shall carry out the corresponding details for our situation in
Section~\ref{sec:indicator}. Alternative approaches may be found in
\cite{CM16,Fuchs75,GP83,MCHG15}. Some of these references concern
the effective permittivity tensor rather than polarizability. However,
the polarizability tensor may be viewed as a limiting case of the
effective permittivity tensor.

By the representation of the polarizability as a Cauchy integral \eqref{eq:polarcauchy}, the limit
$$\omega_{jj}^-(x) = \lim_{y \to 0^-} \omega_{jj}(x+iy)$$
exists almost everywhere $x \in \mathbb{R}$, even when $x$ lies in the support of $\mu_{j}$. We refer to $\omega_{jj}^-(x)$ as the limit polarizability. When $x$ lies outside the support of $\mu_j$ the limit polarizability and polarizability coincide. For axially symmetric domains with a conical point, we will find that the spectral measure $\mu_{j}$ typically has an absolutely continuous part, in addition to a possible singular part. The absolutely continuous part is recognized by the fact that almost everywhere $x \in \mathbb{R}$ it holds that
$$\mu_{j}'(x) = -\frac{\Im \mathrm{m} \, \omega_{jj}^-(x)}{\pi}.$$

By \cite[Remark~5.1 and Theorem~5.2]{HP13}, $-\mu_j$ is a positive measure, and $\rho$ and $\mu_{j}$ satisfy that
\begin{equation} \label{eq:rhorule}
\int_{\Gamma} \rho(\bm{r}) \, d\sigma(\bm{r}) = 0,
\end{equation}
\begin{equation} \label{eq:murule}
\int_{-1}^{1} \, d\mu_{j}(s) = -2.
\end{equation}
Let $\mu_{j}^{\textrm{pp}}$ be the pure point part of $\mu_{j}$.
By \cite[Theorem~5.6]{HP13} there are eigenvectors $\phi_i \in \mathcal{E}$ and $\psi_i \in H^{1/2}(\Gamma)$ of $K^{\Gamma}$ and $(K^\Gamma)^*$, respectively, normalized so that
$$\langle \phi_i, \psi_k \rangle_{L^2(\Gamma)} = \delta_{ik},$$
such that
$$\int_\Gamma \, d\mu_{j}^{\textrm{pp}}(s) = \sum_i u_i v_i, \quad u_i = \langle \phi_i, h_{\bm{e}_j} \rangle_{L^2(\Gamma)}, \quad v_i = \langle \psi_i, g_{\bm{e}_j} \rangle_{L^2(\Gamma)}.$$
In particular, if $\mu_{j}$ has no singular continuous part, then \eqref{eq:murule} takes the form
\begin{equation} \label{eq:murule2}
\sum_i u_i v_i + \int_{-1}^{1} \mu'_j(x) \, dx = -2.
\end{equation}
We strongly believe, but will not prove, that $\mu_{j}$ never has a singular continuous part for the surfaces $\Gamma$ we consider. We shall use the rules \eqref{eq:rhorule} and \eqref{eq:murule2} to verify the accuracy of our numerical results.
\subsection{Fredholm operators}
Recall that a bounded operator $S \colon \mathcal{H} \to \mathcal{H}$ on a Hilbert space $\mathcal{H}$ is \textit{Fredholm} if it has closed range and both its kernel and cokernel are finite-dimensional. Equivalently, $S$ is Fredholm if and only if it is invertible modulo compact operators. If $S$ is Fredholm, its index is given by
$$\ind S = \dim \ker S - \codim \ran S.$$
\begin{defin}
If two operators $S \colon \mathcal{H}_1 \to \mathcal{H}_1$ and $T \colon \mathcal{H}_2 \to \mathcal{H}_2$ on Hilbert spaces $\mathcal{H}_1$ and $\mathcal{H}_2$ are unitarily equivalent, we write that $S \simeq_{\ue} T$.    
\end{defin}
\begin{defin} \label{def:fredholm}
We write $S \simeq T$ if there exist Hilbert spaces $\mathcal{H}_1'$ and $\mathcal{H}_2'$ and a compact operator $K : \mathcal{H}_2 \oplus \mathcal{H}_2' \to \mathcal{H}_2 \oplus \mathcal{H}_2'$ such that $S \oplus 0 : \mathcal{H}_1 \oplus \mathcal{H}_1' \to \mathcal{H}_1 \oplus \mathcal{H}_1'$ is similar to $(T \oplus 0) + K : \mathcal{H}_2 \oplus \mathcal{H}_2' \to \mathcal{H}_2 \oplus \mathcal{H}_2'$.
\end{defin}
The point of the above definition is that if $S \simeq T$ and $z \neq 0$, it holds that $S - z$ is Fredholm if and only if $T - z$ is Fredholm and then the Fredholm indices satisfy
$$\ind(S - z) = \ind(T - z).$$
For a (not necessarily self-adjoint) operator $S \colon \mathcal{H}_1 \to \mathcal{H}_1$ we will denote its essential spectrum in the sense of invertibility modulo compacts by $\sigma_{\ess}(S, \mathcal{H}_1)$.
\begin{defin}
The essential spectrum of $S \colon \mathcal{H}_1 \to \mathcal{H}_1$ is the set
$$\sigma_{\ess}(S, \mathcal{H}_1) = \{z \in \mathbb{C} \, : \, K - z \textrm{ is not Fredholm} \}.$$
\end{defin}
We will also make use of another concept of essential spectrum, also invariant under compact perturbations. We say that a bounded sequence $(x_n) \subset \mathcal{H}_1$ is a singular sequence for the operator $S \colon \mathcal{H}_1 \to \mathcal{H}_1$ and spectral point $\lambda$ if $(x_n)$ has no convergent subsequences and $(S-\lambda) x_n \to 0$ in $\mathcal{H}_1$ as $n \to \infty$.
\begin{defin}
The point $\lambda \in \mathbb{C}$ belongs to $\sigma_{\ea}(S, \mathcal{H}_1)$ if and only if there is a singular sequence $(x_n)$ for $S$ and $\lambda$.
\end{defin}
Note that if $S \colon \mathcal{H}_1 \to \mathcal{H}_1$ is a self-adjoint operator, then the two type of essential spectra agree by Weyl's criterion, $\sigma_{\ea}(S, \mathcal{H}_1) = \sigma_{\ess}(S, \mathcal{H}_1)$. Furthermore, in this case $\ind(S - \lambda) = 0$ whenever $\lambda \notin \sigma_{\ess}(S, \mathcal{H}_1)$.
\subsection{Mellin transforms}
For $g \in L^1([0, \infty), \, dt/t)$, let $\mathcal{M}g = \hat{g}$ be its Mellin transform,
$$\mathcal{M}g(\zeta) = \hat{g}(\zeta) = \int_0^\infty t^\zeta g(t) \frac{dt}{t}.$$
The $L^1$-hypothesis on $g$ implies that $\mathcal{M}g(\zeta)$ is well-defined and bounded at least for $\zeta = i \xi \in i \mathbb{R}$.
We will denote Mellin convolution by $\star$,
$$ u \star v(t) = \int_0^\infty u(t/t') v(t') \frac{dt'}{t'}, \quad t > 0.$$
The Mellin transform is the Fourier transform of the multiplicative group of $(0, \infty)$; for sufficiently nice functions $u$ and $v$ and appropriate $\zeta$ it holds that
$$\mathcal{M}(u \star v)(\zeta) = \mathcal{M}u(\zeta) \mathcal{M}v(\zeta).$$
Young's inequality for the Mellin transform says that
\begin{equation} \label{eq:young}
\|u \star v \|_{L^2(dt/t)} \leq \|u\|_{L^1(dt/t)} \|v\|_{L^2(dt/t)}.
\end{equation}
Another way to see this is by noting that $W \colon L^2(dt/t) \to L^2(\mathbb{R})$ is a unitary operator,
\begin{equation} \label{eq:mellinunitary}
Wv(\xi) = \frac{1}{\sqrt{2\pi}} \mathcal{M}v(i\xi),
\end{equation}
 with inverse 
$$W^{-1} \psi(t) = \frac{1}{\sqrt{2\pi}}\int_{-\infty}^{\infty} t^{-i\xi} \psi(\xi) \, d\xi.$$
In particular, Plancherel's formula takes the form
\begin{equation} \label{eq:plancharel}
\frac{1}{2\pi}\int_{-\infty}^{\infty} \left| \int_0^\infty t^{i\xi} v(t) \, \frac{dt}{t} \right|^2 \, d\xi = \int_0^\infty |v(t)|^2 \, \frac{dt}{t}.
\end{equation}
\subsection{Singular integral estimates on Sobolev spaces} \label{sec:sobolev}
Suppose first that $\Gamma$ is a Lipschitz graph
$$\Gamma = \{(x,y, \varphi(x,y)) \, : \, (x,y) \in \mathbb{R}^2\}.$$
The parametrization $(x,y) \to (x,y, \varphi(x,y))$ then induces tangential derivatives $\partial_x$ on $\partial_y$ on $\Gamma$. The (inhomogeneous) Sobolev space $H^1(\Gamma)$ consists of those functions $f$ such that
$$\|f\|_{H^1(\Gamma)}^2 = \|f\|_{L^2(\Gamma)} + \|\partial_x f\|_{L^2(\Gamma)} + \|\partial_y f\|_{L^2(\Gamma)} < \infty.$$
This also allows us to define $H^1(\Gamma)$ in the case that $\Gamma$ is a bounded Lipschitz surface, via its Lipschitz manifold structure. In this setting, we will make use of the fact that $H^1(\Gamma)$ is characterized by single layer potentials.
\begin{lem}[\cite{Ver84}, Theorem 3.3] \label{lem:Siso}
Let $\Gamma$ be a bounded and simply connected Lipschitz surface. Then 
$$S^{\Gamma} \colon L^2(\Gamma) \to H^1(\Gamma)$$
is a continuous isomorphism.
\end{lem}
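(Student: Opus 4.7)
The proof splits naturally into three parts: boundedness of $S^\Gamma$ from $L^2(\Gamma)$ into $H^1(\Gamma)$, injectivity, and surjectivity.

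For boundedness, the $L^2(\Gamma) \to L^2(\Gamma)$ estimate comes immediately from a Schur test applied to the weakly singular kernel $|\bm{r}-\bm{r}'|^{-1}$ on the two-dimensional surface $\Gamma$. To obtain $\|\partial_t S^\Gamma f\|_{L^2(\Gamma)} \lesssim \|f\|_{L^2(\Gamma)}$ for a tangential derivative $\partial_t$, I would pass to local Lipschitz graph coordinates via a finite atlas of $\Gamma$ and differentiate the kernel $|\bm{r}-\bm{r}'|^{-1}$ under the integral sign. The resulting kernel satisfies the standard Calder\'on--Zygmund size and cancellation estimates and is essentially a component of a Riesz transform on the Lipschitz graph. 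Boundedness on $L^2$ of such singular integrals on Lipschitz surfaces is precisely the higher-dimensional analogue of the Coifman--McIntosh--Meyer theorem on the Cauchy integral, and this is the genuinely deep input. Partition-of-unity gluing then provides the global bound.

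For injectivity, suppose $f \in L^2(\Gamma)$ satisfies $S^\Gamma f = 0$ on $\Gamma$. Since $S^\Gamma f$ is harmonic in $\inte(\Gamma)$ with zero interior trace, uniqueness for the interior Dirichlet problem forces $S^\Gamma f \equiv 0$ on $\inte(\Gamma)$. In $\exte(\Gamma)$, $S^\Gamma f$ is harmonic, has zero exterior trace, and decays as $|\bm{r}|^{-1}$ at infinity; the hypothesis that $\Gamma$ is simply connected (so that $\exte(\Gamma)$ is connected) together with exterior Dirichlet uniqueness for decaying solutions yields $S^\Gamma f \equiv 0$ on $\exte(\Gamma)$ as well. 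Applying the jump formula \eqref{eq:jumpformula} then gives $2f = \partial_{\bm{\nu}}^{\inte} S^\Gamma f - \partial_{\bm{\nu}}^{\exte} S^\Gamma f = 0$.

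For surjectivity, given $g \in H^1(\Gamma)$, I would invoke the regularity theory for the Dirichlet problem on bounded Lipschitz domains with data in $H^1(\Gamma)$: there exist harmonic functions $u_{\inte}$ on $\inte(\Gamma)$ and $u_{\exte}$ on $\exte(\Gamma)$ (the latter decaying at infinity) with boundary trace $g$ and with $\partial_{\bm{\nu}}^{\inte} u_{\inte}, \partial_{\bm{\nu}}^{\exte} u_{\exte} \in L^2(\Gamma)$. Setting $f = \tfrac{1}{2}\bigl(\partial_{\bm{\nu}}^{\inte} u_{\inte} - \partial_{\bm{\nu}}^{\exte} u_{\exte}\bigr) \in L^2(\Gamma)$ and $U$ equal to $u_{\inte}$ on $\inte(\Gamma)$ and $u_{\exte}$ on $\exte(\Gamma)$, one verifies via \eqref{eq:jumpformula} that $U$ and $S^\Gamma f$ are harmonic off $\Gamma$ with matching traces and matching normal-derivative jumps. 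Interior and exterior Dirichlet uniqueness then give $U = S^\Gamma f$, so $S^\Gamma f$ has trace $g$, and the open mapping theorem upgrades the algebraic isomorphism to a topological one. The main obstacle throughout is the first step: Calder\'on--Zygmund theory on Lipschitz surfaces is the nontrivial ingredient that makes the whole scheme go.
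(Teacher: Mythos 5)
The paper itself does not prove this: Lemma~\ref{lem:Siso} is quoted from Verchota's Theorem~3.3 in \cite{Ver84}, so the relevant comparison is with Verchota's argument rather than anything in the present text. Your boundedness and injectivity steps are in order (boundedness via the Coifman--McIntosh--Meyer theorem, in its $\mathbb{R}^n$ form Theorem~IX of \cite{CMM82}, controlling the tangential derivative of $S^\Gamma$; injectivity via Dahlberg's $L^2$-Dirichlet uniqueness on Lipschitz domains and the jump relations), apart from a harmless misattribution --- connectedness of $\exte(\Gamma)$ is automatic for a closed surface in $\mathbb{R}^3$ and has nothing to do with simple connectedness of $\Gamma$.

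The surjectivity step is where the genuine gap lies, in two layers. First, invoking ``regularity theory for the Dirichlet problem with $H^1$ data'' is circular with respect to \cite{Ver84}: there, solvability of the regularity problem with $L^2$ control of $\nabla u$ is \emph{derived from} the invertibility of $S^\Gamma \colon L^2(\Gamma) \to H^1(\Gamma)$, not used to prove it. Verchota's actual argument establishes the two-sided a priori estimate $\|f\|_{L^2(\Gamma)} \simeq \|S^\Gamma f\|_{H^1(\Gamma)}$ via a Rellich identity comparing the normal and tangential derivatives of $S^\Gamma f$ on $\Gamma$, and then obtains ontoness by a method-of-continuity argument through smooth approximating domains, exploiting the uniformity of these bounds; your route would be available only if the regularity problem were supplied by an independent source (e.g.\ Jerison--Kenig), and then its logical dependence must be tracked explicitly. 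Second, even granting the regularity input, the step ``interior and exterior Dirichlet uniqueness then give $U = S^\Gamma f$'' does not parse: the trace of $S^\Gamma f$ on $\Gamma$ is precisely the unknown, so you cannot apply Dirichlet uniqueness to the difference. The correct observation is that $U - S^\Gamma f$ has vanishing jump in both trace and normal derivative across $\Gamma$ --- equivalently, add the interior and exterior Green representation formulas so that the double-layer contributions cancel --- whence it extends to an entire harmonic function decaying at infinity and is therefore zero by Liouville.
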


For $0 < s < 1$ we define the Sobolev-Besov space $H^{s}(\Gamma)$ via the Gagliardo-Slobodeckij norm
\begin{equation} \label{eq:gagliardo}
\|u\|_{H^{s}(\Gamma)}^2 = \|u\|_{L^2(\Gamma_\alpha)}^2 + \int_{\Gamma} \int_{\Gamma} \frac{|u(\bm{r}) - u(\bm{r'})|^2}{|\bm{r} - \bm{r'}|^{2+2s}} \, d\sigma(\bm{r}) \, d\sigma(\bm{r'}).
\end{equation}
The spaces $H^{s}(\Gamma)$, $0 < s < 1$, coincide with the real interpolation scale between $L^2(\Gamma)$ and $H^1(\Gamma)$, see for instance \cite{Triebel78}.
For $0 < s \leq 1$ we define the space of distributions $H^{-s}(\Gamma)$ as the dual space of $H^{s}(\Gamma)$ with respect to the scalar product of $L^2(\Gamma)$. Recall that $H^{-1/2}(\Gamma)$ coincides with the energy space $\mathcal{E}$, with equivalent norms.

Our goal in Section~\ref{sec:perturb} is to view the operator $K^{\Gamma}$, where $\Gamma$ has a single axially symmetric conical point, as a perturbation of $K^{\Gamma_\alpha}$, where $\Gamma_\alpha$ is a straight cone. In doing so we will encounter many integral operators with weakly singular kernels. It is well known that such kernels generate compact operators, see for example \cite{Calderon65} and \cite{Torres91}. However, we have been unable to locate a precise statement which covers all of our cases. We therefore sketch a proof of a statement which is far from sharp, but sufficient for our purposes.
\begin{lem} \label{lem:cpctweaksing}
Let $\Gamma$ be a bounded and simply connected Lipschitz surface, and let $H(\bm{r}, \bm{r}')$ be a kernel on $\Gamma \times \Gamma$ satisfying
\begin{equation} \label{eq:czo1}
|H(\bm{r}, \bm{r'})| \lesssim \frac{1}{|\bm{r}-\bm{r'}|},
\end{equation}
\begin{equation} \label{eq:czo2}
|H(\bm{r}, \bm{r'}) - H(\bm{r^\ast}, \bm{r'})| \lesssim \frac{|\bm{r} - \bm{r^\ast}|}{|\bm{r}-\bm{r'}|^2}, \quad |\bm{r}-\bm{r^\ast}| < \frac{1}{2}|\bm{r}-\bm{r'}|,
\end{equation}
and
\begin{equation} \label{eq:czo3}
|H(\bm{r'}, \bm{r}) - H( \bm{r'}, \bm{r^\ast})| \lesssim \frac{|\bm{r} - \bm{r^\ast}|}{|\bm{r}-\bm{r'}|^2}, \quad |\bm{r}-\bm{r^\ast}| < \frac{1}{2}|\bm{r}-\bm{r'}|.
\end{equation}
Then the integral operator
$$Hf(\bm{r}) = \int_{\Gamma} H(\bm{r}, \bm{r}') f(\bm{r'}) \, d\sigma(\bm{r'})$$
defines compact operators $H \colon H^{-1/2}(\Gamma) \to H^{-1/2}(\Gamma)$, $H \colon L^2(\Gamma) \to L^2(\Gamma)$, and $H \colon H^{1/2}(\Gamma) \to H^{1/2}(\Gamma)$.
\end{lem}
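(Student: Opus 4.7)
My plan is to reduce the three compactness statements to one $L^2$-compactness result together with one smoothing estimate, and then obtain the remaining cases via Rellich's theorem and duality. Specifically: (i) prove $L^2$-compactness of $H$ directly by truncation and Schur's test, using only \eqref{eq:czo1}; (ii) prove that $H\colon L^2(\Gamma)\to H^s(\Gamma)$ is bounded for some $s>1/2$, also using \eqref{eq:czo2}; (iii) deduce compactness on $H^{1/2}(\Gamma)$ by composition through the compact Rellich embedding $H^s(\Gamma)\hookrightarrow H^{1/2}(\Gamma)$; (iv) deduce compactness on $H^{-1/2}(\Gamma)$ by noting that the transposed kernel $H(\bm{r}',\bm{r})$ satisfies the same hypotheses (by the symmetry of \eqref{eq:czo2}--\eqref{eq:czo3}), so the adjoint $H^*$ is compact on $H^{1/2}(\Gamma)$, and then dualizing.

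For step (i), choose a smooth cutoff $\chi_\delta\colon[0,\infty)\to[0,1]$ vanishing on $[0,\delta]$ and equal to $1$ on $[2\delta,\infty)$, and let $H_\delta$ be the operator with kernel $H(\bm{r},\bm{r}')\chi_\delta(|\bm{r}-\bm{r}'|)$. Since this kernel is bounded and supported on the bounded set $\Gamma\times\Gamma$, $H_\delta$ is Hilbert--Schmidt, hence compact on $L^2(\Gamma)$. By \eqref{eq:czo1},
\[
\sup_{\bm{r}\in\Gamma}\int_{|\bm{r}-\bm{r}'|<2\delta}\frac{d\sigma(\bm{r}')}{|\bm{r}-\bm{r}'|}\lesssim\delta,
\]
and the same bound holds with the roles of $\bm{r}$ and $\bm{r}'$ interchanged; Schur's test then yields $\|H-H_\delta\|_{L^2\to L^2}\lesssim\delta$, so $H$ is a norm-limit of compact operators and hence compact on $L^2(\Gamma)$.

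For step (ii), use the Gagliardo--Slobodeckij norm \eqref{eq:gagliardo}: it suffices to bound the double integral of $|Hf(\bm{r})-Hf(\bm{r}^*)|^2/|\bm{r}-\bm{r}^*|^{2+2s}$ by $\|f\|_{L^2(\Gamma)}^2$ for some $s>1/2$. Setting $h=|\bm{r}-\bm{r}^*|$, split the $\bm{r}'$-integration in $Hf(\bm{r})-Hf(\bm{r}^*)$ into a near region $|\bm{r}-\bm{r}'|<2h$, on which $H(\bm{r},\bm{r}')$ and $H(\bm{r}^*,\bm{r}')$ are each bounded by $1/|\bm{r}-\bm{r}'|$ via \eqref{eq:czo1}, and a far region $|\bm{r}-\bm{r}'|\ge 2h$, on which the difference is bounded by $h/|\bm{r}-\bm{r}'|^2$ via \eqref{eq:czo2}. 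Weighted Cauchy--Schwarz on each piece followed by Fubini yields convergent bounds for some $s\in(1/2,1)$. Given (i) and (ii), steps (iii) and (iv) are immediate: the composition $H^{1/2}(\Gamma)\hookrightarrow L^2(\Gamma)\xrightarrow{H} H^s(\Gamma)\hookrightarrow H^{1/2}(\Gamma)$, with the last inclusion compact, gives $H^{1/2}$-compactness, and the same argument applied to $H^*$ combined with duality yields $H^{-1/2}$-compactness.

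\textbf{Main obstacle.} The main difficulty lies in step (ii). On a two-dimensional surface, an order $-1$ kernel is at the borderline for gaining a full derivative, so extracting even $s>1/2$ requires care: a naive Cauchy--Schwarz on the near region produces logarithmic divergences, and one must balance the Gagliardo weight $|\bm{r}-\bm{r}^*|^{-2-2s}$ against the pointwise kernel bounds, e.g.\ via weighted Cauchy--Schwarz, to avoid the critical singularity.
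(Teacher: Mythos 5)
Your proposal is correct and is essentially the paper's proof: the substance in both is the smoothing bound $H\colon L^2(\Gamma)\to H^{s}(\Gamma)$ obtained from \eqref{eq:czo1}--\eqref{eq:czo2} and the Gagliardo--Slobodeckij norm, followed by Rellich embeddings and $L^2$-duality, using the $H\leftrightarrow H^\ast$ symmetry of the hypotheses to move between $H^{\pm1/2}(\Gamma)$. Your near/far split of the $\bm{r}'$-integral is exactly the content of the paper's interpolated pointwise estimate $|H(\bm{r},\bm{r}')-H(\bm{r}^\ast,\bm{r}')|\lesssim |\bm{r}-\bm{r}^\ast|^{3/4}\bigl(|\bm{r}-\bm{r}'|^{-7/4}+|\bm{r}^\ast-\bm{r}'|^{-7/4}\bigr)$, just carried out inside the norm rather than at the kernel level. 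Two small simplifications you could make: step~(i) is redundant, since $L^2$-compactness follows at once from step~(ii) and the compact embedding $H^s(\Gamma)\hookrightarrow L^2(\Gamma)$; and $s=1/2$ already suffices in step~(iii), because the first inclusion $H^{1/2}(\Gamma)\hookrightarrow L^2(\Gamma)$ in your chain is itself compact by Rellich, so there is no need to push past the $s=1/2$ borderline (which is the value the paper works at).
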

\begin{proof}
For $\beta < 2$ it is easy to show that the operator
$$G_\beta f(\bm{r}) = \int_{\Gamma} \frac{1}{|\bm{r}-\bm{r'}|^\beta} f(\bm{r'}) \, d\sigma(\bm{r'})$$
is bounded on $L^2(\Gamma)$, for instance by interpolating between the spaces $L^1(\Gamma)$ and $L^\infty(\Gamma)$, on which the boundedness property is evident. Next, inequalities \eqref{eq:czo1} and \eqref{eq:czo2} imply that
$$|H(\bm{r}, \bm{r'}) - H(\bm{r^\ast}, \bm{r'})| \lesssim \frac{|\bm{r} - \bm{r^\ast}|^{3/4}}{|\bm{r}-\bm{r'}|^{7/4}} + \frac{|\bm{r} - \bm{r^\ast}|^{3/4}}{|\bm{r^\ast}-\bm{r'}|^{7/4}}, \quad \bm{r}, \bm{r^\ast}, \bm{r'} \in \Gamma.$$
Hence, for $f \in L^2(\Gamma)$
$$|Hf(\bm{r}) - Hf(\bm{r^\ast})| \lesssim |r-r^\ast|^{3/4} ( G_{7/4}|f|(\bm{r}) + G_{7/4}|f|(\bm{r^\ast})).$$
From this estimate we obtain that
$$\|Hf\|_{H^{1/2}(\Gamma)} \lesssim 2\int_{\Gamma} \left[G_{7/4}|f|(\bm{r})\right]^2 \int_{\Gamma} \frac{1}{|\bm{r} - \bm{r^\ast}|^{3/2}} \, d\sigma(\bm{r^\ast}) \, d\sigma(\bm{r}) \lesssim \int_{\Gamma} \left[G_{7/4}|f|(\bm{r})\right]^2 \, d\sigma(\bm{r}) \lesssim \|f\|_{L^2(\Gamma)}.$$
Hence $H \colon L^2(\Gamma) \to H^{1/2}(\Gamma)$ is bounded. In particular $H \colon L^2(\Gamma) \to L^2(\Gamma)$ is compact, since $H^{1/2}(\Gamma)$ is compactly contained in $L^2(\Gamma)$. By \eqref{eq:czo3} the same argument yields that the $L^2(\Gamma)$-adjoint $H^\ast$ also maps $L^2(\Gamma)$ into $H^{1/2}(\Gamma)$. Equivalently, by duality, $H$ maps $H^{-1/2}(\Gamma)$ into $L^2(\Gamma)$ boundedly. Since $L^2(\Gamma)$ is compactly contained in $H^{-1/2}(\Gamma)$ it follows that $H \colon H^{-1/2}(\Gamma) \to H^{-1/2}(\Gamma)$ is compact. By duality, this is equivalent to saying that $H^{\ast} \colon H^{1/2}(\Gamma) \to H^{1/2}(\Gamma)$ is compact. Since the statement of the lemma is symmetric with respect to $H$ and $H^\ast$, it follows that also $H \colon H^{1/2}(\Gamma) \to H^{1/2}(\Gamma)$ is compact.
\end{proof}
\begin{rmk}
If $\Gamma$ is a $C^2$-surface, then $H = K^\Gamma$ satisfies the hypotheses of the lemma. Hence $K^\Gamma$ is a compact operator in this case (as is well known). Another example we have in mind is given by the kernel
$$H_j(\bm{r}, \bm{r'}) = \frac{\bm{r}_j - \bm{r'}_j}{|\bm{r}-\bm{r'}|^2},$$
where $\bm{r}_j$ denotes the $j$th coordinate of $\bm{r}$, $j=1,2,3$.
\end{rmk}
We will also make use of the fact that the Riesz transforms are bounded when $\Gamma$ is Lipschitz, which was first proven in \cite[Theorem IX]{CMM82}. See also \cite{DS85}.
        \begin{lem} \label{lem:riesz}
                Let $\Gamma$ be a bounded Lipschitz surface, or a Lipschitz graph.
                For $j=1,2,3$, let $R_j^\Gamma$ be the corresponding Riesz transform on $\Gamma$,
                $$R_j^\Gamma f(\bm{r}) = \int_{\Gamma} \frac{\bm{r}_j - \bm{r'}_j}{|\bm{r} - \bm{r'}|^3} f(\bm{r'})\, d\sigma(\bm{r'}).$$
                Then $R_j^\Gamma \colon L^2(\Gamma) \to L^2(\Gamma)$ is bounded. In fact, if 
                $$MR_j^\Gamma f(\bm{r}) = \sup_{\varepsilon > 0} \left| \int_{|\bm{r}-\bm{r'}| > \varepsilon} \frac{\bm{r}_j - \bm{r'}_j}{|\bm{r} - \bm{r'}|^3} f(\bm{r'})\, d\sigma(\bm{r'})\right|,$$
                then
                $$\|MR_j^\Gamma f(\bm{r})\|_{L^2(\Gamma)} \lesssim \|f\|_{L^2(\Gamma)}.$$
        \end{lem}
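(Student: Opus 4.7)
The plan is to reduce the statement, via a partition of unity, to the case of a Lipschitz graph and then invoke the machinery of Calder\'on-Zygmund theory on spaces of homogeneous type; the $L^2$-bound is the three-dimensional analogue of the CMM theorem for the Cauchy integral on Lipschitz curves, and the maximal bound then follows from Cotlar's inequality.

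First I would localize. In the bounded case, cover $\Gamma$ by finitely many coordinate patches $U_k$ on each of which, after a rigid motion, $\Gamma \cap U_k$ is the graph of a Lipschitz function $\varphi_k \colon \Omega_k \subset \mathbb{R}^2 \to \mathbb{R}$. Choosing a subordinate partition of unity $\{\chi_k\}$ and decomposing
$$R_j^\Gamma f = \sum_{k,l} \chi_k \, R_j^\Gamma(\chi_l f),$$
the off-diagonal terms, i.e. those with $\mathrm{dist}(\mathrm{supp}\,\chi_k, \mathrm{supp}\,\chi_l) > 0$, have smooth bounded kernels on the relevant product set and are therefore bounded and in fact compact on $L^2(\Gamma)$. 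It thus suffices to bound each diagonal piece $\chi_k R_j^\Gamma(\chi_k \cdot)$, which (after extending by zero) reduces to $R_j^\Gamma$ acting on compactly supported $L^2$-functions on the Lipschitz graph $\Gamma_k = \{(x,y,\varphi_k(x,y))\}$. The graph case in the lemma's hypothesis is of course already of this form.

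Second, on such a Lipschitz graph with surface measure $d\sigma$, the triple $(\Gamma, |\cdot|, d\sigma)$ is a space of homogeneous type, Ahlfors-regular of dimension $2$, and
$$K_j(\bm{r},\bm{r}') = \frac{\bm{r}_j - \bm{r}'_j}{|\bm{r}-\bm{r}'|^3}$$
is a standard antisymmetric Calder\'on-Zygmund kernel on it, satisfying the size estimate $|K_j(\bm{r},\bm{r}')| \lesssim |\bm{r}-\bm{r}'|^{-2}$ together with the usual Lipschitz-type smoothness in each variable. The T(1) theorem of David-Journ\'e, or equivalently the T(b) theorem with an appropriate accretive function $b$ built from $\nabla\varphi_k$, reduces the $L^2$-boundedness of $R_j^\Gamma$ on $\Gamma_k$ to verifying that $R_j^\Gamma 1 \in \mathrm{BMO}(\Gamma_k)$ uniformly over the truncations. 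This BMO estimate is the heart of the matter and the main obstacle: it is the genuinely deep content of the theorem, the three-dimensional analogue of the commutator estimates used to bound the Cauchy integral in CMM82, and is provided in the present surface setting by DS85.

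Finally, once $L^2$-boundedness of $R_j^\Gamma$ is in hand, the maximal truncation bound is routine Calder\'on-Zygmund machinery: Cotlar's inequality on spaces of homogeneous type gives the pointwise bound
$$MR_j^\Gamma f(\bm{r}) \lesssim M_\Gamma(R_j^\Gamma f)(\bm{r}) + M_\Gamma f(\bm{r}),$$
where $M_\Gamma$ denotes the Hardy-Littlewood maximal operator on $(\Gamma, d\sigma)$. Both terms on the right are controlled in $L^2(\Gamma)$ -- the first by composing the $L^2$-boundedness of $R_j^\Gamma$ with the $L^2$-boundedness of $M_\Gamma$, the second directly -- yielding the stated maximal inequality.
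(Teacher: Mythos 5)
The paper offers no proof of Lemma~\ref{lem:riesz} at all: it is stated as a quotation of known deep results, with \cite[Theorem IX]{CMM82} cited for the $L^2$-bound and \cite{DS85} as a companion reference. Your sketch is therefore more detailed than what appears in the paper, but it ultimately defers the hard step to the same literature, so the two are consistent in spirit.

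Your outline is a legitimate route and the references you invoke are the right ones. Two small remarks on precision. First, when you invoke $T(1)$/$T(b)$, the full hypotheses require not only $R_j^\Gamma 1 \in \mathrm{BMO}$ but also $(R_j^\Gamma)^*1 \in \mathrm{BMO}$ and the weak boundedness property; here all three are accessible because the kernel is antisymmetric (so $(R_j^\Gamma)^* = -R_j^\Gamma$ up to sign, and WBP follows from antisymmetry), but a complete proof should say so rather than reduce to a single BMO estimate. Second, the original argument of \cite{CMM82} does not go through $T(b)$ at all; it bounds the Cauchy integral on Lipschitz curves by iterating commutator estimates (the Calder\'on commutators) and then obtains the higher-dimensional Riesz transforms by the method of rotations, whereas \cite{DS85} is closer in flavor to the $T(b)$-type approach you describe. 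Either path works, and your Cotlar-inequality step for the maximal truncation is standard once the $L^2$-bound on the singular operator is in hand. In short: your proposal is a correct high-level roadmap, with the genuinely deep content (the $L^2$-bound on a Lipschitz graph) black-boxed to the same sources the paper itself relies on.
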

\section{Fourier analysis on a straight cone} \label{sec:straightcone}
\subsection{Spectral resolution on $L^2$} \label{sec:infcone}
Let $\Gamma_\alpha$ be the infinite straight cone with opening $2\alpha$, $0 < \alpha < \pi$, $\alpha \neq \pi/2$, parametrized by
$$\bm{r}(t, \theta) = (\sin(\alpha) t \cos \theta, \sin(\alpha) t \sin \theta, \cos(\alpha) t ), \qquad \theta \in [0, 2\pi], \; t > 0. $$
It is generated by revolution of the straight line $\gamma_\alpha(t) = (\sin(\alpha) t, \cos(\alpha) t)$, $t > 0$.
The surface element on $\Gamma_\alpha$ is given by
$$d \sigma(t, \theta) = \sin(\alpha) t \, dt \, d\theta,$$
and the outward normal $\bm{\nu}_{\bm{r}}$ by
$$\bm{\nu}_{\bm{r}} = (\cos \alpha \cos \theta, \cos \alpha \sin \theta, -\sin \alpha).$$
Note that the kernel  $K^{\Gamma_\alpha}(t, \theta, t', \theta') := K^{\Gamma_\alpha}(\bm{r}(t, \theta), \bm{r}(t', \theta'))$, defined in \eqref{eq:kdef}, only depends on $t$, $t'$, and $\theta - \theta'$,
\begin{equation} \label{eq:inv}
K^{\Gamma_\alpha}(t, \theta, t', \theta') = K^{\Gamma_\alpha}(t, \theta - \theta', t', 0).
\end{equation}
For a function $f : \Gamma_\alpha \to \mathbb{C}$, $f(t,\theta):=
f(\bm{r}(t,\theta))$, let $f_n$ be its $n$th Fourier coefficient,
$$f_n(t) = \frac{1}{\sqrt{2 \pi}} \int_0^{2\pi} e^{-in\theta} f(t, \theta) \, d\theta, \qquad t > 0,$$
so that
$$f(t, \theta) = \frac{1}{\sqrt{2\pi}} \sum_{n=-\infty}^\infty f_n(t) e^{in\theta}.$$
Then 
$$\int_{\Gamma_\alpha} |f(\bm{r})|^2 \, d\sigma(\bm{r}) = \sum_{n=-\infty}^\infty \int_0^\infty |f_n(t)|^2 \sin(\alpha) t \, dt,$$
reflecting the fact that $L^2(\Gamma_\alpha, d\sigma)$ decomposes into the direct sum
\begin{equation} \label{eq:L2decomprev}
L^2(\Gamma_\alpha, d\sigma) \simeq \bigoplus_{n=-\infty}^\infty L^2([0,\infty), \sin(\alpha) t \, dt).
\end{equation}
For $n \in \mathbb{Z}$, let 
$$K_n^{\alpha} (t, t') = \int_0^{2\pi} e^{-in\theta} K^{\Gamma_\alpha}(t, \theta, t', 0) \, d\theta, \qquad t, t' > 0.$$
Then property \eqref{eq:inv} implies that
\begin{equation}\label{eq:Kfourier}
(K^{\Gamma_\alpha}f)_n(t) = \int_0^\infty K^{\alpha}_n(t, t') f_n(t') \sin(\alpha) t' \, dt', \qquad f \in L^2(\Gamma_\alpha).
\end{equation}
If by $K_n^{\alpha}$ we also denote the associated integral operator $K_n^{\alpha} : L^2(\sin(\alpha)t \, dt) \to L^2(\sin(\alpha)t \, dt)$,
\begin{equation}\label{eq:infconeKn}
K_n^{\alpha} f(t) = \int_0^\infty K_n^{\alpha}(t, t') f(t') \sin(\alpha) t' \, dt',
\end{equation}
then we have observed that
$$K^{\Gamma_\alpha} \simeq_{\ue} \bigoplus_{n=-\infty}^\infty K_n^{\alpha}.$$

Since the kernel $K^{\Gamma_\alpha}$ is homogeneous of degree $-2$, the same is true of $K_n^\alpha$,
\begin{equation} \label{eq:homog}
K_n^\alpha (\lambda t, \lambda t') = \frac{1}{\lambda^2} K_n^\alpha(t, t'), \qquad \lambda > 0.
\end{equation}
Consider the unitary map $V : L^2(\sin(\alpha)t \, dt) \to L^2(dt/t)$,
$$ Vf(t) = \sqrt{\sin(\alpha)} tf(t).$$
Observe that
$$V\left(K_n^\alpha f\right)(t) = \sin(\alpha)\int_0^\infty \frac{t}{t'}K_n^\alpha\left(\frac{t}{t'}, 1\right) Vf(t') \, \frac{dt'}{t'} = \left(h_n^\alpha \star Vf \right)(t),$$
where $h_n^\alpha(t) = \sin(\alpha) t K_n^\alpha(t, 1)$ and $\star$, as before, denotes Mellin convolution. 

In other words, $K_n^\alpha$ is unitarily equivalent to the Mellin convolution operator on $L^2(dt/t)$ with kernel $h_n^\alpha(t)$. This allows us to determine the spectral resolution of $K_n^\alpha$ and $K^{\Gamma_\alpha}$. Before proceeding we will establish the following result on the properties of $K_n^\alpha$. Its proof is rather lengthy and depends on an explicit formula for $K_n^\alpha(t,t')$ in terms of special functions. As to not break the flow of this section we defer the proof to the Appendix.
\begin{lem} \label{lem:specestimate}
For all $t > 0$ it holds that $K^\alpha_n(1,t) = tK^\alpha_n(t,1)$.
There is a constant $C > 0$, depending only on $\alpha$, such that 
\begin{equation}\label{eq:Kdecayinf}
|K_0^\alpha(t,1)| \leq \frac{C}{t^3}, \; t \geq \frac{3}{2}, \quad |K_{n}^\alpha(t,1)| \leq \frac{C}{t^{|n|+2}}, \; t \geq \frac{3}{2}, \; n\neq 0,
\end{equation}
and such that 
\begin{equation}\label{eq:Kdecayzero}
|K_0^\alpha(t,1)| \leq C, \; t \leq \frac{1}{2}, \quad |K_{n}^\alpha(t,1)| \leq Ct^{|n|-1}, \; t \leq \frac{1}{2}, \; n\neq 0.
\end{equation}
At $t=1$, $K_n^\alpha(t,1)$ has a logarithmic singularity: there is an analytic function $G(t)$ on $[1/2, 3/2]$ such that $K_n^\alpha(t,1) - \log |1-t| G(t)$ is analytic on $[1/2, 3/2]$.

Furthermore, for every $\beta$, $-1 < \beta < 2$, the functions $b_n(t) = t^\beta K_n^\alpha(t, 1)$ satisfy
\begin{equation} \label{eq:hndecay}
\|b_n\|_{L^1(dt/t)} \lesssim \frac{1}{n}.
\end{equation}
\end{lem}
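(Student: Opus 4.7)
The plan is to reduce everything to an explicit Fourier integral in the single angular variable $\phi$. Starting from the parametrization of $\Gamma_\alpha$, a direct computation of $\langle \bm{r}-\bm{r}',\bm{\nu}_{\bm{r}}\rangle$ and $|\bm{r}-\bm{r}'|^2$ yields
\begin{equation*}
K^{\Gamma_\alpha}(t,\phi,t',0) = \frac{\sin\alpha\cos\alpha\,t'(1-\cos\phi)}{\bigl[t^2+t'^2-2tt'\bigl(1-\sin^2\alpha(1-\cos\phi)\bigr)\bigr]^{3/2}},
\end{equation*}
so $K_n^\alpha(t,1)$ is the $n$th Fourier coefficient in $\phi$ of an explicit $2\pi$-periodic function. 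The quotient $K^{\Gamma_\alpha}(t,\phi,t',0)/t'$ is manifestly symmetric in $(t,t')$, which immediately gives $K_n^\alpha(1,t) = tK_n^\alpha(t,1)$. Combining this with the degree $-2$ homogeneity \eqref{eq:homog} yields the key identity $K_n^\alpha(t,1) = t^{-3}K_n^\alpha(1/t,1)$, which reduces the small-$t$ bound \eqref{eq:Kdecayzero} to the large-$t$ bound \eqref{eq:Kdecayinf}.

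For the large-$t$ decay I would apply the Gegenbauer generating function
\begin{equation*}
\bigl[1-2(\mu/t)+1/t^2\bigr]^{-3/2} = \sum_{k=0}^\infty C_k^{3/2}(\mu)\,t^{-k},\qquad \mu=\cos^2\alpha+\sin^2\alpha\cos\phi.
\end{equation*}
The function $(1-\cos\phi)\,C_k^{3/2}(\mu(\phi))$ is a trigonometric polynomial in $\phi$ of degree $k+1$, so its $n$th Fourier coefficient vanishes whenever $k\leq|n|-2$; the first surviving contribution then has magnitude $t^{-k-3}$ with $k=\max(|n|-1,0)$, giving exactly the exponents $3$ (for $n=0,\pm 1$) and $|n|+2$ (for $|n|\geq 2$) claimed in \eqref{eq:Kdecayinf}. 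The logarithmic singularity at $t=1$ is then extracted by localizing near the unique singular point $(t,\phi)=(1,0)$: writing $t=1+s$, the leading-order model integrand $(1-\cos\phi)/[s^2+2\sin^2\alpha(1-\cos\phi)]^{3/2}$ produces, via the substitution $\phi=su/\sin\alpha$, exactly one $\log|s|$ upon integration against $e^{-in\phi}$, while all higher-order terms give functions jointly analytic in $(s,\phi)$ and hence contribute an analytic function of $t$ on $[1/2,3/2]$.

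The main obstacle is the $L^1$ bound \eqref{eq:hndecay}. I would split $\int_0^\infty |t^\beta K_n^\alpha(t,1)|\,dt/t$ into the three pieces $(0,1/2]$, $[1/2,3/2]$, $[3/2,\infty)$. The two tails are controlled directly by \eqref{eq:Kdecayinf}--\eqref{eq:Kdecayzero}: in the range $-1<\beta<2$ they contribute terms of size $1/(|n|+2-\beta)$ and $1/(|n|+\beta-1)$, both of order $1/n$. On the central interval the logarithmic singularity is integrable but only at rate $O(1)$, so the required $1/n$ improvement must come from oscillation in $\phi$. I would integrate by parts once with respect to $\phi$ in the defining Fourier integral, producing a factor $1/n$ at the cost of differentiating the integrand. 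A careful accounting, using that the only singularity of the differentiated integrand still lies at $(t,\phi)=(1,0)$ and remains integrable after averaging in $t$ over $[1/2,3/2]$, yields the claim. I expect this to be where the real technical difficulty sits; in the Appendix it is presumably handled more cleanly by working with the explicit representation of $K_n^\alpha(t,1)$ in terms of associated Legendre functions, whose classical large-order asymptotics provide uniform control as $n\to\infty$.
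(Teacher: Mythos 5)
Your approach is genuinely different from the paper's: you work directly from the Fourier integral in $\phi$ of the explicit kernel and propose Gegenbauer expansions for the tails plus an integration by parts in $\phi$ for the $L^1$ bound, whereas the paper goes through the closed form for $K_n^\alpha(t,1)$ in terms of associated Legendre functions $\mathfrak{Q}_{n\pm 1/2}(\chi)$ (equation (A.2)), expands $\mathfrak{Q}_{n-1/2}(\chi)$ as a hypergeometric series $\chi^{-n-1/2}\sum_k q_{n,k}\chi^{-2k}$, and estimates the coefficients $q_{n,k}$ and $b_{n,k}$ via Stirling's formula and a bespoke monotonicity analysis of $H(x,y)=(x/2+y)^{x+2y}/((x+y)^{x+y}y^y)$. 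There is no invocation of ``classical large-order Legendre asymptotics''; the key bounds are derived ad hoc from the series. Your parity identity and the homogeneity trick $K_n^\alpha(t,1)=t^{-3}K_n^\alpha(1/t,1)$, and your local analysis of the logarithmic singularity, are sound and in the right spirit.

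There is a genuine gap in your treatment of \eqref{eq:hndecay}. After one integration by parts in $\phi$ you face $\tfrac{1}{n}\int_{1/2}^{3/2}\left|\int_0^{2\pi} e^{-in\phi}\,\partial_\phi f(t,\phi)\,d\phi\right|\,dt$. The absolute-value bound $\int|\partial_\phi f(t,\cdot)|\,d\phi \simeq 1/|t-1|$ is \emph{not} integrable over $[1/2,3/2]$, so ``remains integrable after averaging in $t$'' is false as stated; and the natural patch (trivial bound for $|t-1|<1/n$, IBP for $|t-1|>1/n$) only yields $\log n/n$, not $1/n$. To close the argument you must exploit the $e^{-in\phi}$ oscillation more carefully than the $L^1$ bound allows — e.g., that the mass of $\partial_\phi f(t,\cdot)$ is concentrated at scale $\phi\sim|t-1|$ so the Fourier coefficient decays once $n|t-1|\gg 1$ — and this is precisely where the paper spends most of its technical effort (the dyadic-in-$k$ summation and the $\int_0^1 s^k(1-s)^{\pm 1/2}ds \simeq k^{-3/2\mp 1}$ estimates). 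A secondary, more minor issue: your Gegenbauer argument cleanly identifies the vanishing of coefficients for $k\le |n|-2$, but $C_k^{3/2}(1)\sim k^2$, so a uniform-in-$n$ constant in \eqref{eq:Kdecayinf} (as the statement requires) does not follow without additional cancellation; this does not hurt the $L^1$ estimate because of the $(3/2)^{-|n|}$ factor, but it does not give the lemma as stated.
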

 For every $n \in \mathbb{Z}$, let $\Pi_n$ be the set
$$\Pi_n = \{\mathcal{M}h_n^\alpha(i\xi) \, : \, -\infty \leq \xi \leq \infty \} = \{ \sin(\alpha)\mathcal{M}[K_n^\alpha(\cdot, 1)](1+i\xi) \, : \, -\infty \leq \xi \leq \infty\}.$$
Since $h_n^\alpha \in L^1(dt/t)$ we have by the Riemann-Lebesgue lemma that $\mathcal{M}h_n^\alpha(i\xi)$ is a continuous function vanishing at infinity, so that $\Pi_n$ is actually a closed curve in $\mathbb{C}$. 

\begin{thm} \label{thm:infconespec} For each $n$, let $M_n \colon L^2(\mathbb{R}) \to L^2(\mathbb{R})$ be the multiplication operator
$$M_n \psi(\xi) = \sin(\alpha)\mathcal{M}[K_n^\alpha(\cdot, 1)](1+i\xi) \psi(\xi).$$ 
Then 
\begin{equation} \label{eq:L2specres}
K^{\Gamma_\alpha} \simeq_{\ue} \bigoplus_{n=-\infty}^\infty M_n.
\end{equation}
In particular, the spectrum of $K^{\Gamma_\alpha} \colon L^2(\Gamma_\alpha, d\sigma) \to  L^2(\Gamma_\alpha, d\sigma)$ is given by the union of the closed curve $\Pi_n$,
$$\sigma\left( K^{\Gamma_\alpha}, L^2\right) = \bigcup_{n=-\infty}^\infty \Pi_n.$$
The curves $\Pi_n$ tend to $0$ as $1/|n|$ when $|n| \to \infty$,
$$ \max \{ |z| \, : \, z \in \Pi_n\} \lesssim \frac{1}{|n|+1}.$$
\end{thm}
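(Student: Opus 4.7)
My plan is to diagonalize each modal operator $K_n^\alpha$ completely via the Mellin transform, reducing the theorem to the spectral theory of multiplication operators on $L^2(\mathbb{R})$. The groundwork laid earlier in the section already gives $K^{\Gamma_\alpha} \simeq_{\ue} \bigoplus_n K_n^\alpha$ and shows that $K_n^\alpha$ is unitarily equivalent, via $V$, to Mellin convolution by $h_n^\alpha(t) = \sin(\alpha)\, t\, K_n^\alpha(t,1)$ on $L^2(dt/t)$. Composing with the unitary Mellin transform $W$ from \eqref{eq:mellinunitary}, which intertwines Mellin convolution with pointwise multiplication by the Mellin symbol, each modal operator becomes multiplication by
\[
\mathcal{M}h_n^\alpha(i\xi) = \sin(\alpha)\,\mathcal{M}\bigl[K_n^\alpha(\cdot,1)\bigr](1+i\xi),
\]
which is precisely $M_n$. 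This will yield the unitary equivalence \eqref{eq:L2specres} directly.

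To legitimize the last step I first need $h_n^\alpha \in L^1(dt/t)$, which is supplied by Lemma~\ref{lem:specestimate} with $\beta = 1$: it gives $\|h_n^\alpha\|_{L^1(dt/t)} \lesssim 1/|n|$ for $n \neq 0$, while the pointwise decay and log-singularity bounds in the same lemma show that $h_0^\alpha$ is also integrable against $dt/t$. The Riemann--Lebesgue lemma then makes $\xi \mapsto \mathcal{M}h_n^\alpha(i\xi)$ continuous on $\mathbb{R}$ and vanishing at $\pm\infty$, so $\Pi_n$ is a closed curve in $\mathbb{C}$ (the continuous image of the one-point compactification of $\mathbb{R}$, with the point at infinity sent to $0$).

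Because the spectrum of multiplication by a bounded continuous symbol on $L^2(\mathbb{R})$ equals the closure of the range of that symbol, I get $\sigma(M_n) = \Pi_n$ for each $n$. For the direct sum the standard identity
\[
\sigma\Bigl(\bigoplus_n M_n\Bigr) = \overline{\bigcup_n \sigma(M_n)} = \overline{\bigcup_n \Pi_n}
\]
leaves only the claim that the closure is redundant. This follows from Young's inequality \eqref{eq:young}, which gives $\|M_n\| \leq \|h_n^\alpha\|_{L^1(dt/t)} \lesssim 1/|n|$, combined with the fact that $0 \in \Pi_n$ for every $n$: any accumulation point of $\bigcup_n \Pi_n$ either lies in finitely many of the $\Pi_n$ (and hence already in some individual $\Pi_n$) or is a limit of points $z_{n_k} \in \Pi_{n_k}$ with $|n_k| \to \infty$, in which case $z_{n_k} \to 0 \in \Pi_0$. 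The same norm bound yields the final quantitative conclusion $\max\{|z| : z \in \Pi_n\} \lesssim 1/(|n|+1)$.

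The only substantive obstacle in this program is Lemma~\ref{lem:specestimate} itself, whose proof is deferred to the Appendix and rests on the explicit representation of $K_n^\alpha$ in terms of associated Legendre functions. Granting that lemma, the proof of the theorem reduces to the clean Mellin-theoretic calculation sketched above, and no further technical input is needed.
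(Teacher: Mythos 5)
Your proposal is correct and follows the same route as the paper's own proof: decompose into Fourier modes, identify each $K_n^\alpha$ as a Mellin convolution via the unitary $V$, pass to multiplication operators with the unitary Mellin transform $W$, then use Young's inequality and Lemma~\ref{lem:specestimate} (with $\beta=1$, so $b_n=h_n^\alpha/\sin\alpha$) to control $\|M_n\|$ and remove the closure. Your remark that the closure is redundant because $0\in\Pi_n$ for all $n$ and $\max\{|z|:z\in\Pi_n\}\to 0$ spells out a step the paper treats more tersely, but no genuinely different ideas are involved.
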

\begin{rmk}
In Theorem~\ref{thm:curveformula} we will compute $\mathcal{M}[K_n^\alpha(\cdot, 1)]$ explicitly to show that
$$\Pi_n = \left\{ \frac{P_{i\xi}^n(\cos \alpha)\dot{P}_{i\xi}^n(-\cos \alpha)-P_{i\xi}^n(-\cos \alpha)\dot{P}_{i\xi}^n(\cos \alpha)}{P_{i\xi}^n(-\cos \alpha)\dot{P}_{i\xi}^n(\cos \alpha)+P_{i\xi}^n(\cos \alpha)\dot{P}_{i\xi}^n(-\cos \alpha)} \, : \, -\infty \leq \xi \leq \infty \right\},$$
where $P_\lambda^n(x)$ denotes an associated Legendre function of the first kind, and $\dot{P}_\lambda^n$ denotes the derivative in $x$.
\end{rmk}
\begin{proof}
We have shown that 
$$K^{\Gamma_\alpha} \simeq_{\ue} \bigoplus_{n=-\infty}^\infty K_n^{\alpha},$$
where each operator $K_n^\alpha \colon L^2(\sin(\alpha) t \, dt) \to L^2(\sin(\alpha) t \, dt)$ is unitarily equivalent to the operator of Mellin convolution with $h_n^\alpha$ on $L^2(dt/t)$. Hence \eqref{eq:L2specres} follows from applying the unitary Mellin transform operator of \eqref{eq:mellinunitary}. Note also that
$$\|M_n\| \leq  \|h_n^\alpha\|_{L^1(dt/t)} \lesssim \frac{1}{|n|+1}$$ 
by \eqref{eq:young} and \eqref{eq:hndecay}. The spectrum of $M_n$ is equal to $\Pi_n$, and therefore
$$ \max \{ |z| \, : \, z \in \Pi_n\} \leq \|M_n\| \lesssim \frac{1}{|n|+1}.$$
Hence
$$\sigma\left( K^{\Gamma_\alpha}, L^2\right) = \mathrm{clos}_{\mathbb{C}} \bigcup_{n=-\infty}^\infty \Pi_n = \bigcup_{n=-\infty}^\infty \Pi_n,$$
where the last equality follows since $\Pi_n$ are closed curves tending to the origin as $|n| \to \infty$.
\end{proof}
Another interpretation of Theorem~\ref{thm:infconespec} is the following. Suppose that $\widehat{h}_n^\alpha(i\xi)$ is a point in $\sigma\left( K^{\Gamma_\alpha}, L^2\right)$. The change of variable $t = s/s'$ and the homogeneity \eqref{eq:homog} then gives us that
\begin{equation} \label{eq:l2unboundstate}
\widehat{h}_n^\alpha(i\xi) = \int_0^\infty t^{i\xi + 1} K_n^{\alpha}(t, 1) \sin(\alpha) \, \frac{dt}{t} = s^{i\xi+1} \int_0^\infty (s')^{-i\xi - 1} K_n^\alpha(s, s') \sin(\alpha) s' \, ds'.
\end{equation}
Comparing with \eqref{eq:infconeKn} and letting $f(t) = t^{-i\xi -1}$ we see that
$$K_n^\alpha f (t) = \widehat{h}_n^\alpha(i\xi)  f(t).$$
Hence $f(t, \theta) = t^{-i\xi -1}e^{i n \theta}$ is an eigenfunction of $K^{\Gamma_\alpha}$ for the eigenvalue $\widehat{h}_n^\alpha(i\xi)$. The function $f(t, \theta) = t^{-i\xi -1}e^{i n \theta}$ just barely fails to belong to $L^2(\Gamma_\alpha, d \sigma)$ and we think of it as a generalized eigenfunction for the point $\widehat{h}_n^\alpha(i\xi)$ of the spectrum.

Note that the parity relation $K_n^\alpha(1, t) = t K_n^\alpha(t,1)$, the change of variable $t = s'/s$ and the homogeneity \eqref{eq:homog} also gives that
$$\widehat{h}_n^\alpha(i\xi) = \int_0^\infty t^{i\xi} K_n^{\alpha}(1, t) \sin(\alpha)\, \frac{dt}{t} = s^{-(i \xi -2)} \int_0^\infty (s')^{i\xi - 2} K_n^\alpha(s, s') \sin(\alpha) s' \, ds'.$$
Hence $f(t, \theta) = t^{i\xi -2}e^{i n \theta}$ is a second eigenfunction of $K^{\Gamma_\alpha}$ for the eigenvalue $\widehat{h}_n^\alpha(i\xi)$.
\subsection{The transmission problem} \label{sec:transmission}
It will follow from Lemma~\ref{lem:mellinsmooth} that $\mathcal{M}[K_n^\alpha(\cdot, 1)](\zeta)$ is well-defined and holomorphic in the strip $0 < \Re\mathrm{e} \, \zeta < 3$. Considering a value $\sin(\alpha)\mathcal{M}[K_n^\alpha(\cdot, 1)](3/2+i\xi)$, $\xi \in \mathbb{R}$, we find as in \eqref{eq:l2unboundstate} that
$$
\sin(\alpha)\mathcal{M}[K_n^\alpha(\cdot, 1)](3/2+i\xi) = \int_0^\infty t^{i\xi + 3/2} K_n^{\alpha}(t, 1) \sin(\alpha) \, \frac{dt}{t} = s^{i\xi+3/2} \int_0^\infty (s')^{-i\xi - 3/2} K_n^\alpha(s, s') \sin(\alpha) s' \, ds'.
$$
Hence 
\begin{equation} \label{eq:Kenergyeigfcns}
e_{\xi, n}(t, \theta) = t^{-i\xi - 3/2} e^{in\theta}
\end{equation} 
is an eigenfunction of $K^{\Gamma_\alpha}$ to the eigenvalue $\sin(\alpha) \mathcal{M}[K_n^\alpha(\cdot, 1)](3/2+i\xi)$. We will see that this function just barely fails to lie in $\mathcal{E}$ and we therefore think of it as a generalized eigenfunction for this space. The parity relation $K_n^\alpha(1,t) = tK_n^\alpha(t,1)$ also yields a second generalized eigenfunction $d_{\xi, n}(t, \theta) = t^{i\xi - 3/2} e^{in\theta}$. 

For a complex number  $\epsilon \neq 1$, we now consider the transmission problem 
\begin{equation} \label{eq:electrostatic}
\begin{cases}
U \textrm{ continuous on }\mathbb{R}^3 \setminus \{0\}, \\
\Delta U(\bm{r}) = 0, \quad \bm{r} \in \mathbb{R}^3 \setminus \Gamma_\alpha, \\
\partial_{\bm{\nu}}^{\exte} U(\bm{r}) = \epsilon \partial_{\bm{\nu}}^{\inte} U(\bm{r}), \quad 0 \neq \bm{r} \in \Gamma_\alpha.
\end{cases}
\end{equation}
To solve it, we make an ansatz with the single layer potential $S^{\Gamma_\alpha}$ of $\Gamma_\alpha$, see \eqref{eq:Sdef}.
For points $\bm{r} = \bm{r}(t, \theta) \in \Gamma_\alpha$, using the same parametrization of $\Gamma_\alpha$ as in Section~\ref{sec:infcone}, consider for $n \in \mathbb{Z}$ the operator $S_n^\alpha$,
$$S_n^\alpha f(t) = \int_0^\infty S_n^\alpha(t, t')f(t') \sin(\alpha) t' \, dt',$$
 with kernel 
$$S_n^\alpha(t, t') = \int_{0}^{\infty} e^{-in\theta} S^{\Gamma_\alpha}(t, \theta, t', 0) \, d\theta.$$
The kernel of $S^{\Gamma_\alpha}$ is rotationally invariant, so just as for $K^{\Gamma_\alpha}$ the Fourier coefficients of $S^{\Gamma_\alpha}$ can be obtained from $S_n^\alpha$,
$$(S^{\Gamma_\alpha}f)_n(t) = S_n^\alpha f_n(t).$$
Hence, for sufficiently nice $f$ it holds that
\begin{equation} \label{eq:Sfourier}
S^{\Gamma_\alpha} f (t, \theta) = \frac{1}{\sqrt{2\pi}} \sum_{n=-\infty}^\infty S_n^\alpha f_n(t) e^{in\theta}.
\end{equation}
If $f \in L^1\left(\Gamma_\alpha, \frac{d\sigma(t, \theta)}{1+t}\right) \cap C^1((0, \infty))$, say, the jump formulas hold with pointwise convergence,
$$\partial_{\bm{\nu}}^{\inte} S^{\Gamma_\alpha}f(\bm{r}) = f(\bm{r}) - Kf(\bm{r}), \quad \partial_{\bm{\nu}}^{\exte} S^{\Gamma_\alpha}f(\bm{r}) = -f(\bm{r}) - Kf(\bm{r}), \quad \bm{r} \in \Gamma_\alpha \setminus \{0\}.$$
It follows that $S^{\Gamma_\alpha}f$ solves the transmission problem if and only if
$$(K^{\Gamma_\alpha} - z)f = 0, \quad z = -\frac{1+\epsilon}{1 - \epsilon}.$$
In particular, $S^{\Gamma_\alpha}e_{\xi, n}$ solves the transmission problem \eqref{eq:electrostatic} for the number $\epsilon$ such that 
\begin{equation} \label{eq:elecer}
\sin(\alpha)\mathcal{M}[K_n^\alpha(\cdot, 1)](3/2+i\xi) = -\frac{1+\epsilon}{1 - \epsilon}.
\end{equation}
\begin{lem} \label{lem:smellinconstant}
Let $f_{\xi, n}(t, \theta) = t^{-i\xi - 1/2} e^{in\theta}$. There is a constant $C = C_n(\xi, \alpha)$ such that
$$S^{\Gamma_\alpha}e_{\xi, n}|_{\Gamma_\alpha} = Cf_{\xi, n}.$$
\end{lem}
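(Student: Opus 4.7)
The plan is to exploit the two symmetries of $S^{\Gamma_\alpha}$: rotational invariance around the axis of the cone and invariance under the rescaling $\bm r\mapsto\lambda\bm r$. Set $f=e_{\xi,n}$ for brevity. Rotational invariance of the kernel combined with $f(t,\theta+\phi)=e^{in\phi}f(t,\theta)$ gives that $S^{\Gamma_\alpha}f$ lies in the $n$th Fourier mode; equivalently, via \eqref{eq:Sfourier}, the only nonzero Fourier coefficient is $(e_{\xi,n})_n(t)=\sqrt{2\pi}\,t^{-i\xi-3/2}$, so $(S^{\Gamma_\alpha}f)(t,\theta)=S_n^\alpha(t^{-i\xi-3/2})(t)\,e^{in\theta}$.

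For the scaling, let $f_\lambda(t,\theta):=f(\lambda t,\theta)=\lambda^{-i\xi-3/2}f(t,\theta)$. The change of variable $t'=u/\lambda$ in \eqref{eq:Sdef}, combined with the $(-1)$-homogeneity identity $S^{\Gamma_\alpha}(\bm r(s,\phi),\bm r(u/\lambda,\theta'))=\lambda\,S^{\Gamma_\alpha}(\bm r(\lambda s,\phi),\bm r(u,\theta'))$ (a direct consequence of $\bm r(s,\phi)-\bm r(u/\lambda,\theta')=\lambda^{-1}[\bm r(\lambda s,\phi)-\bm r(u,\theta')]$), yields
\[
(S^{\Gamma_\alpha}f_\lambda)(s,\phi)=\lambda^{-1}(S^{\Gamma_\alpha}f)(\lambda s,\phi).
\]
Substituting $f_\lambda=\lambda^{-i\xi-3/2}f$ and rearranging gives $(S^{\Gamma_\alpha}f)(\lambda s,\phi)=\lambda^{-i\xi-1/2}(S^{\Gamma_\alpha}f)(s,\phi)$, so $S^{\Gamma_\alpha}f$ is homogeneous in $t$ of degree $-i\xi-1/2$. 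Combined with the Fourier-mode property this forces $S^{\Gamma_\alpha}e_{\xi,n}(t,\theta)=C\,t^{-i\xi-1/2}e^{in\theta}=Cf_{\xi,n}(t,\theta)$ for a constant $C=C_n(\xi,\alpha)$, as desired.

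Viewed through the modal operator $S_n^\alpha$, the same argument can be phrased more concretely: the kernel $S_n^\alpha(t,t')$ inherits the homogeneity $S_n^\alpha(\lambda t,\lambda t')=\lambda^{-1}S_n^\alpha(t,t')$, and substituting $t'=st$ in the integral defining $S_n^\alpha(t^{-i\xi-3/2})(s)$ produces the Mellin-transform expression
\[
C_n(\xi,\alpha)=\sin(\alpha)\int_0^\infty S_n^\alpha(1,t)\,t^{-i\xi-1/2}\,dt=\sin(\alpha)\,\mathcal{M}[S_n^\alpha(1,\cdot)]\!\left(\tfrac{1}{2}-i\xi\right).
\]

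The main obstacle is to verify that this integral converges absolutely, which is what makes $S^{\Gamma_\alpha}e_{\xi,n}(\bm r)$ well-defined on $\Gamma_\alpha$ in the first place. This reduces to an $S_n^\alpha$-analogue of Lemma~\ref{lem:specestimate}: bounded behaviour of $S_n^\alpha(1,t)$ as $t\to 0^+$, decay at least like $1/t$ as $t\to\infty$ (with additional $n$-dependent improvement for $|n|\geq 1$ coming from orthogonality in $\theta$), and an integrable logarithmic singularity at $t=1$ coming from the coincidence $\bm r(1,0)=\bm r(1,0)$. Since $\operatorname{Re}(\tfrac{1}{2}-i\xi)=\tfrac{1}{2}\in(0,1)$, the integrand is integrable at $t=0$ (behaving as $t^{-1/2}$), at $t=\infty$ (behaving as $t^{-3/2}$), and across the logarithmic singularity at $t=1$, so $C_n(\xi,\alpha)$ is finite. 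These kernel estimates for $S_n^\alpha$ are obtained by the same method as in the Appendix proof of Lemma~\ref{lem:specestimate}, applied to the explicit special-function representation of the modal single-layer kernel rather than the double-layer one.
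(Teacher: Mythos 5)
Your proposal is correct and takes essentially the same approach as the paper: both arguments reduce to the degree-$(-1)$ homogeneity of the modal single-layer kernel $S_n^\alpha$ (equivalently, the scaling invariance of $S^{\Gamma_\alpha}$ on the cone) and the rotational invariance selecting the $n$th Fourier mode, and both identify $C_n(\xi,\alpha)=\sin(\alpha)\,\mathcal{M}[S_n^\alpha(1,\cdot)](\tfrac12-i\xi)$ by the change of variable $t'=st$. The only addition is your remark on absolute convergence of this Mellin integral, which the paper leaves implicit; your asymptotics for $S_n^\alpha(1,t)$ at $0$, $\infty$, and $t=1$ are correct and do justify convergence at $\Re\,\zeta=1/2$.
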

\begin{proof}
For $\bm{r} \in \Gamma_\alpha$, we may compute $S^{\Gamma_\alpha}e_{\xi, n}(\bm{r})$ as follows, using the change of variable $t' = st$ and that $S_n^\alpha$ is homogeneous of degree $-1$,
\begin{align*}
\frac{1}{\sqrt{2\pi}}(S^{\Gamma_\alpha}e_{\xi, n})_n(t) &= \int_0^\infty t^{-i\xi - 3/2} S_n^\alpha(t, t') \sin(\alpha) t' \, dt' \\ &= \int_0^\infty t^{-i\xi - 1/2} s^{-i\xi - 3/2} S_n^\alpha(1, s) \sin(\alpha) s \, ds = t^{-i\xi - 1/2} \sin(\alpha) \mathcal{M}[S_n^\alpha(1, \cdot)](-i\xi + 1/2).
\end{align*}
This shows that $S^{\Gamma_\alpha}e_{\xi, n}|_{\Gamma_\alpha} = Cf_{\xi, n}$, for $C=\sin(\alpha) \mathcal{M}[S_n^\alpha(1, \cdot)](-i\xi + 1/2)$. 
\end{proof}

On the other hand, explicit computations have been made for the transmission problem \eqref{eq:electrostatic} \cite{KCHWA14,MRL77,SPG14}. Write $\bm{r} \sim (t, \varphi, \theta)$ in spherical coordinates and let
\begin{equation*}
V(\bm{r}) = 
\begin{cases} 
t^{-i\xi-1/2}e^{in\theta}P_{i\xi-1/2}^n(\cos(\varphi)), \quad \bm{r} \in \inte(\Gamma_\alpha), \\ 
Dt^{-i\xi-1/2}e^{in\theta}P_{i\xi-1/2}^n(-\cos(\varphi)), \quad \bm{r} \in \exte(\Gamma_\alpha),
 \end{cases}
\quad D = \frac{P_{i\xi-1/2}^n(\cos(\alpha))}{P_{i\xi-1/2}^n(-\cos(\alpha))}.
\end{equation*}
Then $V$ solves the transmission problem for
$$\epsilon = -\frac{P_{i\xi - 1/2}^n(\cos \alpha)\dot{P}_{i\xi - 1/2}^n(-\cos \alpha)}{P_{i\xi - 1/2}^n(-\cos \alpha)\dot{P}_{i\xi - 1/2}^n(\cos \alpha)}.$$
Here $P_\lambda^n(x)$ denotes an associated Legendre function of the first kind, and $\dot{P}_\lambda^n$ denotes its derivative in $x$, see the Appendix. We implement the Legendre function through the formula \cite{mw:legendrealt},
\begin{equation} \label{eq:legendreformula}
P^n_\lambda(x) = (1-x^2)^{n/2}(-\lambda)_n(\lambda+1)_n \sum_{k=0}^\infty \frac{(n-\lambda)_k (n+\lambda+1)_k}{k!(k+n)!2^{n+k}}(1-x)^k, \quad |1-x| < 2,
\end{equation}
where $(\lambda)_n$ denotes the Pochhammer symbol,
\begin{equation} \label{eq:pochhammer}
(\lambda)_n = \frac{\Gamma(\lambda+n)}{\Gamma(\lambda)},
\end{equation}
$\Gamma$ denoting the usual gamma function.
Note that $V|_{\Gamma_\alpha} = C'f_{\xi, n}$, where $C' = P_{i\xi-1/2}^n(\cos(\alpha)) \neq 0$ is a constant.
\begin{lem} \label{lem:SVlem}
$Se_{\xi, n} = (C/C')V$.
\end{lem}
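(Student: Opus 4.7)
The plan is to prove the identity by a separation-of-variables uniqueness argument in spherical coordinates. I would first check that $W := S^{\Gamma_\alpha} e_{\xi, n}$ is well-defined as an absolutely convergent integral for $\bm{r} \notin \Gamma_\alpha$, which comes down to the observation that $|e_{\xi, n}(\bm{r}')|\,d\sigma(\bm{r}') \sim (t')^{-1/2}\, dt'$ as $t' \to 0$ and as $t' \to \infty$, while the kernel $|\bm{r}-\bm{r}'|^{-1}$ is bounded by a multiple of $1/t'$ at infinity. Hence $W$ is harmonic on $\mathbb{R}^3 \setminus \Gamma_\alpha$. A change of variable $\bm{r}' \mapsto \lambda \bm{r}'$ in the defining integral, using $d\sigma(\lambda \bm{r}') = \lambda^2 d\sigma(\bm{r}')$ together with $e_{\xi, n}(\lambda \bm{r}') = \lambda^{-i\xi - 3/2} e_{\xi, n}(\bm{r}')$, will show that $W$ is homogeneous of degree $-i\xi - 1/2$, while the rotational symmetry of the Newtonian kernel around the $\bm{r}_3$-axis forces the $\theta$-dependence of $W$ to be purely $e^{in\theta}$. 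Writing $\bm{r}$ in spherical coordinates $(t, \varphi, \theta)$ I would thus conclude
$$W(\bm{r}) = t^{-i\xi - 1/2} e^{in\theta} F(\cos \varphi)$$
for some function $F$ defined on $(-1, 1) \setminus \{\cos \alpha\}$.

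Next, I would insert this ansatz into $\Delta W = 0$. Separation of variables in spherical coordinates reduces Laplace's equation to the associated Legendre equation of degree $i\xi - 1/2$ and order $n$ for $F$. On the interior component $0 < \varphi < \alpha$, which contains a portion of the positive $\bm{r}_3$-axis on which $W$ is smooth, $F$ must extend regularly to $x = 1$; since any solution linearly independent from $P_{i\xi-1/2}^n(\cos\varphi)$ is singular there, this forces $F(\cos\varphi) = A P_{i\xi-1/2}^n(\cos\varphi)$ for some constant $A$. The same reasoning applied at $\varphi = \pi$ on the exterior component $\alpha < \varphi < \pi$ gives $F(\cos\varphi) = B P_{i\xi-1/2}^n(-\cos\varphi)$ for some constant $B$.

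Finally, to identify $A$ and $B$, I would evaluate at $\varphi = \alpha$ and compare with Lemma~\ref{lem:smellinconstant}, which asserts that the continuous boundary trace of $W$ on $\Gamma_\alpha$ equals $C f_{\xi, n}(t, \theta) = C t^{-i\xi-1/2} e^{in\theta}$. Matching on $\Gamma_\alpha$ yields $A = C/P_{i\xi-1/2}^n(\cos\alpha) = C/C'$ and $B = C/P_{i\xi-1/2}^n(-\cos\alpha) = (C/C')D$, where $D$ is the constant appearing in the formula for $V$. In view of the explicit form of $V$ this is precisely $W = (C/C')V$ on each component of $\mathbb{R}^3 \setminus \Gamma_\alpha$. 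The only step demanding some care is the exclusion of the irregular Legendre solution on the interior and exterior axis segments; this just reflects harmonicity of $W$ off $\Gamma_\alpha$ and is the part I would spell out most carefully, while the homogeneity and constant-matching arguments are routine.
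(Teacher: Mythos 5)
Your proof is correct, but it takes a genuinely different route from the paper's. The paper translates the cone so that its vertex is at $(0,0,1)$, applies a Kelvin transform centered at the origin to map the exterior/interior of the infinite cone onto a bounded Lipschitz lens domain $\Gamma_\alpha^\ast$, verifies that the transformed boundary datum lies in $L^2(\Gamma_\alpha^\ast, d\sigma^\ast)$, and then invokes Dahlberg's theorem on uniqueness of the $L^2$ Dirichlet problem to conclude that $S^{\Gamma_\alpha} e_{\xi,n}$ and $(C/C')V$ coincide. Your argument instead exploits the full rotational and dilation symmetry of the straight cone directly: scaling and rotation invariance of the single layer potential force $W = S^{\Gamma_\alpha}e_{\xi,n}$ into the separated form $t^{-i\xi-1/2}e^{in\theta}F(\cos\varphi)$; the Legendre ODE from separation of variables, together with the regularity of $W$ on the two axis segments (which lie strictly off $\Gamma_\alpha$), selects the $P_{i\xi-1/2}^n$-branch on each component; and Lemma~\ref{lem:smellinconstant} fixes the two scalar constants. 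Your route is more elementary and self-contained in this symmetric setting — it avoids reliance on harmonic-measure theory — while the paper's Kelvin/Dahlberg approach is more robust and would survive perturbations that destroy the exact homogeneity. The point you correctly flag as needing care (exclusion of the singular Legendre solution via axis regularity) is indeed the crux; it is handled properly by the observation that the axis is a positive distance from $\Gamma_\alpha$ away from the origin, where the weakly singular integral converges absolutely and defines a harmonic, hence real-analytic, function.
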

\begin{proof}
We consider the interior of $\Gamma_\alpha$ to be the set where $0 < \varphi < \alpha$. Treating the interior first, we have found two solutions of the Dirichlet problem
\begin{equation*}
\begin{cases}
\Delta U(\bm{r}) = 0, \quad \bm{r} \in \inte(\Gamma_\alpha), \\ 
U|_{\Gamma_\alpha}(\bm{r}) = f_{\xi, n}(\bm{r}), \quad \bm{r} \in \Gamma_\alpha \setminus \{0\},
\end{cases}
\end{equation*}
and we want to show that they are the same.
For convenience, we may apply the translation $\bm{r} \mapsto \bm{r} + (0,0,1)$, so that $\Gamma_\alpha$ is the cone with vertex at $(0,0,1)$, and $U$ and $f_{\xi, n}$ instead denote the translated functions. 
Let $B(0, R)$ be a ball with center at the origin and radius $R$ chosen so small that $\overline{B(0, R)} \subset \exte(\Gamma_\alpha)$. For $\bm{r} \in \mathbb{R}^3$, let $\bm{r}^*$ denote its inversion in the surface of $B(0, R)$, 
$$\bm{r}^* = \frac{R^2}{|\bm{r}|^2} \bm{r}.$$
Let $\Gamma_\alpha^*$ be the inversion of $\Gamma_\alpha$. It is a bounded, rotationally symmetric lens domain, smooth except for two conical points at $0$ and $(0,0, R^2)$. In particular it is a Lipschitz domain.

The Kelvin transform of $U$ is the function
$$U^*(\bm{r}^*) = |\bm{r}^*|^{-1} U(\bm{r}),$$
harmonic on $\inte(\Gamma_\alpha^*)$. We similarly define $f_{\xi, n}^*$. It is continuous on $\Gamma_\alpha^* \setminus \{0, (0,0,R^2)\}$ and satisfies that
$$f_{\xi, n}^*(\bm{r}) \simeq \frac{1}{\sqrt{|\bm{r}|}}, \quad \Gamma_\alpha^* \ni \bm{r} \to 0$$
and
$$f_{\xi, n}^*(\bm{r}) \simeq \frac{1}{\sqrt{|\bm{r} - (0,0,R^2)|}}, \quad \Gamma_\alpha^* \ni \bm{r} \to (0,0,R^2).$$
Let $d\sigma^*$ denote surface measure on $\Gamma_\alpha^*$. In a Lipschitz parametrization $\bm{r} = \bm{r}(x,y)$ of $\Gamma_\alpha^*$, it is clear that 
$$\frac{d\sigma^*(\bm{r})}{d\bm{r}} \simeq \mathrm{dist}(\bm{r}, 0), \quad \Gamma_\alpha^* \ni \bm{r} \to 0,$$
and 
$$\frac{d\sigma^*(\bm{r})}{d\bm{r}} \simeq \mathrm{dist}(\bm{r}, (0,0,R^2)), \quad \Gamma_\alpha^* \ni \bm{r} \to (0,0,R^2),$$
cf. Section~\ref{sec:perturb}. It follows that $f_{\xi, n}^* \in L^2(\Gamma_\alpha, d\sigma^*)$. Hence, we have produced two solutions to the interior Dirichlet problem
\begin{equation*}
\begin{cases}
\Delta U^*(\bm{r}) = 0, \quad \bm{r} \in \inte(\Gamma_\alpha^*), \\ 
U^*|_{\Gamma_\alpha^*}(\bm{r}) = f_{\xi, n}^*(\bm{r}), \quad \bm{r} \in \Gamma_\alpha^* \setminus \{0, (0,0,R^2)\}.
\end{cases}
\end{equation*}
The equality on the boundary in particular holds almost everywhere $d\sigma^*$. But since $f_{\xi, n}^* \in L^2$ the Dirichlet problem is uniquely determined, by Dahlberg's theorem \cite{Dahl79}. Hence $Se_{\xi, n}(\bm{r}) = (C/C')V(\bm{r})$ on $\inte(\Gamma_\alpha)$. Equality in the exterior domain is proven in the same way.
\end{proof}
Since $\epsilon$ is unique for a given function, we deduce that
$$\sin(\alpha)\mathcal{M}[K_n^\alpha(\cdot, 1)](3/2+i\xi)  = \frac{P_{i\xi - 1/2}^n(\cos \alpha)\dot{P}_{i\xi - 1/2}^n(-\cos \alpha) - P_{i\xi - 1/2}^n(-\cos \alpha)\dot{P}_{i\xi - 1/2}^n(\cos \alpha)}{P_{i\xi - 1/2}^n(-\cos \alpha)\dot{P}_{i\xi - 1/2}^n(\cos \alpha)+P_{i\xi - 1/2}^n(\cos \alpha)\dot{P}_{i\xi - 1/2}^n(-\cos \alpha)}.$$
It follows that for every $\zeta$ in the strip $0 < \Re\mathrm{e} \, \zeta < 3$ we have that
$$\sin(\alpha)\mathcal{M}[K_n^\alpha(\cdot, 1)](\zeta)  = \frac{P_{\zeta - 2}^n(\cos \alpha)\dot{P}_{\zeta - 2}^n(-\cos \alpha) - P_{\zeta-2}^n(-\cos \alpha)\dot{P}_{\zeta - 2}^n(\cos \alpha)}{P_{\zeta - 2}^n(-\cos \alpha)\dot{P}_{\zeta - 2}^n(\cos \alpha)+P_{\zeta - 2}^n(\cos \alpha)\dot{P}_{\zeta - 2}^n(-\cos \alpha)},$$
since both sides are meromorphic in the strip and they agree on the line $\zeta = 3/2 + i\xi$. In particular, letting $\zeta = 1 + \xi$ yields an explicit formula for the curve $\Pi_n$. Note that we have made use of the parity identity $P^n_\lambda = P^n_{-\lambda-1}$. Incidentally, this identity is consistent with the existence of two eigenfunctions for each eigenvalue.
\begin{thm} \label{thm:curveformula}
The closed curve $\Pi_n$ is given by 
$$\Pi_n = \left\{ \frac{P_{i\xi}^n(\cos \alpha)\dot{P}_{i\xi}^n(-\cos \alpha) - P_{i\xi}^n(-\cos \alpha)\dot{P}_{i\xi}^n(\cos \alpha)}{P_{i\xi}^n(-\cos \alpha)\dot{P}_{i\xi}^n(\cos \alpha)+P_{i\xi}^n(\cos \alpha)\dot{P}_{i\xi}^n(-\cos \alpha)} \, : \, -\infty \leq \xi \leq \infty \right\} .$$
\end{thm}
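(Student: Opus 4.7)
The plan is to identify $\sin(\alpha)\mathcal{M}[K_n^\alpha(\cdot,1)](\zeta)$ as a meromorphic function of $\zeta$ on the strip $0 < \mre \zeta < 3$ by first pinning down its values on the line $\mre \zeta = 3/2$ via the transmission problem, and then analytically continuing to the line $\mre \zeta = 1$, which is the one that parametrizes $\Pi_n$ by definition.

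First I would invoke the computations already assembled in this subsection. The generalized eigenfunction $e_{\xi,n}(t,\theta) = t^{-i\xi-3/2}e^{in\theta}$ is an eigenfunction of $K^{\Gamma_\alpha}$ with eigenvalue
\begin{equation*}
\lambda_{\xi,n} := \sin(\alpha)\mathcal{M}[K_n^\alpha(\cdot,1)](3/2+i\xi),
\end{equation*}
so by the jump formulas $U = S^{\Gamma_\alpha} e_{\xi,n}$ solves the transmission problem \eqref{eq:electrostatic} for the permittivity $\epsilon$ determined by $\lambda_{\xi,n} = -(1+\epsilon)/(1-\epsilon)$. The explicit Legendre-function potential $V$ solves the same problem for
\begin{equation*}
\epsilon_V = -\frac{P_{i\xi-1/2}^n(\cos\alpha)\dot{P}_{i\xi-1/2}^n(-\cos\alpha)}{P_{i\xi-1/2}^n(-\cos\alpha)\dot{P}_{i\xi-1/2}^n(\cos\alpha)}.
\end{equation*}
Lemmas~\ref{lem:smellinconstant} and \ref{lem:SVlem} identify $S^{\Gamma_\alpha} e_{\xi,n}$ with $V$ up to a constant multiple on $\Gamma_\alpha$, so both must correspond to the same permittivity: $\epsilon = \epsilon_V$. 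Solving $\lambda_{\xi,n} = -(1+\epsilon_V)/(1-\epsilon_V)$ and simplifying the resulting rational expression yields a closed form for $\lambda_{\xi,n}$ in terms of $P^n_{i\xi-1/2}(\pm\cos\alpha)$ and the derivatives $\dot{P}^n_{i\xi-1/2}(\pm\cos\alpha)$.

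Having pinned down $\mathcal{M}[K_n^\alpha(\cdot,1)](\zeta)$ on the line $\mre \zeta = 3/2$, I would extend by analytic continuation. By Lemma~\ref{lem:mellinsmooth} the Mellin transform is holomorphic in the whole strip $0 < \mre \zeta < 3$; the right-hand side, regarded as a function of $\zeta$ via the substitution $i\xi - 1/2 \mapsto \zeta - 2$, is meromorphic on the same strip since $P^n_\lambda(x)$ is entire in $\lambda$ for each fixed $x \in (-1,1)$ (cf.~\eqref{eq:legendreformula}). Two meromorphic functions that coincide on a horizontal line coincide throughout the strip, so specializing to $\zeta = 1 + i\xi$ produces a formula involving $P^n_{i\xi - 1}$. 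I would then reduce to $P^n_{i\xi}$ using the parity identity $P^n_\lambda = P^n_{-\lambda-1}$ at $\lambda = i\xi - 1$ (which gives $P^n_{i\xi-1} = P^n_{-i\xi}$), followed by the reparametrization $\xi \mapsto -\xi$, which leaves $\Pi_n$ invariant as a set.

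The only real obstacle is bookkeeping in this final step: checking that the parity identity and the $\xi \mapsto -\xi$ reparametrization really do transport the numerator and denominator into the stated form, with matching signs and derivative placements. All the substantive analytic content — the construction of the generalized eigenfunctions on the cone, the uniqueness argument via Dahlberg's theorem in Lemma~\ref{lem:SVlem}, and the existence of a holomorphic extension of the Mellin transform to the strip — has already been handled in the preceding lemmas, so the remainder is purely algebraic.
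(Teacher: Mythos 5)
Your proposal follows the paper's argument essentially verbatim: identify the eigenvalue $\sin(\alpha)\mathcal{M}[K_n^\alpha(\cdot,1)](3/2+i\xi)$ via the transmission problem on the straight cone, equate it with the explicit Legendre-function answer using Lemmas~\ref{lem:smellinconstant} and \ref{lem:SVlem} and the uniqueness of $\epsilon$, analytically continue both sides across the strip $0<\mre\zeta<3$ using Lemma~\ref{lem:mellinsmooth}, and evaluate at $\zeta=1+i\xi$ with the parity identity $P^n_\lambda=P^n_{-\lambda-1}$ followed by $\xi\mapsto-\xi$. This is the paper's proof; one small note is that the conclusion $\epsilon=\epsilon_V$ rests on the global identification $Se_{\xi,n}=(C/C')V$ of Lemma~\ref{lem:SVlem}, not merely on agreement of the two potentials restricted to $\Gamma_\alpha$, which is what your phrasing ``up to a constant multiple on $\Gamma_\alpha$'' might suggest.
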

Lemma~\ref{lem:SVlem} also lets us compute the constant $C = C_n(\xi, \alpha)$ explicitly.
\begin{lem}\label{lem:smellinconstantcomp}
The constant $C_n(\xi, \alpha)$ of Lemma~\ref{lem:smellinconstant} is given by
$$C_n(\xi, \alpha) = -\frac{2}{\sin \alpha} \frac{P_{i\xi - 1/2}^n(\cos \alpha) P_{i\xi - 1/2}^n(-\cos \alpha)}{P_{i\xi - 1/2}^n(\cos \alpha)\dot{P}_{i\xi - 1/2}^n(-\cos \alpha) + P_{i\xi - 1/2}^n(-\cos \alpha)\dot{P}_{i\xi - 1/2}^n(\cos \alpha)}.$$
In particular, $C_n(\xi, \alpha)$ is uniformly bounded in $n \in \mathbb{Z}$ and $\xi \in \mathbb{R}$, and $C_n(-\xi, \alpha) = C_n(\xi, \alpha)$ and $C_n(\xi, \alpha) > 0$ for all such $n$ and $\xi$.
\end{lem}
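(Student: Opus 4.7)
The plan is to derive the explicit formula by exploiting the jump relations for the single layer potential applied to the generalized eigenfunction $e_{\xi,n}$, and then combining them with the explicit representation of $S^{\Gamma_\alpha} e_{\xi,n}$ in spherical harmonics provided by Lemma~\ref{lem:SVlem}.

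First I would use the pointwise jump formulas
\[
\partial_{\bm{\nu}}^{\inte} S^{\Gamma_\alpha} e_{\xi,n} - \partial_{\bm{\nu}}^{\exte} S^{\Gamma_\alpha} e_{\xi,n} = 2 e_{\xi, n}
\]
valid on $\Gamma_\alpha \setminus \{0\}$, which is permissible since $e_{\xi,n} \in C^1((0,\infty))$ times $e^{in\theta}$, and $e_{\xi,n}$ lies in $L^1(\Gamma_\alpha, d\sigma/(1+t))$. By Lemma~\ref{lem:SVlem}, $S^{\Gamma_\alpha} e_{\xi,n} = (C/C') V$ with $C'=P_{i\xi-1/2}^n(\cos\alpha)$, so it suffices to compute the normal derivatives of $V$ directly in spherical coordinates $(t,\varphi,\theta)$, where the outward normal of the interior $\{\varphi<\alpha\}$ at $\varphi=\alpha$ is $\hat{\varphi}$ and thus $\partial_{\bm{\nu}} = t^{-1} \partial_\varphi$.

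Next I would carry out these $\varphi$-derivatives. From
\[
V_{\inte}(t,\varphi,\theta) = t^{-i\xi-1/2} e^{in\theta} P_{i\xi-1/2}^n(\cos\varphi), \qquad V_{\exte} = D\,t^{-i\xi-1/2} e^{in\theta} P_{i\xi-1/2}^n(-\cos\varphi),
\]
the chain rule gives at $\varphi = \alpha$,
\[
\partial_{\bm{\nu}}^{\inte} V = -\sin\alpha\, \dot{P}_{i\xi-1/2}^n(\cos\alpha)\, e_{\xi,n},\qquad \partial_{\bm{\nu}}^{\exte} V = \sin\alpha\, D\, \dot{P}_{i\xi-1/2}^n(-\cos\alpha)\, e_{\xi,n}.
\]
Substituting into the jump relation and using $D = P_{i\xi-1/2}^n(\cos\alpha)/P_{i\xi-1/2}^n(-\cos\alpha)$, I obtain
\[
\frac{C}{C'} \cdot (-\sin\alpha)\, \frac{P_{i\xi-1/2}^n(-\cos\alpha)\dot P_{i\xi-1/2}^n(\cos\alpha) + P_{i\xi-1/2}^n(\cos\alpha)\dot P_{i\xi-1/2}^n(-\cos\alpha)}{P_{i\xi-1/2}^n(-\cos\alpha)} = 2,
\]
which upon solving for $C$ and multiplying through by $C'$ gives exactly the stated formula.

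For the remaining properties, the parity $C_n(-\xi,\alpha) = C_n(\xi,\alpha)$ follows immediately from the identity $P_\lambda^n = P_{-\lambda-1}^n$ (already noted in the paragraph above Theorem~\ref{thm:curveformula}), since this yields $P_{-i\xi-1/2}^n = P_{i\xi-1/2}^n$ and the same relation for $\dot{P}$. Positivity of $C_n(\xi,\alpha)$ and the uniform boundedness in $(n,\xi)$ are the most delicate claims, and I expect them to be the main obstacle. For positivity, one route is to recognize $C_n(\xi,\alpha)$ as the Mellin symbol of $\sin(\alpha)S_n^\alpha(1,\cdot)$ evaluated on the line $\mre\zeta = 1/2$ (from the computation in Lemma~\ref{lem:smellinconstant}), and to use that $S^{\Gamma_\alpha}$ is a positive operator, so its modal Mellin symbol on the $L^2$-symmetric line is positive; alternatively one may verify directly, using the real-valuedness of $P_{i\xi-1/2}^n(\pm\cos\alpha)$ (these are Mehler–type conical functions, with real values for real $\xi$) and the Wronskian-type sign of the denominator. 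Uniform boundedness can then be extracted either from the general fact that $S_n^\alpha$ is bounded on $L^2(\sin\alpha\, t\,dt)$ (so its Mellin symbol is bounded on the relevant line) together with monotonicity of Mellin norms across the strip given by Lemma~\ref{lem:mellinsmooth}, or by working through the known asymptotics of $P_{i\xi-1/2}^n(\pm\cos\alpha)$ and $\dot P_{i\xi-1/2}^n(\pm\cos\alpha)$ as $|\xi|\to\infty$ and $|n|\to\infty$, which are classical but somewhat technical; these asymptotics are precisely the kind of bounds that the Appendix develops for the closely related kernels, and I would invoke or adapt them here.
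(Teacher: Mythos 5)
Your derivation of the explicit formula is essentially the same as the paper's.  Both use the single-layer jump relation for $e_{\xi,n}$ together with Lemma~\ref{lem:SVlem} ($S e_{\xi,n} = (C/C')V$) and the explicit spherical-coordinate form of $V$; the only cosmetic difference is that you compute $\partial^{\exte}_{\bm\nu}V$ directly from the exterior formula for $V$, whereas the paper computes only $\partial^{\inte}_{\bm\nu}V$ and then eliminates $\partial^{\exte}_{\bm\nu}V$ using the transmission condition $\partial^{\exte}_{\bm\nu}V = \epsilon\,\partial^{\inte}_{\bm\nu}V$.  Both lead to the same linear equation for $C/C'$ and hence to the stated formula.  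The parity $C_n(-\xi,\alpha)=C_n(\xi,\alpha)$ via $P^n_\lambda = P^n_{-\lambda-1}$ is also what the paper uses.

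Where you are vague, though, there is a real gap. For positivity, the paper does \emph{not} appeal to positivity of $S^{\Gamma_\alpha}$; it simply substitutes $\lambda = i\xi-1/2$ into the series representation \eqref{eq:legendreformula} and reads off that every coefficient is real and positive (the pairs $(1/2\pm i\xi)_n$ and $(n+1/2\pm i\xi)_k$ combine into real positive products), so that $P^n_{i\xi-1/2}(x)>0$ and $\dot P^n_{i\xi-1/2}(x)<0$ for $-1<x<1$, and positivity of $C_n$ follows at once.  You mention this route only in passing; it is the one that works cleanly.  For uniform boundedness, your ``route~(a)'' is flawed as stated: on the infinite cone $\Gamma_\alpha$ the single layer potential is \emph{not} bounded on $L^2(\Gamma_\alpha)$ (for $n=0$ the Mellin convolution kernel $\sin\alpha\,t\,S_0^\alpha(t,1)$ decays only like a constant at infinity and is not in $L^1(dt/t)$), and even for those $n$ for which the modal operator were bounded on $L^2(\sin\alpha\,t\,dt)$, the corresponding Mellin symbol lives on the line $\Re\mathrm{e}\,\zeta = 1$, whereas $C_n(\xi)$ is the symbol evaluated on $\Re\mathrm{e}\,\zeta = 1/2$; $L^2$-operator boundedness says nothing directly about the value there.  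The paper's own argument is brief at this point (it identifies $C_n$ with the Mellin transform of $b_n(t)=\sin\alpha\sqrt{t}\,S_n^\alpha(1,t)$ and invokes a uniform $L^2(dt/t)$-norm bound followed by ``an obvious estimate''), but the substance is really a uniform $L^1(dt/t)$-type decay estimate on $b_n$, which is exactly your ``route~(b)'' via Legendre-function asymptotics -- the approach that, when fleshed out, does produce the claim.  So: formula derivation correct and matching the paper; positivity should be done directly from \eqref{eq:legendreformula}; for uniform boundedness you should drop route~(a) and carry out the asymptotic/$L^1$ estimate.
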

\begin{proof}
For $\bm{r} \in \Gamma_\alpha$ we have that
$$\partial_{\bm{\nu}}^{\inte} V(\bm{r}) = \frac{1}{t}\frac{\partial V}{\partial \varphi}(\bm{r}) = -\sin{\alpha} \dot{P}^n_{i\xi-1/2}(\cos(\alpha)) t^{-i\xi - 3/2} e^{in\theta}.$$
Since $V$ solves the transmission problem we also know that
$$\partial_{\bm{\nu}}^{\exte} V(\bm{r}) - \partial_{\bm{\nu}}^{\inte} V(\bm{r}) = (\epsilon - 1)\partial_{\bm{\nu}}^{\inte} V(\bm{r}).$$
On the other hand, having established that $V = (C'/C) S e_{\xi, n}$ in Lemma~\ref{lem:SVlem}, the jump formulas for the single layer potential give us that
$$\partial_{\bm{\nu}}^{\exte} V(\bm{r}) - \partial_{\bm{\nu}}^{\inte} V(\bm{r}) = (C'/C)\left(\partial_{\bm{\nu}}^{\exte} Se_{\xi, n}(\bm{r}) - \partial_{\bm{\nu}}^{\inte} Se_{\xi, n}(\bm{r})\right) = -2(C'/C)e_{\xi, n}(t, \theta)$$
Recalling that $C' = P_{i\xi-1/2}^n(\cos(\alpha))$ and comparing the two expressions yields the explicit formula.

From \eqref{eq:legendreformula} we have that
$$P^n_{i \xi - 1/2}(x) = (1-x^2)^{n/2}(1/2 - i\xi)_n(1/2+i\xi)_n \sum_{k=0}^\infty \frac{(n+ 1/2 - i \xi)_k (n+1/2 + i\xi)_k}{k!(k+n)!2^{n+k}}(1-x)^k.$$
Since the Pochhammer symbol is given by
$$(\lambda)_n = \lambda(\lambda+1)(\lambda + 2) \cdots (\lambda + n -1) $$
it is clear that $P^n_{i \xi - 1/2}(x) > 0$ and that $\dot{P}_{i\xi - 1/2}^n(x) < 0$ for every $\xi$ and $-1 < x < 1$. Hence $C_n(\xi, \alpha) > 0$. It is also clear that $C_n(-\xi, \alpha) = C_n(\xi, \alpha)$. The easiest way to see that $C_n$ is uniformly bounded is to recall from Lemma~\ref{lem:SVlem} that $C_n(\xi, \alpha)$ is the Mellin transform of
$$b_n(t) = \sin(\alpha) \sqrt{t} S_n^\alpha(1, t).$$
It is clear that
$$\sup_n \|b_n\|_{L^2(dt/t)} < \infty,$$
and hence the statement follows from an obvious estimate.
\end{proof}
\subsection{Spectral resolution on $\mathcal{E}$} \label{sec:Especres}
Let $\mathcal{S}$ denote the space of smooth compactly supported functions in $(0,\infty)$, and let $\mathcal{E}_n$ denote the Hilbert space completion of $\mathcal{S}$ in the positive definite scalar product
$$\langle f_n, g_n \rangle_{\mathcal{E}_n} = \langle S_n^\alpha f_n, g_n \rangle_{L^2(\sin(\alpha) t \,dt)}.$$
Since
$$\|f\|_{\mathcal{E}}^2 = \langle S^{\Gamma_{\alpha}}f, f \rangle_{L^2(\Gamma_\alpha)},$$
we deduce from \eqref{eq:L2decomprev} and \eqref{eq:Sfourier} that
\begin{equation} \label{eq:Edecomp}
\mathcal{E} \simeq \bigoplus_{n=-\infty}^\infty \mathcal{E}_n.
\end{equation}
By \eqref{eq:Kfourier} it follows that $K^{\Gamma_\alpha} \colon \mathcal{E} \to \mathcal{E}$ acts diagonally with respect to the decomposition \eqref{eq:Edecomp},
$$K^{\Gamma_\alpha} \simeq_{\ue} \bigoplus_{n=-\infty}^\infty K_n^\alpha,$$
where $K_n^\alpha \colon \mathcal{E}_n \to \mathcal{E}_n$ is considered as an operator on $\mathcal{E}_n$.

To understand the operator $K_n^\alpha$, let $V \colon \mathcal{E}_n \to \widetilde{\mathcal{E}}_n$ be the unitary map
$$Vf(t) = \sqrt{t}f(t),$$
where the space $\widetilde{\mathcal{E}}_n$ is defined by the requirement that $V$ be unitary. It is the completion of $\mathcal{S}$ under the scalar product
$$\langle f, g \rangle_{\widetilde{\mathcal{E}}_n} = \langle V^{-1} S_n^\alpha V^{-1} f, g \rangle_{L^2(\sin(\alpha) t \, dt)}.$$
Note that 
$$\widetilde{S}_n^\alpha = V^{-1} S_n^\alpha V^{-1}$$
is a positive definite (in particular symmetric) unbounded operator on $L^2(\sin(\alpha) t \, dt)$. By polarizing the Plancherel formula \eqref{eq:plancharel} and using that $\widetilde{S}_n^\alpha$ is symmetric in the $L^2(\sin(\alpha)t\, dt)$-pairing we obtain, initially for $f, g \in \mathcal{S}$, that
\begin{align*}
\langle \widetilde{S}_n^\alpha f, g \rangle_{L^2(\sin(\alpha) t \, dt)} &= \frac{\sin(\alpha)}{2\pi} \int_{-\infty}^{\infty} \int_0^\infty t^{i\xi-1} \widetilde{S}_n^\alpha f(t) t \, dt \overline{\int_0^\infty t^{i\xi-1} g(t) t \, dt} \, d\xi \\
&= \frac{\sin(\alpha)}{\sqrt{2\pi}} \int_{-\infty}^{\infty} C_n(\xi) \int_0^\infty t^{i\xi-1}  f(t) t \, dt \overline{\int_0^\infty t^{i\xi-1} g(t) t \, dt} \, d\xi.
\end{align*}
Here $C_n(\xi) = C_n(\xi, \alpha) > 0$ is the constant of Lemma~\ref{lem:smellinconstant} and Lemma~\ref{lem:smellinconstantcomp}, now interpreted as a function of $\xi$. Hence
$$\widetilde{W} f(\xi) = \int_0^\infty t^{i\xi-1}  f(t) t \, dt$$
defines a unitary map 
$$\widetilde{W} \colon \widetilde{\mathcal{E}}_n \to L^2\left(\frac{\sin(\alpha)}{\sqrt{2\pi}}C_n(\xi) \, d\xi\right),$$
since $\widetilde{W}$ maps $\mathcal{S}$ to a dense subset of $L^2(C_n(\xi) \, d\xi)$.

Next we unitarily transfer $K_n^\alpha \colon \mathcal{E}_n \to \mathcal{E}_n$ to the operator $\widetilde{K}_n^\alpha \colon \widetilde{\mathcal{E}}_n \to \widetilde{\mathcal{E}}_n$,
$$\widetilde{K}_n^\alpha = V K_n^\alpha V^{-1}.$$ 
Let $F_n$ denote the function of Section~\ref{sec:transmission},
$$F_n(\xi) = \sin(\alpha)\mathcal{M}[K_n^\alpha(\cdot, 1)](3/2+i\xi)  = \frac{P_{i\xi - 1/2}^n(\cos \alpha)\dot{P}_{i\xi - 1/2}^n(-\cos \alpha) - P_{i\xi - 1/2}^n(-\cos \alpha)\dot{P}_{i\xi - 1/2}^n(\cos \alpha)}{P_{i\xi - 1/2}^n(-\cos \alpha)\dot{P}_{i\xi - 1/2}^n(\cos \alpha)+P_{i\xi - 1/2}^n(\cos \alpha)\dot{P}_{i\xi - 1/2}^n(-\cos \alpha)}.$$
The change of variable $t = s'/s$ yields that
$$F_n(\xi) = \int_0^\infty t^{i\xi+3/2}K_n^\alpha(t,1) \sin(\alpha) \, \frac{dt}{t} = s^{-i\xi + 1 /2}\int_0^\infty (s')^{i\xi - 1/2} K_n(s', s)\sin(\alpha) s' \, ds'.$$
Hence $(K_n^\alpha)^\ast(s^{i\xi - 1/2})(t) = F_n(\xi) t^{i\xi - 1/2}$, where the adjoint is taken with respect to the scalar product of $L^2(\sin(\alpha) t \, dt)$. Therefore,
\begin{align*}
\langle \widetilde{K}_n^\alpha f, g \rangle_{\widetilde{\mathcal{E}}_n} &=  \langle \widetilde{S}_n^\alpha \widetilde{K}_n^\alpha f, g \rangle_{L^2(\sin(\alpha) t \, dt)} \\ 
&= \frac{\sin(\alpha)}{\sqrt{2\pi}} \int_{-\infty}^{\infty} C_n(\xi) \int_0^\infty t^{i\xi-1/2}  K_n^{\alpha}V^{-1}f(t) t \, dt \overline{\int_0^\infty t^{i\xi-1} g(t) t \, dt} \, d\xi, \\
&= \frac{\sin(\alpha)}{\sqrt{2\pi}} \int_{-\infty}^{\infty} F_n(\xi)C_n(\xi)  \int_0^\infty t^{i\xi-1} f(t) t \, dt \overline{\int_0^\infty t^{i\xi-1} g(t) t \, dt} \, d\xi,
\end{align*}
where we have used that $F_n(\xi) = F_n(-\xi)$ (cf. the proof of Lemma~\ref{lem:smellinconstantcomp}).
It follows that
$$\widetilde{K}_n^\alpha = \widetilde{W}^{-1} M_{F_n} \widetilde{W},$$
where $M_{F_n} \colon L^2\left(\frac{\sin(\alpha)}{\sqrt{2\pi}}C_n(\xi) \, d\xi\right) \to L^2\left(\frac{\sin(\alpha)}{\sqrt{2\pi}}C_n(\xi) \, d\xi\right)$ denotes the operator of multiplication by $F_n$. Since $C_n(\xi)$ is a strictly positive function it follows that $\widetilde{K}_n^\alpha$ is unitarily equivalent to the same multiplication operator $M_{F_n} \colon L^2(\mathbb{R}) \to L^2(\mathbb{R})$ acting on the usual $L^2$-space of the real line.

We have realized the spectral theorem for $K^{\Gamma_\alpha} \colon \mathcal{E} \to \mathcal{E}$.

\begin{thm} \label{thm:Especres}
For $n \in \mathbb{Z}$, let $F_n$ be the real-valued function
$$F_n(\xi) = \frac{P_{i\xi - 1/2}^n(\cos \alpha)\dot{P}_{i\xi - 1/2}^n(-\cos \alpha) - P_{i\xi - 1/2}^n(-\cos \alpha)\dot{P}_{i\xi - 1/2}^n(\cos \alpha)}{P_{i\xi - 1/2}^n(-\cos \alpha)\dot{P}_{i\xi - 1/2}^n(\cos \alpha)+P_{i\xi - 1/2}^n(\cos \alpha)\dot{P}_{i\xi - 1/2}^n(-\cos \alpha)},$$
and let $M_{F_n} \colon L^2( \mathbb{R}) \to L^2( \mathbb{R})$ denote the operator of multiplication by $F_n$. Then $K^{\Gamma_\alpha} \colon \mathcal{E} \to \mathcal{E}$ is unitarily equivalent to the direct sum of the operators $M_{F_n}$,
$$K^{\Gamma_\alpha} \simeq_{\ue} \bigoplus_{n=-\infty}^{\infty} M_{F_n}.$$
In particular, letting $\Sigma_n$ be the interval
$$\Sigma_n = \{F_n(\xi) \, : \, -\infty \leq \xi \leq \infty\},$$
we have that
$$\sigma(K^{\Gamma_\alpha}, \mathcal{E}) = \bigcup_{n=-\infty}^\infty \Sigma_n.$$
\end{thm}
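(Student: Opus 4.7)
The plan is to parallel the $L^2$ argument of Section~\ref{sec:infcone} but now with a Mellin transform weighted so as to diagonalize the single layer operator $S_n^\alpha$ simultaneously with $K_n^\alpha$. First I would exploit the rotational invariance of $S^{\Gamma_\alpha}$ and $K^{\Gamma_\alpha}$ to pass everything through Fourier series in $\theta$: since $\langle f,g\rangle_{\mathcal{E}} = \langle S^{\Gamma_\alpha}f,g\rangle_{L^2(\Gamma_\alpha)}$ decouples across angular modes, the $L^2$-decomposition \eqref{eq:L2decomprev} lifts to an orthogonal decomposition $\mathcal{E} \simeq \bigoplus_n \mathcal{E}_n$, where $\mathcal{E}_n$ is the completion of $\mathcal{S}$ in the inner product $\langle S_n^\alpha\cdot,\cdot\rangle_{L^2(\sin(\alpha)t\,dt)}$, and $K^{\Gamma_\alpha}$ acts diagonally as $\bigoplus_n K_n^\alpha$. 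The task then reduces to diagonalizing each $K_n^\alpha$ on $\mathcal{E}_n$.

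Next I would conjugate by the unitary $Vf(t) = \sqrt{t}\,f(t)$, so that the inner product lives on $L^2(\sin(\alpha)t\,dt)$ and is implemented by a symmetric positive operator $\widetilde{S}_n^\alpha = V^{-1}S_n^\alpha V^{-1}$. Applying polarization of Plancherel \eqref{eq:plancharel}, together with Lemma~\ref{lem:smellinconstant} (which identifies $C_n(\xi)$ as the Mellin multiplier of $\widetilde{S}_n^\alpha$), to test functions $f,g \in \mathcal{S}$ should produce
\[
\langle \widetilde{S}_n^\alpha f, g\rangle_{L^2(\sin(\alpha)t\,dt)} = \frac{\sin\alpha}{\sqrt{2\pi}}\int_{-\infty}^\infty C_n(\xi)\, \widetilde{W}f(\xi)\,\overline{\widetilde{W}g(\xi)}\,d\xi,
\]
where $\widetilde{W}f(\xi) = \int_0^\infty t^{i\xi} f(t)\,dt$. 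Combined with $C_n > 0$ from Lemma~\ref{lem:smellinconstantcomp} and density of $\widetilde{W}(\mathcal{S})$ in the target space (a standard Mellin--Fourier argument), this promotes $\widetilde{W}$ to a unitary map from $\widetilde{\mathcal{E}}_n$ onto $L^2\bigl(\tfrac{\sin\alpha}{\sqrt{2\pi}}C_n(\xi)\,d\xi\bigr)$. A parallel computation applied to $\widetilde{K}_n^\alpha = V K_n^\alpha V^{-1}$, using the formula $F_n(\xi) = \sin(\alpha)\mathcal{M}[K_n^\alpha(\cdot,1)](3/2+i\xi)$ already established in Section~\ref{sec:transmission}, identifies $\widetilde{W}\widetilde{K}_n^\alpha\widetilde{W}^{-1}$ with $M_{F_n}$ on the weighted space. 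Multiplication by $\sqrt{C_n(\xi)}$ then supplies a further unitary equivalence with $M_{F_n}$ on standard $L^2(\mathbb{R})$.

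Finally, the spectrum of a real bounded multiplication operator $M_{F_n}$ is the essential range of $F_n$, which by continuity of $F_n$ (inherited from its Mellin representation) and the existence of equal limits at $\pm\infty$ is the closed interval $\Sigma_n$. Applying Lemma~\ref{lem:specestimate} with $\beta = 1/2$ yields $\sup_\xi |F_n(\xi)| \lesssim 1/|n|$, so $\Sigma_n \to \{0\}$ and the union $\bigcup_n \Sigma_n$ is already closed, giving the stated formula for $\sigma(K^{\Gamma_\alpha},\mathcal{E})$. The main obstacle I anticipate is the rigorous construction of $\widetilde{\mathcal{E}}_n$ as the form completion of an unbounded positive operator, and verifying that $\widetilde{W}$ extends to a unitary \emph{onto} the weighted $L^2$ space rather than merely into it; this amounts to checking that $\mathcal{S}$ is a form core and that the polarized Plancherel identity survives passage to the closure.
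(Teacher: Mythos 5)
Your proposal is correct and follows essentially the same route as the paper: the modal decomposition $\mathcal{E}\simeq\bigoplus_n\mathcal{E}_n$, conjugation by $Vf(t)=\sqrt{t}f(t)$, polarization of the Plancherel identity to identify $\widetilde{\mathcal{E}}_n$ with the weighted space $L^2\bigl(\tfrac{\sin\alpha}{\sqrt{2\pi}}C_n(\xi)\,d\xi\bigr)$ via $\widetilde{W}$, the identification of $\widetilde{K}_n^\alpha$ with $M_{F_n}$ using $F_n(\xi)=\sin(\alpha)\mathcal{M}[K_n^\alpha(\cdot,1)](3/2+i\xi)$, and the final renormalization by $\sqrt{C_n}$ together with Lemma~\ref{lem:specestimate} to see that $\bigcup_n\Sigma_n$ is closed. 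The paper executes exactly these steps, including the caveats you flag about density of $\widetilde{W}(\mathcal{S})$ in the weighted space.
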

\begin{rmk}
That $F_n(\xi)$ is real-valued follows abstractly from our considerations, but can also be seen directly from the explicit formula. It follows from Lemma~\ref{lem:specestimate} that the sets $\Sigma_n$ shrink to zero as $|n|\to\infty$ (on the order of $1/|n|$), compare with the proof of Theorem~\ref{thm:infconespec}.
\end{rmk}

\section{Perturbation of a straight cone} \label{sec:perturb}
We consider a surface $\Gamma$ obtained by revolving a curve $\gamma$ in the $xz$-plane around the $z$-axis, see Figure~\ref{fig:surface} for two examples. We suppose that $\gamma$ is a simple $C^5$-curve, $r = \gamma(t) = (\gamma_1(t), \gamma_2(t))$, $0 \leq t \leq 1$, such that $\gamma(0) = 0$, $\gamma_1(t) > 0$ for $0 < t < 1$, $\gamma_1(1) = 0$, $\gamma_2(1) > 0$, and $\gamma_2'(1) = 0$. We normalize the curve so that $|\gamma'(0)|=1$ and assume that $\gamma_1'(0) \neq 0$ and $\gamma_2'(0) \neq 0$. Let $0 < \alpha < \pi$, $\alpha \neq \pi/2$, be the angle which $\gamma'(0)$ makes with the $z$-axis.  Let $\gamma_{c}$ be a curve of the same type as $\gamma$ such that 
$$\gamma_c(t) = (\sin (\alpha) t, \cos (\alpha) t), \qquad 0 \leq t \leq 1/2,$$
and denote its surface of revolution be $\Gamma_c$. The goal in this section is to establish that $K^\Gamma \simeq K^{\Gamma_c}$, on $L^2(\Gamma)$ and on $\mathcal{E}$, so that we may study the essential spectrum of $K^\Gamma$ by considering $K^{\Gamma_c}$.

$\Gamma$ has the parametrization
\begin{equation} \label{eq:gammaparamet2}
\bm{r}(t, \theta) = (\gamma_1(t)\cos\theta, \gamma_1(t) \sin\theta, \gamma_2(t)), \qquad \theta \in [0, 2\pi], \; 0 \leq t \leq 1,
\end{equation}
and therefore
$$d \sigma(t, \theta) = \gamma_1(t)|\gamma'(t)| \, dt \, d\theta.$$
When $t \to 0$ we have by assumption that $\gamma_1(t)|\gamma'(t)| = \sin (\alpha)t + O(t^2)$ and similarly the outward unit normal $\bm{\nu}_{\bm{r}}$ satisfies
\begin{equation} \label{eq:normalperturb}
\bm{\nu}_{\bm{r}} = (\cos \alpha \cos \theta, \cos \alpha \sin \theta, -\sin \alpha) + O(t).
\end{equation}
As we did earlier for the infinite straight cone, we write $K^\Gamma(t, \theta, t', \theta') = K^\Gamma(\bm{r}(t, \theta), \bm{r}(t', \theta'))$, and adopt a similar convention for other functions and kernels on $\Gamma$. 

We will first study the action of $K^\Gamma$ on $L^2(\Gamma)$, and begin with the following simple lemma.
\begin{lem} \label{lem:multcomp}
Let $b(t)$ be a bounded function on $[0,1]$ such that $b(t) = o(1)$ as $t \to 0$. Let $M_b$ denote the operator of multiplication by $b(t)$ on $L^2(\Gamma, d \sigma)$. Then $K^\Gamma M_b$ and $M_b K^\Gamma$ are compact on $L^2(\Gamma, d \sigma)$.
\end{lem}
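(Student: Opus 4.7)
The plan is to exploit the hypothesis $b(t) = o(1)$ as $t \to 0$ in order to localize $M_b$ away from the conical point, thereby reducing the question of compactness to a statement about $K^\Gamma$ on the smooth portion of $\Gamma$. Given $\epsilon > 0$, I would first pick $\delta > 0$ so that $|b(t)| < \epsilon$ on $[0,\delta]$, and then choose a smooth cutoff $\chi$ on $\Gamma$ (depending only on the parameter $t$) with $\chi = 1$ for $t \leq \delta/2$ and $\chi = 0$ for $t \geq \delta$. Writing $b = \chi b + (1-\chi) b$, the sup-norm bound $\|M_{\chi b}\|_{L^2 \to L^2} \leq \epsilon$ yields $\|K^\Gamma M_{\chi b}\| \leq \epsilon \|K^\Gamma\|$. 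Hence $K^\Gamma M_b$ is a norm-limit, as $\epsilon \to 0$, of operators of the form $K^\Gamma M_{(1-\chi) b}$, and it suffices to show that each of these is compact, since the compact operators form a norm-closed ideal.

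For this, the multiplier $(1-\chi)b$ is supported in $\{t \geq \delta/2\}$, a region where $\Gamma$ is $C^5$-smooth. I would introduce a second smooth cutoff $\eta$ with $\eta = 1$ for $t \leq \delta/8$ and $\eta = 0$ for $t \geq \delta/4$, and split
$$K^\Gamma M_{(1-\chi)b} \;=\; M_\eta K^\Gamma M_{(1-\chi)b} \;+\; M_{1-\eta} K^\Gamma M_{(1-\chi)b}.$$
In the first summand, the kernel $\eta(\bm{r}) K^\Gamma(\bm{r}, \bm{r'}) (1-\chi)(t') b(t')$ is supported where $t \leq \delta/4$ and $t' \geq \delta/2$, which forces $|\bm{r} - \bm{r'}| \geq c > 0$. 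The kernel is therefore bounded on the compact set $\Gamma \times \Gamma$, and the operator is Hilbert--Schmidt, hence compact.

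The second summand is supported in $\{t \geq \delta/8\} \times \{t' \geq \delta/2\}$, where $\Gamma$ is a $C^2$-smooth surface. To handle it I would invoke Lemma~\ref{lem:cpctweaksing} together with the remark immediately following it, which record that on a $C^2$ surface the kernel $K^\Gamma$ satisfies the Calder\'on--Zygmund estimates \eqref{eq:czo1}--\eqref{eq:czo3}. The cutoff kernel inherits these estimates globally on $\Gamma \times \Gamma$: they hold trivially where either cutoff vanishes, and on the smooth region they follow from $C^2$-smoothness combined with the smoothness of the cutoffs. Lemma~\ref{lem:cpctweaksing} then yields compactness on $L^2(\Gamma)$, completing the argument for $K^\Gamma M_b$. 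The compactness of $M_b K^\Gamma$ follows by the symmetric argument, interchanging the roles of $\bm{r}$ and $\bm{r'}$ in the splittings (or by taking the $L^2$-adjoint and using $(K^\Gamma)^\ast = K_{\mathrm{NP}}$, which has a kernel of the same type).

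The step I expect to be most delicate is the verification of the Calder\'on--Zygmund estimates for the cutoff kernel in the borderline case where $\bm{r}$ and $\bm{r^\ast}$ straddle the support boundary of $\eta$, with $|\bm{r}-\bm{r^\ast}| < \tfrac{1}{2}|\bm{r}-\bm{r'}|$. This is routine but requires choosing the cutoffs $\eta$ and $\chi$ on slightly enlarged smooth neighborhoods so that whenever the cutoff kernel (or its translate by $\bm{r} \mapsto \bm{r^\ast}$) is nonzero, all three points $\bm{r}, \bm{r^\ast}, \bm{r'}$ lie in a $C^2$-smooth region of $\Gamma$; the estimates then reduce to the known $C^2$-smooth case plus a controlled contribution from the smooth cutoff factors.
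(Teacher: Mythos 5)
Your proposal is correct and takes essentially the same route as the paper: exploit $b(t)=o(1)$ to make the near-corner piece uniformly small in operator norm, and reduce the remaining piece to Lemma~\ref{lem:cpctweaksing}. The only structural difference is that you introduce a second cutoff $\eta$ and split $K^\Gamma M_{(1-\chi)b}$ into an off-diagonal part (bounded kernel, Hilbert--Schmidt) and a near-diagonal part with $C^2$ Calder\'on--Zygmund estimates; the paper instead applies Lemma~\ref{lem:cpctweaksing} in one shot to the one-sided cutoff kernel $K^\Gamma(\bm{r},\bm{r}')\,b(t')(1-\rho_\varepsilon(t'))$, citing a verification from \cite{PP16}. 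Your two-sided splitting is, if anything, the more transparent way to verify the hypotheses of Lemma~\ref{lem:cpctweaksing} (since it pushes both variables into the smooth region before applying the $C^2$ estimates, with the $\delta$-separated piece handled trivially), so the extra sentence acknowledging the borderline case near the support boundary of $\eta$ is the right thing to flag and is indeed routine once one notes that there $|\bm{r}-\bm{r}^*|\gtrsim\delta$ and the kernel values are bounded by $1/\delta$.
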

\begin{proof}
For every $0 < \varepsilon < 1$, let $\rho_\varepsilon \in C_c^\infty([0, 2\varepsilon))$ be such that $\rho_\varepsilon(t) = 1$ for $t \in [0, \varepsilon]$ and $0 \leq \rho_\varepsilon(t) \leq 1$ for all $t$.  $K^\Gamma M_b M_{1 - \rho_\varepsilon}$ has a weakly singular kernel and is therefore a compact operator by Lemma~\ref{lem:cpctweaksing}, cf. \cite[Lemma 1]{PP16}. But for $f \in L^2(\Gamma)$ it holds that
$$\|K^\Gamma M_b f - K^\Gamma M_b M_{1 - \rho_\varepsilon} f\|_{L^2} = \|K^\Gamma M_b M_{\rho_\varepsilon} f\|_{L^2} \leq \|K^\Gamma\|_{L^2(\Gamma) \to L^2(\Gamma)} \|b\rho_\varepsilon f\|_L^2  = o(1)\|f\|_{L^2}.$$
Hence the compact operators $K^\Gamma M_b M_{1 - \rho_\varepsilon}$ converge uniformly to $K^\Gamma M_b$ as $\varepsilon \to 0$, and therefore $K^\Gamma M_b$ is compact. A similar argument shows that $M_b K^\Gamma$ is compact.
\end{proof}
 Having picked a parametrization of $\Gamma$, we may write down the action of $K^\Gamma$ explicitly,
\begin{equation} \label{eq:KGaction}
K^\Gamma f(t, \theta) = \frac{1}{2\pi} \int_0^1 \int_0^{2 \pi} K^\Gamma(t, \theta, t', \theta') f(t', \theta') \, d\sigma(t', \theta'), \quad f \in L^2(\Gamma, d\sigma(t, \theta)).
\end{equation}
Using this formula as the definition, we may also consider $K^\Gamma$ as an operator on $$L^2(\Gamma_c) = L^2([0,1] \times [0, 2\pi], \, d\sigma_c(t, \theta)).$$ The next lemma shows that this renorming only amounts to perturbing $K^\Gamma$ by a compact operator.
\begin{lem} \label{lem:renorm}
$K^\Gamma \colon L^2(\Gamma, d\sigma(t, \theta)) \to L^2(\Gamma, d\sigma(t, \theta))$ is unitarily equivalent to a compact perturbation of $$K^\Gamma \colon L^2(\Gamma_c,  \, d\sigma_c(t,\theta)) \to L^2(\Gamma_c, \, d\sigma_c(t,\theta)).$$ 
\end{lem}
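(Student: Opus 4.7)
\medskip
\noindent\emph{Proof proposal.} The plan is to trivialize the lemma by constructing an explicit unitary between the two $L^2$-spaces that differs from the identity only by a multiplication operator whose symbol tends to $1$ at the conical point, and then appeal to Lemma~\ref{lem:multcomp} to absorb the commutator errors. Write $d\sigma(t,\theta) = w(t)\,dt\,d\theta$ and $d\sigma_c(t,\theta) = w_c(t)\,dt\,d\theta$, where $w(t) = \gamma_1(t)|\gamma'(t)|$ and $w_c(t) = (\gamma_c)_1(t)|\gamma_c'(t)|$. Both weights are smooth and strictly positive on $(0,1]$, and the normalizations on $\gamma$ and $\gamma_c$ guarantee that $w(t) = \sin(\alpha) t + O(t^2)$ and $w_c(t) = \sin(\alpha) t$ as $t \to 0^+$. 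Consequently the ratio $h(t) = w(t)/w_c(t)$ extends to a continuous function on $[0,1]$ with $h(0) = 1$, is bounded on $[0,1]$, and is bounded away from zero.

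In particular, $L^2(\Gamma, d\sigma)$ and $L^2(\Gamma_c, d\sigma_c)$ coincide as vector spaces with equivalent norms, and the map $U \colon L^2(\Gamma,d\sigma) \to L^2(\Gamma_c, d\sigma_c)$ defined by $Uf = h^{1/2} f$ is unitary (a direct check of norms). Under this conjugation, the operator $K^\Gamma$ on $L^2(\Gamma, d\sigma)$ transports to $M_{h^{1/2}} K^\Gamma M_{h^{-1/2}}$ on $L^2(\Gamma_c, d\sigma_c)$, where $K^\Gamma$ on the right still denotes the integral operator \eqref{eq:KGaction}. The lemma is then reduced to showing that the difference
\begin{equation*}
M_{h^{1/2}} K^\Gamma M_{h^{-1/2}} - K^\Gamma
\end{equation*}
is compact as an operator on $L^2(\Gamma_c, d\sigma_c)$.

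I will expand the difference algebraically. Writing $h^{1/2} = 1 + (h^{1/2}-1)$ and $h^{-1/2} = 1 + (h^{-1/2}-1)$ and multiplying out, the $K^\Gamma$ term cancels and one is left with
\begin{equation*}
M_{h^{1/2}-1} K^\Gamma + K^\Gamma M_{h^{-1/2}-1} + M_{h^{1/2}-1} K^\Gamma M_{h^{-1/2}-1}.
\end{equation*}
Both $b_1 = h^{1/2} - 1$ and $b_2 = h^{-1/2} - 1$ are bounded functions on $[0,1]$ vanishing at $t = 0$, so each satisfies the hypothesis of Lemma~\ref{lem:multcomp}. Hence every summand is compact on $L^2(\Gamma, d\sigma)$, and since that space coincides with $L^2(\Gamma_c, d\sigma_c)$ as a topological vector space (equivalent norms), compactness transfers to the latter.

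There isn't a serious obstacle here; the whole content of the argument is the observation that $h(0) = 1$, which is forced by the normalization $|\gamma'(0)| = 1$ together with the opening angle condition, and this is precisely what places $h^{\pm 1/2} - 1$ into the class of multipliers covered by Lemma~\ref{lem:multcomp}. The only minor point to verify carefully is the regularity of $h$ near $t = 0$: since $\gamma$ is assumed to be $C^5$ and $\gamma_c$ is $C^5$, the ratio $w(t)/w_c(t)$ is at least continuous on $[0,1]$, which is already more than needed for the application of Lemma~\ref{lem:multcomp}.
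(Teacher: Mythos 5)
Your argument is correct and coincides with the paper's own proof of Lemma~\ref{lem:renorm}: both introduce the multiplicative unitary given by the square root of the ratio of surface densities, expand the conjugated operator into $K^\Gamma$ plus three terms involving $M_{h^{\pm 1/2}-1}$ (the paper's $M_{b_1}$, $M_{b_2}$ with $b_j(t)=O(t)$), and invoke Lemma~\ref{lem:multcomp} for compactness. The only cosmetic difference is the direction of the unitary (yours maps $L^2(\Gamma)\to L^2(\Gamma_c)$, the paper's the reverse), which changes nothing.
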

\begin{proof}
Let $U \colon L^2(\Gamma_c, \, d\sigma_c(t,\theta)) \to L^2(\Gamma, d\sigma(t, \theta))$ be the unitary map 
$$U \colon f(t, \theta) \mapsto \sqrt{\frac{\gamma_{c,1}(t)|\gamma_c'(t)|}{\gamma_{1}(t)|\gamma'(t)|}} f(t, \theta).$$ 
Let $I \colon  L^2(\Gamma, \sin(\alpha) t \, dt \, d\theta) \to L^2(\Gamma, d\sigma(t, \theta))$ be the inclusion map, $If = f$. Then $U = I + M_{b_1}$ and $U^{-1} = I^{-1} + M_{b_2}$, for functions $b_1$ and $b_2$ which satisfy $b_j(t) = O(t)$, $j=1,2$. Hence
$$ U K^\Gamma U^{-1} = K^\Gamma + K^\Gamma M_{b_2} + M_{b_1} K^\Gamma + M_{b_1} K^\Gamma M_{b_2},$$
which by Lemma~\ref{lem:multcomp} implies the desired property of $K^\Gamma$. 
\end{proof}

We can now prove our main perturbation result on $L^2(\Gamma)$.
\begin{thm} \label{thm:perturbcurve}
$K^\Gamma \colon L^2(\Gamma) \to L^2(\Gamma)$ and $K^{\Gamma_c} \colon L^2(\Gamma_c) \to L^2(\Gamma_c)$ are equivalent in the sense of Definition~\ref{def:fredholm}. That is,
$$K^\Gamma \simeq K^{\Gamma_c}.$$
\end{thm}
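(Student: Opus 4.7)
By Lemma~\ref{lem:renorm}, it suffices to show that when $K^\Gamma$ is viewed on $L^2(\Gamma_c, d\sigma_c)$ via the common parametrization from \eqref{eq:gammaparamet2}, the difference $K^\Gamma - K^{\Gamma_c}$ is compact on $L^2(\Gamma_c, d\sigma_c)$. The plan is to exploit the $C^5$-regularity of $\gamma$ together with $\gamma(0)=0$ and $\gamma'(0)$ being parallel to the generator of $\gamma_c$. These hypotheses, combined with the Taylor expansion of $\gamma$ at the conical tip, yield smooth vector-valued functions $\bm{a}$ and $\bm{b}$ on $[0,1]\times[0,2\pi]$ such that
\begin{equation*}
\bm{r}(t,\theta) - \bm{r}_c(t,\theta) = t^2\bm{a}(t,\theta), \qquad \bm{\nu}_{\bm{r}(t,\theta)} - \bm{\nu}_{\bm{r}_c(t,\theta)} = t\,\bm{b}(t,\theta),
\end{equation*}
together with the uniform comparability $|\bm{r}(t,\theta)-\bm{r}(t',\theta')| \simeq |\bm{r}_c(t,\theta)-\bm{r}_c(t',\theta')|$.

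The next step is an algebraic expansion of the kernel difference. Writing
\begin{equation*}
|\bm{r}-\bm{r}'|^{-3} - |\bm{r}_c-\bm{r}_c'|^{-3} = \frac{(|\bm{r}_c-\bm{r}_c'|-|\bm{r}-\bm{r}'|)\bigl(|\bm{r}_c-\bm{r}_c'|^2+|\bm{r}_c-\bm{r}_c'||\bm{r}-\bm{r}'|+|\bm{r}-\bm{r}'|^2\bigr)}{|\bm{r}-\bm{r}'|^3|\bm{r}_c-\bm{r}_c'|^3},
\end{equation*}
and polarizing $|\bm{r}_c-\bm{r}_c'|^2 - |\bm{r}-\bm{r}'|^2$ via the Taylor bounds above, together with the corresponding expansion of the numerator $\langle \bm{r}-\bm{r}',\bm{\nu}_{\bm{r}}\rangle - \langle \bm{r}_c-\bm{r}_c',\bm{\nu}_{\bm{r}_c}\rangle$, yields a finite decomposition
\begin{equation*}
K^\Gamma(t,\theta,t',\theta') - K^{\Gamma_c}(t,\theta,t',\theta') = \sum_{j,k} m_{jk}(t,\theta,t',\theta')\,\frac{(\bm{r}_c-\bm{r}_c')_k}{|\bm{r}_c-\bm{r}_c'|^3} + H(t,\theta,t',\theta'),
\end{equation*}
where each multiplier $m_{jk}$ is bounded and of order $O(t)+O(t')$ near the tip, and the remainder $H$ is weakly singular in the sense that it satisfies the hypotheses of Lemma~\ref{lem:cpctweaksing}. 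The point of retaining the Calder\'on--Zygmund-type factor $(\bm{r}_c-\bm{r}_c')_k/|\bm{r}_c-\bm{r}_c'|^3$ intact is that this is precisely the Riesz kernel for $\Gamma_c$, so Lemma~\ref{lem:riesz} applies.

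With the decomposition in hand, the conclusion is straightforward. Each operator $M_{m_{jk}}R_k^{\Gamma_c}$ is bounded on $L^2(\Gamma_c)$ by Lemma~\ref{lem:riesz}, and compactness follows by the cut-off approximation used in Lemma~\ref{lem:multcomp}: truncating $m_{jk}$ by $1-\rho_\varepsilon(t)-\rho_\varepsilon(t')$ produces operators with weakly singular kernels (compact by Lemma~\ref{lem:cpctweaksing}), while the removed pieces converge uniformly to zero since $m_{jk}$ vanishes at the tip. The remainder $H$ is compact directly by Lemma~\ref{lem:cpctweaksing}. The main obstacle is the algebraic bookkeeping in the decomposition step: one must expand both numerator and denominator so that every term either contains a complete factor $(\bm{r}_c-\bm{r}_c')_k/|\bm{r}_c-\bm{r}_c'|^3$ (for which the Riesz bound supplies the cancellation needed at the principal-value scale) or is integrable against $|\bm{r}-\bm{r}'|^{-1}$ and thus weakly singular. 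The Taylor bounds $O(t^2)$ and $O(t)$ are exactly what is needed to ensure that after extracting one Riesz factor, the remaining multiplier is bounded and vanishing at the tip.
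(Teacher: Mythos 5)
Your approach matches the paper's in broad outline---reduce via Lemma~\ref{lem:renorm}, Taylor-expand $\gamma$ against the straight line segment, decompose the kernel difference into Riesz kernels times multipliers that vanish at the tip plus a weakly singular remainder, and conclude via Lemma~\ref{lem:riesz}, Lemma~\ref{lem:cpctweaksing}, and the cut-off argument of Lemma~\ref{lem:multcomp}. However, there is a genuine gap in how you handle the Riesz-times-multiplier terms, and it occurs precisely at the technical crux of the proof.

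You claim that each $m_{jk}$ is merely \emph{bounded} (and $O(t)+O(t')$ near the tip), and you then assert both that ``$M_{m_{jk}}R_k^{\Gamma_c}$ is bounded by Lemma~\ref{lem:riesz}'' and that ``truncating $m_{jk}$ by $1-\rho_\varepsilon(t)-\rho_\varepsilon(t')$ produces operators with weakly singular kernels.'' Neither statement is correct as written. First, Lemma~\ref{lem:riesz} bounds the pure Riesz transform (and its maximal truncations); it does not bound an integral operator whose kernel is a Riesz kernel multiplied by a bounded function of \emph{both} variables, and $m_{jk}$ does depend on all four arguments. Second, truncating away from the conical tip does nothing to the diagonal singularity: the kernel $m_{jk}(\bm{r},\bm{r'})R_k^{\Gamma_c}(\bm{r},\bm{r'})$ still blows up like $|\bm{r}-\bm{r'}|^{-2}$ on the diagonal away from the tip, which violates the weak-singularity hypothesis \eqref{eq:czo1} of Lemma~\ref{lem:cpctweaksing}. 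What is missing is the finer structure of the multiplier: it must factor as $(b_1(\bm{r})+b_2(\bm{r'}))\,B(\bm{r},\bm{r'})$ with $b_1,b_2$ depending on a single variable and vanishing at the tip, and $B$ \emph{Lipschitz} (not merely bounded---this is where the $C^5$ regularity enters a second time). With that in hand one may write
\begin{equation*}
B(\bm{r},\bm{r'})R_k(\bm{r},\bm{r'}) = \bigl(B(\bm{r},\bm{r'})-B(\bm{r'},\bm{r'})\bigr)R_k(\bm{r},\bm{r'}) + B(\bm{r'},\bm{r'})R_k(\bm{r},\bm{r'}),
\end{equation*}
where the first summand is genuinely weakly singular by the Lipschitz bound and hence compact via Lemma~\ref{lem:cpctweaksing}, and the second is a single-variable multiplication composed with the Riesz transform, which is exactly what Lemma~\ref{lem:riesz} does cover. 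Only then does the cut-off argument of Lemma~\ref{lem:multcomp}---driven by the single-variable factors $b_1,b_2$ vanishing at the origin---give the operator-norm convergence you need. Your Taylor bounds $\bm{r}-\bm{r}_c=t^2\bm{a}$, $\bm{\nu}-\bm{\nu}_c=t\bm{b}$ are the right raw ingredients for producing this factorization, but the claim that mere boundedness of $m_{jk}$ suffices, and that the truncation produces weakly singular kernels, is the step that would fail if checked.
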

\begin{proof}
Let $\rho_{\varepsilon}$ be as in Lemma~\ref{lem:multcomp} and let $\rho = \rho_{1/4}$. By Lemma~\ref{lem:renorm} we are justified to consider the difference $K^{\Gamma} - K^{\Gamma_c}$ as an operator on $L^2(\Gamma_c)$, and it sufficient to prove that it is compact. The differences $K^\Gamma - M_\rho K^\Gamma M_\rho$ and $K^{\Gamma_c} -  M_\rho K^{\Gamma_{c}} M_\rho$ have weakly singular kernels and therefore define compact operators by Lemma~\ref{lem:cpctweaksing}. It follows that it is sufficient to prove that $T = M_\rho(K^\Gamma - K^{\Gamma_c})M_\rho$ is compact on $L^2(\widetilde{\Gamma}_c, d\sigma_c)$, where $\widetilde{\Gamma}_c$ is the conical surface
$$\widetilde{\Gamma}_c = \{(\sin (\alpha) t \cos \theta, \sin (\alpha) t \sin \theta, \cos (\alpha) t) \, : \, (t, \theta) \in [0, 1/2] \times [0, 2\pi]\}$$
with surface measure $d\sigma_c(t, \theta) = \sin (\alpha) t\, dt \, d\theta$.

Since $T - M_{\rho_\varepsilon} T M_{\rho_\varepsilon}$ is compact, again by Lemma~\ref{lem:cpctweaksing}, it is sufficient to show that $T_\varepsilon = M_{\rho_\varepsilon} (K^\Gamma - K^{\Gamma_c}) M_{\rho_\varepsilon}$ converges to zero in operator norm as $\varepsilon \to 0$. Note that for $t \in [0, 1/2]$
\begin{multline*}
M_{\rho_\varepsilon} K^\Gamma M_{\rho_\varepsilon} f(t, \theta) = \frac{\rho_\varepsilon(t)}{2\pi} \int_0^{2\pi} \int_0^{2\varepsilon} K^\Gamma(t, \theta, t', \theta') \rho_\varepsilon(t')f(t', \theta') \, d\sigma_c(t', \theta' ) \\ + \frac{\rho_\varepsilon(t)}{2\pi} \int_0^{2\pi} \int_0^{2\varepsilon} K^\Gamma(t, \theta, t', \theta') \rho_\varepsilon(t')f(t', \theta') b(t') \, dt' \, d\theta',
\end{multline*}
where $b(t') = O(t')$ as $t' \to 0$. The second integral in this expression has operator norm tending to zero as $\varepsilon \to 0$ (cf. Lemma~\ref{lem:multcomp}), and hence we do not have to consider it.

We will work with $\widetilde{\Gamma}_c$ in its Lipschitz parametrization
$$\widetilde{\Gamma}_c = \{(x, y, \cot(\alpha)\sqrt{x^2+y^2}) \, : \, x^2 + y^2 \leq 1/4 \},$$
for which the surface measure is given by $d\sigma_c(x,y) = (1/\sin \alpha) \, dx \, dy$ and the unit outward normal is
$$\bm{\nu}^c_{x,y} = \sin \alpha \left(\cot \alpha \frac{x}{\sqrt{x^2+y^2}}, \cot \alpha \frac{y}{\sqrt{x^2+y^2}}, -1\right).$$
Close to the origin, $\Gamma$ has by assumption the parametrization 
$$\Gamma \cap \{\bm{r} \, : \, |\bm{r}| < \delta\} = \{(x, y, \cot(\alpha)\varphi(\sqrt{x^2+y^2})) \, : \, x^2 + y^2 < \delta^2 \},$$
for a sufficiently small $\delta > 0$ and a function $\varphi \in C^5([0, \delta])$ such that $\varphi(t) = t + O(t^2)$. For $\varepsilon \leq \delta/\sin(\alpha)$, we are now left to consider the integral kernel 
\begin{equation} \label{eq:Tintker}
\rho_\varepsilon(\csc(\alpha) \sqrt{x^2 + y^2})\widetilde{T}(x,y, x', y')\rho_\varepsilon(\csc(\alpha) \sqrt{x'^2 + y'^2}),
\end{equation}
where for $x^2+y^2 < \delta^2$ and $x'^2 + y'^2 < \delta^2$ it holds that
\begin{align*}
\widetilde{T}(x,y,x',y') &= \frac{\langle (x-x',y-y', \cot(\alpha) (\sqrt{x^2+y^2} - \sqrt{x'^2+y'^2})), \bm{\nu}_{x,y}^c \rangle}{|(x-x',y-y', \cot(\alpha) (\sqrt{x^2+y^2} - \sqrt{x'^2+y'^2}))|^3} 
\\&\qquad- \frac{\langle (x-x',y-y', \cot(\alpha) (\varphi(\sqrt{x^2+y^2}) - \varphi(\sqrt{x'^2+y'^2}))), \bm{\nu}_{x,y} \rangle}{|(x-x',y-y', \cot(\alpha) (\varphi(\sqrt{x^2+y^2})) - \varphi(\sqrt{x'^2+y'^2})))|^3} 
\\ &= 
\underbrace{\frac{\varphi(\sqrt{x^2+y^2}) - \varphi(\sqrt{x'^2+y'^2}) - \sqrt{x^2+y^2} + \sqrt{x'^2+y'^2}}{\tan \alpha |(x-x',y-y', \cot(\alpha) (\sqrt{x^2+y^2} - \sqrt{x'^2+y'^2}))|^3}}_{(I)} 
\\&\qquad + \underbrace{\frac{\langle (x-x',y-y', \cot(\alpha) (\varphi(\sqrt{x^2+y^2}) - \varphi(\sqrt{x'^2+y'^2}))), \bm{\nu}_{x,y}^c - \bm{\nu}_{x,y} \rangle}{|(x-x',y-y', \cot(\alpha) (\sqrt{x^2+y^2} - \sqrt{x'^2+y'^2}))|^3}}_{(II)}
\\&\qquad + \underbrace{\langle (x-x',y-y', \cot(\alpha) (\varphi(\sqrt{x^2+y^2}) - \varphi(\sqrt{x'^2+y'^2}))), \bm{\nu}_{x,y} \rangle D(x,y, x', y')}_{(III)},
\end{align*}
where
\begin{multline*}
D(x,y, x', y') = |(x-x',y-y', \cot(\alpha) (\sqrt{x^2+y^2} - \sqrt{x'^2+y'^2}))|^{-3} \\- |(x-x',y-y', \cot(\alpha) (\varphi(\sqrt{x^2+y^2})) - \varphi(\sqrt{x'^2+y'^2})))|^{-3}.
\end{multline*}

To prove the theorem, we will now show that $\widetilde{T}$ decomposes into a finite sum 
\begin{equation} \label{eq:Tdecomp}
\widetilde{T} = \sum_i \widetilde{T}^i,
\end{equation}
 where each $\widetilde{T}^i$ is of the form $$\widetilde{T}^i(x,y,x',y') = (b_1(x,y) + b_2(x',y'))B(x,y,x',y') R(x,y,x',y').$$
 Here $R$ denotes one of the three Riesz transforms of $\widetilde{\Gamma}_c$, $b_1$ and $b_2$ are bounded functions satisfying $b_1(x,y) = O(\sqrt{x^2+y^2})$ and $b_2(x',y') = O(\sqrt{x'^2+y'^2})$, and $B \in C^1(([-\delta,\delta]\setminus\{0\})^4)$ is Lipschitz. Note that the operator with kernel $B(x,y,x',y') R(x,y,x',y')$ is bounded; in the decomposition
 $$B(x,y,x',y') R(x,y,x',y') = (B(x,y,x',y') - B(x',y',x',y'))R(x,y,x',y') + B(x',y',x',y')R(x,y,x',y')$$
 the first term is weakly singular and therefore defines a compact operator by Lemma~\ref{lem:cpctweaksing}, while the second term defines a bounded integral operator by an application of Lemma~\ref{lem:riesz}. Since $b_1$ and $b_2$ both tend to zero at the origin, the argument of Lemma~\ref{lem:multcomp} hence shows that the integral operator with the kernel \eqref{eq:Tintker} tends to zero in operator norm as $\varepsilon \to 0$. Thus, the proof is finished if we show that there is a decomposition of the type \eqref{eq:Tdecomp}.

In fact, each of the terms $(I)$, $(II)$, and $(III)$, decomposes in the described way. Let us first treat the term $(I)$. Let $\psi(t) = t^{-2}(\varphi(t) - t) \in C^3([0,\delta])$. Then
\begin{multline*}
(I) = \psi(\sqrt{x^2+y^2})(x + x')\frac{x-x'}{\tan \alpha |(x-x',y-y', \cot(\alpha) (\sqrt{x^2+y^2} - \sqrt{x'^2+y'^2}))|^3} 
\\ + \psi(\sqrt{x^2+y^2})(y + y')\frac{y-y'}{\tan \alpha |(x-x',y-y', \cot(\alpha) (\sqrt{x^2+y^2} - \sqrt{x'^2+y'^2}))|^3} 
\\+ \frac{\psi(\sqrt{x^2+y^2}) - \psi(\sqrt{x'^2+y'^2})}{\sqrt{x^2+y^2} - \sqrt{x'^2+y'^2}}(x'^2+y'^2)\frac{\sqrt{x^2+y^2} - \sqrt{x'^2+y'^2}}{\tan \alpha |(x-x',y-y', \cot(\alpha) (\sqrt{x^2+y^2} - \sqrt{x'^2+y'^2}))|^3}.
\end{multline*} 
Each of the terms of this sum are of the described form. For instance, for the final term we have that $b_1 = 0$, $b_2(x',y') = x'^2+y'^2$, $$B(x,y,x',y') = \frac{\psi(\sqrt{x^2+y^2}) - \psi(\sqrt{x'^2+y'^2})}{\sqrt{x^2+y^2} - \sqrt{x'^2+y'^2}},$$
and $R(x,y,x',y') = R^{\widetilde{\Gamma}_c}_3(x,y,x',y')$ is the kernel of the third Riesz transform of $\widetilde{\Gamma}_c$. 

The term $(II)$ is treated very similarly, after using that
$$\bm{\nu}_{x,y} = \frac{\cot \alpha}{\left(1 + \cot^2(\alpha)\left[\varphi'(\sqrt{x^2+y^2})\right]^2\right)^{1/2}}\left( \frac{x}{\sqrt{x^2+y^2}} \varphi(\sqrt{x^2+y^2}),  \frac{y}{\sqrt{x^2+y^2}}\varphi(\sqrt{x^2+y^2}), -\frac{1}{\cot \alpha}\right).$$

To deal with term $(III)$, let 
$$d_1(x,y,x',y') = |(x-x',y-y', \cot(\alpha) (\sqrt{x^2+y^2} - \sqrt{x'^2+y'^2}))|,$$
and let
$$d_2(x,y,x',y') =  |(x-x',y-y', \cot(\alpha) (\varphi(\sqrt{x^2+y^2})) - \varphi(\sqrt{x'^2+y'^2})))|.$$
Then
$$D = \frac{(d_1^2 + d_1d_2 + d_2^2)(d_1 - d_2)}{d_1^3d_2^3}.$$
The quotients $d_1/d_2$ and $d_2/d_1$ are continuously differentiable on $([-\delta,\delta]\setminus\{0\})^4$ and Lipschitz, so it is sufficient to consider the kernel
$$\frac{\langle (x-x',y-y', \cot(\alpha) (\varphi(\sqrt{x^2+y^2}) - \varphi(\sqrt{x'^2+y'^2}))), \bm{\nu}_{x,y} \rangle (d_1(x,y,x',y') - d_2(x,y,x',y'))}{d_1(x,y,x',y')^3}.$$
The remaining details are again very similar to those of the term $(I)$.
\end{proof}
Next we will prove the corresponding theorem for $\mathcal{E}$, the energy space associated with $\Gamma$. We denote by $\mathcal{E}_c$ the energy space associated with $\Gamma_c$. Recall that $\mathcal{E}$ may be isomorphically identified with $H^{-1/2}(\Gamma)$. To treat the mapping properties of $K^{\Gamma}$ on $\mathcal{E}$ we will actually consider $(K^\Gamma)^\ast$ on $H^{1/2}(\Gamma)$. Just as for $K^\Gamma$, we may use the parametrization of $\Gamma$ to consider $(K^\Gamma)^\ast$ as an operator on $H^{1/2}(\Gamma_c)$, cf. the remarks surrounding \eqref{eq:KGaction}. We then have the following analogue of Lemma~\ref{lem:renorm}.
\begin{lem} \label{lem:Erenorm}
$(K^\Gamma)^\ast \colon H^{1/2}(\Gamma) \to H^{1/2}(\Gamma)$ is similar to $(K^\Gamma)^\ast \colon H^{1/2}(\Gamma_c) \to H^{1/2}(\Gamma_c)$.
\end{lem}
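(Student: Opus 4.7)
The plan is to exhibit an explicit bounded invertible map $P$ realizing the similarity. Since $\Gamma$ and $\Gamma_c$ share the common parameter domain $(t,\theta) \in [0,1]\times[0,2\pi]$, I define $P$ by coordinate identification: a function $f$ with representative $F(t,\theta) = f(\bm{r}(t,\theta))$ on $\Gamma$ is sent to the function on $\Gamma_c$ with the same representative, $(Pf)(\bm{r}_c(t,\theta)) = F(t,\theta)$. By the very definition of $(K^\Gamma)^\ast$ as an operator on $H^{1/2}(\Gamma_c)$ given in the discussion preceding the lemma (cf.~\eqref{eq:KGaction}), the integral kernel representing $(K^\Gamma)^\ast$ in the $(t,\theta)$ coordinates is the same on both sides, so
\[
P\,(K^\Gamma)^\ast\big|_{H^{1/2}(\Gamma)} = (K^\Gamma)^\ast\big|_{H^{1/2}(\Gamma_c)}\,P
\]
holds tautologically. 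The real content of the lemma is to show that $P$ extends to a bounded isomorphism between the two $H^{1/2}$ spaces.

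For this, I would use the Gagliardo--Slobodeckij characterization \eqref{eq:gagliardo}. Pulled back to the parameter domain, the two $H^{1/2}$ norms differ only through the surface measures and the extrinsic distances entering the double integral. Equivalence of the pulled-back norms therefore reduces to two uniform comparisons,
\[
d\sigma(t,\theta) \simeq d\sigma_c(t,\theta),
\qquad
|\bm{r}(t,\theta) - \bm{r}(t',\theta')| \simeq |\bm{r}_c(t,\theta) - \bm{r}_c(t',\theta')|,
\]
on the parameter domain. The measure equivalence reduces to $\gamma_1(t)|\gamma'(t)| \simeq \gamma_{c,1}(t)|\gamma_c'(t)|$, which follows from both sides being asymptotic to $\sin(\alpha)\,t$ as $t\to 0$, vanishing to the same linear order as $t\to 1$, and being strictly positive and continuous in between.

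The main obstacle is the distance comparison. Near $t=0$, the asymptotics $\gamma(t) = \gamma_c(t) + O(t^2)$ yields, by direct expansion,
\[
|\bm{r}(t,\theta) - \bm{r}(t',\theta')| = |\bm{r}_c(t,\theta) - \bm{r}_c(t',\theta')|\bigl(1 + O(\max(t,t'))\bigr),
\]
so the two extrinsic distances are equivalent with constants approaching $1$ as one approaches the conical tip. Away from both endpoints $t=0$ and $t=1$, both parametrizations are bi-Lipschitz onto smooth portions of their respective surfaces, so comparability is immediate. Near the opposite pole $t=1$, a standard local argument for smooth rotationally symmetric surfaces handles the usual $\theta$-coordinate singularity. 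A partition-of-unity argument then patches these local equivalences into a global one. Granted the two comparisons, $P$ extends to a bounded isomorphism $H^{1/2}(\Gamma) \to H^{1/2}(\Gamma_c)$, and the intertwining relation above implements the desired similarity.
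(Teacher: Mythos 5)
Your proof is correct and follows essentially the same route as the paper: the paper observes that the identification map (which it calls the inclusion map under the shared parametrization) is an isomorphism between the two $H^{1/2}$ spaces, and that this is ``clear from the Gagliardo--Slobodeckij norm expression.'' You have simply unpacked what the paper treats as evident, namely the reduction to the measure and extrinsic-distance comparisons and their verification near the conical tip, away from the endpoints, and near the opposite pole.
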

\begin{proof}
This is immediate from the fact that the inclusion map $I : H^{1/2}(\Gamma) \to H^{1/2}(\Gamma_c)$ is an isomorphism. In other words, the norms of $H^{1/2}(\Gamma)$ and $H^{1/2}(\Gamma_c)$ are comparable, which is clear from the Gagliardo-Slobodeckij norm expression \eqref{eq:gagliardo}.
\end{proof}
\begin{thm} \label{thm:Eperturbcurve}
        $K^\Gamma \colon \mathcal{E} \to \mathcal{E}$ and $K^{\Gamma_c} \colon \mathcal{E}_c \to \mathcal{E}_c$ are equivalent in the sense of Definition~\ref{def:fredholm}. That is,
        $$K^\Gamma \simeq K^{\Gamma_c}.$$
\end{thm}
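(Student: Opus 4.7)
The plan is to show that the difference $(K^\Gamma)^\ast - (K^{\Gamma_c})^\ast$, pulled back to the parameter space $\Gamma_c$ via the parametrization \eqref{eq:gammaparamet2}, is compact on $H^{1/2}(\Gamma_c)$. Dualizing, this yields compactness of $K^\Gamma - K^{\Gamma_c}$ on $H^{-1/2}(\Gamma_c)$, and combining with Lemma~\ref{lem:Erenorm} (and its $H^{-1/2}$-dual, which renorms the $\mathcal{E}$-action so that both operators may be considered on the same space) gives $K^\Gamma \simeq K^{\Gamma_c}$ in the sense of Definition~\ref{def:fredholm}. This follows the strategy the introduction points to: the Plemelj formula combined with further estimates and real interpolation.

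I would obtain the $H^{1/2}$-compactness by interpolating between two endpoint statements. The first is compactness on $L^2(\Gamma_c)$, which is free: Theorem~\ref{thm:perturbcurve} asserts that $K^\Gamma - K^{\Gamma_c}$ is compact on $L^2(\Gamma_c)$, and taking $L^2$-adjoints preserves compactness. The second is boundedness on $H^1(\Gamma_c)$, where the Plemelj formula enters decisively. Rewriting \eqref{eq:plemelj} as $(K^\Gamma)^\ast = S^\Gamma K^\Gamma (S^\Gamma)^{-1}$ and invoking Lemma~\ref{lem:Siso}, which provides an isomorphism $S^\Gamma \colon L^2(\Gamma) \to H^1(\Gamma)$ for any bounded simply connected Lipschitz surface, the map $(K^\Gamma)^\ast$ factors through the classically bounded $K^\Gamma \colon L^2(\Gamma) \to L^2(\Gamma)$ to yield boundedness of $(K^\Gamma)^\ast$ on $H^1(\Gamma)$. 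Applying the same reasoning to $\Gamma_c$ produces boundedness of $(K^{\Gamma_c})^\ast$ on $H^1(\Gamma_c)$; pulling both operators back to the common parameter space as in Lemma~\ref{lem:Erenorm} gives boundedness of the difference on $H^1(\Gamma_c)$.

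With the two endpoint statements in hand, the proof concludes by invoking the classical compactness-by-interpolation theorem of Krasnosel'skii, Lions-Peetre, and Cwikel: if $T$ is compact on $X_0$ and bounded on $X_1$, then $T$ is compact on the real interpolation space $(X_0, X_1)_{\theta, p}$ for every $0 < \theta < 1$ and $1 \leq p \leq \infty$. Taking $(X_0, X_1) = (L^2(\Gamma_c), H^1(\Gamma_c))$ with $\theta = 1/2$ and $p=2$ identifies the interpolation space with $H^{1/2}(\Gamma_c)$, giving the desired compactness of $(K^\Gamma)^\ast - (K^{\Gamma_c})^\ast$. I expect the most delicate bookkeeping to be at the conical vertex when transporting the Plemelj identity and the $H^1$-norm through the parametrization. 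The $C^5$-regularity of $\gamma$ away from the vertex together with the matched leading-order behavior $\gamma_1(t) = \sin(\alpha) t + O(t^2)$ ensures that the Jacobian ratios are well-behaved; nevertheless one must verify that the $\mathcal{E}$-analogue of Lemma~\ref{lem:renorm} holds, which amounts to showing that the multiplication operators arising from the change of variables differ from the identity by operators that are compact on $\mathcal{E}$, in parallel with the $L^2$-argument of Lemma~\ref{lem:multcomp}.
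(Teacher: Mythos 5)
Your proposal is correct, but it takes a genuinely different route through the $H^1$ endpoint. You and the paper share the frame — pull $(K^\Gamma)^\ast - (K^{\Gamma_c})^\ast$ back to the common parameter space, inherit $L^2$-compactness from Theorem~\ref{thm:perturbcurve}, deduce $H^{1/2}$-compactness by real interpolation, and then dualize via Lemma~\ref{lem:Erenorm}. The divergence is at $H^1$. The paper proves \emph{compactness} there: Lemma~\ref{lem:Siso} reduces this to compactness of $((K^\Gamma)^\ast - (K^{\Gamma_c})^\ast) S^{\Gamma_c} : L^2(\Gamma_c) \to H^1(\Gamma_c)$, and the Plemelj identity splits that operator into three pieces, two of which force a fresh kernel decomposition of $S^{\Gamma_c} - S^{\Gamma}$ together with the maximal Riesz transform estimate of Lemma~\ref{lem:riesz} — in effect a second pass through the machinery of Theorem~\ref{thm:perturbcurve}. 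You replace all of that with the far weaker statement that the difference is merely \emph{bounded} on $H^1$, which as you observe follows from Plemelj plus Lemma~\ref{lem:Siso}, or even more directly by dualizing the classical bound $K^\Gamma : H^{-1}(\Gamma) \to H^{-1}(\Gamma)$ from \cite{Ver84}, and then you appeal to one-sided compactness interpolation for the real method (the Lions--Peetre problem, settled in full generality by Cwikel). This is a real savings — most of the analysis of $S^{\Gamma_c} - S^{\Gamma}$ disappears — at the cost of invoking an interpolation theorem considerably deeper than what the paper needs once compactness is available at both endpoints.

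One small correction to the caveat at the end of your write-up: there is no need for an $\mathcal{E}$-analogue of Lemma~\ref{lem:renorm} in the sense of a compact renorming perturbation à la Lemma~\ref{lem:multcomp}. That device is required only on $L^2$, where the measures $d\sigma$ and $d\sigma_c$ genuinely differ. On $H^1(\Gamma)$ and $H^1(\Gamma_c)$ (and likewise on $H^{1/2}$) the two surface norms are simply equivalent, because the parametrizations are bi-Lipschitz; Lemma~\ref{lem:Erenorm} is just the statement that the inclusion is an isomorphism, and the same is immediate for $H^1$ from the Lipschitz-graph definition of the norm. No compact-operator bookkeeping is required to move between the two surfaces at those regularity levels.
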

\begin{proof}
The proof of Theorem~\ref{thm:perturbcurve} also shows that $(K^\Gamma)^\ast - (K^{\Gamma_c})^\ast$ is compact as an operator on $L^2(\Gamma_c)$. We will soon prove that this difference is also compact as an operator on $H^1(\Gamma_c)$. By real interpolation \cite{CEP90} it follows that
$$(K^\Gamma)^\ast - (K^{\Gamma_c})^\ast \colon H^{1/2}(\Gamma_c) \to H^{1/2}(\Gamma_c)$$
is compact. By Lemma~\ref{lem:Erenorm} it follows that $(K^\Gamma)^\ast \colon H^{1/2}(\Gamma) \to H^{1/2}(\Gamma)$ and $(K^{\Gamma_c})^\ast \colon H^{1/2}(\Gamma_c) \to H^{1/2}(\Gamma_c)$ are equivalent in the sense of Definition~\ref{def:fredholm}. This proves the theorem, by duality and the identification of $H^{-1/2}(\Gamma)$ and $\mathcal{E}$.

As we have done previously for $K^\Gamma$ and $(K^\Gamma)^\ast$, we may consider $S^{\Gamma}$ as an operator on $L^2(\Gamma_c)$. It is an obviously bounded operator, using Lemma~\ref{lem:cpctweaksing}. To prove compactness on $H^1(\Gamma_c)$ we may, by Lemma~\ref{lem:Siso}, equivalently prove that 
$$((K^\Gamma)^\ast - (K^{\Gamma_c})^\ast)S^{\Gamma_c} \colon L^2(\Gamma_c) \to H^1(\Gamma_c)$$
is compact. By the Plemelj formula \eqref{eq:plemelj} we have that
$$((K^\Gamma)^\ast - (K^{\Gamma_c})^\ast)S^{\Gamma_c} = S^{\Gamma_c}(K^{\Gamma}-K^{\Gamma_c}) - (S^{\Gamma_c} - S^{\Gamma})K^{\Gamma} + (K^\Gamma)^\ast(S^{\Gamma_c} - S^{\Gamma}).$$
We already know that $S^{\Gamma_c}(K^{\Gamma}-K^{\Gamma_c}) \colon L^2(\Gamma_c) \to H^1(\Gamma_c)$ is compact, by the proof of Theorem~\ref{thm:perturbcurve} and Lemma~\ref{lem:Siso}. It is hence sufficient to prove that $(S^{\Gamma_c} - S^{\Gamma})K^{\Gamma}$ and $(K^\Gamma)^\ast(S^{\Gamma_c} - S^{\Gamma})$ also are compact terms. They are both similar; we will treat the first one.

Let $\rho_\varepsilon$ be as in Lemma~\ref{lem:multcomp}. Since $M_{\rho_{\varepsilon}}K^\Gamma M_{\rho_{\varepsilon}} - K^\Gamma$ is compact on $L^2(\Gamma_c)$ by Lemma~\ref{lem:cpctweaksing}, it is sufficient to prove that
$$(S^{\Gamma_c} - S^{\Gamma})M_{\rho_{\varepsilon}}K^\Gamma M_{\rho_{\varepsilon}} \colon L^2(\Gamma_c) \to H^1(\Gamma_c)$$
is compact. To accomplish this, we show that $(S^{\Gamma_c} - S^{\Gamma})M_{\rho_{\varepsilon}}$ is compact. For $\bm{r} \in \Gamma_c$, let $\bm{\tau} = \bm{\tau}(\bm{r})$ be a tangent vector on $\Gamma_c$, smooth in $\bm{r}$, except close to the origin where we still assume that $\bm{\tau}$ is bounded above and below in length. We need to show that
$\partial_{\bm{\tau}} (S^{\Gamma_c} - S^{\Gamma})M_{\rho_{\varepsilon}} \colon L^2(\Gamma_c) \to L^2(\Gamma_c)$ is compact, for any choice of $\bm{\tau}$. Clearly $M_{1 - \rho_{3\varepsilon}}\partial_{\bm{\tau}} (S^{\Gamma_c} - S^{\Gamma})M_{\rho_{\varepsilon}}$ is compact on $L^2(\Gamma_c)$ by Lemma~\ref{lem:cpctweaksing}, since its integral kernel is smooth, so it is sufficient to show that the norm of
$$T = M_{\rho_{3\varepsilon}}\partial_{\bm{\tau}} (S^{\Gamma_c} - S^{\Gamma})M_{\rho_{\varepsilon}} \colon L^2(\Gamma_c) \to L^2(\Gamma_c)$$
tends to zero as $\varepsilon \to 0^+$.

For this purpose we may consider $T$ as on operator on $L^2(\widetilde{\Gamma}_c)$, where $\widetilde{\Gamma}_c$ is the conical surface of the proof of Theorem~\ref{thm:perturbcurve}. We consider the same parametrization as in that proof, and we consider the tangential derivative $\partial_{\bm{\tau}} = -\partial_x$. A computation, the interchange of limit and differentiation justified by Lemma~\ref{lem:riesz}, shows that
\begin{multline*}
Tf(x,y,x',y') = \\
\lim_{\eta \to 0^+} \rho_{3\varepsilon}(\csc(\alpha) \sqrt{x^2+y^2})\Bigg(\int_{|(x',y') - (x,y)| > \eta}  K_1(x,y,x',y') \rho_{\varepsilon}(\csc(\alpha) \sqrt{x'^2+y'^2}) f(x',y') \csc(\alpha) \, dx \, dy \\
- \int_{|(x',y') - (x,y)| > \eta}  K_2(x,y,x',y') \rho_{\varepsilon}(\csc(\alpha) \sqrt{x'^2+y'^2}) f(x',y') \left(1 + \cot^2(\alpha)\left[\varphi'(\sqrt{x^2+y^2})\right]^2\right)^{1/2} \, dx \, dy \Bigg), 
\end{multline*}
where 
$$K_1(x,y,x',y') = \frac{ (x-x') + \frac{\cot^2(\alpha)x}{\sqrt{x^2+y^2}} \left(\sqrt{x^2+y^2} - \sqrt{x'^2+y'^2}\right)}{|(x-x',y-y', \cot(\alpha) (\sqrt{x^2+y^2} - \sqrt{x'^2+y'^2}))|^3},$$
and 
$$K_2(x,y,x',y') = \frac{(x-x') + \frac{\cot^2(\alpha)x}{\sqrt{x^2+y^2}}\varphi'(\sqrt{x^2+y^2}) \left(\varphi(\sqrt{x^2+y^2}) - \varphi(\sqrt{x'^2+y'^2})\right)}{|(x-x',y-y', \cot(\alpha) (\varphi(\sqrt{x^2+y^2})) - \varphi(\sqrt{x'^2+y'^2})))|^{3}}.$$
Analogous arguments to those of the proof of Theorem~\ref{thm:perturbcurve} will yield that the integral kernel of $T$ may be decomposed in the same way as in that proof. That is, we may write the integral kernel as a sum of Riesz kernels multiplied by functions which are either sufficiently smooth or small at the origin. Once this is done, we may take $\eta \to 0^+$ using the maximal Riesz transform estimate of Lemma~\ref{lem:riesz}, concluding that $T$ is arbitrarily small in norm as $\varepsilon \to 0^+$. The same can then be done for the tangential derivative $\partial_{\bm{\tau}} = -\partial_y$, finishing the proof. As the details are quite clear after having seen Theorem~\ref{thm:perturbcurve}, but equally lengthy, we choose to not give any further computations.
\end{proof}
\section{The essential spectrum and Fredholm index on $L^2$} \label{sec:L2spec}
Throughout this section we consider $K^{\Gamma} \colon L^2(\Gamma) \to L^2(\Gamma)$ as an operator on $L^2(\Gamma)$ only. The kernel $K^\Gamma$ satisfies the same type of rotational invariance that we considered earlier,
$$K^\Gamma(t, \theta, t', \theta') = K^\Gamma(t, \theta - \theta', t', 0).$$
Hence we let $K_n^\gamma : L^2(\gamma_1(t)|\gamma'(t)| \, dt) \to L^2(\gamma_1(t)|\gamma'(t)| \, dt)$ be the integral operator with kernel
$$K^\gamma_n(t, t') = \frac{1}{2\pi}\int_0^{2\pi} e^{-in\theta} K^\Gamma(t, \theta, t', 0) \, d\theta, \qquad 0 < t, t' \leq 1,$$
so that
\begin{equation} \label{eq:Kgaction}
K_n^\gamma f(t) = \int_0^1 K_n^\gamma(t, t') f(t')  \gamma_1(t')|\gamma'(t')| \, dt'.
\end{equation}
Just as for the infinite cone in Section~\ref{sec:infcone}, we then have that
$$K^\Gamma \simeq_{\ue} \bigoplus_{n=-\infty}^\infty K_n^\gamma.$$

Having established that $K^\Gamma \simeq K^{\Gamma_c}$ in Theorem~\ref{thm:perturbcurve}, we will now investigate the spectrum of
\begin{equation} \label{eq:kgcdecomp}
K^{\Gamma_c} \simeq_{\ue} \bigoplus_{n=-\infty}^\infty K^{\gamma_c}_n
\end{equation}
acting on 
$$L^2(\Gamma_c, \, d\sigma_c) = \bigoplus_{n=-\infty}^\infty L^2([0,1], \gamma_1(t) |\gamma'(t)| \, dt).$$
Let $\rho \in C^\infty_c([0, 1/2))$ be a non-negative cut off function such that $\rho(t) = 1$ for $0 \leq t \leq 1/4$, and $0 \leq \rho(t) \leq 1$ for all $t$. Then $K^{\Gamma_c} - M_\rho K^{\Gamma_c} M_\rho$ is compact, and 
\begin{equation}
\label{eq:kgcdecomp2}
M_\rho K^{\Gamma_c} M_\rho \simeq_{\ue} \bigoplus_{n=-\infty}^\infty M_\rho K^{\gamma_c}_n M_\rho.
\end{equation}
Equations \eqref{eq:kgcdecomp} and \eqref{eq:kgcdecomp2} imply that $K^{\gamma_c} - M_\rho K^{\gamma_c}_n M_\rho$ is compact, for every $n$. Since $\gamma_c$ is a straight line segment for $0 \leq t \leq 1/2$ we consider $M_\rho K^{\Gamma_c} M_\rho$ and $M_\rho K^{\gamma_c}_n M_\rho$ to be operators 
$$M_\rho K^{\Gamma_c} M_\rho \colon L^2([0,1/2] \times [0, 2\pi], \sin(\alpha) t \, dt \, d\theta) \to  L^2([0,1/2] \times [0, 2\pi], \sin(\alpha) t \, dt \, d\theta)$$ 
and 
$$M_\rho K^{\gamma_c}_n M_\rho \colon L^2([0,1/2], \sin(\alpha) t \, dt) \to L^2([0,1/2], \sin(\alpha) t \, dt).$$
This preserves equivalence in the sense of Definition~\ref{def:fredholm}.

Let $U : L^2([0,1/2],  \sin(\alpha) t \, dt) \to L^2([0,1/2], dt)$ be the unitary operator defined by 
$$Uf(t) = \sqrt{t \sin \alpha}f(t), \quad 0 \leq t \leq 1/2.$$
 Observe that $K^{\gamma_c}_n(t, t') = K^\alpha_n(t,t')$ for $0 < t, t' \leq 1/2$, where $K^\alpha_n$ is the kernel associated with the infinite straight cone, defined in Section~\ref{sec:infcone}. By the homogeneity property \eqref{eq:homog}, we have for $g \in L^2([0,1/2], dt)$ that
$$UM_\rho K^{\gamma_c}_n M_\rho U^{-1} g(t) = \sin(\alpha) \rho(t)\int_0^{1/2} \sqrt{t/t'} K_n^\alpha(t/t', 1) \rho(t') g(t') \, \frac{dt'}{t'}.$$
The operator $J_n^\alpha = UM_\rho K^{\gamma_c}_n M_\rho U^{-1}$ is hence a Hardy kernel operator on $L^2([0,1/2], \, dt)$ with kernel 
$$j_n^\alpha(t) = \sin(\alpha) \sqrt{t} K_n^\alpha(t, 1),$$
  multiplied from the left and the right by the cut-off function $\rho$.  Hence, once we show that the Mellin transform of the kernel $j_n^\alpha$ is sufficiently well-behaved, it follows that the operator $J_n^\alpha$ is a pseudodifferential operator of Mellin type. 
There is a well-established symbolic calculus for such Mellin operators, developed in \cite{Els87,Lew91,LP83}, which we will use to compute the spectrum of $K^{\gamma_c}_n$.

 For $-\infty < \alpha < \beta < \infty$ and a number $m$ we follow the notation of \cite{Els87,Lew91} and denote by $\mathcal{O}^{m}_{\alpha,\beta}$ the space of functions $g(\zeta)$, holomorphic in the strip $\{\zeta = \eta + i\xi \, : \, \alpha < \eta < \beta, \; \xi \in \mathbb{R}\}$ and such that for every $ \alpha < c < d < \beta$ and non-negative integer $l$ it holds that
$$\sup_{\zeta = \eta + i\xi \atop c < \eta < d, \; \xi \in \mathbb{R}} \left| (1+|\xi|)^{l-m}\left(\frac{d}{d\zeta}\right)^l\mathcal{M}g(\zeta) \right| < \infty.$$
\begin{lem} \label{lem:mellinsmooth}
The Mellin transform of $j^\alpha_0$ belongs to $\mathcal{O}^{-1}_{-1/2,5/2}$, $\mathcal{M}j^\alpha_0 \in \mathcal{O}^{-1}_{-1/2,5/2}$. For $n\neq0$, it holds that $\mathcal{M}j^\alpha_n\in \mathcal{O}^{-1}_{-|n|+1/2,|n|+3/2}$.
\end{lem}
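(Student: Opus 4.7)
The strategy is to convert the Mellin transform into a Fourier transform via $u=\log t$, and then exploit the three-part structure of $j_n^\alpha$ given by Lemma~\ref{lem:specestimate}: algebraic behavior at $t=0$ and $t=\infty$, and a purely logarithmic singularity at $t=1$. Writing $\zeta=\eta+i\xi$, one has
$$
\Bigl(\tfrac{d}{d\zeta}\Bigr)^l\mathcal{M}j_n^\alpha(\zeta) \;=\; \int_{-\infty}^{\infty}e^{i\xi u}\,u^l\,e^{\eta u}\,j_n^\alpha(e^u)\,du,
$$
so the symbol estimates reduce to uniform Fourier-decay bounds for $u^l e^{\eta u}j_n^\alpha(e^u)$ as $\eta$ ranges over a closed substrip.

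First, I would establish holomorphy and the base bound (the $\mathcal{O}^{-1}$ condition for bounded $|\xi|$). The pointwise estimates of Lemma~\ref{lem:specestimate} on $K_n^\alpha(t,1)$ translate to exponential decay of $e^{\eta u}j_n^\alpha(e^u)$ as $u\to\pm\infty$, with rates $\eta+|n|-1/2$ and $|n|+3/2-\eta$ that are strictly positive on the stated strip (the analogous statement for $n=0$ giving $-1/2<\eta<5/2$). Combined with integrability of $\log|u|$ near $u=0$, the product $u^l e^{\eta u}j_n^\alpha(e^u)$ lies in $L^1(\mathbb{R})$ uniformly on any closed substrip; this yields holomorphy of $\mathcal{M}j_n^\alpha$ and a uniform bound on each derivative.

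Second, for the decisive large-$|\xi|$ decay $(1+|\xi|)^{-l-1}$, I would localize using a smooth cutoff $\chi$ supported near $u=0$. Away from the support of $\chi$, the function $u^l e^{\eta u}j_n^\alpha(e^u)$ is $C^\infty$ with exponential decay, so repeated integration by parts produces Fourier decay of every order. Near $u=0$, the change of variables yields $\log|1-e^u|=\log|u|+O(u)$, so Lemma~\ref{lem:specestimate} provides a local decomposition $e^{\eta u}j_n^\alpha(e^u)=\log|u|\,\Phi(u)+\Psi(u)$ with $\Phi,\Psi$ real-analytic; the smooth remainder $\Psi$ again contributes a rapidly decaying Fourier transform. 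The estimate therefore reduces to
$$
\Bigl|\int_{-\infty}^{\infty}e^{i\xi u}\,u^l\log|u|\,\psi(u)\,du\Bigr|\lesssim (1+|\xi|)^{-l-1}
$$
for $\psi\in C_c^\infty$, which I would prove by $l+1$ integrations by parts, using that $\partial_u^{l+1}(u^l\log|u|)=l!\,\mathrm{p.v.}(1/u)$, whose Fourier transform $i\pi\,\mathrm{sgn}(\xi)$ is bounded.

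The main technical obstacle is justifying the final integration by parts through the $\mathrm{p.v.}(1/u)$ distribution and controlling the principal-value limits; this is standard but must be tracked carefully. The resulting symbol order $-1$ is sharp: it reflects precisely the logarithmic nature of the singularity of $j_n^\alpha$ at $t=1$, as a jump discontinuity would give order $0$ and genuine smoothness would give order $-\infty$.
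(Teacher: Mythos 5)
Your proposal is correct in outline, but it takes a genuinely different technical route from the paper. Both proofs exploit the same structural input from Lemma~\ref{lem:specestimate} (a purely logarithmic singularity at $t=1$, plus algebraic decay at $0$ and $\infty$); the difference lies in how the logarithm is processed. The paper starts from the explicit closed form
\[
\mathcal{M}\!\left[\log\left|\tfrac{1+t}{1-t}\right|\right](\zeta)=\frac{\pi}{\zeta}\tan\!\left(\frac{\pi\zeta}{2}\right),
\]
observes this lies in $\mathcal{O}^{-1}_{0,1}$, and then pushes the estimate through a smooth compactly supported cutoff $\tau$ by writing $\mathcal{M}(w\tau)$ as a Fourier convolution of $\mathcal{M}w$ with the rapidly decaying transform of $\tau(e^x)$, splitting the convolution integral to get the $(1+|\xi|)^{-l-1}$ decay (the $l=0$ case needs a separate step because $\mathcal{M}w(\eta+i\cdot)\notin L^1$). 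The remaining smooth part is dispatched by citing \cite[Lemma~1.2]{Els87}. You instead work entirely on the Fourier side after $u=\log t$, localize near the singularity, and do $l+1$ integrations by parts using $\partial_u^{l+1}\bigl(u^l\log|u|\bigr)=l!\,\mathrm{p.v.}(1/u)$; this avoids needing the closed form for $\mathcal{M}w$ and handles $l=0$ on the same footing as $l\geq 1$. Both are sound; your route is more self-contained (no appeal to the tangent formula or to Elschner's lemma), while the paper's is perhaps shorter once the closed form is in hand. One small caveat in your write-up: the integration-by-parts argument in the regions $u\to\pm\infty$ requires not just the pointwise bounds on $K_n^\alpha(t,1)$ stated in Lemma~\ref{lem:specestimate} but also bounds on $(t\,d/dt)^l K_n^\alpha(t,1)$ with the same algebraic decay; the paper's proof explicitly notes that the argument in the appendix supplies these, and your proof should acknowledge the same.
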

\begin{proof}
First note that the function $w(t) = \log\left|\frac{1+t}{1-t}\right|$ has the explicit Mellin transform
$$\mathcal{M}w(\zeta) = \frac{\pi}{\zeta}\tan\left(\frac{\pi \zeta}{2}\right), \qquad -1 < \Re\mathrm{e} \, \zeta < 1.$$
By this formula, $\mathcal{M}w \in \mathcal{O}^{-1}_{0,1}$. For fixed $c$ and $d$, $0 < c < d < 1$, we let 
$$C_l = \sup_{\zeta = \eta + i\xi \atop c < \eta < d, \; \xi \in \mathbb{R}} \left| (1+|\xi|)^{l+1}\left(\frac{d}{d\zeta}\right)^l\mathcal{M}w(\zeta) \right|.$$
Let $\tau$ be any smooth compactly supported function in $(0, \infty)$.
Let $T(\xi)$ be the Fourier transform of $\tau(e^x)$,
$$T(\xi) = \int_{-\infty}^{\infty} e^{i x \xi} \tau(e^x) \, dx.$$
Note that for every $l \geq 0$ there is a constant $D_l < \infty$ such that $|T(\xi)| \leq D_l (1+|\xi|)^{-l-1}$, since $\tau(e^x)$ is a smooth compactly supported function on $\mathbb{R}$. For $\zeta = \eta + i \xi$, $c < \eta < d$, we have by the change of variable $r = e^x$ and the usual Fourier convolution theorem that
$$\mathcal{M}(w\tau)(\zeta) = \int_{-\infty}^{\infty} e^{ix\xi} e^{x\eta}w(e^x) \tau(e^x)\, dx = \frac{1}{2\pi}\int_{-\infty}^{\infty} \mathcal{M}w(\eta + i (\xi - x)) T(x) \, dx.$$
Using the definitions of $C_l$ and $D_l$ and the fact that $T$ and $\left(\frac{d}{d\zeta}\right)^l \mathcal{M}w(\eta + i\cdot) $ belong to $L^1(\mathbb{R})$, we find for every $l \geq 1$ that
\begin{align*}
\left|\left(\frac{d}{d\zeta}\right)^l \mathcal{M}(w\tau)(\zeta)\right| &= \left|\int_{-\infty}^{\infty} \left(\left(\frac{d}{d\zeta}\right)^l \mathcal{M}w\right)(\eta + i (\xi-x)) T(x) \, dx.\right| \\ &\leq \left(\int_{|\xi - x| \geq \frac{|\xi|}{2}} + \int_{|x| \geq \frac{|\xi|}{2}}\right) \left|\left(\left(\frac{d}{d\zeta}\right)^l \mathcal{M}w\right)(\eta + i (\xi-x)) T(x)\right| \, dx \\  &\lesssim 2^l(C_l + D_l)(1+|\xi|)^{-l-1}.
\end{align*}
The same inequality follows also for $l = 0$, by integrating the obtained inequality for $l = 1$. It immediately follows that $\mathcal{M}(w\tau) \in \mathcal{O}^{-1}_{0,1}$. However, since $\tau(r) r^\eta$ is also compactly supported smooth function on $(0,1)$, for any $\eta \in \mathbb{R}$, we actually conclude that $\mathcal{M}(w\tau) \in \mathcal{O}^{-1}_{\alpha,\beta}$ for any $-\infty < \alpha < \beta < \infty$.

Now fix $n \neq 0$ and a smooth function $\tau_1$, compactly supported in $(1/2, 3/2)$ and such that $\tau_1(t) = 1$ for $t \in [-3/4, 5/4]$. By Lemma~\ref{lem:specestimate} there is a smooth function $G(t)$ on $(1/2, 3/2)$ such that 
$$d(t) := j^\alpha_n(t) - w(t) G(t) \tau_1(t)$$ is a function in $C^\infty((0,\infty))$.  It is also clear from the easy part of the proof, which appears in the Appendix, that the statement of Lemma~\ref{lem:specestimate} could have been sharpened somewhat: for every $l \geq 0$ it holds that 
$$ \left(t \frac{d}{dt}\right)^l d(t) = O(t^{|n|+3/2}), \; t \to \infty, \quad  \left(t \frac{d}{dt}\right)^l d(t) = O(t^{|n|-1/2}), \; t \to 0.$$
By \cite[Lemma 1.2]{Els87} it follows that $\mathcal{M}d \in \mathcal{O}^{m}_{-|n|+1/2,|n|+3/2}$ for every integer $m$. By combining this with the conclusion of the previous paragraph, letting $\tau = G \tau_1$, we see that $\mathcal{M}j_n^\alpha \in \mathcal{O}^{-1}_{-|n|+1/2,|n|+3/2}$. Similarly, taking into account the different asymptotics for $n=0$, $\mathcal{M}j_0^\alpha \in \mathcal{O}^{-1}_{-1/2,5/2}$. 
\end{proof}
\begin{rmk}
The proof shows that for no $\alpha < \beta$ and $\varepsilon > 0$ does it hold that $\mathcal{M} j_n^\alpha \in \mathcal{O}^{-1-\varepsilon}_{\alpha, \beta}$. Hence $j_n^\alpha$ only barely satisfies the smoothness hypothesis required in \cite{Els87,Lew91}. This is unlike the applications to layer potentials for domains with corners in 2D \cite{Lew91,Mit02}, where the Mellin transforms of the corresponding Hardy kernels belong to $\mathcal{O}^{m}_{0,1}$ for every $m$.
\end{rmk}
Having verified that $\mathcal{M} j_n^\alpha \in \mathcal{O}^{-1}_{0,1}$ for every $n$, we may now apply the symbolic calculus. We refer to \cite[pp. 472--473]{Mit02} for a summary of the theory. We will not recount any of the details here, but only mention that each Mellin pseudodifferential operator $A$ is associated with its symbol $\smbl^{1/2} A$, which can be interpreted as a continuous function on the boundary of the compact rectangle $\mathcal{R}^{1/2}$ (see Lemma~\ref{lem:mellincalculus})
The operator $A : L^2([0,1/2], \, dt) \to L^2([0,1/2], \, dt)$ is Fredholm if and only if $\smbl^{1/2} A$ has no zero on $\partial \mathcal{R}^{1/2}$. In this case, the Fredholm index of $A$ is given by the change in argument of $\smbl^{1/2} A$ as $\partial \mathcal{R}^{1/2}$ is traversed clockwise. As $J_n^\alpha$ is a Hardy kernel operator, multiplied from the left and the right by a smooth cut off function, the symbolic calculus yields the following.
\begin{lem} \label{lem:mellincalculus}
For every $z \in \mathbb{C}$ and $n \in \mathbb{Z}$, the operator $J_n^\alpha - z: L^2([0,1/2], \, dt) \to  L^2([0,1/2], \, dt)$ is a Mellin pseudodifferential operator with symbol $\smbl^{1/2} \left(J_n^\alpha - z \right)$ given by
\[
\begin{tikzpicture}
\draw (0,0) node[below = 0.1cm] {$t=0$}
-- (5, 0) node (larrow) {\leftiearrow} node[below = 0.1cm of larrow] {$-z$}
-- (10,0) node[below = 0.1cm] {$t=1/2$} node[right = 0.1cm] {$1/2+i\infty$}
-- (10,2) node (darrow) {\downnarrow}  node[right = 0.1cm of darrow] {$-z$}
-- (10,4) node[right= 0.1cm] {$1/2 - i\infty$} node[above= 0.1cm] {$t=1/2$}
-- (5,4) node (rarrow) {\rightiearrow} node[above = 0.1cm of rarrow] {$-z$}
-- (0,4) node[above= 0.1cm] {$t=0$} node[left= 0.1cm] {$1/2+i\infty$}
-- (0,2) node (uarrow) {\upparrow} node[left = 0.1cm of uarrow] {$\mathcal{M}j_n^{\alpha}(1/2 + i\xi) - z$}
-- cycle node[left= 0.1cm] {$1/2-i\infty$};
\draw node at (5,2) {$\mathcal{R}^{1/2}$};
\end{tikzpicture}
\]
Hence, the essential spectrum of $J_n^\alpha$ is given by the closed curve
$$\sigma_{\ess}(J_n^\alpha) = \{\mathcal{M}j_n^{\alpha}(1/2 + i\xi) \, : \, -\infty \leq \xi \leq \infty\},$$
where it is understood that $\mathcal{M}j_n^{\alpha}(1/2 \pm i\infty) = 0$. For $z \notin \sigma_{\ess}(J_n^\alpha)$, $J_n^\alpha - z$ is Fredholm with index given by the winding number of $z$ with respect to the curve $\sigma_{\ess}(J_n^\alpha)$,
$$\ind(J_n^\alpha - z) = W(z, \sigma_{\ess}(J_n^\alpha)).$$
\end{lem}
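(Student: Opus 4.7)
The plan is to recognize that $J_n^\alpha = M_\rho \circ C_{j_n^\alpha} \circ M_\rho$, where $C_{j_n^\alpha}$ denotes Mellin convolution with kernel $j_n^\alpha$, and then to apply the Mellin pseudodifferential symbolic calculus of \cite{Els87,Lew91,LP83} directly to this composition. Each factor is one of the two fundamental building blocks of the calculus: the multiplication operator $M_\rho$ has symbol $\rho(t)$ independent of $\xi$, while the Mellin convolution $C_{j_n^\alpha}$ has symbol $\mathcal{M}j_n^\alpha(1/2+i\xi)$ independent of $t$ on the line $\mre \zeta = 1/2$. By Lemma~\ref{lem:mellinsmooth}, $\mathcal{M}j_n^\alpha$ lies in $\mathcal{O}^{-1}_{\alpha,\beta}$ for a strip containing this line, which is precisely the hypothesis required to place $C_{j_n^\alpha}$ in the Mellin pseudodifferential calculus of order $-1$.

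With both factors in the calculus, the product rule gives $\smbl^{1/2}(J_n^\alpha)(t,\xi) = \rho(t)^2 \mathcal{M}j_n^\alpha(1/2+i\xi)$ modulo lower-order (compact) terms, and subtracting $zI$ adds the constant $-z$. I would then evaluate the resulting symbol edge by edge on $\partial \mathcal{R}^{1/2}$. On the left edge $t=0$, since $\rho(0)=1$, the symbol is $\mathcal{M}j_n^\alpha(1/2+i\xi) - z$. On the right edge $t=1/2$, $\rho(1/2)=0$ yields the constant $-z$. On the two horizontal edges at $\xi = \pm\infty$, the Riemann-Lebesgue lemma applied to $j_n^\alpha \in L^1(dt/t)$ (which follows from Lemma~\ref{lem:specestimate}) gives $\mathcal{M}j_n^\alpha(1/2 \pm i\infty) = 0$, so the symbol reduces to $-z$ independently of $t$. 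This reproduces the symbol diagram in the statement, and in particular the left-edge curve joins continuously with the constant branches at the two corners $(0, \pm\infty)$. The Fredholm criterion of the calculus then says $J_n^\alpha - z$ is Fredholm iff the symbol is nowhere zero on $\partial \mathcal{R}^{1/2}$, with Fredholm index equal to the total change of argument along $\partial \mathcal{R}^{1/2}$ traversed in the prescribed orientation. Since only the left edge contributes nonzero argument change, one reads off
\begin{equation*}
\sigma_{\ess}(J_n^\alpha) = \{\mathcal{M}j_n^\alpha(1/2 + i\xi) \, : \, -\infty \leq \xi \leq \infty \}, \qquad \ind(J_n^\alpha - z) = W(z, \sigma_{\ess}(J_n^\alpha)).
\end{equation*}

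The main obstacle is that $\mathcal{M}j_n^\alpha$ lies only barely in $\mathcal{O}^{-1}$ and, as noted in the remark following Lemma~\ref{lem:mellinsmooth}, cannot be improved to any higher decay order. Hence I cannot fall back on the smoother variant of the calculus used in the 2D polygonal setting of \cite{Lew91,Mit02}, where the Mellin symbols decay faster than any polynomial. The delicate point is therefore to verify that the Elschner-Lewis-Parenti framework does admit the borderline class $\mathcal{O}^{-1}$ for both the product rule and the Fredholm/index theorem, but this is exactly the scope of \cite{Els87,Lew91}. Beyond this, the argument is bookkeeping in the symbolic calculus, since Lemmas~\ref{lem:mellinsmooth} and \ref{lem:specestimate} have already supplied the smoothness, decay, and $L^1$ estimates required to enter the machinery.
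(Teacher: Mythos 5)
Your approach is essentially the same as the paper's, and correctly so. The paper does not write out a formal proof of this lemma at all — it simply notes, in the preceding text, that $J_n^\alpha$ is a Hardy kernel operator sandwiched between two cut-off multiplications and that "the symbolic calculus yields the following." Your write-up makes explicit exactly the steps the paper is compressing: the decomposition $J_n^\alpha = M_\rho C_{j_n^\alpha} M_\rho$, the identification of the symbol of each factor, the product rule yielding $\rho(t)^2 \,\mathcal{M}j_n^\alpha(1/2+i\xi) - z$ modulo compacts, the edge-by-edge evaluation on $\partial\mathcal{R}^{1/2}$ (using $\rho(0)=1$, $\rho\equiv 0$ near $t=1/2$, and the Riemann--Lebesgue decay of $\mathcal{M}j_n^\alpha$ from Lemma~\ref{lem:specestimate}), and the Fredholm/index criterion. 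You also correctly flag the one genuinely delicate point — that $\mathcal{M}j_n^\alpha$ lives in $\mathcal{O}^{-1}$ and in no smaller class $\mathcal{O}^{-1-\varepsilon}$, so the borderline version of the Elschner--Lewis calculus must be invoked rather than the smoother 2D-polygon variant — which is precisely the remark the authors make after Lemma~\ref{lem:mellinsmooth}. No gaps; this is a faithful unpacking of the paper's argument.
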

Since $\mathcal{M}j_n^{\alpha}(1/2 + i\xi) = \sin(\alpha)\mathcal{M}[K_n^\alpha(\cdot, 1)](1+i\xi)$, the curve in Lemma~\ref{lem:mellincalculus} is the same curve that appears in Theorem~\ref{thm:infconespec},
$$\sigma_{\ess}(J_n^\alpha) = \{\mathcal{M}j_n^{\alpha}(1/2 + i\xi) \, : \, -\infty \leq \xi \leq \infty\} = \Pi_n.$$
We need one more lemma.
\begin{lem} \label{lem:specestimate2}
It holds that
$$\|J_n^\alpha\|_{L^2([0,1/2], dt) \to L^2([0,1/2], dt)} \lesssim \frac{1}{|n|+1}.$$
\end{lem}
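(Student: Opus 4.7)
The plan is to reduce the estimate for $J_n^\alpha$ to a norm bound for the infinite-cone operator $K_n^\alpha$ on $L^2([0,\infty), \sin(\alpha) t \, dt)$, and then invoke the Mellin convolution bound already established in the proof of Theorem~\ref{thm:infconespec}. The first step is to use the unitarity of $U \colon L^2([0,1/2], \sin(\alpha) t \, dt) \to L^2([0,1/2], dt)$ together with the identity $J_n^\alpha = U M_\rho K_n^{\gamma_c} M_\rho U^{-1}$; this reduces matters to bounding the norm of $M_\rho K_n^{\gamma_c} M_\rho$ as an operator on $L^2([0,1/2], \sin(\alpha) t \, dt)$.

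The second step exploits that $\gamma_c$ coincides with the straight ray $(\sin(\alpha) t, \cos(\alpha) t)$ on $[0, 1/2]$, so $K_n^{\gamma_c}(t, t') = K_n^\alpha(t, t')$ for $(t, t') \in (0, 1/2]^2$, and the surface element agrees with $\sin(\alpha) t \, dt$ on this range. For $g \in L^2([0, 1/2], \sin(\alpha) t \, dt)$, let $\widetilde g$ denote its extension by zero to $[0, \infty)$, which has the same norm. Since $\mathrm{supp}\,\rho \subset [0, 1/2)$ and $|\rho| \le 1$, one obtains the exact identity
$$M_\rho K_n^{\gamma_c} M_\rho g(t) = \rho(t)\, K_n^\alpha(\rho \widetilde g)(t), \qquad t \in [0, 1/2],$$
and consequently
$$\|M_\rho K_n^{\gamma_c} M_\rho g\|_{L^2([0,1/2], \sin(\alpha) t \, dt)} \le \|K_n^\alpha\|_{L^2([0,\infty), \sin(\alpha) t \, dt)}\, \|g\|_{L^2([0,1/2], \sin(\alpha) t \, dt)}.$$

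Finally, the proof of Theorem~\ref{thm:infconespec} already established—via the unitary conjugation by $V$ of $K_n^\alpha$ to Mellin convolution by $h_n^\alpha(t) = \sin(\alpha) t K_n^\alpha(t, 1)$ on $L^2(dt/t)$, Young's inequality \eqref{eq:young}, and the $L^1(dt/t)$ estimate \eqref{eq:hndecay} of Lemma~\ref{lem:specestimate}—that
$$\|K_n^\alpha\|_{L^2([0,\infty), \sin(\alpha) t \, dt)} \le \|h_n^\alpha\|_{L^1(dt/t)} \lesssim \frac{1}{|n|+1}.$$
I do not anticipate any substantive obstacle; the argument is essentially bookkeeping, assembling the unitary conjugation by $U$, the zero-extension identity that realizes $M_\rho K_n^{\gamma_c} M_\rho$ as a compression of the infinite-cone operator, and the Mellin convolution estimate already in hand.
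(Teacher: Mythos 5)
Your proposal is correct and follows essentially the same route as the paper: conjugate by the unitary $U$ to reduce to $\|M_\rho K_n^{\gamma_c} M_\rho\|$ on $L^2([0,1/2],\sin(\alpha)t\,dt)$, observe this is a compression of the infinite-cone operator $K_n^\alpha$ since $\gamma_c$ is the straight ray and $0\le\rho\le 1$, and invoke the bound $\|K_n^\alpha\|\lesssim (|n|+1)^{-1}$ already obtained via the Mellin convolution / Young's inequality argument in the proof of Theorem~\ref{thm:infconespec}. You have merely made explicit the zero-extension step that the paper's proof leaves as ``clearly.''
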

\begin{proof}
Let $K_n^\alpha \colon L^2([0, \infty), \sin(\alpha) \, t dt) \to L^2([0, \infty), \sin(\alpha) \, t dt)$ be the operator defined in Section~\ref{sec:infcone} and recall that we showed that $\|K_n\| \lesssim (1+|n|)^{-1}$ in the proof of Theorem~\ref{thm:infconespec}. By definition $J_n^\alpha \simeq_{\ue} M_\rho K^{\gamma_c}_n M_\rho.$ Then clearly
$$\|J_n^\alpha\| = \|M_\rho K^{\gamma_c}_n M_\rho\| \leq \|K_n^{\gamma_\alpha}\|\lesssim \frac{1}{|n|+1},$$
which is what we wanted to prove.
\end{proof}
We can now piece together all of our results to obtain the main theorem of this section.
\begin{thm} \label{thm:winding}
Let $\Gamma$ be a closed surface of revolution with a conical point of opening angle $2\alpha$, obtained by revolving a $C^5$-curve $\gamma$. For $n \in \mathbb{Z}$, denote by $\Pi_n$ the closed curve
\begin{align*}
\Pi_n &= \{\sin(\alpha)\mathcal{M}[K_n^\alpha(\cdot, 1)](1+i\xi) \, : \, -\infty \leq \xi \leq \infty\} \\ &=\left\{ \frac{P_{i\xi}^n(\cos \alpha)\dot{P}_{i\xi}^n(-\cos \alpha)-P_{i\xi}^n(-\cos \alpha)\dot{P}_{i\xi}^n(\cos \alpha)}{P_{i\xi}^n(-\cos \alpha)\dot{P}_{i\xi}^n(\cos \alpha)+P_{i\xi}^n(\cos \alpha)\dot{P}_{i\xi}^n(-\cos \alpha)} \, : \, -\infty \leq \xi \leq \infty \right\},
\end{align*}
with orientation given by the $\xi$-variable.
Then the operator $K^\Gamma \colon L^2(\Gamma, \, d\sigma) \to L^2(\Gamma, \, d\sigma)$ has essential spectrum
\begin{equation} \label{eq:esspecformula}
\sigma_{\ess}(K^\Gamma, L^2) = \bigcup_{n=-\infty}^\infty \Pi_n.
\end{equation}
If $z \notin \sigma_{\ess}(K^\Gamma, L^2)$, then $K^\Gamma-z$ has Fredholm index
$$\ind(K^\Gamma-z) = \sum_{n=-\infty}^\infty W(z, \Pi_n) = W(z, \Pi_0) + 2\sum_{n=1}^\infty W(z, \Pi_n)$$
where $W(z, \Pi_n) \leq 0$ denotes the winding number of $z$ with respect to $\Pi_n$ and the right-hand side is always a finite sum. In particular, every point $z$ lying inside one of the curves $\Pi_n$ belongs to the spectrum $\sigma(K^\Gamma, L^2)$.

Whenever $z$ is not a real number, it holds that $\dim \ker (K^\Gamma - z) = 0$, so that
$$\ind(K^\Gamma - z) = -\codim \ran K^\Gamma, \quad z \in \mathbb{C}\setminus\mathbb{R}.$$
 In particular,  if $\ind(K^\Gamma-z) = 0$ (so that $z$ lies outside every curve $\Pi_n$), then either $K^\Gamma-z$ is invertible or $z = x$ is real and an eigenvalue of $K^\Gamma$.
\end{thm}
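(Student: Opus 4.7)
The plan is to reduce the problem to the model case of the infinite straight cone via the Fourier decomposition and the perturbation result. First, by Theorem~\ref{thm:perturbcurve} we have $K^\Gamma \simeq K^{\Gamma_c}$, so it suffices to compute $\sigma_{\ess}$ and the Fredholm index for $K^{\Gamma_c}$. Decomposing into Fourier modes gives $K^{\Gamma_c} \simeq_{\ue} \bigoplus_n K^{\gamma_c}_n$, and applying the cutoff $\rho$ yields $K^{\gamma_c}_n \simeq M_\rho K^{\gamma_c}_n M_\rho$ (the difference is weakly singular, hence compact by Lemma~\ref{lem:cpctweaksing}). Transferring by $U$, we obtain $M_\rho K^{\gamma_c}_n M_\rho \simeq_{\ue} J_n^\alpha$, so Lemma~\ref{lem:mellincalculus} gives $\sigma_{\ess}(K^{\gamma_c}_n) = \Pi_n$ and $\ind(K^{\gamma_c}_n - z) = W(z, \Pi_n)$ for $z \notin \Pi_n$.

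Next I would assemble the direct sum. For $z \neq 0$, Lemma~\ref{lem:specestimate2} and the identification $\Pi_n \subset \sigma(J_n^\alpha)$ imply $\max\{|w| : w \in \Pi_n\} \lesssim (|n|+1)^{-1}$, so only finitely many indices $n$ satisfy $|z| \leq 2\|J_n^\alpha\|$; for all other $n$ the operator $J_n^\alpha - z$ is invertible with uniformly bounded inverse. Thus $\bigoplus_n (K^{\gamma_c}_n - z)$ is Fredholm precisely when each summand is, i.e.\ precisely when $z \notin \bigcup_n \Pi_n$, and in that case the indices add, giving
\[
\ind(K^\Gamma - z) = \sum_{n \in \mathbb{Z}} W(z, \Pi_n).
\]
The identity $W(z, \Pi_n) = W(z, \Pi_{-n})$ follows from the symmetry $K^\alpha_{-n} = K^\alpha_n$ visible in the explicit Legendre formula, giving the factor $2$. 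That the winding numbers are nonpositive can be read off from the orientation convention in Lemma~\ref{lem:mellincalculus}, where $\partial \mathcal{R}^{1/2}$ is traversed clockwise.

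For the kernel statement, observe that $L^2(\Gamma) \hookrightarrow H^{-1/2}(\Gamma) = \mathcal{E}$ continuously, so any $f \in L^2(\Gamma)$ with $(K^\Gamma - z)f = 0$ is also an eigenvector of $K^\Gamma \colon \mathcal{E} \to \mathcal{E}$ with eigenvalue $z$. Since this latter operator is self-adjoint, its spectrum is real, forcing $z \in \mathbb{R}$; equivalently $\dim \ker(K^\Gamma - z) = 0$ for $z \in \mathbb{C} \setminus \mathbb{R}$. The final dichotomy then follows: if $\ind(K^\Gamma - z) = 0$ then $K^\Gamma - z$ is Fredholm of index $0$, so invertibility is equivalent to triviality of the kernel, which holds unless $z$ is real and an eigenvalue.

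The main obstacle I anticipate is the bookkeeping for the direct-sum Fredholm statement: one has to verify not merely that each modal operator has the asserted essential spectrum but that the essential spectrum of the sum equals the union (not merely its closure) and that summing Fredholm indices across infinitely many summands is legitimate. Both rely on the uniform norm decay $\|J_n^\alpha\| \lesssim (|n|+1)^{-1}$ from Lemma~\ref{lem:specestimate2}, which guarantees that for every fixed $z \neq 0$, all but finitely many modal operators are invertible with uniformly bounded inverse. This makes the direct sum of the inverses bounded, hence Fredholmness of the sum and additivity of indices reduce to the familiar finite case.
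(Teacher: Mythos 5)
Your overall route mirrors the paper's: perturb to the model conical surface (Theorem~\ref{thm:perturbcurve}), decompose into Fourier modes, transfer to the Mellin pseudodifferential calculus via $J_n^\alpha$ (Lemma~\ref{lem:mellincalculus}), and reassemble using the uniform norm decay (Lemma~\ref{lem:specestimate2}) to reduce to finitely many modes. The kernel statement via the embedding $L^2(\Gamma) \hookrightarrow \mathcal{E}$ and self-adjointness on $\mathcal{E}$ is correct and is exactly the paper's argument. The handling of the infinite direct sum (finitely many active modes, uniformly bounded inverses for the rest, hence Fredholmness of the sum equals Fredholmness of each summand and indices add) is also sound.

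The one step that does not hold up is your justification that $W(z,\Pi_n) \leq 0$. You attribute this to "the orientation convention in Lemma~\ref{lem:mellincalculus}, where $\partial\mathcal{R}^{1/2}$ is traversed clockwise," but an orientation convention cannot by itself determine the sign of a winding number---that depends on the actual geometry of how $\Pi_n = \{\mathcal{M}j_n^\alpha(1/2+i\xi)\}$ winds around interior points, which is what needs to be proven. The paper instead derives $W(z,\Pi_n)\leq 0$ from the same self-adjointness argument you use for the kernel statement, applied modewise: for non-real $z$ each modal operator $K_n^\gamma-z$ has trivial $L^2$-kernel (an $L^2$-eigenvector of a mode produces an $\mathcal{E}$-eigenvector of the self-adjoint $K^\Gamma\colon\mathcal{E}\to\mathcal{E}$), hence $\ind(K_n^\gamma - z) = -\codim\ran \leq 0$; since $\ind(K_n^\gamma - z) = W(z,\Pi_n)$ and the winding number is locally constant on the open set $\mathbb{C}\setminus\Pi_n$ (every component of which contains non-real points), the nonpositivity extends to all $z\notin\Pi_n$. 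You already have all the ingredients for this in your last paragraph; you just need to invoke them here rather than the orientation convention.
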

\begin{rmk}
Figure~\ref{fig:leaves} illustrates the set $\bigcup_{n=-\infty}^\infty \Pi_n$. Note that for each $z$ it holds that $W(z, \Pi_0)$ is $0$ or $-1$ and for $n\neq0$ that $W(z, \Pi_n)$ is $0$, $-1$, or $-2$. Strictly speaking we never prove this statement.
\end{rmk}
\begin{rmk}
We could define a notion of more general surfaces $\Gamma$ with a finite number of axially symmetric conical points. Theorem~\ref{thm:winding} extends effortlessly to such surfaces, each conical point contributing a set of the type \eqref{eq:esspecformula} to the essential spectrum. The index formula then extends additively with the corners. We refrain from giving an explicit statement, to avoid introducing further notation.
\end{rmk}
\begin{proof}
 By Theorem~\ref{thm:perturbcurve}, 
$$K^\Gamma \simeq \bigoplus_{n=-\infty}^\infty J_n^\alpha.$$
By Lemma~\ref{lem:specestimate2}, there is a finite number $m$, depending only on $z$, such that  $J_n^\alpha - z$ is invertible for $|n| > m$. Hence, by Lemma~\ref{lem:mellincalculus}, there are two possibilities. Either $z \in \bigcup_{|n| \leq m} \Pi_n$, and in this case $z \in \sigma_{\ess}(K^\Gamma, L^2)$, or $K^\Gamma - z$ is Fredholm with index 
$$\ind(K^\Gamma-z) = \sum_{|n| \leq m} W(z, \Pi_n) = \sum_{n=-\infty}^\infty W(z, \Pi_n).$$
The explicit formula for $\Pi_n$ was proven in Theorem~\ref{thm:curveformula}. 

Recall next that $K^\Gamma \colon \mathcal{E} \to \mathcal{E}$ is a self-adjoint operator. Hence, unless $z = x$ is real, $K^\Gamma-z \colon \mathcal{E} \to \mathcal{E}$ is always invertible. Since $L^2(\Gamma) \subset \mathcal{E}$ it follows that $z$ cannot be an eigenvalue for $K^\Gamma : L^2(\Gamma) \to L^2(\Gamma)$ unless $z$ is real. In particular, $K^\Gamma-z$ is invertible if $z$ is non-real and $\ind(K^\Gamma-z) = 0$. It also follows that $W(z, \Pi_n) \leq 0$ for every $z \notin \Pi_n$ and $n$. If $z = x$ is real and $K-x$ is not invertible, then clearly it must be an eigenvalue if $\ind(K^\Gamma-z) = 0$.
\end{proof}

\section{The essential spectrum on $\mathcal{E}$} \label{sec:Espec}
Recall that we characterized the spectrum of $K^{\Gamma_\alpha} \colon \mathcal{E} \to \mathcal{E}$ in Theorem~\ref{thm:Especres}, when $\Gamma_\alpha$ is an infinite straight cone. To begin this section we will show that the essential spectrum remains the same if we localize $K^{\Gamma_\alpha}$ to the origin. Informally speaking, we will show that the singularities of $K^{\Gamma_\alpha}$ at the origin and at infinity contribute equally to the essential spectrum. 
\begin{lem}
        For $f \in L^2(\Gamma_\alpha)$, let $Vf(t, \theta) = \frac{1}{t^3}f\left(\frac{1}{t}, -\theta \right)$. Then $V$ extends to a unitary involution 
        $$V \colon \mathcal{E} \to \mathcal{E}.$$
        Furthermore, $V$ commutes with $K^{\Gamma_\alpha}$,
        $$VK^{\Gamma_\alpha} = K^{\Gamma_\alpha}V.$$
\end{lem}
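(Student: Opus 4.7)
The idea is to recognize $V$ as the operator on densities induced by the conformal involution $\tau\colon \Gamma_\alpha \setminus \{0\} \to \Gamma_\alpha \setminus \{0\}$ given in Cartesian coordinates by $\tau(\bm{r}) = R(\bm{r}/|\bm{r}|^2)$, where $R(x,y,z) = (x,-y,z)$ is reflection in the $xz$-plane. Since $\Gamma_\alpha$ is axially symmetric about the $z$-axis, $\tau$ preserves $\Gamma_\alpha$, and both of its constituents (Kelvin inversion and a rigid reflection) preserve the energy of harmonic functions. The factor $1/t^3 = 1/|\bm{r}|^3$ in the definition of $V$ is precisely the Jacobian weight needed to make $V$ unitary on $\mathcal{E}$ rather than on $L^2(\Gamma_\alpha)$. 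The involution property is immediate: $V^2 f(t,\theta) = t^{-3}(1/t)^{-3} f(t,\theta) = f(t,\theta)$, so $V$ is an algebraic involution on the dense subspace $C^\infty_c(\Gamma_\alpha \setminus \{0\})$ of $\mathcal{E}$.

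The computational core is the geometric identity
\[
 |\bm{r}(s, \theta) - \bm{r}(1/t, -\phi)| = \frac{s}{t}\,|\bm{r}(1/s, -\theta) - \bm{r}(t, \phi)|,
\]
which follows by expanding both sides in the parametrization of Section~\ref{sec:infcone} and extracting the common prefactors $1/t^2$ and $1/s^2$ respectively; the remaining bracketed expressions agree because the angular parts both reduce to $\cos(\theta+\phi)$ and the squared radial terms are invariant under $x \mapsto -x$. Combining this identity with the change of variables $(t, \theta) \mapsto (1/t, -\theta)$ in the defining integral for the single layer potential (whose Jacobian for $d\sigma$ contributes a factor $1/t^4$) yields the intertwining relation
\[
 S^{\Gamma_\alpha}(Vf)(\bm{r}(s,\theta)) = \frac{1}{s}\,S^{\Gamma_\alpha}(f)(\bm{r}(1/s,-\theta)).
\]
A second application of the same change of variables in $\langle S^{\Gamma_\alpha}(Vf), Vf\rangle_{L^2(\Gamma_\alpha)}$ then shows $\|Vf\|_{\mathcal{E}} = \|f\|_{\mathcal{E}}$, so $V$ extends to a unitary involution on $\mathcal{E}$.

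For the commutation $VK^{\Gamma_\alpha} = K^{\Gamma_\alpha} V$, I would start from the explicit formula
\[
 K^{\Gamma_\alpha}(t, \theta, t', \theta') = \frac{\sin\alpha \cos\alpha \cdot t'\,(1 - \cos(\theta - \theta'))}{|\bm{r}(t, \theta) - \bm{r}(t', \theta')|^3},
\]
obtained by dotting $\bm{r}(t,\theta) - \bm{r}(t',\theta')$ with the outward normal $(\cos\alpha\cos\theta, \cos\alpha\sin\theta, -\sin\alpha)$. Combined with the distance identity above, this yields the kernel symmetry
\[
 K^{\Gamma_\alpha}(s, \theta, 1/t, -\phi) = \frac{t}{s^3}\,K^{\Gamma_\alpha}(1/s, -\theta, t, \phi).
\]
Substituting this into the integral representation of $K^{\Gamma_\alpha}Vf(\bm{r}(s,\theta))$ and making the change of variables $(t', \phi') \mapsto (1/t, -\phi)$ produces $VK^{\Gamma_\alpha} f(\bm{r}(s,\theta))$ directly.

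The main obstacle is purely computational bookkeeping: tracking the interplay of Jacobian factors with the sign change introduced by $\theta \mapsto -\theta$. Once the distance identity is established, both the unitarity on $\mathcal{E}$ and the commutation with $K^{\Gamma_\alpha}$ reduce to the same change of variables applied in two different contexts, reflecting the conformal invariance of the layer potential setup under Kelvin inversion composed with a rigid reflection preserving $\Gamma_\alpha$.
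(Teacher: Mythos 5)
Your proposal is correct and follows essentially the same computational route as the paper: both verify unitarity by establishing the intertwining relation $S^{\Gamma_\alpha}(Vf)(\bm{r}(s,\theta)) = \frac{1}{s}\,S^{\Gamma_\alpha}f(\bm{r}(1/s,-\theta))$ (the paper phrases this as $U S^{\Gamma_\alpha} f = \frac{1}{t} S^{\Gamma_\alpha} V f$ for the auxiliary $L^2$-unitary $Uf(t,\theta) = t^{-2} f(1/t,-\theta)$, which is the same identity) and both verify commutation via a kernel symmetry under $(t,\theta) \mapsto (1/t,-\theta)$ combined with the degree-$(-2)$ homogeneity of $K^{\Gamma_\alpha}$; where you compute the kernel symmetry directly from the explicit formula $K^{\Gamma_\alpha}(t,\theta,t',\theta') = \sin\alpha\cos\alpha\, t'(1-\cos(\theta-\theta'))/|\bm{r}-\bm{r}'|^3$, the paper routes through the equivalent parity relation $K^{\Gamma_\alpha}(t,\theta,t',\theta') = \frac{t'}{t}(K^{\Gamma_\alpha})^\ast(t,\theta,t',\theta')$. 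Your framing of $V$ as the pullback by the conformal involution $\tau = R\circ(\textrm{Kelvin inversion})$ is a nice conceptual gloss the paper leaves implicit, but it does not change the substance of the argument.
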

\begin{proof}
        It is clear that $V$ has dense range in $\mathcal{E}$, so we only need to verify that it is an isometry. Note that $Uf(t, \theta) = \frac{1}{t^2}f\left(\frac{1}{t}, -\theta \right)$ is a unitary map of $L^2(\Gamma_\alpha)$ such that $U = U^*$. A computation relying on the homogeneity and symmetry of $S^{\Gamma_\alpha}$ shows that
        \begin{align*}
        US^{\Gamma_\alpha}f(t) &= \frac{1}{t^2} \int_0^{2\pi} \int_0^\infty S^{\Gamma_\alpha}\left(\frac{1}{t}, -\theta, t', \theta' \right) f(t', \theta') t' \, dt'  \sin(\alpha) \, d\theta' \\ 
        &= \frac{1}{t^2} \int_0^{2\pi} \int_0^\infty S^{\Gamma_\alpha} \left(\frac{1}{t}, -\theta, \frac{1}{s'}, -\theta' \right) Vf(s', \theta') \, ds' \sin(\alpha) \, d\theta' \\
        &= \frac{1}{t} \int_0^{2\pi} \int_0^\infty S^{\Gamma_\alpha} \left(t, \theta, s', \theta' \right) Vf(s') s' \, ds' \sin(\alpha) \, d\theta' \\
        &= \frac{1}{t} S^{\Gamma_\alpha} Vf(t).
        \end{align*} 
        Hence
        \begin{equation*}
        \langle f, g \rangle_{\mathcal{E}} = \langle US^{\Gamma_\alpha} f, Ug \rangle_{L^2(\Gamma_\alpha)} = \langle S^{\Gamma_\alpha} V f, Vg \rangle_{L^2(\Gamma_\alpha)} = \langle Vf, Vg \rangle_{\mathcal{E}}. 
        \end{equation*}
        
        For $\bm{r}, \bm{r'} \in \Gamma_\alpha$ we have that
        $$\langle \bm{r'} - \bm{r}, \bm{\nu_{r}} \rangle  = \cos (\alpha) \sin(\alpha) t' (1 - \cos(\theta' - \theta)),$$
        and thus also that
        $$\langle \bm{r} - \bm{r'}, \bm{\nu_{r'}} \rangle  = \cos (\alpha) \sin(\alpha) t (1 - \cos(\theta' - \theta))$$
        It follows that
        $$K^{\Gamma_\alpha}(t, \theta, t', \theta') = \frac{t'}{t}(K^{\Gamma_\alpha})^{\ast}(t, \theta, t', \theta'),$$
        cf. Lemma~\ref{lem:specestimate}.
        Hence a computation similar to the above one yields that
        \begin{align*}
        VK^{\Gamma_\alpha}f(t) &= \frac{1}{t^3} \int_0^{2\pi} \int_0^\infty K^{\Gamma_\alpha} \left(\frac{1}{t}, -\theta, \frac{1}{s'}, -\theta' \right) Vf(s', \theta') \, ds' \sin(\alpha) \, d\theta' \\
        &= \int_0^{2\pi} \int_0^\infty \frac{1}{t} (K^{\Gamma_\alpha})^\ast \left(t, \theta, s', \theta' \right) s' Vf(s') s' \, ds' \sin(\alpha) \, d\theta' \\
        &= K^{\Gamma_\alpha} V f(t). \qedhere
        \end{align*}
\end{proof}
The localization result we are after is the following.
\begin{lem} \label{lem:essspecloc}
        Let $\rho \in C_c^\infty([0,1/2))$ be a smooth compactly supported function such that $\rho(t) = 1$ for $t \in [0,1/4]$, and denote by $M_\rho : \mathcal{E} \to \mathcal{E}$ the operator of multiplication by $\rho$ in $(t,\theta)$-coordinates. Then
        $$\sigma_{\ea}(M_\rho K^{\Gamma_\alpha} M_\rho, \mathcal{E}) = \bigcup_{n=-\infty}^\infty \Sigma_n,$$
        where $\Sigma_n$ are the same intervals as in Theorem~\ref{thm:Especres}. 
\end{lem}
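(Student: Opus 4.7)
The plan is to prove both inclusions separately, using the dilation-invariance of $K^{\Gamma_\alpha}$ and the involution $V$ established in the preceding lemma.

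For the inclusion $\bigcup_n \Sigma_n \subseteq \sigma_{\ea}(M_\rho K^{\Gamma_\alpha} M_\rho, \mathcal{E})$, I would construct explicit Weyl sequences using the spectral resolution in Theorem~\ref{thm:Especres}. Fix $\lambda = F_n(\xi_0) \in \Sigma_n$ and a bump $\hat\chi \in C_c^\infty(\mathbb{R})$ with $\|\hat\chi\|_{L^2} = 1$. On the spectral side $L^2(\mathbb{R})$, define
\[
\eta_k(\xi) = L_k^{1/2}\, e^{-ia_k(\xi - \xi_0)}\,\hat\chi(L_k(\xi-\xi_0)),
\]
where $L_k \to \infty$ and $a_k$ is chosen sufficiently large compared to $L_k$. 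By continuity of $F_n$ at $\xi_0$, $\|(F_n - \lambda)\eta_k\|_{L^2} \to 0$; moreover since $a_k/L_k \to \infty$, we have $\eta_k \rightharpoonup 0$ weakly in $L^2$ by Riemann--Lebesgue, so $(\eta_k)$ admits no convergent subsequence. Tracing back through the unitary chain of Section~\ref{sec:Especres}, the modulation by $e^{-ia_k\xi}$ amounts to a translation in $\log t$, placing the physical-space support of the corresponding $\phi_k \in \mathcal{E}_n$ inside $t \in [0,1/4]$ for all large $k$. Then $M_\rho \phi_k = \phi_k$, and using boundedness of $M_\rho$ on $\mathcal{E}$,
\[
\|(M_\rho K^{\Gamma_\alpha} M_\rho - \lambda)\phi_k\|_\mathcal{E} = \|M_\rho (K^{\Gamma_\alpha} - \lambda)\phi_k\|_\mathcal{E} \lesssim \|(F_n-\lambda)\eta_k\|_{L^2} \to 0,
\]
which exhibits $\lambda$ as an approximate eigenvalue of $M_\rho K^{\Gamma_\alpha} M_\rho$.

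For the reverse inclusion, I would use the involution $V$. Setting $\check\rho(t,\theta) := \rho(1/t)$, a cutoff near infinity on $\Gamma_\alpha$, the preceding lemma yields $V M_\rho V = M_{\check\rho}$ and hence the unitary equivalence
\[
M_\rho K^{\Gamma_\alpha} M_\rho \simeq_{\ue} M_{\check\rho} K^{\Gamma_\alpha} M_{\check\rho}.
\]
After adjusting the choice of $\rho$ slightly so that $\rho$ and $\check\rho$ have disjoint supports, introduce $\sigma := 1 - \rho - \check\rho$, supported in a bounded smooth annular region bounded away from the vertex. Expanding $K^{\Gamma_\alpha} = \sum_{a,b\in\{\rho,\sigma,\check\rho\}} M_a K^{\Gamma_\alpha} M_b$, the claim is that every term except $M_\rho K^{\Gamma_\alpha} M_\rho$ and $M_{\check\rho} K^{\Gamma_\alpha} M_{\check\rho}$ is compact on $\mathcal{E}$: the cross terms $M_\rho K^{\Gamma_\alpha} M_{\check\rho}$ and $M_{\check\rho} K^{\Gamma_\alpha} M_\rho$ have smooth kernels with $O(|\bm{r}'|^{-2})$ decay at infinity, yielding Hilbert--Schmidt-type estimates, while all terms involving $M_\sigma$ are classical double-layer operators on a compact smooth piece of $\Gamma_\alpha$, compact by arguments in the spirit of Lemma~\ref{lem:cpctweaksing}. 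Since $\sigma_{\ea}$ is invariant under compact perturbations, under unitary equivalence, and under splitting across disjoint-support subspaces, we obtain
\[
\sigma_{\ea}(K^{\Gamma_\alpha},\mathcal{E}) = \sigma_{\ea}(M_\rho K^{\Gamma_\alpha} M_\rho,\mathcal{E}),
\]
and self-adjointness of $K^{\Gamma_\alpha}$ together with Theorem~\ref{thm:Especres} give the final equality with $\bigcup_n \Sigma_n$.

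The main technical obstacle is the compactness of the cross and middle terms on $\mathcal{E}$. Since $\Gamma_\alpha$ is unbounded and $\mathcal{E} \simeq H^{-1/2}$ is a negative-order Sobolev space, the classical Sobolev-based compactness of Lemma~\ref{lem:cpctweaksing} (stated for bounded surfaces) does not apply directly; one must combine local smoothness of the relevant kernels with quantitative decay of $K^{\Gamma_\alpha}$ at infinity to obtain the required compactness.
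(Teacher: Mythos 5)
Your overall plan for showing $\sigma_{\ea}(M_\rho K^{\Gamma_\alpha} M_\rho, \mathcal{E}) = \bigcup_n \Sigma_n$ mirrors the paper's: decompose $K^{\Gamma_\alpha} = \sum_{j,k} M_{\rho_j}K^{\Gamma_\alpha}M_{\rho_k}$ with $\rho_1 = \rho$, $\rho_2(t)=\rho(1/t)$, $\rho_3=1-\rho_1-\rho_2$, show the off-diagonal and middle pieces are compact, use the involution $V$ to identify $M_{\rho_1}K^{\Gamma_\alpha}M_{\rho_1}$ with $M_{\rho_2}K^{\Gamma_\alpha}M_{\rho_2}$, and conclude through the splitting of singular sequences. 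If this decomposition argument is completed, it gives the full equality, so the explicit Weyl-sequence construction for $\bigcup_n\Sigma_n\subseteq\sigma_{\ea}(\cdot)$ is redundant (and as written it overstates: a compactly supported $\hat\chi$ cannot produce a compactly supported $\phi_k$, so ``$M_\rho\phi_k=\phi_k$'' must be replaced by an estimate on the $\mathcal{E}$-small tail).

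The genuine gap is the one you flag yourself: the compactness on $\mathcal{E}$ of the cross and middle terms. Your suggested route — Hilbert--Schmidt estimates from the $O(|\bm{r}'|^{-2})$ decay of the kernel — gives compactness on $L^2(\Gamma_\alpha)$, but says nothing directly about $\mathcal{E}\simeq H^{-1/2}(\Gamma_\alpha)$ on the \emph{unbounded} cone, where Lemma~\ref{lem:cpctweaksing} is not available and the energy-norm equivalence \eqref{eq:energynormcutoff} only holds after cutting off by a compactly supported $\rho$. The paper resolves this precisely by invoking $V$ a second time, e.g.\ $M_{\rho_1}K^{\Gamma_\alpha}M_{\rho_2}V = M_{\rho_1}K^{\Gamma_\alpha}VM_{\rho_1}$, which transports the far cutoff $\rho_2$ to the vertex so that both cutoffs have compact support near the origin; only then does \eqref{eq:energynormcutoff} apply, after which the $L^2\to H^{1/2}$ smoothing estimate and duality (as in the proof of Lemma~\ref{lem:cpctweaksing}) yield compactness on $H^{-1/2}$. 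You use $V$ only to identify $M_\rho K M_\rho$ with $M_{\check\rho}K M_{\check\rho}$, not in this second role, and without it the compactness of the cross terms on $\mathcal{E}$ is left unproven.
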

\begin{proof}
        Let $\rho_1 = \rho$, let $\rho_2(t) = \rho_1(1/t)$, $0 < t < \infty$, and let $\rho_3 = 1 - \rho_1 - \rho_2$. Then 
        $$K^{\Gamma_\alpha} = \sum_{j,k=1}^3 M_{\rho_j} K^{\Gamma_\alpha} M_{\rho_k}.$$
        If $(j,k)$ is any of the indices $(1,3)$, $(3,1)$, or $(3,3)$, then  $M_{\rho_j} K^{\Gamma_\alpha} M_{\rho_k}$ has a weakly singular kernel and it follows from Lemma~\ref{lem:cpctweaksing} and \eqref{eq:energynormcutoff} that it is a compact operator. To treat the remaining non-diagonal terms we use the operator $V$ to move neighborhoods of $\infty$ to the origin, so that we may apply $\eqref{eq:energynormcutoff}$. Note that $VM_{\rho_1} = M_{\rho_2}V$,  $VM_{\rho_3} = M_{\rho_3}V$ and that the adjoint of $V$ with respect to the $L^2(\Gamma_\alpha)$-pairing is given by
        $$V^\ast f(t) = \frac{1}{t}f\left(\frac{1}{t}, -\theta\right).$$ 
        By $\eqref{eq:energynormcutoff}$, $M_{\rho_1} K^{\Gamma_\alpha} M_{\rho_2} \colon \mathcal{E} \to \mathcal{E}$ is compact if and only if 
        $$M_{\rho_1} K^{\Gamma_\alpha} M_{\rho_2} V = M_{\rho_1} K^{\Gamma_\alpha} V M_{\rho_1}  \colon H^{-1/2}(\Gamma_\alpha) \to H^{-1/2}(\Gamma_\alpha)$$
        is compact. It is easy to check that the $L^2(\Gamma_\alpha)$-adjoint of this latter operator defines a bounded operator
        $$M_{\rho_1} V^\ast (K^{\Gamma_\alpha})^\ast  M_{\rho_1} \colon L^2(\Gamma_\alpha) \to H^{1/2}(\Gamma_\alpha)$$
        As in the proof of Lemma~\ref{lem:cpctweaksing}, it follows that $M_{\rho_1} K^{\Gamma_\alpha} M_{\rho_2} \colon \mathcal{E} \to \mathcal{E}$ is compact. Since
        $$M_{\rho_1} K^{\Gamma_\alpha} M_{\rho_2} V = V M_{\rho_2} K^{\Gamma_\alpha} M_{\rho_1}$$
        it also follows that $M_{\rho_2} K^{\Gamma_\alpha} M_{\rho_1} \colon \mathcal{E} \to \mathcal{E}$ is compact. Similarly, 
        $$M_{\rho_2} K^{\Gamma_\alpha} M_{\rho_3} V = VM_{\rho_1} K^{\Gamma_\alpha} M_{\rho_3}$$
        so we see that $M_{\rho_2} K^{\Gamma_\alpha} M_{\rho_3}$ is compact. In the same way we conclude that $M_{\rho_3} K^{\Gamma_\alpha} M_{\rho_2}$ is compact.
        
        In total, we have shown that
        $$K^{\Gamma_\alpha} = M_{\rho_1} K^{\Gamma_\alpha} M_{\rho_1} + M_{\rho_2} K^{\Gamma_\alpha} M_{\rho_2} \textrm{+ compact.}$$
        Note that $M_{\rho_1} K^{\Gamma_\alpha} M_{\rho_1}$ and $M_{\rho_2} K^{\Gamma_\alpha} M_{\rho_2}$ are orthogonal (in the sense that their composition is $0$) and unitarily equivalent,
        $$VM_{\rho_1} K^{\Gamma_\alpha} M_{\rho_1}V^{-1} = M_{\rho_2} K^{\Gamma_\alpha} M_{\rho_2}.$$
        To conclude using these two facts we apply Weyl's criterion to the self-adjoint operator $K^{\Gamma_\alpha} \colon \mathcal{E} \to \mathcal{E}$. Let 
        $$\lambda \in \sigma_{\ea}(M_{\rho_1} K^{\Gamma_\alpha} M_{\rho_1} + M_{\rho_2} K^{\Gamma_\alpha} M_{\rho_2}, \mathcal{E}) = \sigma_{\ess}(K^{\Gamma_\alpha}, \mathcal{E}),$$ and let $(f_n) \subset \mathcal{E}$ be a corresponding singular sequence. Let $\rho_4$ be a smooth function such that $\rho_4(t) = 1$ on the support of $\rho_1$, $\rho_4(t) = 0$ on the support of $\rho_2$. Then either $(M_{\rho_4}f_n)$ is a singular sequence for $M_{\rho_1} K^{\Gamma_\alpha} M_{\rho_1}$ or $(M_{1-\rho_4}f_n)$ is a singular sequence for $M_{\rho_2} K^{\Gamma_\alpha} M_{\rho_2}$. In either case, we find that
        $$\lambda \in \sigma_{\ea}(M_{\rho_1} K^{\Gamma_\alpha} M_{\rho_1}, \mathcal{E}) = \sigma_{\ea}(M_{\rho_2} K^{\Gamma_\alpha} M_{\rho_2}, \mathcal{E}).$$
        Conversely, if $\lambda \in \sigma_{\ea}(M_{\rho_1} K^{\Gamma_\alpha} M_{\rho_1}, \mathcal{E})$ with singular sequence $(f_n)$, then $M_{\rho_1} f_n$ is a singular sequence for the sum $M_{\rho_1} K^{\Gamma_\alpha} M_{\rho_1} + M_{\rho_2} K^{\Gamma_\alpha} M_{\rho_2}$. Hence
        $$\sigma_{\ea}(M_{\rho_1} K^{\Gamma_\alpha} M_{\rho_1}, \mathcal{E}) = \sigma_{\ess}(K^{\Gamma_\alpha}, \mathcal{E}) = \bigcup_{n=-\infty}^\infty \Sigma_n,$$
        by Theorem~\ref{thm:Especres}.
\end{proof}
Combined with Theorem~\ref{thm:Eperturbcurve} this allows us to prove the main theorem of this section.
\begin{thm} \label{thm:Eessspec}
        Let $\Gamma$ be a closed surface of revolution with a conical point of opening angle $2\alpha$, obtained by revolving a $C^5$-curve $\gamma$. For $n \in \mathbb{Z}$, denote by $\Sigma_n$ the closed interval
        \begin{equation*}
        \Sigma_n = \left\{\frac{P_{i\xi - 1/2}^n(\cos \alpha)\dot{P}_{i\xi - 1/2}^n(-\cos \alpha) - P_{i\xi - 1/2}^n(-\cos \alpha)\dot{P}_{i\xi - 1/2}^n(\cos \alpha)}{P_{i\xi - 1/2}^n(-\cos \alpha)\dot{P}_{i\xi - 1/2}^n(\cos \alpha)+P_{i\xi - 1/2}^n(\cos \alpha)\dot{P}_{i\xi - 1/2}^n(-\cos \alpha)} \, : \, -\infty \leq \xi \leq \infty \right\}.
        \end{equation*}
        Then the self-adjoint operator $K^\Gamma \colon \mathcal{E} \to \mathcal{E}$, where $\mathcal{E}$ is the energy space of $\Gamma$, has essential spectrum
        \begin{equation} \label{eq:essspecE}
        \sigma_{\ess}(K^\Gamma, \mathcal{E}) = \bigcup_{n=-\infty}^\infty \Sigma_n.
        \end{equation}
        Hence, the spectrum of $K^\Gamma$ consists of this interval and a sequence of real eigenvalues $\{\lambda_k\}$ with no limit point outside of it,
                \begin{equation*}
                \sigma(K^\Gamma, \mathcal{E}) = \{\lambda_k\} \cup \sigma_{\ess}(K^\Gamma, \mathcal{E}).
                \end{equation*}
\end{thm}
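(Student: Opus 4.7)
The plan is to combine Theorem~\ref{thm:Eperturbcurve}, which reduces $\Gamma$ to the auxiliary surface $\Gamma_c$ modulo compact perturbations, with Lemma~\ref{lem:essspecloc}, which identifies the localized essential spectrum on the infinite cone $\Gamma_\alpha$, and finally invoke self-adjointness to upgrade the essential spectrum description to a description of the full spectrum. By Theorem~\ref{thm:Eperturbcurve} and Definition~\ref{def:fredholm}, $\sigma_{\ess}(K^\Gamma,\mathcal{E}) \setminus \{0\} = \sigma_{\ess}(K^{\Gamma_c},\mathcal{E}_c) \setminus \{0\}$, so it suffices to prove \eqref{eq:essspecE} for $K^{\Gamma_c}$.

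First, I would choose a cutoff $\rho \in C_c^\infty([0,1/2))$ with $\rho=1$ on $[0,1/4]$, as in Lemma~\ref{lem:essspecloc}. By construction of $\Gamma_c$, the parametrizations of $\Gamma_c$ and $\Gamma_\alpha$ coincide on the support of $\rho$, so the integral kernels of $M_\rho K^{\Gamma_c} M_\rho$ and $M_\rho K^{\Gamma_\alpha} M_\rho$ agree pointwise. The complementary piece $K^{\Gamma_c} - M_\rho K^{\Gamma_c} M_\rho$ splits into a term with smooth kernel plus terms with kernel supported away from the conical point; the former is compact by Lemma~\ref{lem:cpctweaksing} applied on $H^{-1/2}(\Gamma_c) \simeq \mathcal{E}_c$, and the latter are handled analogously (or by the $C^2$-remark following Lemma~\ref{lem:cpctweaksing}). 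Hence
$$\sigma_{\ess}(K^{\Gamma_c},\mathcal{E}_c) = \sigma_{\ess}(M_\rho K^{\Gamma_c} M_\rho,\mathcal{E}_c).$$

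The main step is to transfer the singular-sequence calculation on $\Gamma_\alpha$ given by Lemma~\ref{lem:essspecloc} to $\Gamma_c$. Here the key point is that, thanks to \eqref{eq:energynormcutoff}, for any distribution supported in the common neighborhood $\{\rho > 0\}$ the $\mathcal{E}_c$-norm and the $\mathcal{E}$-norm of $\Gamma_\alpha$ are each equivalent to the $H^{-1/2}$-norm of the same compactly supported distribution, with uniform constants. Given a singular sequence $(f_n)$ for $\lambda \in \sigma_{\ea}(M_\rho K^{\Gamma_c} M_\rho,\mathcal{E}_c)$ I would replace $f_n$ by $M_\rho f_n$, transplant it to $\Gamma_\alpha$ via the common parametrization, and observe that the transplanted sequence is bounded, has no convergent subsequence (by the norm equivalence), and satisfies $(M_\rho K^{\Gamma_\alpha} M_\rho - \lambda)\tilde f_n \to 0$ in $\mathcal{E}$ because the operator actions coincide. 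The reverse implication is identical. Combined with Weyl's criterion for the self-adjoint operator $K^{\Gamma_c} \colon \mathcal{E}_c \to \mathcal{E}_c$ (giving $\sigma_{\ea} = \sigma_{\ess}$), this yields
$$\sigma_{\ess}(K^{\Gamma_c},\mathcal{E}_c) = \sigma_{\ea}(M_\rho K^{\Gamma_\alpha} M_\rho,\mathcal{E}) = \bigcup_{n=-\infty}^\infty \Sigma_n,$$
where the final equality is Lemma~\ref{lem:essspecloc}. Checking the point $z=0$ separately is easy since $0$ lies in each $\Sigma_n$ by the Riemann--Lebesgue behavior of $F_n(\xi)$ at $\xi = \pm\infty$.

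The last assertion about the structure $\sigma(K^\Gamma,\mathcal{E}) = \{\lambda_k\} \cup \sigma_{\ess}(K^\Gamma,\mathcal{E})$ follows from the general spectral theory of bounded self-adjoint operators: the complement of the essential spectrum in the spectrum consists of isolated eigenvalues of finite multiplicity, which in particular cannot accumulate outside $\sigma_{\ess}(K^\Gamma,\mathcal{E})$. The main obstacle is the careful bookkeeping in the transplantation step: the energy spaces $\mathcal{E}$ (on the unbounded $\Gamma_\alpha$) and $\mathcal{E}_c$ (on the bounded $\Gamma_c$) are defined through different single-layer potentials, and the argument hinges on the local $H^{-1/2}$-equivalence \eqref{eq:energynormcutoff} to ensure that neither spurious spectral points are introduced nor genuine ones are lost when passing between them.
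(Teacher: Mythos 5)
Your proposal follows essentially the same route as the paper: reduce $\Gamma$ to $\Gamma_c$ via Theorem~\ref{thm:Eperturbcurve}, localize to the conical point using the cutoff $\rho$ and Lemma~\ref{lem:cpctweaksing}, transplant singular sequences between $\Gamma_c$ and $\Gamma_\alpha$ through the local $H^{-1/2}$-equivalence \eqref{eq:energynormcutoff}, and conclude via Weyl's criterion together with Lemma~\ref{lem:essspecloc}. The extra care you take with $z=0$ is harmless but not needed, since $0$ belongs to every $\Sigma_n$ and to both essential spectra automatically.
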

\begin{rmk}
        Again, this theorem extends to surfaces $\Gamma$ with a finite number of axially symmetric conical points, each conical point contributing an interval of the type \eqref{eq:essspecE} to the essential spectrum.
\end{rmk}
\begin{proof}
Let $\rho$ be as in Lemma~\ref{lem:essspecloc}. In the notation of Theorem~\ref{thm:Eperturbcurve} we already know that
$$\sigma_{\ess}(K^\Gamma, \mathcal{E}) = \sigma_{\ess}(K^{\Gamma_c}, \mathcal{E}_c) = \sigma_{\ea}(K^{\Gamma_c}, \mathcal{E}_c) = \sigma_{\ea}(M_\rho K^{\Gamma_c} M_\rho, \mathcal{E}_c).$$
For an element $f(t, \theta) \in \mathcal{E}_c$ supported in $[0,1/2] \times [0,2\pi]$ we have by \eqref{eq:energynormcutoff} that
$$\|f\|_{\mathcal{E}_c} \simeq \|f\|_{H^{-1/2}(\Gamma_c)} = \|f\|_{H^{-1/2}(\Gamma_\alpha)} \simeq \|f\|_{\mathcal{E}(\Gamma_\alpha)}.$$
In other words, the energy norms on $\Gamma_c$ and $\Gamma_\alpha$ are comparable for such elements. If $(g_n) \subset \mathcal{E}(\Gamma_\alpha)$ is a singular sequence for $M_\rho K^{\Gamma_\alpha} M_\rho$ it is clear that $(M_\rho g_n)$ also is a singular sequence. The same statement is also true with $\Gamma_c$ in place of $\Gamma_{\alpha}$. Combined with Lemma~\ref{lem:essspecloc} it follows that 
$$\sigma_{\ess}(K^\Gamma, \mathcal{E}) = \sigma_{\ea}(M_\rho K^{\Gamma_c} M_\rho, \mathcal{E}_c) =  \sigma_{\ea}(M_\rho K^{\Gamma_\alpha} M_\rho, \mathcal{E}(\Gamma_{\alpha})) = \bigcup_{n=-\infty}^\infty \Sigma_n. \qedhere$$
\end{proof}

\section{Numerical experiments} 
\label{sec:numerical}

This section is organized as follows: Section~\ref{sec:indicator}
introduces an indicator function $\kappa_n^{\mathrm{ind}}$ which highlights the spectral properties of $K^\gamma_n$. The
function $\kappa_n^{\mathrm{ind}}$ is based on a generalization of
the polarizability $\omega_{jj}$ in~(\ref{eq:polariz}) and
bears some resemblance to the function $\alpha_\sharp$
of~\cite[Eq.~(4.8)]{HKL16}. Section~\ref{sec:nummet} reviews an
efficient strategy for the numerical solution of integral equations of
the type
\begin{equation}
\left(K_n^\gamma-z\right)\rho_n(t)=g_n(t),
\label{eq:inteq2F}
\end{equation}
needed to compute $\omega_{jj}$ and $\kappa_n^{\mathrm{ind}}$ to
high precision in the numerical examples of Section~\ref{sec:numres}.

\subsection{Polarizability and the indicator function} \label{sec:indicator}
Let $\Gamma$ be a closed surface of revolution with a conical point of
opening angle $2\alpha$, obtained by revolving a $C^5$-curve $\gamma$,
as described in Section~\ref{sec:perturb}. By rotational invariance,
following Section~\ref{sec:Especres}, the energy space on $\Gamma$ has
the orthogonal decomposition
$$\mathcal{E} = \bigoplus_{n=-\infty}^\infty \mathcal{E}_n,$$
where the norm on $\mathcal{E}_n$ is given by
$$\|f_n\|_{\mathcal{E}_n} = \langle S_n^{\gamma} f_n, f_n \rangle_{L^2([0,1], \gamma_1(t)|\gamma'(t)| \, dt)}.$$
The double layer potential $K^\Gamma \colon \mathcal{E} \to \mathcal{E}$ acts diagonally in this decomposition,
$$K^\Gamma \simeq_{\ue} \bigoplus_{n=-\infty}^\infty K^{\gamma}_n.$$
The operators $K^{\gamma}_n \colon \mathcal{E}_n \to \mathcal{E}_n$ were defined in Section~\ref{sec:L2spec} (but their action was considered on a different space), and the operators $S_n^{\gamma}$ are defined analogously (cf. Section~\ref{sec:transmission}). Hence, to numerically study the spectrum of $K^\Gamma$, we may consider the modal operators $K_n^{\gamma}$ separately. To accomplish this, we will now follow the symmetrization scheme of \cite[Section 5]{HP13}.

Since $S^\Gamma \colon L^2(\Gamma) \to L^2(\Gamma)$ is a positive operator, the same is true of 
$$S_n^{\gamma} \colon L^2([0,1], \gamma_1(t)|\gamma'(t)| \, dt) \to L^2([0,1], \gamma_1(t)|\gamma'(t)| \, dt).$$ By definition, the square root of $S_n^{\gamma}$ extends to a unitary operator
$$(S_n^\gamma)^{1/2} = \sqrt{S_n^{\gamma}} \colon \mathcal{E}_n \to L^2([0,1], \gamma_1(t)|\gamma'(t)| \, dt).$$
We denote its inverse by $(S_n^\gamma)^{-1/2}$.
Let $\mathcal{E}_n^*$ be the dual space of $\mathcal{E}_n$, with respect to the $L^2$-pairing. It is straightforward to verify that $\mathcal{E}_n^* \subset L^2([0,1], \gamma_1(t)|\gamma'(t)| \, dt)$. Then, by duality, $\sqrt{S_n^{\gamma}}$ is also unitary as an operator
$$\sqrt{S_n^{\gamma}} \colon L^2([0,1], \gamma_1(t)|\gamma'(t)| \, dt) \to \mathcal{E}_n^*.$$
Let $A_n^\gamma \colon L^2([0,1], \gamma_1(t)|\gamma'(t)| \, dt) \to L^2([0,1], \gamma_1(t)|\gamma'(t)| \, dt)$ be the self-adjoint operator
$$A_n^\gamma = (S_n^\gamma)^{1/2}K^{\gamma}_n(S_n^\gamma)^{-1/2},$$
unitarily equivalent to $K^\gamma_n$.
Since $A_n^\gamma$ is self-adjoint, it is by the spectral theorem associated with a spectral resolution $dE_n$, obviously equivalent to the spectral resolution of $K_n^\gamma$. Suppose that $u \in \mathcal{E}_n$ and $v \in \mathcal{E}_n^*$ are real-valued. Both $S^\gamma_n$ that $K^{\gamma}_n$ map real-valued distributions to real-valued distributions, since their kernels are real (see the Appendix). It follows that the measure 
$$d\langle E_n (S_n^\gamma)^{1/2}u, (S_n^\gamma)^{-1/2}v \rangle = d\langle E_n(\cdot) (S_n^\gamma)^{1/2}u, (S_n^\gamma)^{-1/2}v\rangle_{L^2([0,1], \gamma_1(t)|\gamma'(t)| \, dt)}$$ 
is real. 

For $z = x + iy$, $y \neq 0$, we now let
\begin{equation} \label{eq:kappadef}
\kappa_n(u,v, z) = \Im \mathrm{m} \, \langle (K^{\gamma}_n-z)^{-1} u, v \rangle_{L^2([0,1], \gamma_1(t)|\gamma'(t)| \, dt)}.
\end{equation}
Since the spectral measure $dE_n$ is supported on $[-1,1]$ by \eqref{eq:Kspeccontain}, we have that
\begin{equation} \label{eq:kappapois}
\kappa_n(u,v, z) = \Im \mathrm{m} \, \int_{-1}^{1} \frac{d\langle  E_n(s) (S_n^\gamma)^{1/2}u, (S_n^\gamma)^{-1/2} v \rangle}{s-z}  = y\int_{-1}^1 \frac{d\langle  E_n(s) (S_n^\gamma)^{1/2}u, (S_n^\gamma)^{-1/2}v \rangle}{(s-x)^2 + y^2}.
\end{equation}

The imaginary part of the polarizability $\omega_{33}(z)$ in the $\bm{r}_3$-direction (see Section~\ref{sec:introtrans}) can in the current notation be expressed as
$$\Im \mathrm{m} \, \omega_{33}(z) = \kappa_{0}((g_{\bm{e}_3})_0, (h_{\bm{e}_3})_0, z).$$
This is because $g_{\bm{e}_3}$ and $h_{\bm{e}_3}$ are independent of $\theta$, $\Gamma$ parametrized by \eqref{eq:gammaparamet2}, so that
\begin{equation} \label{eq:polarmode0}
g_{\bm{e}_3}(t, \theta) = \frac{1}{\sqrt{2\pi}}(g_{\bm{e}_3})_0(t), \quad h_{\bm{e}_3}(t, \theta) = \frac{1}{\sqrt{2\pi}}(h_{\bm{e}_3})_0(t), \qquad \theta \in [0, 2\pi], \; 0 \leq t \leq 1.
\end{equation}
In the $\bm{r}_1$-direction, $g_{\bm{e}_1}$ and $h_{\bm{e}_1}$ have non-zero Fourier coefficients for $n = \pm 1$, and
\begin{equation} \label{eq:polarmode1}
g_{\bm{e}_1}(t, \theta) = \frac{1}{\sqrt{2\pi}}\left((g_{\bm{e}_1})_{-1}(t)e^{-in\theta} + (g_{\bm{e}_1})_{1}(t)e^{in\theta}\right), \quad h_{\bm{e}_1}(t, \theta) = \frac{1}{\sqrt{2\pi}}\left((h_{\bm{e}_1})_{-1}(t)e^{-in\theta} + (h_{\bm{e}_1})_{1}(t)e^{in\theta}\right),
\end{equation}
for $\theta \in [0, 2\pi]$, $0 \leq t \leq 1$. By symmetry it follows that
$$\Im \mathrm{m} \, \omega_{11}(z) = 2\kappa_{1}((g_{\bm{e}_1})_1, (h_{\bm{e}_1})_1, z),$$
and a similar formula holds for $\omega_{22}(z)$. Hence, \eqref{eq:kappapois} shows that there indeed is a spectral measure $\mu_j$, $j=1,2,3$, such that $\omega_{jj}(z)$ can be represented as the Cauchy integral of $\mu_j$, as claimed in \eqref{eq:polarcauchy}.

We can draw several conclusions from the representation \eqref{eq:kappapois} of $\kappa_n$ as a Poisson integral. If 
$$d\mu_{u,v}(s) = d\langle  E_a(s)(S_n^\gamma)^{1/2}u, (S_n^\gamma)^{-1/2}v \rangle$$
 has absolutely continuous support around the point $x$, then almost surely
$$\lim_{y \to 0^-}  \kappa_n(u,v, x+iy) = -\pi \mu'_{u,v}(x) = -\pi \lim_{h \to 0} \frac{\mu_{u,v}(x-h, x+h)}{2h}\neq 0.$$
For any $x$, it holds that
$$\lim_{y \to 0^-}  y\kappa_n(u,v, x+iy) = \pi \mu_{u,v}(\{x\}).$$
In particular, if $\mu_{u,v}(\{x\}) \neq 0$, i.e. if $x$ is an $\mathcal{E}_n$-eigenvalue of $K_n^\gamma$ excited by $u$ and $v$, then $|\kappa_n(u,v, x+iy)| \simeq 1/y$ as $y \to 0^-$.
 On the other hand, if $x$ lies outside the support of $\mu_{u,v}$ entirely, then $|\kappa_n(u,v, x+iy)| \simeq y$ as $y \to 0^-$. In fact, if $x$ lies outside the spectrum of $K^{\gamma}_n$, then $|\kappa_n(u,v, x+iy)|/y$ is uniformly bounded in $y < 0$ and real-valued $u \in \mathcal{E}_n$, $v \in \mathcal{E}_n^\ast$ of norm less or equal to $1$. In general, it is not clear how to recover the singular continuous spectrum from $\kappa_n$.

We introduce the indicator function
$$\kappa_n^{\mathrm{ind}} (u, v, x + iy) = \frac{\kappa_n^\Delta (u, v, x + iy) + 1}{2},$$
where
$$\kappa_n^\Delta (u,v, x + iy) = \log_{10}|\kappa_n(u,v, x+iy)| - \log_{10}|\kappa_n(u, v, x+10iy)|.$$
Our numerical experiments rely on the following properties, evident from the preceding discussion.
\begin{itemize}
\item If $x$ belongs to the absolutely continuous support of $\mu_{u,v}$, then almost surely, we have that
$$\lim_{y \to 0^-} \kappa_n^{\mathrm{ind}} (u, v, x + iy) = \frac{1}{2}.$$
\item If $\mu_{u,v}(\{x\}) \neq 0$, then
$$\lim_{y \to 0^-} \kappa_n^{\mathrm{ind}} (u, v, x + iy) = 1.$$
\item If $x$ lies outside the support of $\mu_{u,v}$, then
$$\lim_{y \to 0^-} \kappa_n^{\mathrm{ind}} (u, v, x + iy) = 0.$$
\end{itemize}
We finish this discussion by describing how $u$ and $v$ are chosen, for
a given $x \in \mathbb{R}$, in the framework of our numerical method
presented in Section~\ref{sec:nummet}. To each level of discretization
$\mathfrak{d}$ we associate a finite-dimensional space
$\mathcal{F}_{\mathfrak{d}} \subset \mathcal{E}_n^*$ of piecewise
polynomial functions. The spaces $\mathcal{F}_{\mathfrak{d}}$ increase
as $\mathfrak{d}$ gets finer, and their union is dense in
$\mathcal{E}_n^*$. Note that even when the discretization is rough,
our numerical method still computes $(K^\gamma_n-z)^{-1}u(t)$ to a
very high accuracy for every $t\neq 0$. The level of discretization
only limits the choice of functions $u$.

For a given $x$, we choose a small number $y_0 < 0$, and let $u = \breve{u}$ and $v = \breve{v}$ be the maximizers of the supremum
\begin{equation} \label{eq:uvchoice}
\sup_{\substack{u,v \in \mathcal{F}_{\mathfrak{d}}, \\\|u\|_{L^2}=\|v\|_{L^2} = 1}} \kappa_n(u,v, x+iy_0) < \infty.
\end{equation}
Before continuing, a subtle remark is required: we know from Theorem~\ref{thm:winding} that it often happens that
$$
\sup_{\|u\|_{L^2}=\|v\|_{L^2}=1} \kappa_n(u,v, x+iy_0) = \infty.
$$
already for $y_0 < 0$. In particular, \eqref{eq:uvchoice} may become arbitrarily large as $\mathfrak{d}$ gets finer. However, the $L^2$-, $\mathcal{E}_n$-, and $\mathcal{E}_n^*$-norms are all equivalent on $\mathcal{F}_{\mathfrak{d}}$, since it is a finite-dimensional space. Therefore, \eqref{eq:uvchoice} is certainly finite for a given $\mathfrak{d}$, since
$$
\sup_{\|u\|_{\mathcal{E}_n}=\|v\|_{\mathcal{E}_n^*}=1}
\kappa_n(u,v, x+iy_0) = \|(K_n^\gamma-x-iy_0)^{-1}\| < \infty, \quad
y_0 < 0.
$$
Based on the equivalence of these norms, we will soon see that it is
sound to maximize \eqref{eq:uvchoice} in the $L^2$-norm. In fact,
since the $\mathcal{E}_n$- and $\mathcal{E}_n^*$-norms are more
expensive to compute and also difficult to apply to the numerical
maximization of \eqref{eq:uvchoice}, cf. \cite[Section~4]{HKL16}, it
turns out that our numerical approach yields much better results when
we maximize \eqref{eq:uvchoice} in the $L^2$-norm. The functions $u =
\breve{u}$ and $v = \breve{v}$ are best interpreted as moderately
aggressive test functions for which we are guaranteed that
$\kappa_n(\breve{u},\breve{v}, x+iy_0) > 0$ is relatively large.

By the spectral theorem, there is a unitary operator $V$ that carries
$A^\gamma_n$ onto a multiplication operator $M_\varphi$ on $L^2(X,
\nu)$, for some positive measure $\nu$, $A^\gamma_n = V^{-1}M_\varphi
V.$ For subsets $F \subset [-1,1]$ we then have that
$$\mu_{u,v}(F) = \int_{\varphi^{-1}(F)} V(S^\gamma_n)^{1/2}u \overline{V(S^\gamma_n)^{-1/2}v} \, d\nu.$$

Suppose first that $x$ is an eigenvalue of $K^\gamma_n$, i.e. $\nu(\varphi^{-1}(\{x\})) \neq 0$, and that it is isolated. Suppose that $\mathfrak{d}$ is so fine that some eigenvector of $A^\gamma_n$, to the eigenvalue $x$, is not orthogonal to either $(S^\gamma_n)^{1/2}\mathcal{F}_{\mathfrak{d}}$ or $(S^\gamma_n)^{-1/2}\mathcal{F}_{\mathfrak{d}}$. For any $\delta > 0$, the Poisson integral at $x$ of $\chi_{\{|s - x| > \delta\}} \mu_{u,v}$ tends to zero as $y \to 0$, uniformly in vectors $u\in \mathcal{E}_n$, $v\in\mathcal{E}_n^*$ of norm $1$. Here $\chi$ denotes the characteristic function of the indicated set.
 In view of \eqref{eq:kappapois} and the equivalence of all norms on $\mathcal{F}_\mathfrak{d}$, it follows that if $y_0$ is sufficiently small then it must be that the maximizers $\breve{u}$ and $\breve{v}$ of \eqref{eq:uvchoice} satisfy that $V(S^\gamma_n)^{1/2}\breve{u}(\gamma) \neq 0$ and $V(S^\gamma_n)^{-1/2}\breve{v}(\gamma) \neq 0$ for some $\gamma \in \varphi^{-1}(\{x\})$. Hence 
$$\lim_{y \to 0^-} \kappa_n^{\mathrm{ind}} (\breve{u}, \breve{v}, x + iy) = 1,$$
as soon as $y_0 < 0$ is sufficiently small and the discretization $\mathfrak{d}$ is sufficiently fine.

Our conclusion for the non-discrete spectrum appears to be a little less satisfying without further a priori knowledge about the spectral measures of $K_n^\gamma$.
Let $\delta > 0$, and suppose that $K^\gamma_n$ has absolutely continuous spectrum in the interval $(x-\delta, x+\delta)$. Then the same argument yields, if $y_0 < 0$ is sufficiently small and $\mathfrak{d}$ sufficiently fine, that $V(S^\gamma_n)^{1/2}\breve{u}$ and $V(S^\gamma_n)^{-1/2}\breve{v}$ could not be identically zero in $\varphi^{-1}((x-\delta, x+\delta))$ with respect to $\nu$-measure. Hence there are points $x' \in (x-\delta, x+\delta)$ for which
$$\lim_{y \to 0^-} \kappa_n^{\mathrm{ind}} (\breve{u}, \breve{v}, x' + iy) = \frac{1}{2}.$$
We do expect that the spectral resolution of $K_n^\gamma$ is actually well behaved, allowing for stronger conclusions. In particular, we believe that $K_n^\gamma$ never has a singularly continuous spectrum. However, a rigorous study of the spectral measures is beyond the scope of this article.

\subsection{Numerical method}
\label{sec:nummet}

We rely on high-order panel-based Nystr{\"o}m
discretization~\cite{HK14} to solve~(\ref{eq:inteq2F}). The resolution
requirements for the layer density $\rho_n(t)$ may lead to a giant
linear system that we never form explicitly, but instead solve using a
technique called recursively compressed inverse preconditioning
(RCIP)~\cite{HO08}. A homotopy method~\cite[Section~6.3]{H11} to capture
limits $y\to 0^-$ is another key ingredient in the numerical scheme.

RCIP-accelerated Nystr{\"o}m schemes have previously been used to
solve Fredholm integral equations of the second kind related to the  Neumann--Poincar\'e operator on Lipschitz
surfaces~\cite{H11,HKL16,HP13} and electromagnetic resonances in
axially symmetric domains with sharp edges~\cite{HK16}. It would bring us
too far from the scope of this article to give a complete account of these schemes. We
refer instead to the original papers~\cite{H11,HK14,HK16,HP13} and to
the compendium~\cite{H16}. Below follows a short summary, compiled
from~\cite{HK16,HP13}, giving special attention to differences in the present
implementation from that of~\cite{HK16}.

\subsubsection{Main features of RCIP-acceleration}

In the Nystr{\"o}m discretization of an integral equation, the integral operator is approximated by numerical
quadrature and the resulting semi-discrete equation is enforced
at the quadrature nodes, leading to a linear system for the unknown
layer density at the nodes. The idea behind RCIP-acceleration is to
transform the integral equation into a preconditioned form where the
layer density has better regularity and can be resolved with fewer
nodes. In the present context, the operator in~(\ref{eq:inteq2F}) is split into two parts,
\begin{displaymath}
K_n^\gamma=K_n^{\gamma\star}+K_n^{\gamma\circ},
\end{displaymath}
where $K_n^{\gamma\star}$ describes the kernel interaction close to the
origin and $K_n^{\gamma\circ}$ is a compact operator. The change
of variables
\begin{displaymath}
\rho_n(t)=\left(I-\tfrac{1}{z}K_n^{\gamma\star}\right)^{-1}\tilde{\rho}_n(t)
\end{displaymath}
lead to the right preconditioned equation
\begin{equation}
\left(K_n^{\gamma\circ}\left(I-\tfrac{1}{z}K_n^{\gamma\star}\right)^{-1}
         -z\right)\tilde{\rho}_n(t)=g_n(t).
\label{eq:RCIP}
\end{equation}

The functions $\tilde{\rho}_n(t)$ and $g_n(t)$ in~(\ref{eq:RCIP}) share the same
regularity and they, along with $K_n^{\gamma\circ}$, are discretized on a coarse uniform mesh of
panels on $\gamma$, as in
standard Nystr{\"o}m discretization. The resulting grid $\mathfrak{d}$ is assumed to
resolve these quantities so that their values at arbitrary points on
$\gamma$ can be recovered by piecewise interpolation with polynomials
of no higher degree than that of the underlying quadrature. The local
resolvent $\left(I-\tfrac{1}{z}K_n^{\gamma\star}\right)^{-1}$, on the
other hand, is discretized on a mesh of panels that is almost
infinitely dyadically refined towards the origin. The
corresponding grid $\mathfrak{d}_{\rm fin}$ is assumed to resolve
$\rho_n(t)$. Then $\mathfrak{d}_{\rm fin}$ is coarsened via a
recursive procedure where, in each step, the two smallest panels on
the fine mesh are merged and the part of the local resolvent that
needs those panels for resolution is locally projected. Upon
completion, this process results in a compressed discrete local
resolvent on $\mathfrak{d}$. The computational cost for the
compression grows linearly with the number of refinement levels.
However, by the proof of Theorem~\ref{thm:perturbcurve} it is, for a given grid $\mathfrak{d}$, justified to replace $\gamma$ with a line segment in a very small neighborhood of the origin.  Then $K_n^\gamma(t,t')t'\,dt'$ becomes scale
invariant close to the origin, see~(\ref{eq:homog}), and therefore compression on the finer levels
can be carried out using Newton-accelerated fixed-point
iteration~\cite[Section~6.2]{H11}. The entire compression is then
performed in {\it sublinear} time and the memory requirements are
modest.

It is a very important feature of the RCIP compression scheme that solving the discrete version
of~(\ref{eq:RCIP}) does not lead to any loss of information whatsoever, compared
to solving the discrete version of~(\ref{eq:inteq2F}) entirely on
$\mathfrak{d}_{\rm fin}$, provided that the assumptions on resolution
mentioned in the preceding paragraph are met. When the discrete
version of~(\ref{eq:RCIP}) has been solved for $\tilde{\rho}_n(t)$ on
$\mathfrak{d}$, the original density $\rho_n(t)$ can easily be
reconstructed on $\mathfrak{d}_{\rm fin}$ given that certain
information about the compression has been saved~\cite[Section
9]{H16}. In applications $\rho_n(t)$ is often not needed, as it may be possible to compute quantities of interest directly from
$\tilde{\rho}_n(t)$ and the compressed local resolvent.

\subsubsection{Details particular to the present implementation}

There are several possible ways to compute the Fourier coefficients
$K_n^\gamma(t,t')$. We rely solely on the explicit formula~(\ref{eq:Knexp}) from the Appendix. The associated
Legendre functions are evaluated as outlined in~\cite[Section 5]{HK16}, with some minor improvements to the {\sc Matlab} code.

Numerical experiments strongly suggest that the general asymptotic
behavior of the density $\rho_n(t)$ in~(\ref{eq:inteq2F}) close to the
conical point, and with $z=x+i0^{\pm}$ in the absolutely continuous
spectrum, is
\begin{displaymath}
\rho(t)_n\propto\lvert\gamma(t)\rvert^{-1.5}
    \left(\cos(\xi(x)\log{\lvert\gamma(t)\rvert})
     \pm i\sin(\xi(x)\log{\lvert\gamma(t)\rvert})\right),\qquad t\to 0.
\end{displaymath}
Here $\xi(x)$ is a left inverse of the function $F_n(\xi)$ in
Theorem~\ref{thm:Especres}. Since $F_n(\xi)$, by the proof of
Lemma~\ref{lem:mellinsmooth}, behaves like $1/|\xi|$,
$|\xi|\to\infty$, and has no zero other than at $\pm\infty$, we have
that $|\xi| \simeq 1/|x|$. This means that the number of nodes on each
panel on the fine mesh, in the framework of dyadic mesh refinement,
must increase as $1/|x|$ in order for $\mathfrak{d}_{\rm fin}$ to
resolve $\rho_n(t)$ as $x\to 0$ in the absolutely continuous spectrum.
This stands in stark contrast to the situation of a sharp edge
\cite{HK16} or a corner in 2D \cite{HKL16}, where the need for
resolution of the corresponding density only appears to grow on the
order of $-\log{|x|}$. To cope with this need for high resolution we
use 32-point composite Gauss--Legendre quadrature as the underlying
quadrature in the Nystr{\"o}m scheme, and place up to 1024 nodes on
each panel of the fine mesh. This way, we can accurately resolve
$\rho_n(t)$ for $|x|$ as small as $0.001$.

As a final remark, our scheme does not directly compute the action of the operator
$K_n^\gamma$ in~(\ref{eq:inteq2F}). Instead we do a change of
variables and work with the transformed kernel
\begin{displaymath}
\widetilde{K}_n^\gamma(t,t') = tK_n^\gamma(t,t') (t')^{-1}.
\end{displaymath}
Nystr{\"o}m discretization of the transformed equation then resembles
a norm-preserving discretization on $L^1$ in the terminology
of~\cite{AG14}, and RCIP-acceleration is still applicable. We have
observed slightly better stability using the transformed equation, in
line with the discussion in~\cite{B12}. In particular, for $n=0$ the
transform avoids the formation of a false near-resonance at the
rightmost point of $\sigma_{\ess}(K^\Gamma, L^2)$, a point which in
general is not part of the $\mathcal{E}_0$-spectrum of $K_0^\gamma$,
see Theorems~\ref{thm:winding} and \ref{thm:Eessspec}.

Let $Uf(t) = tf(t)$ and denote by $\widetilde{\mathcal{E}}_n$, $n \in
\mathbb{N}$, the Hilbert space defined by the requirement that $U
\colon \mathcal{E}_n \to \widetilde{\mathcal{E}}_n$ be unitary, cf.
Section~\ref{sec:Especres}. We consider $\widetilde{K}_n^\gamma$ as an
operator on $\widetilde{\mathcal{E}}_n$, and then
\begin{displaymath}
\widetilde{K}_n^\gamma = U K_n^\gamma U^{-1},
\end{displaymath}
With the unitary $U$ in hand, it is now straightforward to implement
the minor modifications to the framework of
Section~\ref{sec:indicator}, needed to directly consider the indicator
function for the transformed operator. We leave the precise details to
the reader.

\subsection{Numerical results}
\label{sec:numres}

Our Nystr{\"o}m scheme for~(\ref{eq:inteq2F}), needed for computing~(\ref{eq:polariz}) and~(\ref{eq:kappadef}), is implemented in {\sc
  Matlab} and executed on a workstation equipped with an Intel Core
i7-3930K CPU and 64 GB of memory. In all numerical experiments
$\gamma$ is the curve
\begin{displaymath}
\gamma(t) = \sin\left(\frac{\pi}{2}t\right)\left( \sin((1-t)\alpha),
  \cos((1-t)\alpha) \right), \quad t\in [0,1],
\end{displaymath}
generating a surface $\Gamma$ with a conical point of opening angle
$2\alpha$, $0<\alpha<\pi$, $\alpha\neq\pi/2$.

The excluded case $\alpha=\pi/2$ corresponds to a sphere. For the
sphere $K^\Gamma\colon\mathcal{E}\to\mathcal{E}$ has the eigenvalues
$x_i=1/(2i-1)$, $i=1,2,\ldots$, but the spectral measure $\mu$ of the
polarizability only has a single atom, at $x=1/3$. Note that the
sphere has the same polarizability in all directions, and hence the
spectral measure is independent of direction. We say that $x=1/3$ is a
{\it bright plasmon}, while the remaining eigenvalues of $K^\Gamma$
are {\it dark plasmons}.

Consider now the polarizability in the $\bm{r}_3$-direction for
$\alpha<\pi/2$, $\bm{e}=\bm{e}_3 = (0,0,1)$. When $\alpha$ shrinks the
non-discrete support of $\mu_3$ becomes wider and the bright plasmon
moves to the right until it disappears into the non-discrete support
at the angle $2\alpha\approx 0.91895945$. We choose
$2\alpha=5\pi/18<0.91895945$ for our experiments in
Sections~\ref{sec:limpol} and \ref{sec:specindic}, since in this case
we may test our results for $\mu_3$ against \eqref{eq:murule2} without
involving any bright plasmons. In Section~\ref{sec:reflex} we consider
the reflex angle $2\alpha=31\pi/18$. The surfaces $\Gamma$, for these
two opening angles, are illustrated in Figure~\ref{fig:surface}.

\subsubsection{Limit polarizability} 
\label{sec:limpol}

We first compute $\omega_{33}^-(x)$ and use \eqref{eq:rhorule} and
\eqref{eq:murule2} as indirect error estimates. The results are shown
in Figure~\ref{fig:polaracute}(a,b). Equation \eqref{eq:murule2},
discretized with adaptive 16-point composite quadrature and a total of
3136 nodes, holds with an estimated relative accuracy of
$5\cdot10^{-16}$. The absolute error in \eqref{eq:rhorule}, called
\textit{charge error} in Figure~\ref{fig:polaracute}(b), depends on
$x$ and varies from no measurable error to an error on the order of
$10^{-12}$.
\begin{figure}[ht]
\centering
\includegraphics[width =0.32\linewidth]{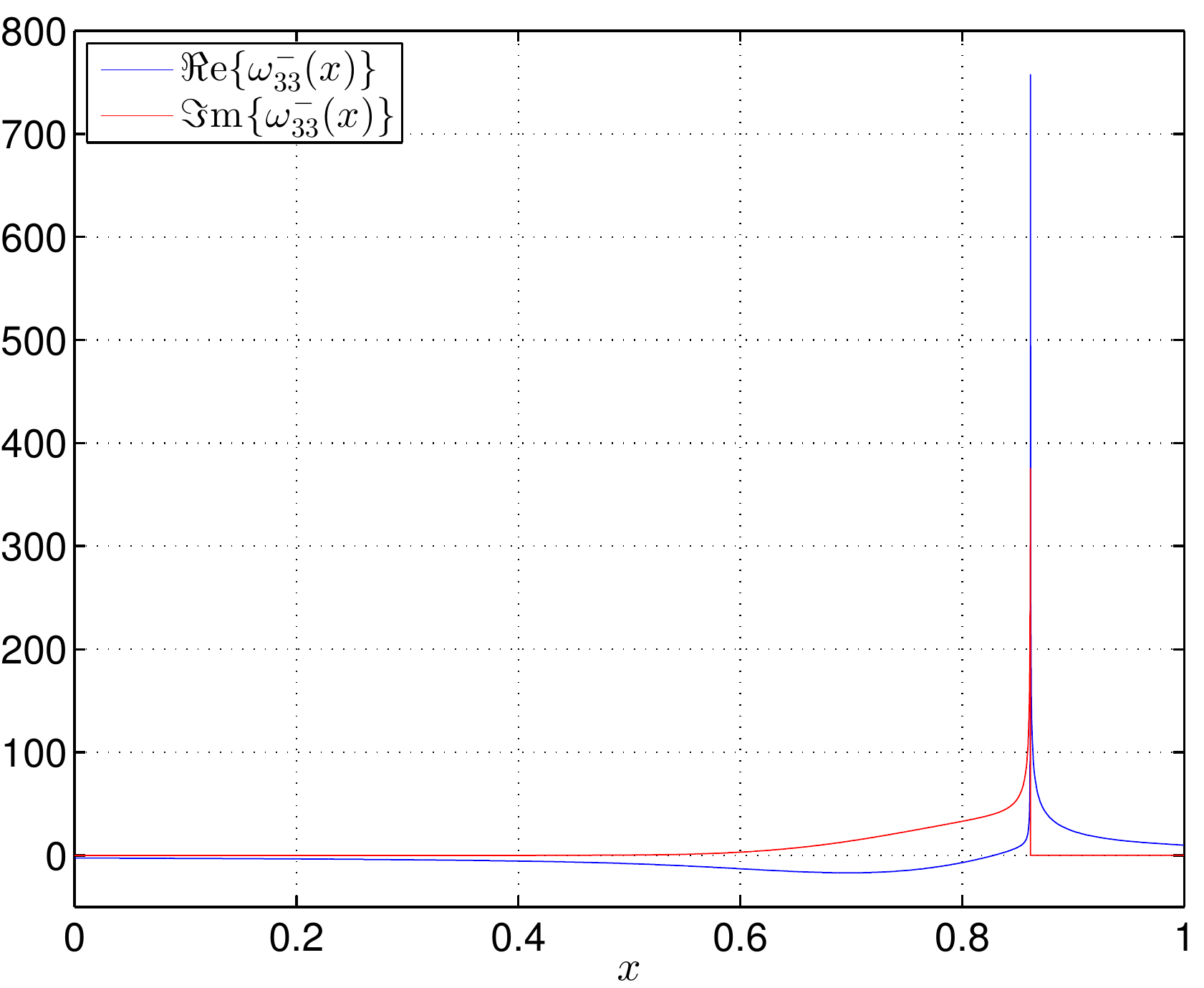}
\quad
\includegraphics[width =0.32\linewidth]{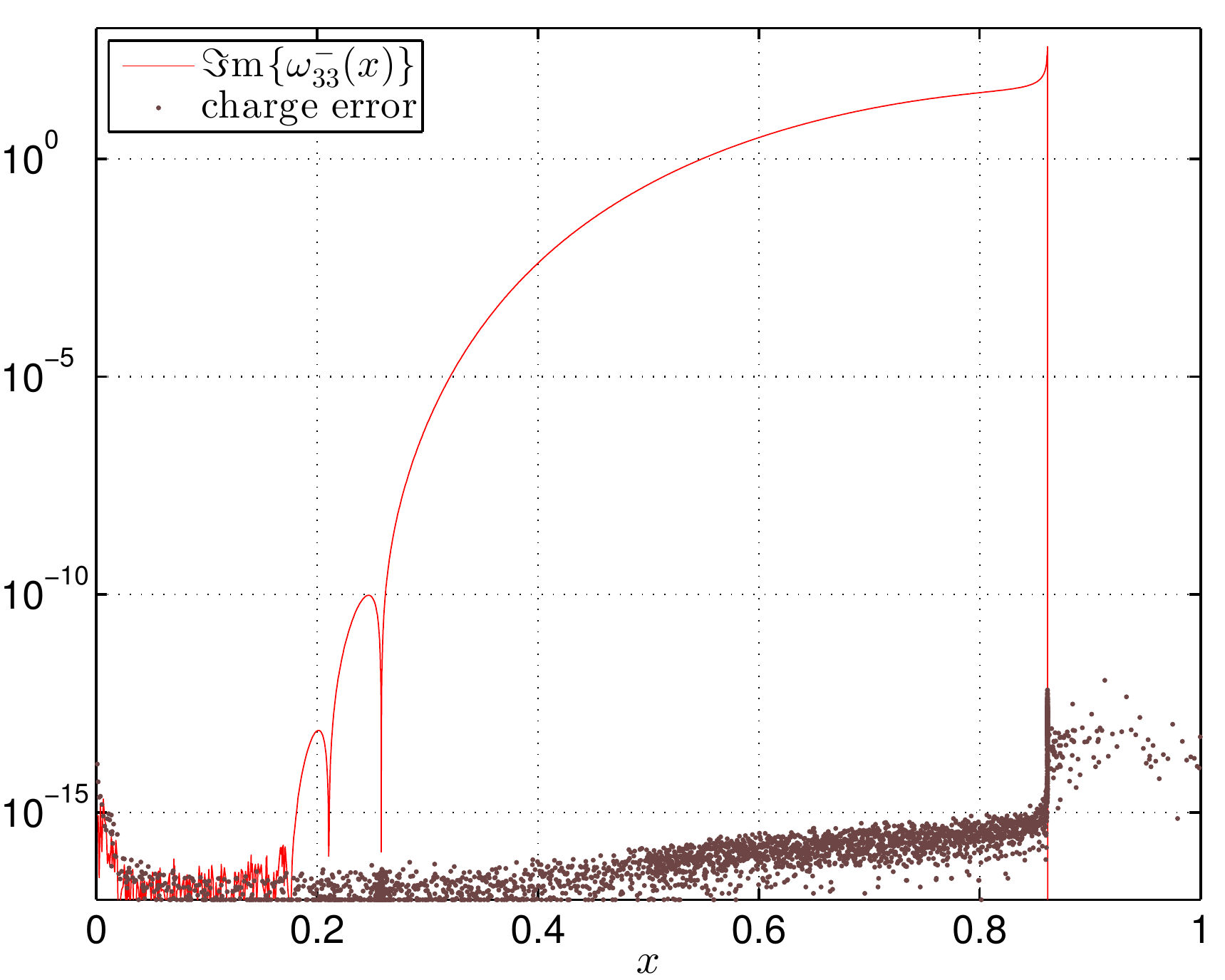} \\
\includegraphics[width =0.32\linewidth]{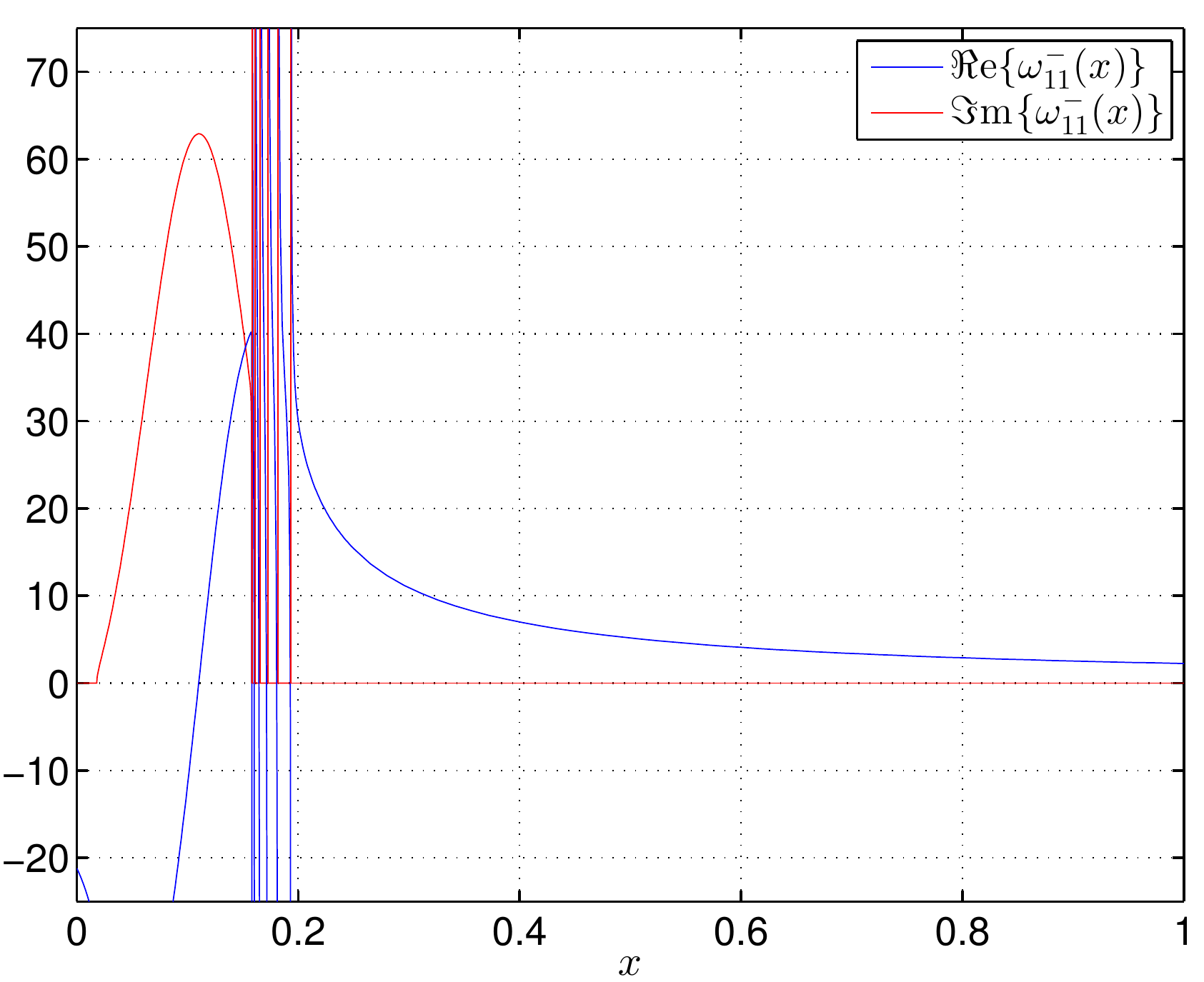}
\quad
\includegraphics[width =0.32\linewidth]{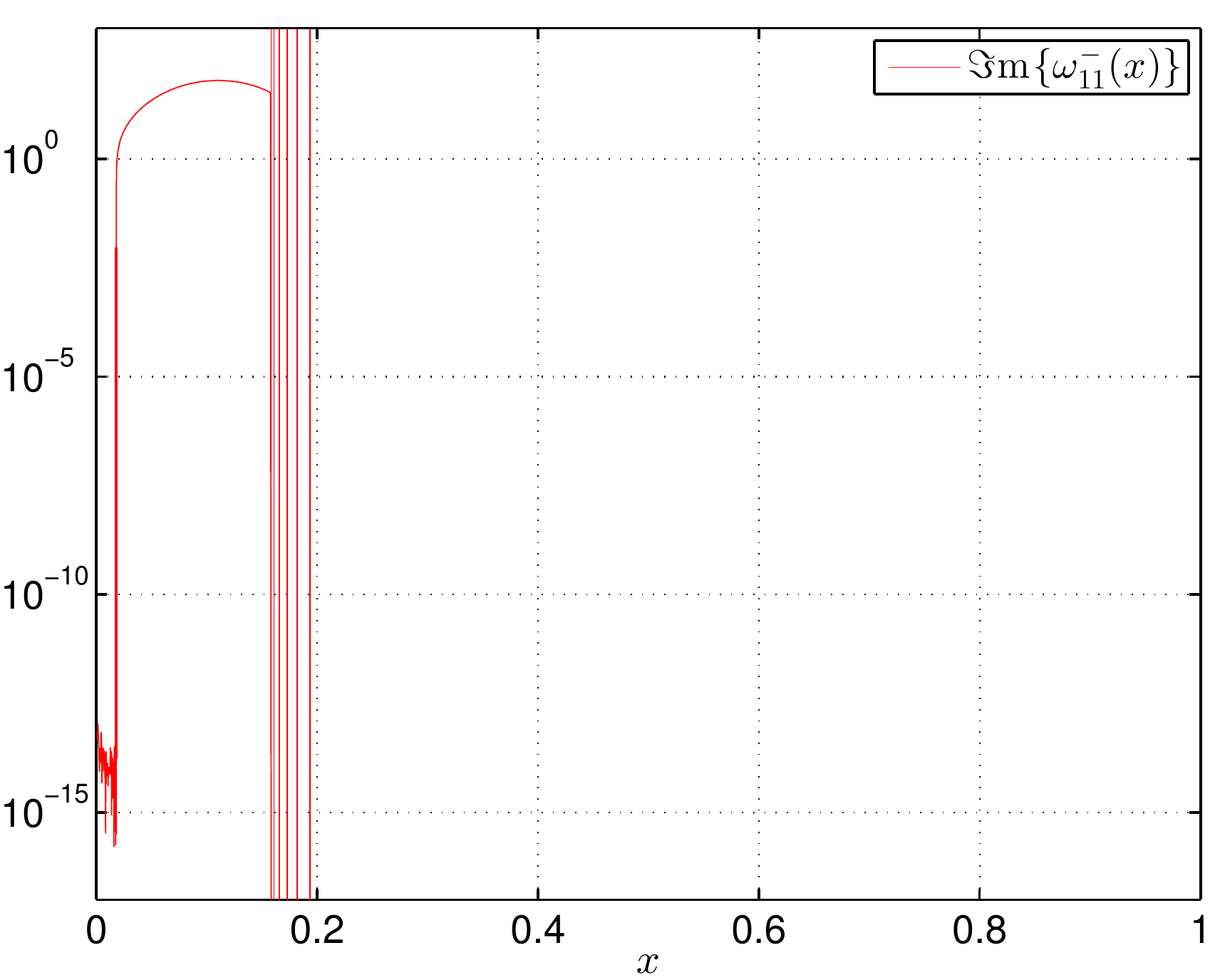}
\caption{(a,b): Limit polarizability in the $\bm{r}_3$-direction for 
  $2\alpha = 5\pi/18$. The right image displays the imaginary part,
  with a logarithmic scale on the vertical axis. (c,d): Limit
  polarizability in the $\bm{r}_1$-direction, featuring 6 bright
  plasmons.}
\label{fig:polaracute}
\end{figure}

The underlying data used to produce Figure~\ref{fig:polaracute}(a,b) shows that
$\Im\mathrm{m}\,\omega_{33}^-(x)=-\pi\mu_3'(x)$ is non-zero to the
left of $x\approx 0.861463506648456$. Note that this corresponds to
the right endpoint of the interval $\Sigma_0$ from
Theorems~\ref{thm:Especres} and \ref{thm:Eessspec}, $F_0(0) \approx
0.861463506648456$, and provides yet another piece of indirect
evidence that our numerical scheme is accurate. See
Figure~\ref{fig:Eessspec}(a) for an illustration of $F_0(\xi)$.
$\Im\mathrm{m}\,\omega_{33}^-(x)$ appears to have zeroes in $x\approx
0.258175$ and $x\approx 0.210575$. To the left of $x=0.18$ it is not
possible to determine whether $\Im\mathrm{m}\,\omega_{33}^-(x)$ is
non-zero, since the numerical results there are of the same order as
the numerical error.

Figure~\ref{fig:polaracute}(c,d) depicts $\omega_{11}^-(x)$. Six
bright plasmons are visible. Their locations $x_i$ and amplitudes $u_i
v_i$ are given in Table~\ref{tab:plasmamp}. Equation
\eqref{eq:murule2} holds with an estimated relative accuracy of
$3\cdot 10^{-16}$. The numerically visible support of $\Im\mathrm{m}\,
\omega_{11}^-(x)=-\pi\mu_1'(x)$ is $(0.018216722,0.15813952053635)$.
The right endpoint again corresponds to the right endpoint of the
interval $\Sigma_1$, $F_1(\pm 3.8202309)\approx 0.158139520536354$.
See Figure~\ref{fig:Eessspec}(b) for an illustration of $F_1(\xi)$.
The left endpoint corresponds to the local minimum of $F_1(\xi)$ at
$\xi=0$, $F_1(0)\approx 0.018216721972542$. Recall from
Section~\ref{sec:transmission} that, on the infinite straight cone,
$F_1(\xi)$ is an eigenvalue of $K_1^\alpha$ to the generalized
eigenvector $t^{-i\xi-3/2}$. Hence, for the infinite straight cone
there is a kind of singularity in the spectrum at $F_1(0)$: as $x\to
F_1(0)^+$ there are generalized eigenvectors with $\xi\to
0$, but as $x<F_1(0)$ all generalized eigenvectors to $x$ have large $\xi$ and therefore exhibit wild oscillations. It seems
likely that a similar phenomenon is responsible for the drastic change
in $\Im\mathrm{m}\,\omega_{11}^-(x)$ at $x=F_1(0)$.
\begin{figure}[ht]
\includegraphics[width =0.32\linewidth]{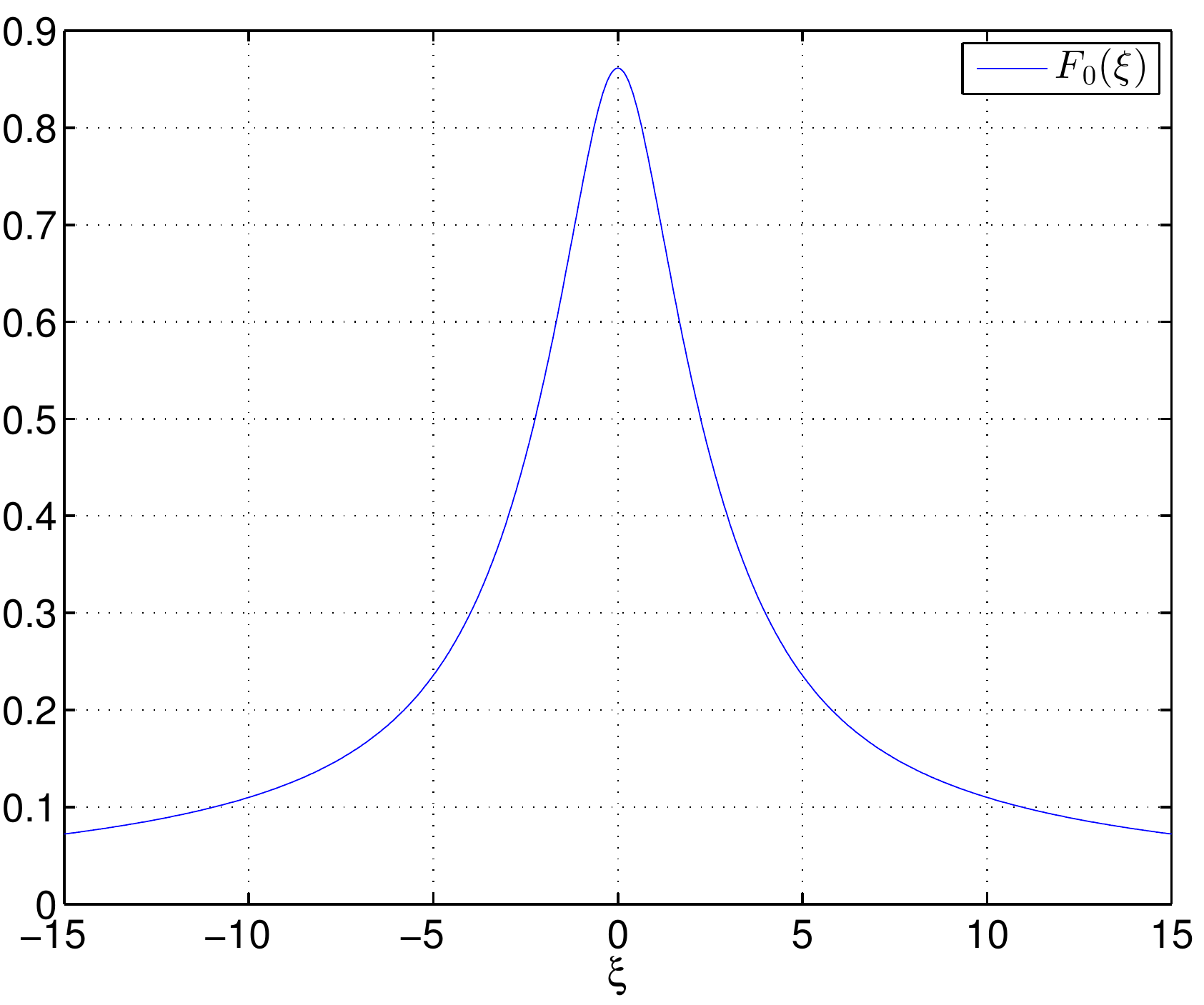}
\hfill
\includegraphics[width =0.32\linewidth]{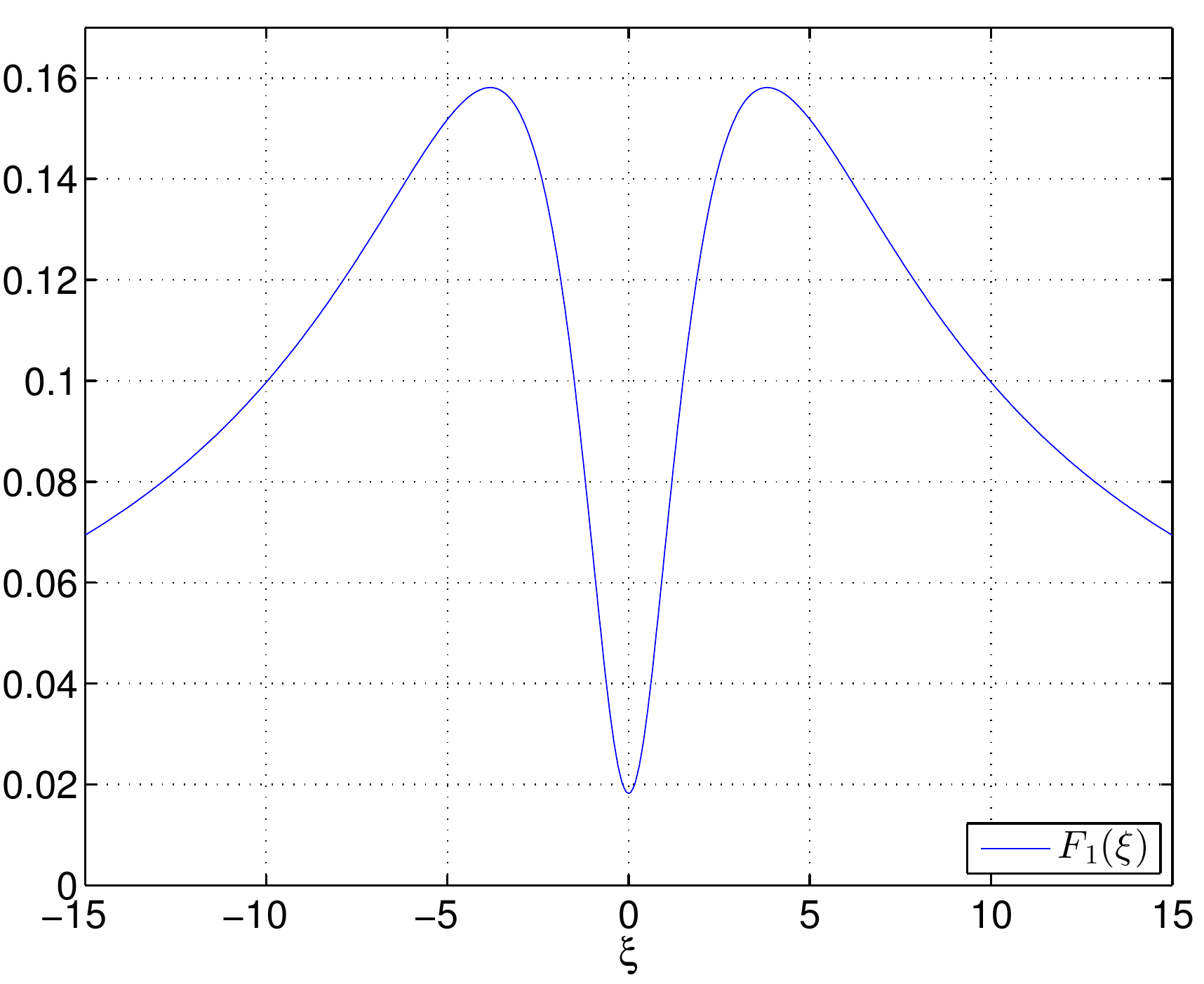}
\hfill
\includegraphics[width =0.32\linewidth]{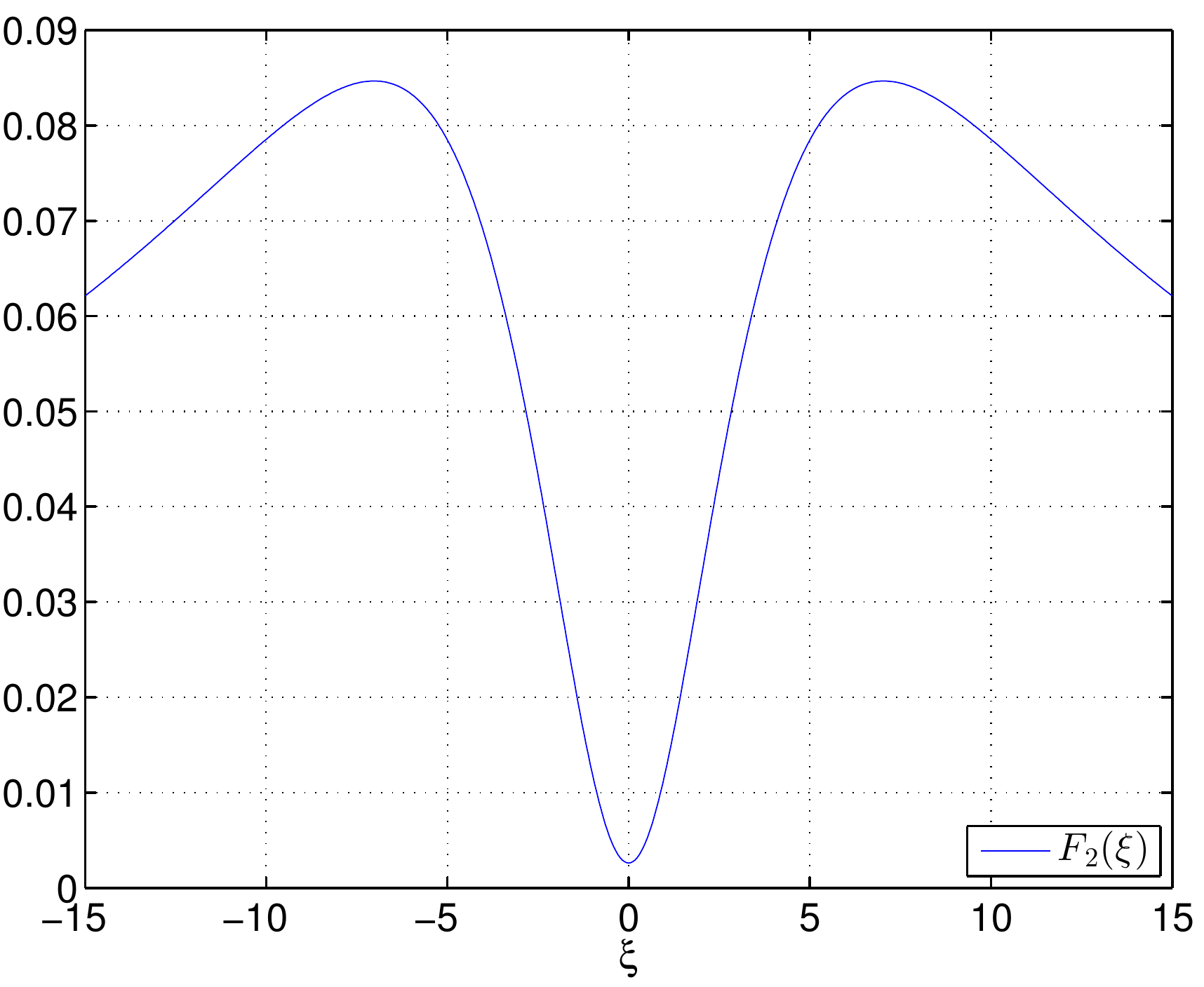}
\caption{(a,b,c): Depictions of $F_0(\xi)$, $F_1(\xi)$, $F_2(\xi)$ for
  $2\alpha=5\pi/18$ and $-15\leq\xi\leq 15$.}
\label{fig:Eessspec}
\end{figure}

\begin{table}
\caption{Positions and amplitudes of the bright plasmons in the $\bm{r}_1$-direction. }
\vspace{0.2cm}
\centering
\begin{tabular}{|c|l|l|}
\hline
$i$ & $x_i\approx$   & $u_iv_i\approx$  \\
\hline
 1 & 0.1935609900496035 & -0.0187559469606535\\
 2 & 0.1818566189413259 & -0.0382018970029643\\
 3 & 0.1727245662280549 & -0.048328578377405\\ 
 4 & 0.1658366392086451 & -0.047191751961888\\
 5 & 0.1610557751155232 & -0.035638113206132\\
 6 & 0.1584765425577683 & -0.014176894941617\\ 
\hline
\end{tabular}
\label{tab:plasmamp}
\end{table}

\subsubsection{The spectrum via the indicator function} 
\label{sec:specindic}

To locate the spectrum of
$K_n^\gamma\colon\mathcal{E}_n\to\mathcal{E}_n$, we want to compute
the limit $\lim_{y\to
  0^-}\kappa_n^{\mathrm{ind}}(\breve{u},\breve{v},x+iy)$ of the
indicator function, as described in Section~\ref{sec:indicator}.
Finite precision arithmetic constrains how small $y$ can be
before losing singular features of the spectrum such as eigenvalues.
The experiments in this section are carried out with $y_0=-10^{-70}$
and three different values of $y$: $y=-10^{-5}$, $y=-10^{-7}$,
$y=-10^{-11}$.

Figure~\ref{fig:indicatorzeromode}(a) shows the indicator function for
mode $n=0$. The only eigenvalue present is at $x=1$, which is an
eigenvalue of $K^\Gamma$ for every closed Lipschitz surface $\Gamma$
\cite[Lemma~3.1]{PP14}. Figure~\ref{fig:indicatorzeromode}(b) shows
the absolute deviation of the indicator function from the step
function
\begin{displaymath}
\mathrm{round}(4\kappa_0^{\mathrm{ind}}(\breve{u},\breve{v},x+iy))/4.
\end{displaymath}
The indicator function is very close to $1/2$ on the interval $(0,
0.861463506648456)$, which agrees with the interval $\Sigma_0$ of
Theorem~\ref{thm:Eessspec}. Note that
$\kappa_0^{\mathrm{ind}}(\breve{u},\breve{v},1+iy)=1$ to almost 13
digits for $y=-10^{-11}$.
\begin{figure}[ht]
\includegraphics[width =0.32\linewidth]{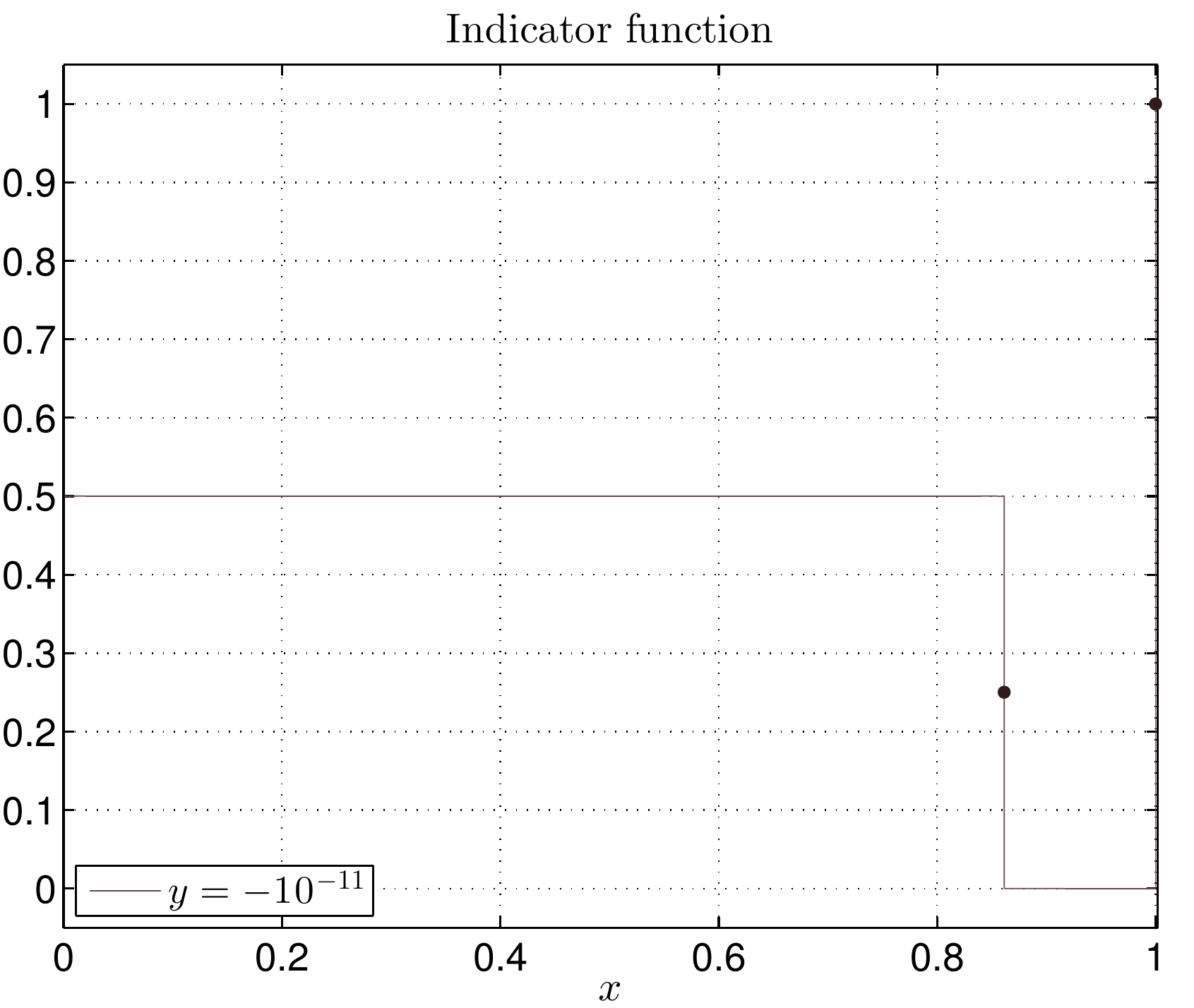}
\hfill
\includegraphics[width =0.32\linewidth]{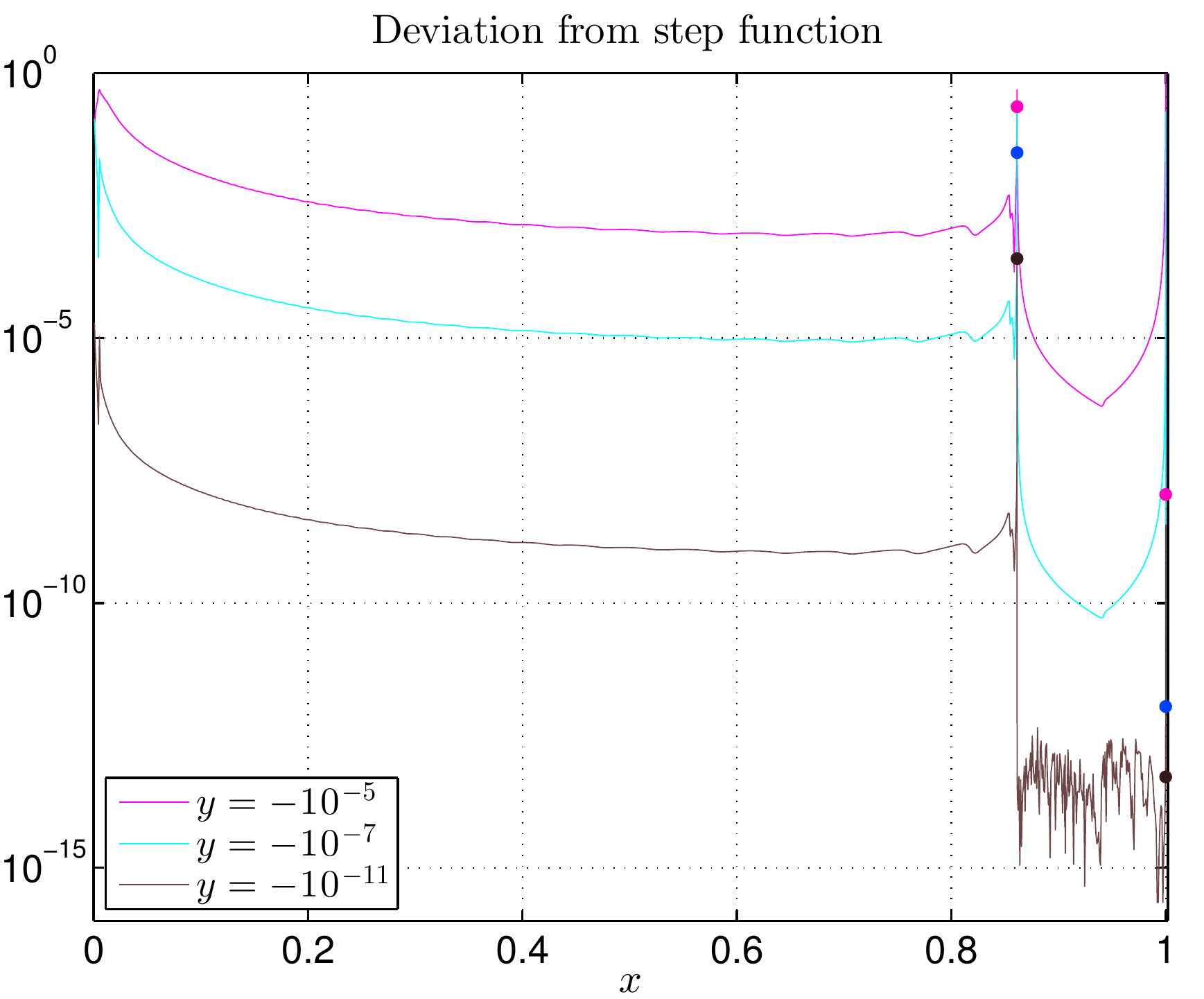} 
\hfill
\includegraphics[width =0.32\linewidth]{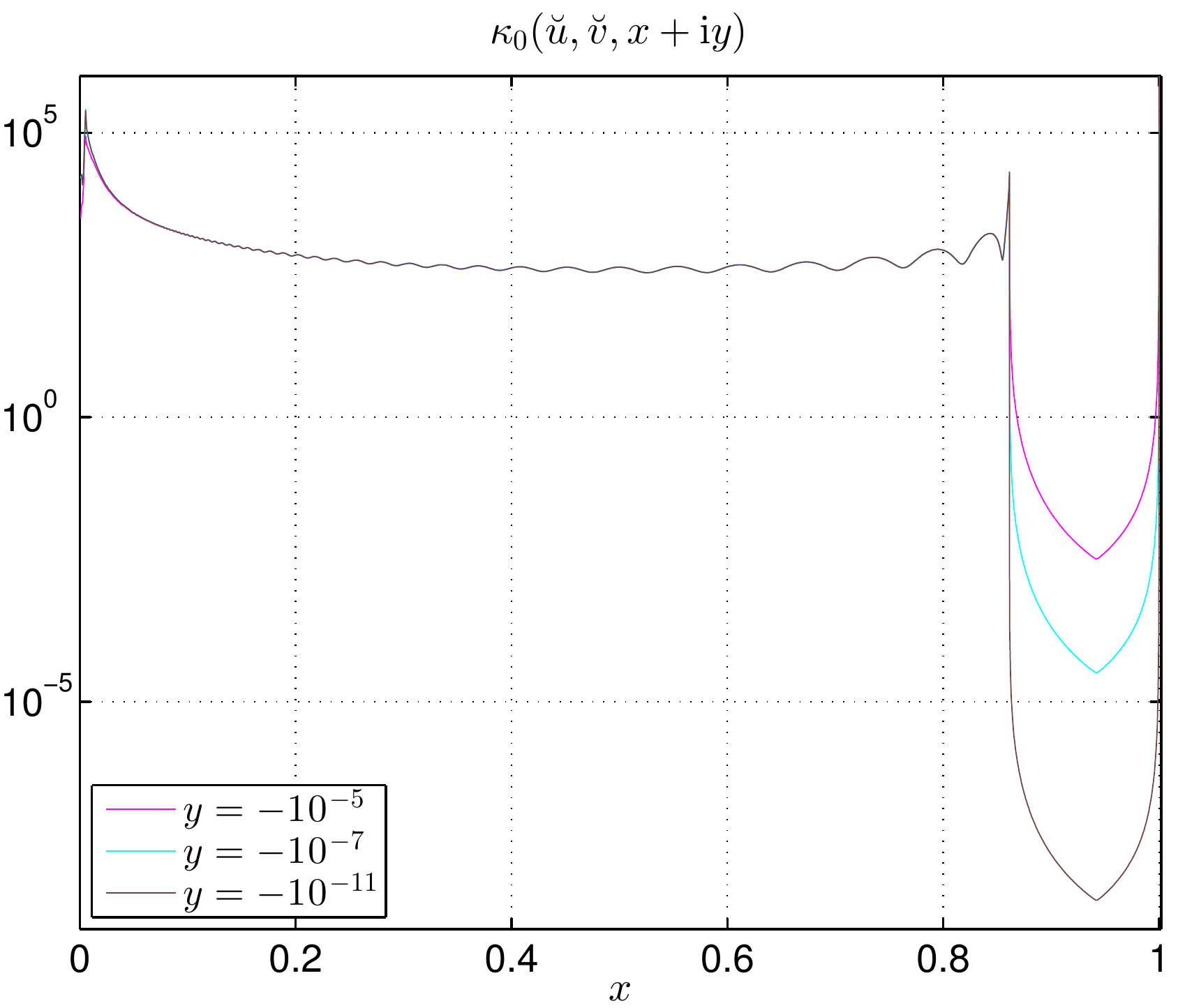}
\caption{(a): The indicator function 
  $\kappa_0^{\mathrm{ind}}(\breve{u},\breve{v},x+iy)$ for
  $2\alpha=5\pi/18$, $y_0=-10^{-70}$, and
  $y\in\{-10^{-5},-10^{-7},-10^{-11}\}$. Special values are marked
  with darker dots. (b): The absolute deviation from the corresponding
  step function. (c): $\kappa_0(\breve{u},\breve{v},x+iy)$. }
\label{fig:indicatorzeromode}
\end{figure}

Figure~\ref{fig:indicatorzeromode}(c) shows
$\kappa_0(\breve{u},\breve{v},x+iy)$, supported by 1156 data points
for each $y$. The appearance suggests that
$\kappa_0(\breve{u},\breve{v},x+iy)$ is uniformly bounded in $x$, $y$,
and $y_0$. If a spectral measure of $K_0^\gamma$ had any singular
parts in $[-1,1)$, either a point mass or a singular continuous part,
there would certainly be a $y_0$, an $x$, and functions $\breve{u}$
and $\breve{v}$ such that $\lim_{y\to
  0^-}\kappa_0(\breve{u},\breve{v},x+iy)=\infty$. There are no signs
of any overlooked singular points.

Finally, we remark that
$\kappa_0^{\mathrm{ind}}(\breve{u},\breve{v},F_0(0)+iy)$ has converged
to $1/4$ with more than 3 digits at $y=-10^{-11}$, see
Figure~\ref{fig:indicatorzeromode}(a,b). This suggests that
$\mu'_{\breve{u},\breve{v}}$ has a singularity in the right endpoint
of its support, a phenomenon that was analytically demonstrated for
certain 2D-domains with corners in \cite[Section~6.2]{HKL16}.

Figure~\ref{fig:indicatoracute}(a,b) shows the indicator function for
mode $n=1$. The interval of continuous spectrum coincides with
$\Sigma_1$. In addition there are 6 bright plasmons.
Figure~\ref{fig:indicatoracute}(c,d) shows mode $n=2$. The interval
coincides with $\Sigma_2$, and in this case there are 10 dark
plasmons. In view of \eqref{eq:polarmode0} and \eqref{eq:polarmode1}, the modes $|n| \geq 2$ never contribute to the spectral measures of the polarizability tensor. Therefore, the spectrum of $K_n^\gamma\colon\mathcal{E}_n\to\mathcal{E}_n$, $|n| \geq 2$, is always dark. Note also that the computed eigenvalues for $n=1$ and $n=2$ are embedded
in the continuous spectrum of $K_0^\gamma$ and therefore in the
continuous spectrum of $K^\Gamma$.
\begin{figure}[t!]
\centering
\includegraphics[width =0.32\linewidth]{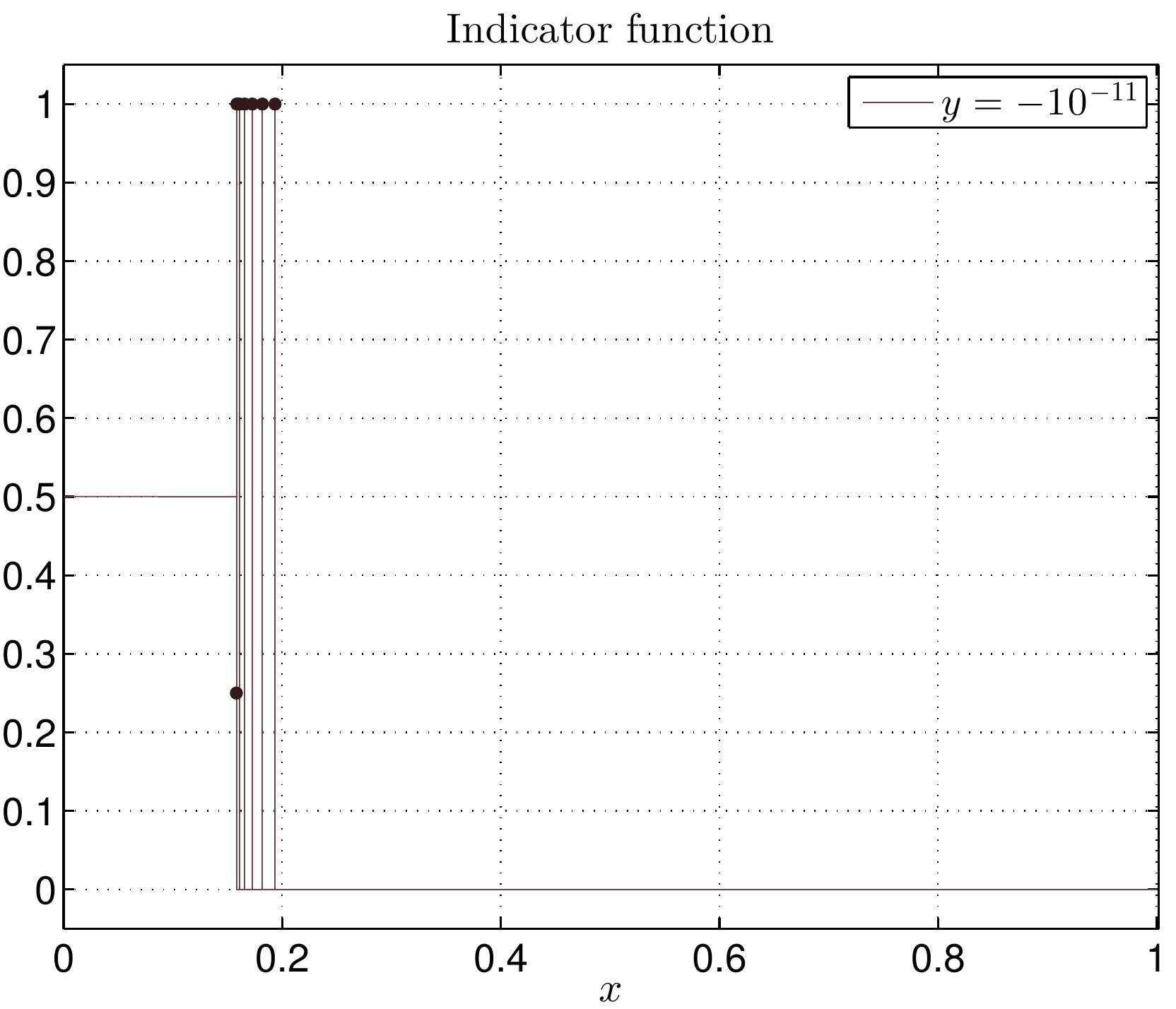}
\quad
\includegraphics[width =0.32\linewidth]{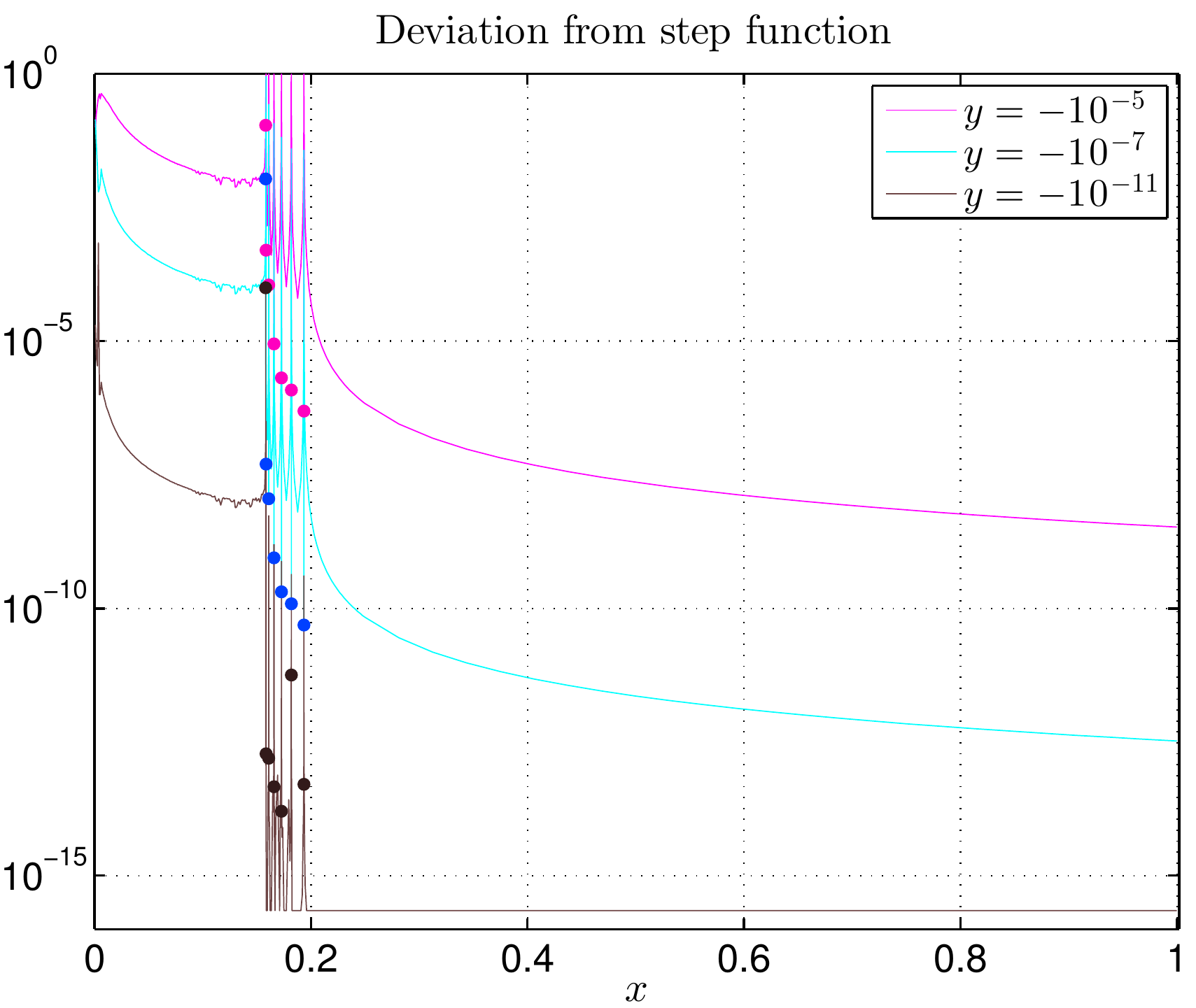}\\
\includegraphics[width =0.32\linewidth]{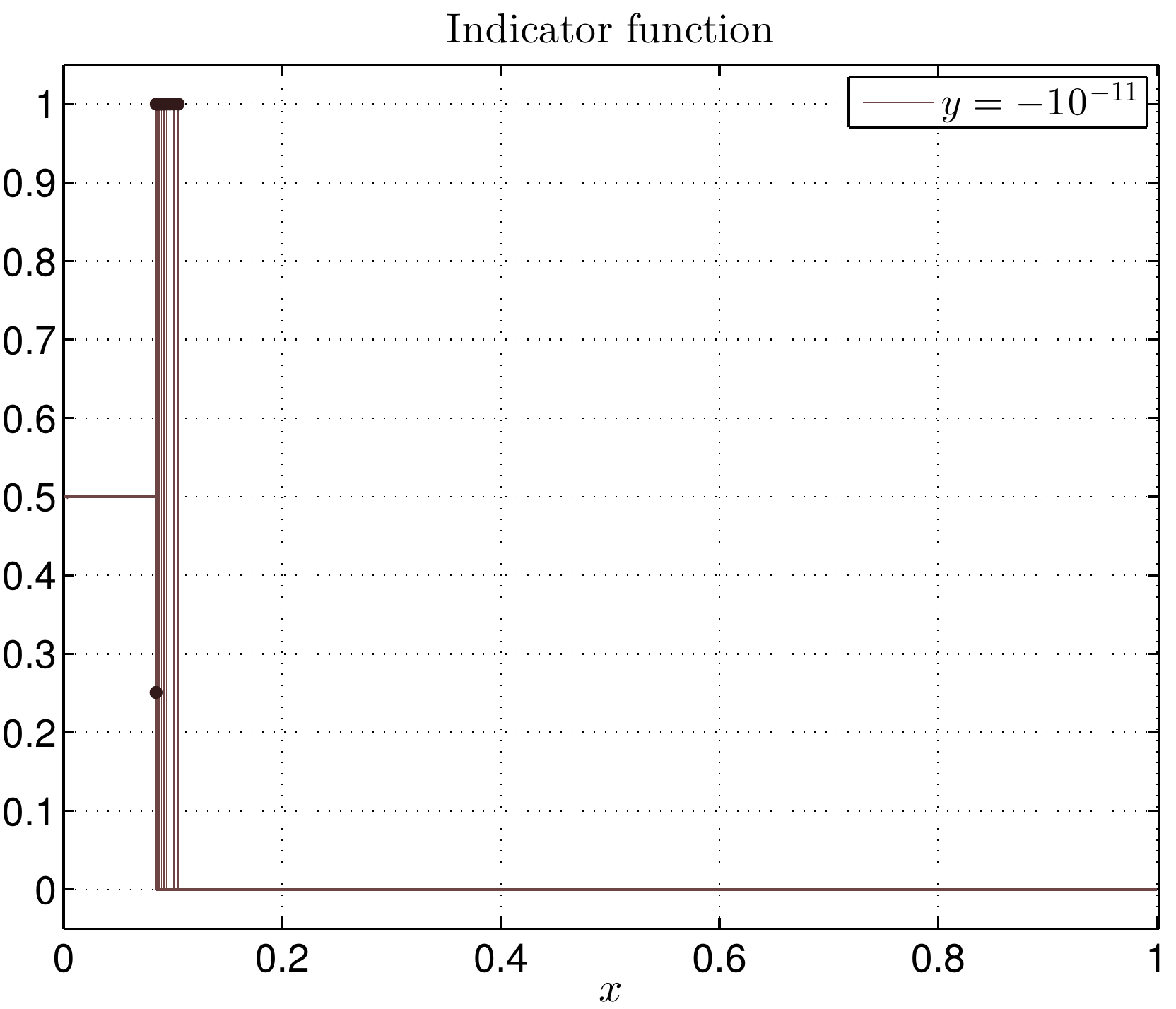}
\quad
\includegraphics[width =0.32\linewidth]{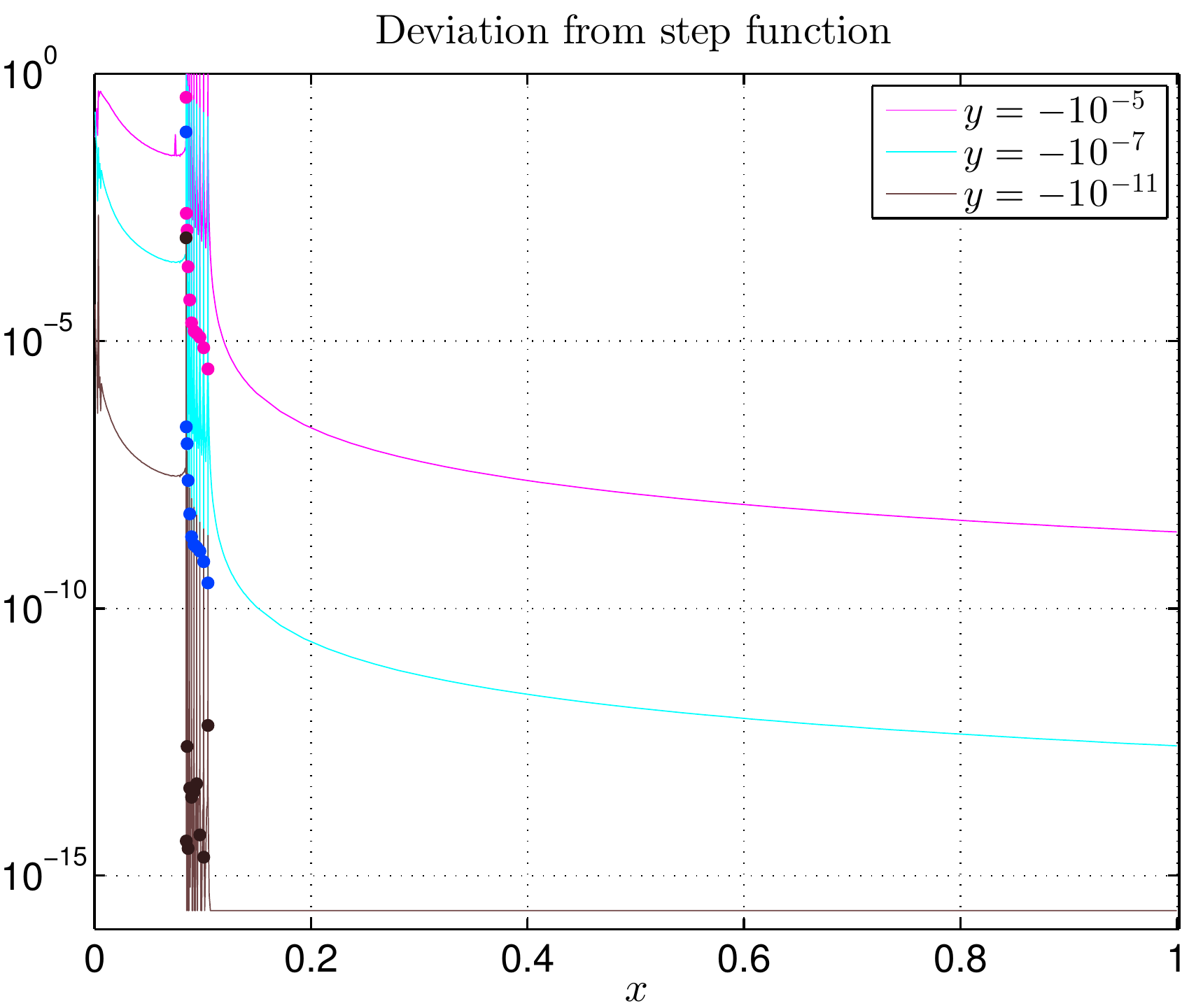}
\caption{(a,b): The indicator function
  $\kappa_1(\breve{u},\breve{v},x+iy)$ for $2\alpha=5\pi/18$. (c,d):
  $\kappa_2(\breve{u},\breve{v},x+iy)$.}
\label{fig:indicatoracute}
\end{figure}

\subsubsection{Results for a reflex angle} 
\label{sec:reflex}

We now carry out experiments for the reflex opening angle $2\alpha =
31\pi/18$ and mode $n=0$. The results are shown in
Figure~\ref{fig:reflex}. The non-discrete spectrum of $K_0^\gamma
\colon\mathcal{E}_0\to\mathcal{E}_0$ is as predicted by
Theorem~\ref{thm:Eessspec}. However, in contrast to the previous
sections, the reflex angle also exhibits a discrete spectrum
consisting of an infinite sequence of eigenvalues converging to $0$.
All of these eigenvalues, except $x=1$, are bright plasmons. Hence this geometry features an infinite number of bright plasmons.
\begin{figure}[h]
\centering
\includegraphics[width =0.32\linewidth]{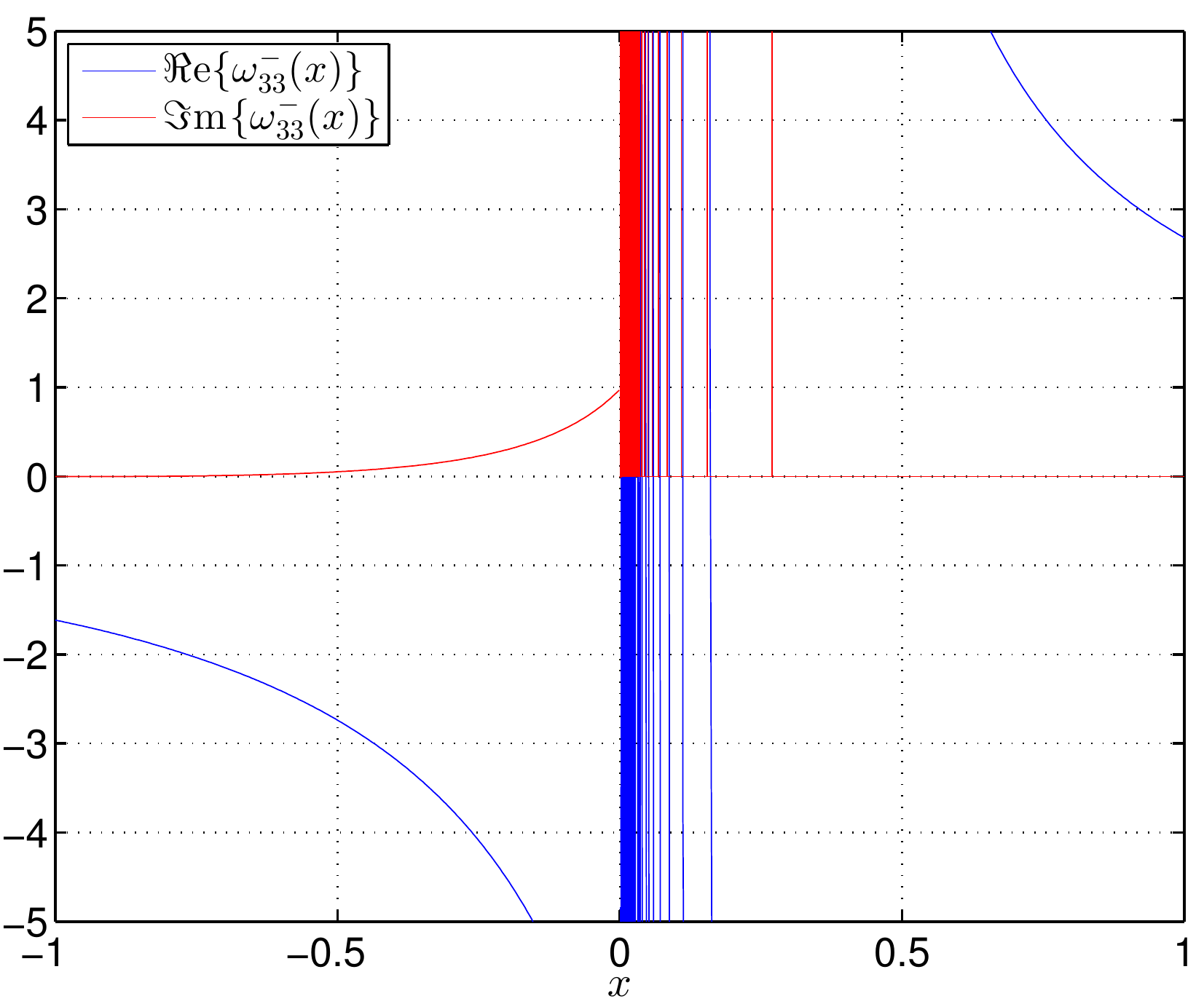}
\quad
\includegraphics[width =0.32\linewidth]{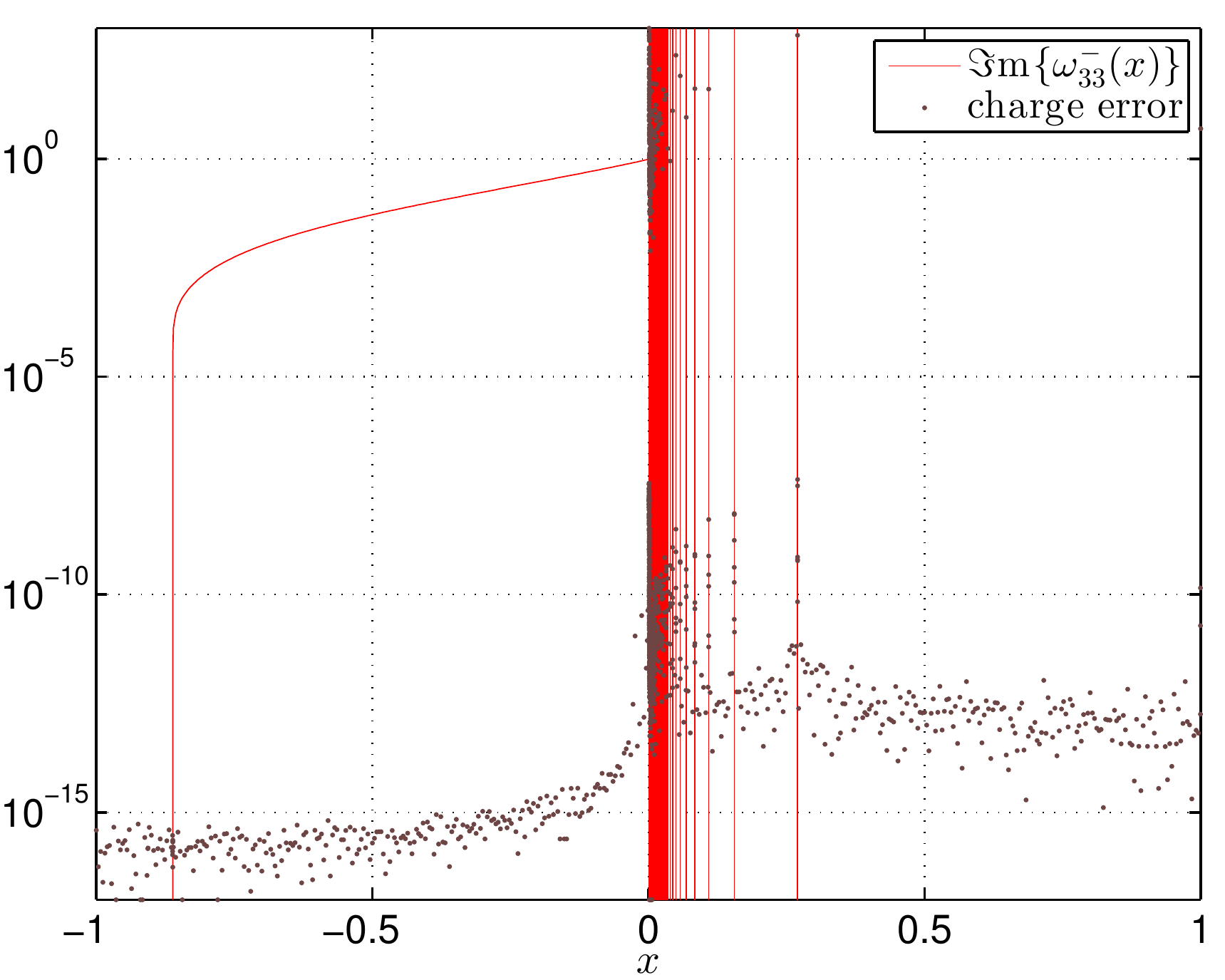} \\
\includegraphics[width =0.32\linewidth]{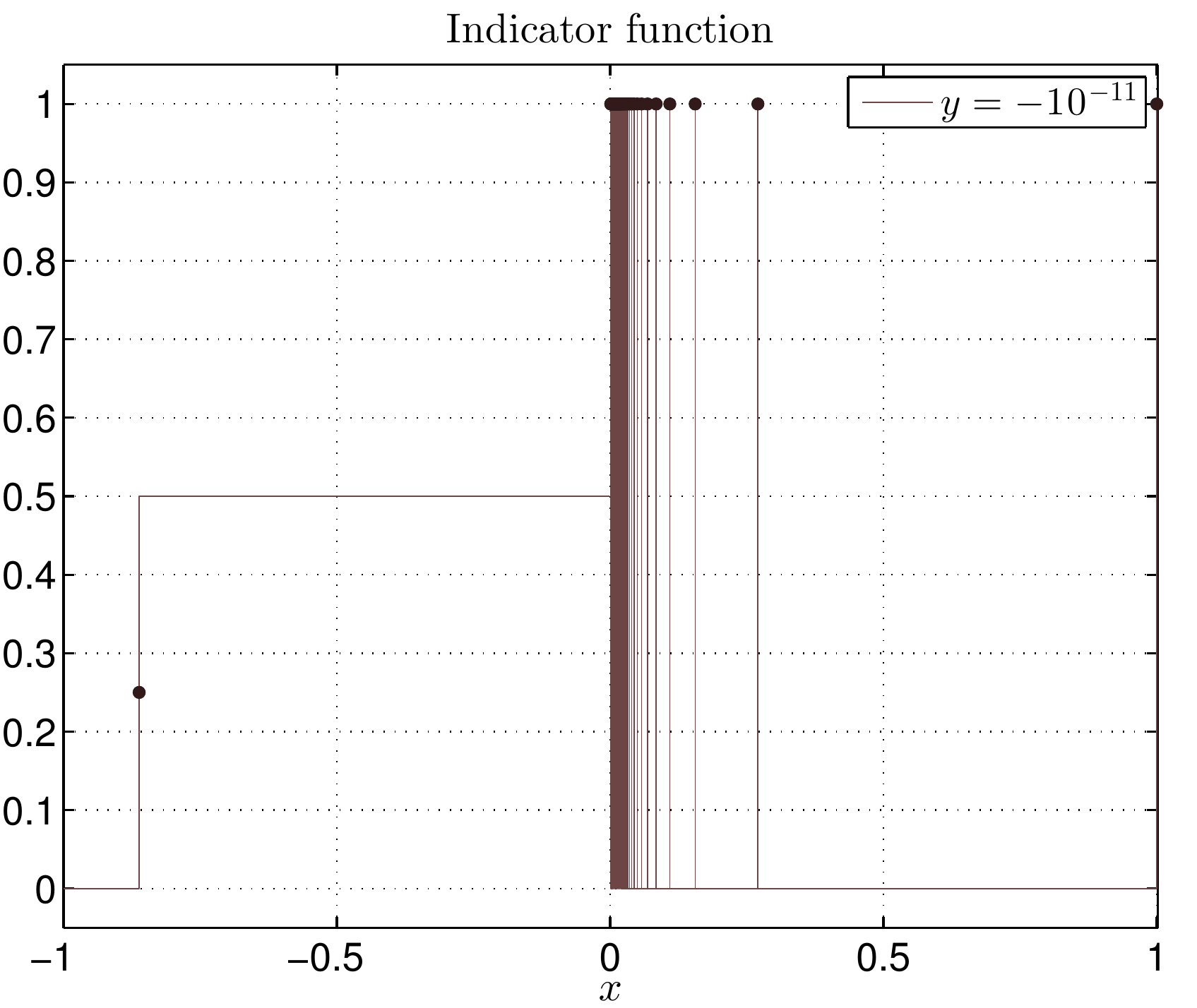}
\quad
\includegraphics[width =0.32\linewidth]{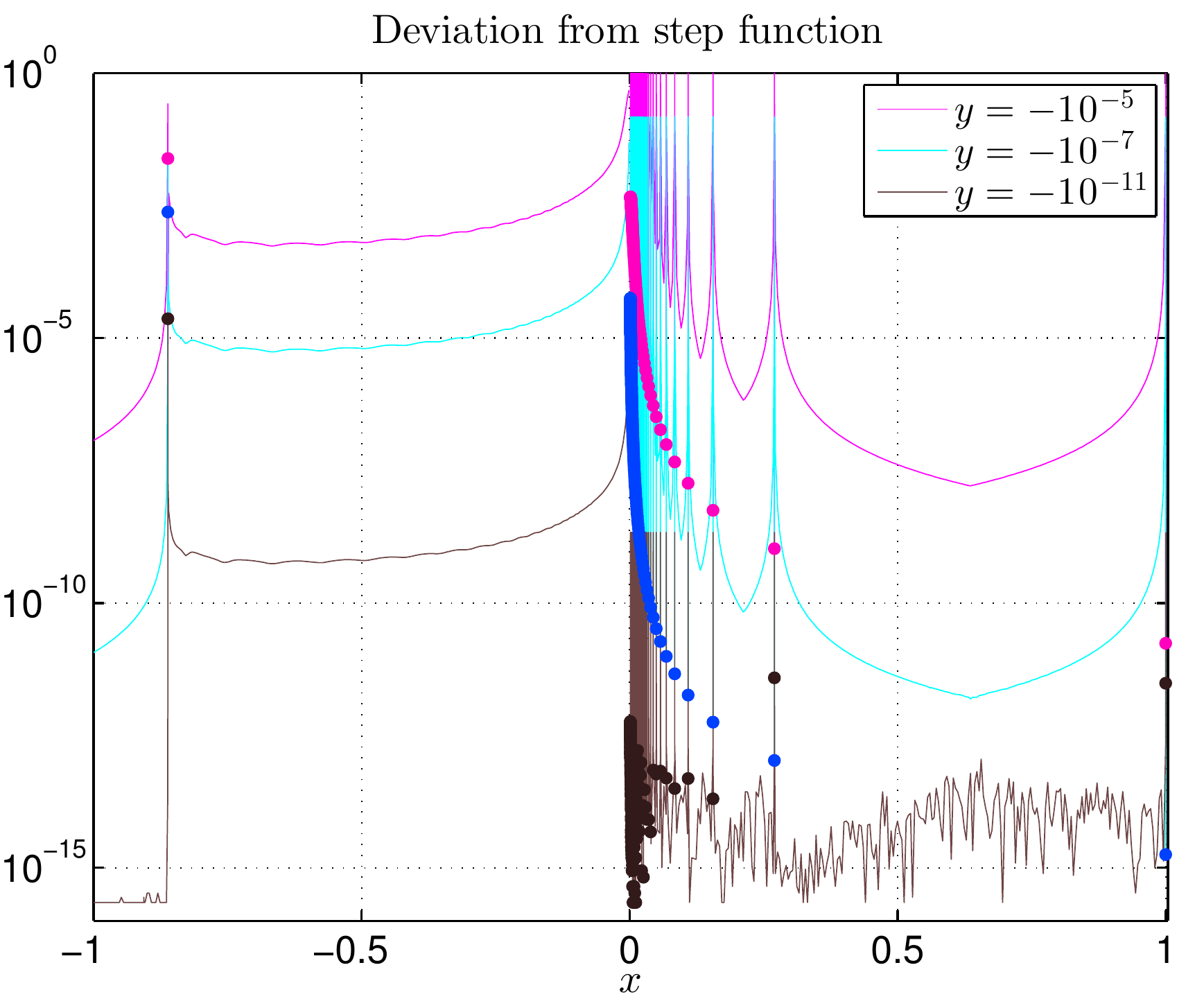}
\caption{Opening angle $2\alpha = 31\pi/18$. (a,b): Limit
  polarizability in the $\bm{r}_3$-direction. (c,d): The indicator
  function $\kappa_0^{\mathrm{ind}}(\breve{u},\breve{v},x+iy)$. 275
  eigenvalues are drawn.}
\label{fig:reflex}
\end{figure}

\appendix
\section{Explicit kernel formulas} \label{sec:appendix}

As in Section~\ref{sec:perturb}, let $\Gamma$ be a closed surface of revolution with a conical point of opening angle $2\alpha$, obtained by revolving a $C^5$-curve $\gamma$. We parametrize $\Gamma$ as before,
$$\bm{r}(t, \theta) = (\gamma_1(t)\cos\theta, \gamma_1(t) \sin\theta, \gamma_2(t)), \qquad \theta \in [0, 2\pi], \; 0 \leq t \leq 1,$$
In this Section we provide explicit formulas for the kernels $K_n^\alpha$, $K_n^\gamma$ and $S_n^\gamma$, defined in Sections~\ref{sec:infcone}, \ref{sec:L2spec}, and \ref{sec:indicator}, respectively. We use the first of these formulas to give the missing proof of Lemma~\ref{lem:specestimate}.

The formulas we are after can be read from \cite[Section~5.3]{YHM12}. We refer also to \cite{HK14}, where several typos of \cite{YHM12} are corrected. We have that
$$S_n^\gamma(t,t') = \frac{1}{\sqrt{2\pi^3\gamma_1(t)\gamma_1(t')}}\mathfrak{Q}_{n-1/2}(\chi),$$
and for $n \geq 0$ that
\begin{equation} 
\label{eq:Knexp}
K_n^\gamma(t, t') = \frac{1}{\sqrt{2\pi^3\gamma_1(t)\gamma_1(t')}} 
\left[\frac{\gamma_2'(t)}{2\gamma_1(t)|\gamma'(t)|}
\left(\mathfrak{Q}_{n-1/2}(\chi)+\mathfrak{R}_n(\chi)\right) 
- |\gamma(t)-\gamma(t')|K^\Gamma(t,0,t',0)\mathfrak{R}_n(\chi)\right], 
\end{equation}
where
$$\chi=1+\frac{|\gamma(t)-\gamma(t')|^2}{2\gamma_1(t)\gamma_1(t')},$$
and
$$\mathfrak{R}_n(\chi) = \frac{2n-1}{\chi+1}\left(\chi\mathfrak{Q}_{n-1/2}(\chi)-\mathfrak{Q}_{n-3/2}(\chi) \right).$$
To evaluate $K_n^\gamma$ for the negative indices $n < 0$, just note that $K^\gamma_n(t,t') = K^\gamma_{-n}(t,t')$.
In these formulas, $\mathfrak{Q}_{n-1/2}$ is an associated Legendre function of the second kind of half-integer degree,
$$\mathfrak{Q}_{n-1/2}(\chi) = \int_{-\pi}^\pi \frac{\cos(n\theta) \, d\theta}{\sqrt{8(\chi-\cos(\theta))}}.$$
By for example \cite[p. 153]{MagnusFormulas}, $\mathfrak{Q}_{n-1/2}$ has for $|\chi| > 1$ the series development
\begin{equation*}
\mathfrak{Q}_{n-1/2}(\chi) = \sqrt{\pi}\frac{\Gamma(n+1/2)}{\Gamma(n+1)}2^{-n-1/2} \chi^{-n-1/2} F(n/2+3/4, n/2 + 1/4, n+1, 1/\chi^2), \quad |\chi| > 1,
\end{equation*}
where $\Gamma$ denotes the usual gamma function and $F$ is the hypergeometric function
$$F(a, b, c, w) = \sum_{k=0}^\infty \frac{(a)_k (b)_k}{(c)_k} \frac{w^k}{k!}.$$
Here $(a)_k$ denotes the Pochhammer symbol \eqref{eq:pochhammer}.
We also note here that the associated Legendre function of the first kind, $P_\lambda^n(z)$, may be defined through the formula
$$P_\lambda^n(z) = \frac{1}{\Gamma(1-n)} \left( \frac{1+z}{1-z} \right)^{n/2} F(-\lambda, \lambda+1, 1-n, \frac{1-z}{2}), \quad |1-z| < 2.$$

We now supply the proof of Lemma~\ref{lem:specestimate}.
\newtheorem*{lem:specestimate}{Lemma~\ref{lem:specestimate}}
\begin{lem:specestimate} \it
For all $t > 0$ it holds that $K^\alpha_n(1,t) = tK^\alpha_n(t,1)$.
There is a constant $C > 0$, depending only on $\alpha$, such that 
\begin{equation}\label{eq:aKdecayinf}
|K_0^\alpha(t,1)| \leq \frac{C}{t^3}, \; t \geq \frac{3}{2}, \quad |K_{n}^\alpha(t,1)| \leq \frac{C}{t^{|n|+2}}, \; t \geq \frac{3}{2}, \; n\neq 0,
\end{equation}
and such that 
\begin{equation}\label{eq:aKdecayzero}
|K_0^\alpha(t,1)| \leq C, \; t \leq \frac{1}{2}, \quad |K_{n}^\alpha(t,1)| \leq Ct^{|n|-1}, \; t \leq \frac{1}{2}, \; n\neq 0.
\end{equation}
At $t=1$, $K_n^\alpha(t,1)$ has a logarithmic singularity: there is an analytic function $G(t)$ on $[1/2, 3/2]$ such that $K_n^\alpha(t,1) - \log |1-t| G(t)$ is analytic on $[1/2, 3/2]$.

Furthermore, for every $\beta$, $-1 < \beta < 2$, the functions $b_n(t) = t^\beta K_n^\alpha(t, 1)$ satisfy
\begin{equation} \label{eq:ahndecay}
 \|b_n\|_{L^1(dt/t)} \lesssim \frac{1}{n}.
 \end{equation}
\end{lem:specestimate}
\begin{proof}
Due to symmetry, we only have to consider the case $n \geq 0$. Equation \eqref{eq:Knexp} is valid also on the infinite cone $\Gamma_{\alpha}$, yielding that
\begin{equation} \label{eq:Kneq}
K_n^\alpha(t,1) = \frac{1}{(2\pi)^{3/2} \tan \alpha \sin \alpha} \frac{(2n \chi + 1) \mathfrak{Q}_{n-1/2}(\chi) - (2n-1)\mathfrak{Q}_{n-3/2}(\chi)}{t^{3/2}(\chi+1)},
\end{equation}
where 
$$\chi = \chi(t) = 1 + \frac{(t-1)^2}{2\sin^2(\alpha) t}.$$
When $t=1$ we instead have that $K_n^{\alpha}(1, t) = tK_n^{\alpha}(t,1)$.
We denote the coefficients of $\mathfrak{Q}_{n-1/2}$ by $q_{n,k}$,
\begin{align*}
\mathfrak{Q}_{n-1/2}(\chi) &= \sqrt{\pi}\frac{\Gamma(n+1/2)}{\Gamma(n+1)}2^{-n-1/2} \chi^{-n-1/2} F(n/2+3/4, n/2 + 1/4, n+1, 1/\chi^2) \\
&=: \chi^{-n-1/2} \sum_{k=0}^\infty q_{n, k} \chi^{-2k}, \qquad |\chi| > 1.
\end{align*}
 By Stirling's formula, they satisfy, for $n, k \geq 1$, that
$$4q_{n,k} = \sqrt{\pi}\frac{\Gamma(n+1/2)}{\Gamma(n+1)}2^{-n+3/2} \frac{(n/2 + 3/4)_k (n/2 + 1/4)_k}{(n+1)_k k!} = \frac{1}{\sqrt{n+k}\sqrt{k}} \frac{(n/2+k)^{n+2k}}{(n+k)^{n+k}k^k}\left(1 + O\left( \frac{1}{n} + \frac{1}{k} \right) \right).$$
We will also consider the coefficients $b_{n,k}$, defined by the equality
$$\mathfrak{Q}_{n-3/2}(\chi)  - \chi \mathfrak{Q}_{n-1/2}(\chi)= \chi^{-n+1/2} \sum_{k=0}^\infty b_{n,k} \chi^{-2k}.$$
From the formula for $q_{n,k}$, we deduce that
$$8 b_{n,k} = \frac{1}{\sqrt{n+k}\sqrt{k}} \frac{(n/2+k)^{n+2k}}{(n+k)^{n+k}k^k}\left(\frac{n}{n+k} + O\left( \frac{1}{n} + \frac{1}{k} \right) \right).$$
Consider the function
$$H(x,y) = \frac{(x/2+y)^{x+2y}}{(x+y)^{x+y}y^y}, \quad x,y > 0.$$
Then
$$ \partial_x H(x,y) = H(x,y)\log \frac{x+2y}{2x+2y} \leq 0, \quad \partial_y H(x,y) = H(x,y)\log \frac{(x+2y)^2}{4y(x+y)} \geq 0,$$
so that $H(n,k)$ is decreasing in $n$ and increasing in $k$. Since $\lim_{y \to \infty}H(x,y) = 1$ for every $x > 0$, it follows in particular that $H(n,k) \leq 1$ for all $n,k \geq 1$.

We consider first the case in which $t \geq 1 + \varepsilon$ or $t \leq 1 - \varepsilon$. The number $\varepsilon > 0$ will be chosen later depending only on $\alpha$. When $k \leq n$ we have, since $H(n, \cdot)$ is increasing, that
$$q_{n,k} \lesssim \frac{H(n,n)}{k} \lesssim \left(\frac{27}{32}\right)^n.$$
When $k \geq n$ we instead note that
$$q_{n,k} \lesssim \frac{1}{n}.$$
In total, we obtain that
$$n \mathfrak{Q}_{n-1/2}(\chi) \lesssim \chi^{-n-1/2}\sum_{k=0}^\infty \chi^{-2k} = \frac{\chi^{-n-1/2}}{1-\chi^{-2}}.$$
Since $\chi \simeq t$ for $t \geq 1+\varepsilon$ and $\chi \simeq t^{-1}$ for $t \leq 1 + \varepsilon$, the estimates \eqref{eq:aKdecayinf} and \eqref{eq:aKdecayzero} now follow from \eqref{eq:Kneq}.

To prove \eqref{eq:ahndecay} we have to work harder. Note first that 
$$\int_{1+ \varepsilon}^\infty  t^\beta\left|K_n^\alpha(t, 1)\right| \, dt \lesssim \frac{1}{n}, \quad \int_{0}^{1-\varepsilon} t^\beta \left|K_n^\alpha(t, 1)\right| \, dt \lesssim \frac{1}{n}$$
by  \eqref{eq:aKdecayinf} and \eqref{eq:aKdecayzero}. Hence we are left to consider $\int_{1 - \varepsilon}^{1+\varepsilon} |K_n^\alpha(t, 1)| \, dt$.
We let $n$ be fixed in our argument, but all implied constants will be independent of $n$. As before, for those $k$ such that $k \leq n$ it holds that
$$\max\{q_{n,k}, b_{n,k}\} \lesssim \frac{H(n,n)}{k}= \left(\frac{27}{32}\right)^n \frac{1}{k}.$$
When $2^{j-1}n \leq k \leq 2^{j}n$ for some $j \geq 1$ we similarly have that 
$$q_{n,k} \lesssim \frac{H(n,2^{j}n)}{k}= \left(\frac{\left(\frac{1}{2} + 2^j \right)^{1+2\cdot2^{j}}}{\left(1+2^j\right)^{1+2^j}\left(2^j\right)^{2^j}}\right)^n \frac{1}{k} \lesssim \left(1 - \frac{1}{2^{j+2}}\right)^n \frac{1}{k},$$
where the last inequality follows from the fact that
$$\frac{(1/2 + x)^{1+2x}}{(1+x)^{1+x} x^x} = 1 - \frac{1}{2x} + O\left(\frac{1}{x^2}\right), \quad x \to \infty.$$
For $b_{n,k}$ we have the better estimate
$$b_{n,k} \lesssim \frac{H(n,2^{j}n)n}{k^2} \lesssim \left(1 - \frac{1}{2^{j+2}}\right)^n \frac{n}{k^2}.$$

Recall that $\chi(t) = 1 + \frac{(t-1)^2}{2\sin^2(\alpha) t}$, so that $1/\chi(t)^2 \leq 1 - \frac{(t-1)^2}{2\sin^2 \alpha}$, when $|t-1| < \varepsilon$ and $\varepsilon$ is sufficiently small (depending on $\alpha$). Suppose that $\varepsilon^2 < 2\sin^2 \alpha$. Then note that
$$
\int_{1-\varepsilon}^{1+\varepsilon}\left(1 - \frac{(t-1)^2}{2\sin^2 \alpha}\right)^k \, dt = \frac{\sin \alpha}{\sqrt{2}} \int_{1-\frac{\varepsilon^2}{2\sin^2 \alpha}}^1 \frac{s^k}{\sqrt{1-s}} \, ds \lesssim \int_{0}^1 \frac{s^k}{\sqrt{1-s}} \, ds = \sqrt{\pi} \frac{\Gamma(k+1)}{\Gamma(k+3/2)} \simeq \frac{1}{\sqrt{k}}.
$$
Hence,
$$
 \int_{1-\varepsilon}^{1+\varepsilon} \chi(t)^{n-1/2} \sum_{k=2^{j-1}n}^{2^j n} q_{n,k} \chi(t)^{-2k} \, dt \lesssim \left(1 - \frac{1}{2^{j+2}}\right)^n \sum_{k=2^{j-1}n}^{2^j n} \frac{1}{k^{3/2}} \lesssim \left(1 - \frac{1}{2^{j+2}}\right)^n \frac{1}{2^{j/2}\sqrt{n}}.
$$
and
$$
 \int_{1-\varepsilon}^{1+\varepsilon} \chi(t)^{n-1/2} \sum_{k=2^{j-1}n}^{2^j n} b_{n,k} \chi(t)^{-2k} \, dt \lesssim \left(1 - \frac{1}{2^{j+2}}\right)^n n \sum_{k=2^{j-1}n}^{2^j n} \frac{1}{k^{5/2}} \lesssim \left(1 - \frac{1}{2^{j+2}}\right)^n \frac{1}{2^{3j/2}\sqrt{n}}.
$$
Therefore,
\begin{align*}
 \int_{1-\varepsilon}^{1+\varepsilon} \mathfrak{Q}_{n-1/2}(\chi(t)) \, dt &\lesssim \frac{1}{\sqrt{n}} \sum_{j=1}^\infty \left(1 - \frac{1}{2^{j+2}}\right)^n \frac{1}{2^{j/2}} \\
 &\lesssim \frac{1}{\sqrt{n}} \int_1^\infty \left(1 - \frac{1}{2^{x+2}}\right)^n 2^{-x/2} \, dx \lesssim \frac{1}{\sqrt{n}} \int_0^1 \frac{y^n}{\sqrt{1-y}} \, dy \simeq \frac{1}{n},
 \end{align*}
 and
 \begin{multline*}
  n\int_{1-\varepsilon}^{1+\varepsilon} \left|\mathfrak{Q}_{n-3/2}(\chi(t))  - \chi(t) \mathfrak{Q}_{n-1/2}(\chi(t))\right| \, dt \lesssim \sqrt{n} \sum_{j=1}^\infty \left(1 - \frac{1}{2^{j+2}}\right)^n \frac{1}{2^{3j/2}} \\
  \lesssim \sqrt{n} \int_1^\infty \left(1 - \frac{1}{2^{x+2}}\right)^n 2^{-3x/2} \, dx \lesssim \sqrt{n} \int_0^1 y^n\sqrt{1-y} \, dy = \frac{\sqrt{\pi}}{2} \frac{\sqrt{n}\Gamma(n+1)}{\Gamma(n+5/2)} \simeq \frac{1}{n}.
  \end{multline*}
In view of \eqref{eq:Kneq}, we have proven \eqref{eq:ahndecay}.

It only remains to show that $K_n^\alpha(t,1)$ has a logarithmic singularity at $t=1$. But this follows from the standard fact that the same is true of $\mathfrak{Q}_{n-1/2}(\chi(t))$. For example, when $\chi \simeq 1$, $\mathfrak{Q}_{n-1/2}(\chi)$ has the following series expansion \cite{mw:legendrelog},
\begin{multline*}
\mathfrak{Q}_{n-1/2}(\chi) = \left( \frac{1}{2}(\log(1+\chi) - \log(1-\chi)) - \psi(n+1/2)\right) F\left(-n+1/2, n+1/2, 1, \frac{1-\chi}{2}\right)  \\+ \sum_{k=0}^\infty \frac{(-n+1/2)_k (n+1/2)_k \psi(k+1)}{k!^2} \left(\frac{1-\chi}{2}\right)^k, \quad \left|\frac{1-\chi}{2}\right| < 1,
\end{multline*}
where $\psi$ denotes the digamma function.
\end{proof}

\section*{References}
\bibliographystyle{amsplain}
\bibliography{npconical} 

\providecommand{\bysame}{\leavevmode\hbox to3em{\hrulefill}\thinspace}
\providecommand{\MR}{\relax\ifhmode\unskip\space\fi MR }
\providecommand{\MRhref}[2]{%
  \href{http://www.ams.org/mathscinet-getitem?mr=#1}{#2}
}
\providecommand{\href}[2]{#2}
\begin{thebibliography}{10}

\bibitem{ADM}
Habib Ammari, Youjun Deng, and Pierre Millien, \emph{Surface plasmon resonance
  of nanoparticles and applications in imaging}, Arch. Ration. Mech. Anal.
  \textbf{220} (2016), no.~1, 109--153.

\bibitem{AMRZ}
Habib Ammari, Pierre Millien, Matias Ruiz, and Hai Zhang, \emph{Mathematical
  analysis of plasmonic nanoparticles: the scalar case}, Arch. Ration. Mech.
  Anal. \textbf{224} (2017), no.~2, 597--658.

\bibitem{APRY}
Habib Ammari, Mihai Putinar, Matias Ruiz, and Sanghyeon Yu, \emph{Shape
  reconstruction of nanoparticles from their associated plasmonic resonances},
  J. Math. Pures Appl., to appear.

\bibitem{ARYZ}
Habib Ammari, Matias Ruiz, Sanghyeon Yu, and Hai Zhang, \emph{Mathematical
  analysis of plasmonic resonances for nanoparticles: the full {M}axwell
  equations}, J. Differential Equations \textbf{261} (2016), no.~6, 3615--3669.

\bibitem{AG14}
Travis Askham and Leslie Greengard, \emph{Norm-preserving discretization of
  integral equations for elliptic {PDE}s with internal layers {I}: {T}he
  one-dimensional case}, SIAM Rev. \textbf{56} (2014), no.~4, 625--641.

\bibitem{BZ17}
Eric Bonnetier and Hai Zhang, \emph{Characterization of the essential spectrum
  of the {N}eumann--{P}oincar\'e operator in 2{D} domains with corner via
  {W}eyl sequences}, arXiv:1702.08127 [math.SP] (2017).

\bibitem{B12}
James Bremer, \emph{On the {N}ystr\"om discretization of integral equations on
  planar curves with corners}, Appl. Comput. Harmon. Anal. \textbf{32} (2012),
  no.~1, 45--64.

\bibitem{Calderon65}
A.-P. Calder\'on, \emph{Commutators of singular integral operators}, Proc. Nat.
  Acad. Sci. U.S.A. \textbf{53} (1965), no.~5, 1092--1099.

\bibitem{CQ13}
Catarina Carvalho and Yu~Qiao, \emph{Layer potentials {$C^*$}-algebras of
  domains with conical points}, Cent. Eur. J. Math. \textbf{11} (2013), no.~1,
  27--54.

\bibitem{CM16}
Maxence Cassier and Graeme~W. Milton, \emph{Bounds on {H}erglotz functions and
  fundamental limits of broadband passive quasistatic cloaking}, J. Math. Phys.
  \textbf{58} (2017), no.~7, 071504.

\bibitem{CL08}
Tongkeun Chang and Kijung Lee, \emph{Spectral properties of the layer
  potentials on {L}ipschitz domains}, Illinois J. Math. \textbf{52} (2008),
  no.~2, 463--472.

\bibitem{CEP90}
Fernando Cobos, David~E. Edmunds, and Anthony J.~B. Potter, \emph{Real
  interpolation and compact linear operators}, J. Funct. Anal. \textbf{88}
  (1990), no.~2, 351--365.

\bibitem{CMM82}
R.~R. Coifman, A.~McIntosh, and Y.~Meyer, \emph{L'int\'egrale de {C}auchy
  d\'efinit un op\'erateur born\'e sur {$L^{2}$} pour les courbes
  lipschitziennes}, Ann. of Math. \textbf{116} (1982), no.~2, 361--387.

\bibitem{CS85}
Martin Costabel and Ernst Stephan, \emph{A direct boundary integral equation
  method for transmission problems}, J. Math. Anal. Appl. \textbf{106} (1985),
  no.~2, 367--413.

\bibitem{Dahl79}
Bj{\"o}rn E.~J. Dahlberg, \emph{On the {P}oisson integral for {L}ipschitz and
  {$C^{1}$}-domains}, Studia Math. \textbf{66} (1979), no.~1, 13--24.

\bibitem{DS85}
G.~David, J.-L. Journ{\'e}, and S.~Semmes, \emph{Op\'erateurs de
  {C}alder\'on-{Z}ygmund, fonctions para-accr\'etives et interpolation}, Rev.
  Mat. Iberoamericana \textbf{1} (1985), no.~4, 1--56.

\bibitem{Els87}
Johannes Elschner, \emph{Asymptotics of solutions to pseudodifferential
  equations of {M}ellin type}, Math. Nachr. \textbf{130} (1987), no.~1,
  267--305.

\bibitem{FJL76}
E.~B. Fabes, Max Jodeit, Jr., and J.~E. Lewis, \emph{On the spectra of a
  {H}ardy kernel}, J. Functional Analysis \textbf{21} (1976), no.~2, 187--194.

\bibitem{FJL77}
E.~B. Fabes, Max Jodeit, Jr., and Jeff~E. Lewis, \emph{Double layer potentials
  for domains with corners and edges}, Indiana Univ. Math. J. \textbf{26}
  (1977), no.~1, 95--114.

\bibitem{Fuchs75}
R.~Fuchs, \emph{Theory of the optical properties of ionic crystal cubes}, Phys.
  Rev. B \textbf{11} (1975), no.~4, 1732.

\bibitem{GP83}
K.~Golden and G.~Papanicolaou, \emph{Bounds for effective parameters of
  heterogeneous media by analytic continuation}, Comm. Math. Phys. \textbf{90}
  (1983), no.~4, 473--491.

\bibitem{H11}
Johan Helsing, \emph{The effective conductivity of arrays of squares: Large
  random unit cells and extreme contrast ratios}, J. Comput. Phys. \textbf{230}
  (2011), no.~20, 7533--7547.

\bibitem{H16}
\bysame, \emph{Solving integral equations on piecewise smooth boundaries using
  the {RCIP} method: a tutorial}, arXiv:1207.6737v7 [physics.comp-ph] (2017).

\bibitem{HKL16}
Johan Helsing, Hyeonbae Kang, and Mikyoung Lim, \emph{Classification of spectra
  of the {N}eumann-{P}oincar\'e operator on planar domains with corners by
  resonance}, Ann. I. H. Poincar\'e -- AN \textbf{34} (2017), no.~4, 991 --
  1011.

\bibitem{HK14}
Johan Helsing and Anders Karlsson, \emph{An explicit kernel-split panel-based
  {N}ystr\"om scheme for integral equations on axially symmetric surfaces}, J.
  Comput. Phys. \textbf{272} (2014), 686--703.

\bibitem{HK16}
\bysame, \emph{Determination of normalized electric eigenfields in microwave
  cavities with sharp edges}, J. Comput. Phys. \textbf{304} (2016), 465--486.

\bibitem{HO08}
Johan Helsing and Rikard Ojala, \emph{Corner singularities for elliptic
  problems: integral equations, graded meshes, quadrature, and compressed
  inverse preconditioning}, J. Comput. Phys. \textbf{227} (2008), no.~20,
  8820--8840.

\bibitem{HP13}
Johan Helsing and Karl-Mikael Perfekt, \emph{On the polarizability and
  capacitance of the cube}, Appl. Comput. Harmon. Anal. \textbf{34} (2013),
  no.~3, 445--468.

\bibitem{mw:legendrealt}
http://functions.wolfram.com/07.08.06.0012.01, accessed April 17 2017.

\bibitem{mw:legendrelog}
http://functions.wolfram.com/07.10.06.0007.01, accessed April 17 2017.

\bibitem{KLY15}
Hyeonbae Kang, Mikyoung Lim, and Sanghyeon Yu, \emph{Spectral resolution of the
  {N}eumann-{P}oincar\'e operator on intersecting disks and analysis of plasmon
  resonance}, Arch. Ration. Mech. Anal. \textbf{226} (2017), no.~1, 83--115.

\bibitem{KCHWA14}
H.~Kettunen, L.~Chesnel, H.~Hakula, H.~Wall\'en, and A.~Sihvola, \emph{Surface
  plasmon resonances on cones and wedges}, 8th Int. Congr. Adv. Electromagn.
  Mater. Microw. Opt. (Metamaterials 2014), 163--165.

\bibitem{KPS07}
Dmitry Khavinson, Mihai Putinar, and Harold~S. Shapiro, \emph{Poincar\'e's
  variational problem in potential theory}, Arch. Ration. Mech. Anal.
  \textbf{185} (2007), no.~1, 143--184.

\bibitem{Lew90}
Jeff~E. Lewis, \emph{Layer potentials for elastostatics and hydrostatics in
  curvilinear polygonal domains}, Trans. Amer. Math. Soc. \textbf{320} (1990),
  no.~1, 53--76.

\bibitem{Lew91}
\bysame, \emph{A symbolic calculus for layer potentials on {$C^1$} curves and
  {$C^1$} curvilinear polygons}, Proc. Amer. Math. Soc. \textbf{112} (1991),
  no.~2, 419--427.

\bibitem{LP83}
Jeff~E. Lewis and Cesare Parenti, \emph{Pseudodifferential operators of
  {M}ellin type}, Comm. Partial Differential Equations \textbf{8} (1983),
  no.~5, 477--544.

\bibitem{MagnusFormulas}
Wilhelm Magnus, Fritz Oberhettinger, and Raj~Pal Soni, \emph{Formulas and
  theorems for the special functions of mathematical physics}, Third enlarged
  edition. Die Grundlehren der mathematischen Wissenschaften, Band 52,
  Springer-Verlag New York, Inc., New York, 1966.

\bibitem{Mik11}
Sergey~E. Mikhailov, \emph{Traces, extensions and co-normal derivatives for
  elliptic systems on {L}ipschitz domains}, J. Math. Anal. Appl. \textbf{378}
  (2011), no.~1, 324--342.

\bibitem{MMV16}
Dorina Mitrea, Marius Mitrea, and Joan Verdera, \emph{Characterizing regularity
  of domains via the {R}iesz transforms on their boundaries}, Anal. PDE
  \textbf{9} (2016), no.~4, 955--1018.

\bibitem{Mit02}
Irina Mitrea, \emph{On the spectra of elastostatic and hydrostatic layer
  potentials on curvilinear polygons}, J. Fourier Anal. Appl. \textbf{8}
  (2002), no.~5, 443--487.

\bibitem{MRL77}
A.~Moussiaux, A~Ronveaux, and A.~Lucas, \emph{Surface plasmon oscillations for
  different geometrical shapes}, Can. J. Phys. \textbf{55} (1977), no.~16,
  1423--1433.

\bibitem{MCHG15}
N.~Benjamin Murphy, Elena Cherkaev, Christel Hohenegger, and Kenneth~M. Golden,
  \emph{Spectral measure computations for composite materials}, Commun. Math.
  Sci. \textbf{13} (2015), no.~4, 825--862.

\bibitem{PP14}
Karl-Mikael Perfekt and Mihai Putinar, \emph{Spectral bounds for the
  {N}eumann-{P}oincar\'e operator on planar domains with corners}, J. Anal.
  Math. \textbf{124} (2014), no.~1, 39--57.

\bibitem{PP16}
\bysame, \emph{The {E}ssential {S}pectrum of the {N}eumann--{P}oincar\'e
  {O}perator on a {D}omain with {C}orners}, Arch. Ration. Mech. Anal.
  \textbf{223} (2017), no.~2, 1019--1033.

\bibitem{QN12}
Yu~Qiao and Victor Nistor, \emph{Single and double layer potentials on domains
  with conical points {I}: {S}traight cones}, Integral Equations Operator
  Theory \textbf{72} (2012), no.~3, 419--448.

\bibitem{Shele90}
V.~Yu. Shelepov, \emph{On the index and spectrum of integral operators of
  potential type along {R}adon curves}, Mat. Sb. \textbf{181} (1990), no.~6,
  751--778.

\bibitem{SYJA04}
A.~Sihvola, P.~Yl\"a-Oijala, S.~J\"arvenp\"a\"a, and J.~Avelin,
  \emph{Polarizabilities of platonic solids}, IEEE Trans. Antennas Propagat.
  \textbf{52} (2004), no.~9, 226--2233.

\bibitem{SPG14}
B.~Sturman, E.~Podivilov, and M.~Gorkunov, \emph{Critical behavior of optical
  singularities near sharp metal corners and tips}, Phys. Rev. B \textbf{89}
  (2014), 045429.

\bibitem{Torres91}
Rodolfo~H. Torres, \emph{Boundedness results for operators with singular
  kernels on distribution spaces}, Mem. Amer. Math. Soc. \textbf{90} (1991),
  no.~442, viii+172.

\bibitem{Triebel78}
Hans Triebel, \emph{Interpolation theory, function spaces, differential
  operators}, North-Holland Mathematical Library, vol.~18, North-Holland
  Publishing Co., Amsterdam-New York, 1978.

\bibitem{Ver84}
Gregory Verchota, \emph{Layer potentials and regularity for the {D}irichlet
  problem for {L}aplace's equation in {L}ipschitz domains}, J. Funct. Anal.
  \textbf{59} (1984), no.~3, 572--611.

\bibitem{Wendland09}
W.~L. Wendland, \emph{On the double layer potential}, Analysis, partial
  differential equations and applications, Oper. Theory Adv. Appl., vol. 193,
  Birkh\"auser Verlag, Basel, 2009, pp.~319--334.

\bibitem{YHM12}
P.~Young, S.~Hao, and P.~G. Martinsson, \emph{A high-order {N}ystr\"om
  discretization scheme for boundary integral equations defined on rotationally
  symmetric surfaces}, J. Comput. Phys. \textbf{231} (2012), no.~11,
  4142--4159.

\end{thebibliography}

\end{document}